\let\csname ver@amsthm.sty\endcsname\relax
\let\theoremstyle\relax
\theoremstyle{plain}
\newtheorem{theorem}{Theorem}[section]
\newtheorem{proposition}[theorem]{Proposition}
\newtheorem{lemma}[theorem]{Lemma}
\newtheorem{corollary}[theorem]{Corollary}
\newtheorem{conjecture}[theorem]{Conjecture}
\theoremstyle{definition}
\newtheorem{definition}[theorem]{Definition}
\newtheorem{remark}[theorem]{Remark}
\newtheorem{example}[theorem]{Example}
    \newtheorem{Exa}[theorem]{Example}
    \newtheorem{Rem}[theorem]{Remark}
    \newtheorem{Cons}[theorem]{Construction}
		\newtheorem{Question}[theorem]{Question}
\numberwithin{figure}{section}
\numberwithin{equation}{section}
\def\@myMR[#1 #2]{\relax\ifhmode\unskip\spacefactor3000 \space\fi
  \MRhref{#1}{MR\,#1}}
\renewcommand\MR[1]{\@myMR[#1 ]}
\renewcommand{\MRhref}[2]{{\tiny%
  \href{http://www.ams.org/mathscinet-getitem?mr=#1}{#2}}%
}
\renewcommand*{\backref}[1]{}
\renewcommand*{\backrefalt}[4]{%
    %\relax\ifhmode\unskip\spacefactor3000 \space\fi%
    %\unskip
    \tiny%
    ({%\tiny%
    \ifcase #1 not cited%
          \or cit.\ on p.~#2%
          \else cit.\ on pp.~#2%
    \fi%
    })\\[-.6em]}
\def\maketitle{\par
  \@topnum\z@ % this prevents figures from falling at the top of page 1
  \@setcopyright
  \thispagestyle{empty}% this sets first page specifications
  \ifx\@empty\shortauthors \let\shortauthors\shorttitle
  \else \andify\shortauthors
  \fi
  \@maketitle@hook
  \begingroup
  \@maketitle
  \toks@\@xp{\shortauthors}\@temptokena\@xp{\shorttitle}%
  \toks4{\def\\{ \ignorespaces}}% defend against questionable usage
  \edef\@tempa{%
    \@nx\markboth{\the\toks4
      \@nx\MakeUppercase{\the\toks@}}{\the\@temptokena}}%
  \@tempa
  \endgroup
  \c@footnote\z@
  % \vfill
  % \tableofcontents
  % \vfill
  % \vfill
    \renewcommand{\footnoterule}{%
      \kern -3pt
      \hrule width \textwidth height .5pt
      \kern 2pt
    }
  {
    \renewcommand\thefootnote{}
    \vspace{-2em}
    \footnote{
      % \renewcommand{\footnoterule}{%
      %   \kern -3pt
      %   \hrule width \textwidth height 1pt
      %   \kern 2pt
      % }
      % \hrule
      %\vspace*{.7em}
      \par\vspace{-1.2em}\noindent
      \def\@footnotetext##1{\noindent{\footnotesize##1}\par}%
      \let\@makefnmark\relax  \let\@thefnmark\relax
      \ifx\@empty\@date\else \@footnotetext{\@setdate}\fi
      \ifx\@empty\@subjclass\else \@footnotetext{\@setsubjclass}\fi
      \ifx\@empty\@keywords\else \@footnotetext{\@setkeywords}\fi
      \ifx\@empty\thankses\else \@footnotetext{%
        \def\par{\let\par\@par}\@setthanks}%
      \fi
    }
    \addtocounter{footnote}{-1}
  }
  \@cleartopmattertags
  %\newpage
  %\hypersetup{pageanchor=true}
}
\def\@adminfootnotes{\@empty}
\def\@settitle{\begin{center}%
  \baselineskip14\p@\relax
    \bfseries
\Large
  \@title
  \end{center}%
}
\def\@setauthors{%
  \begingroup
  \def\thanks{\protect\thanks@warning}%
  \trivlist
  \centering\footnotesize \@topsep30\p@\relax
  \advance\@topsep by -\baselineskip
  \item\relax
  \author@andify\authors
  \def\\{\protect\linebreak}%
  \large{\authors}%
  \ifx\@empty\contribs
  \else
    ,\penalty-3 \space \@setcontribs
    \@closetoccontribs
  \fi
  \endtrivlist
  \endgroup
}
\def\@setaddresses{\par
  \nobreak \begingroup
\footnotesize
  \def\author##1{\end{minipage}\hskip 1sp \begin{minipage}{.5\textwidth}\raggedright%
    ~\\[2em]{\bf##1}\\[.5em]%
  }%
  \interlinepenalty\@M
  \def\address##1##2{\begingroup
    %\par\addvspace\bigskipamount\indent
    %\@ifnotempty{##1}{(\ignorespaces##1\unskip) }%
    %{\scshape\ignorespaces##2}\par\endgroup}%
    {\ignorespaces##2}\endgroup\\[.5em]}%
  \def\curraddr##1##2{\begingroup
    \@ifnotempty{##2}{\nobreak\indent\curraddrname
      \@ifnotempty{##1}{, \ignorespaces##1\unskip}\/:\space
      ##2\par}\endgroup}%
  \def\email##1##2{\begingroup
    \@ifnotempty{##2}{\nobreak\indent%\emailaddrname
      \@ifnotempty{##1}{, \ignorespaces##1\unskip}%\/:\space
      \ttfamily##2\par}\endgroup}%
  \def\urladdr##1##2{\begingroup
    \def~{\char`\~}%
    \@ifnotempty{##2}{\nobreak\indent\urladdrname
      \@ifnotempty{##1}{, \ignorespaces##1\unskip}\/:\space
      \ttfamily##2\par}\endgroup}%
  \setlength{\parindent}{0pt}%
  \vfill%
  {
  \begin{minipage}{0mm}
  \addresses
  \end{minipage}
  }
  \endgroup
}
\renewcommand{\author}[2][]{%
  \ifx\@empty\authors
    \gdef\authors{#2}%
    \g@addto@macro\addresses{\author{#2}}%
  \else
    \g@addto@macro\authors{\and#2}%
    \g@addto@macro\addresses{\author{#2}}%
  \fi
  \@ifnotempty{#1}{%
    \ifx\@empty\shortauthors
      \gdef\shortauthors{#1}%
    \else
      \g@addto@macro\shortauthors{\and#1}%
    \fi
  }%
}
\edef\author{\@nx\@dblarg
  \@xp\@nx\csname\string\author\endcsname}
\def\@secnumfont{\@empty}
\def\section{\@startsection{section}{1}%
  \z@{.7\linespacing\@plus\linespacing}{.5\linespacing}%
  {\large\bfseries\centering}}
\title{Parabolic Lusztig varieties and chromatic symmetric functions}
\author{Alex Abreu}
\address{
    Instituto de Matemática e Estatística\\
    Universidade Federal Fluminense\\
    Rua Prof. M. W. de Freitas, S/N\\
    24210-201 Niterói, Rio de Janeiro, Brasil
}
\email{alexbra1@gmail.com}
\author{Antonio Nigro}
\address{
    Instituto de Matemática e Estatística\\
    Universidade Federal Fluminense\\
    Rua Prof. M. W. de Freitas, S/N\\
    24210-201 Niterói, Rio de Janeiro, Brasil
}
\email{antonio.nigro@gmail.com}
\definecolor{forestgreen}{rgb}{0.13, 0.55, 0.13}
\newcommand{\col}{\colon}
\newcommand{\X}{\mathcal X}
\newcommand{\F}{\mathcal F}
\newcommand{\Y}{\mathcal Y}
\newcommand{\U}{\mathcal U}
\newcommand{\N}{\mathcal N}
\newcommand{\mS}{\mathcal S}
\newcommand{\m}{\mathbf{m}}
\newcommand{\ind}{i}
\newcommand{\parti}{\mathcal{P}ar}
\DeclareMathOperator{\Adm}{Adm}
\DeclareMathOperator{\Ima}{Im}
\DeclareMathOperator{\ch}{ch}
\DeclareMathOperator{\LLT}{LLT}
\DeclareMathOperator{\csf}{csf}
\DeclareMathOperator{\asc}{asc}
\DeclareMathOperator{\Gr}{Gr}
\newcommand{\C}{\mathbb{C}}  
\newcommand{\flag}{\mathcal{B}}
\newcommand{\h}{\mathcal{Y}}  
\newcommand{\Hi}{\mathcal{H}}
\newcommand{\dw}{\dot{w}} 
\newcommand{\dz}{\dot{z}}
\newcommand{\yd}[1]{\scalebox{0.3}{\ydiagram{#1}}}
\begin{document}

\maketitle

% \begin{abstract}
%     We propose a geometric approach to prove the Stanley-Stembridge conjecture. We find a connection between the character of certain homology groups of subvarieties of the partial flag varieties and the Haiman-Grojnowski hybrid basis of the Hecke algebra.
% \end{abstract}

% \begin{abstract}
%     \begin{enumerate}
%         \item Shareshian wachs conjecture
%         \item Decomposition theorem parabolics
%         \item Description local systems
%         \item Computation Frobenius character open locus (Haiman-Grojnowski hybrid basis)
%     \end{enumerate}
%   \end{abstract}
   \begin{abstract}
    The characters of Kazhdan--Lusztig elements of the Hecke algebra over $S_n$ (and in particular, the chromatic symmetric function of indifference graphs) are completely encoded in the (intersection) cohomology of certain subvarieties of the flag variety. Considering the forgetful map to some partial flag variety, the decomposition theorem tells us that this cohomology splits as a sum of intersection cohomology groups with coefficients in some local systems of subvarieties of the partial flag variety. We prove that these local systems correspond to representations of subgroups of $S_n$. An explicit characterization of such  representations would provide a recursive formula for the computation of such characters/chromatic symmetric functions, which could settle Haiman's conjecture about the positivity of the monomial characters of Kazhdan--Lusztig elements and Stanley--Stembridge conjecture about $e$-positivity of chromatic symmetric function of indifference graphs. We also find a connection between the character of certain homology groups of subvarieties of the partial flag varieties and the Grojnowski--Haiman hybrid basis of the Hecke algebra.

\end{abstract}

\tableofcontents

\section{Introduction}

The connection between chromatic symmetric functions and geometry goes back to Stanley (\cite{Stanley86}), who noticed that, based on a recursion of Procesi  (\cite{Procesi}), the chromatic symmetric function of the path graph is the omega-dual of the Frobenius character of the cohomology of the toric variety given by the Weyl chambers of the symmetric group $S_n$.
%(this variety has several names and appears in a multidude of contexts, it is isomorphic to the Losev-Manin moduli space of rational curves, the Hessenberg variety associated to the path graph, ...).\par

The Shareshian--Wachs (\cite{ShareshianWachs}) conjecture, now proved by Brosnan--Chow (\cite{BrosnanChow}) and Guay-Paquet (\cite{GP}), generalizes this connection  to any indifference graph (unit interval order graph). Namely, the chromatic quasisymmetric function of an indifference graph is the omega-dual Frobenius character of the cohomology of an associated Hessenberg variety. This motivated a flurry of work trying to better understand the cohomology of Hessenberg varieties, including \cite{Precup18}, \cite{HaradaPrecup}, \cite{CHL}, \cite{HHMPT}, \cite{HPT21}, and \cite{BalibanuCrooks} to name a few. This connection can be used in both directions. For instance, some relations for the chromatic symmetric function, such as the modular law in \cite{GPmodular}, or \cite[Theorem 1.1]{AN}  can be used to obtain results about the geometry of these Hessenberg varieties (see \cite{PrecupSommers}, \cite{KiemLee}).

The chromatic symmetric function is also related to the characters of the Kazhdan--Lusztig basis elements $C'_w$ of the Hecke algebra of $W=S_n$. In fact, when $w$ is codominant the characters of $C'_w$ recover the chromatic quasisymmetric function of the associated indifference graph (see \cite{CHSS} and \cite{AN_haiman}). In particular, we have a relation between the characters of $C'_w$, with $w$ codominant, and the cohomology of Hessenberg varieties.

For general $w\in S_n$, the characters of $C'_w$ also have geometric meaning. Namely, they are encoded in the geometry of certain subvarieties of the flag variety $\flag := GL_n/B$, where $B$ is the Borel subgroup o upper triangular matrices. These varieties $\h_w(X)$, which we call \emph{Lusztig varieties}, are defined by
\[
\h_w(X) = \{V_\bullet=(0\subset V_1\subset V_2\subset \ldots \subset V_n=\mathbb{C}^n); \dim XV_i\cap V_j\geq r_{i,j}(w)\},
\]
where $w$ is a permutation in $S_n$, $X$ is an invertible matrix, and
\[
r_{i,j}(w): = |\{k;k\leq i,w(k)\leq j\}|.
\]
If $w$ is codominant, then the variety $\h_w(X)$ is a Hessenberg variety.

\begin{example}
If $w=3412 \in S_4$, then
\[
\h_w(X) = \{V_1\subset V_2\subset V_3\subset \mathbb{C}^4; XV_1\subset V_3, V_1\subset XV_3\}.
\]
Note that some redundant conditions $\dim XV_i\cap V_J\geq r_{i,j}(w)$ are omitted above.
\end{example}

When $X$ is regular semisimple (for example if $X$ is diagonal with distinct diagonal entries), the intersection cohomology group\footnote{The varieties $\h_w(X)$ are singular when the permutation $w$ is singular, so it is natural to consider the intersection cohomology instead of usual cohomology.} $IH^*(\h_w(X))$ is naturally a $S_n$-module, and its (graded) Frobenius character, denoted by $\ch(IH^*(\h_w(X)))$, is the same as
\[
\ch(q^{\frac{\ell(w)}{2}}C'_w):=\sum_{\lambda\vdash n}\chi^{\lambda}(q^{\frac{\ell(w)}{2}}C'_w)s_{\lambda},
\]
which is a result of Lusztig \cite{ChaShvV} (see also \cite{AN_hecke}, the notation of which we follow). Here, $\chi^{\lambda}$ is the (extension to the Hecke algebra of the) irreducible character of $S_n$ associated to the partition $\lambda$ and $s_{\lambda}$ is the Schur symmetric function associated to $\lambda$. The Stanley--Stembridge conjecture about $e$-positivity of chromatic symmetric function and the Haiman conjecture about $h$-positivity of characters of Kazhdan--Lusztig basis elements become equivalent to the fact that $IH^*(\h_w(X))$ has a permutation basis (stabilizers of which are conjugate to the Young subgroups $S_{\lambda}$ for some partition $\lambda$). \par

 One strategy to prove that such a permutation basis exists is to write $IH^*(\h_w(X))$ as sum of $S_n$-modules, where each one has a permutation basis. There are several natural ways to write $IH^*(\h_w(X))$ as a sum of $S_n$-modules using the decomposition theorem of \cite{BBD}, although proving that each one has a  permutation basis still appears to be a very difficult problem.\par

 Let $J$ be a subset of the set $S=\{1,\ldots, n-1\}$ of simple transpositions of $S_n$ and denote by $\flag_J = GL_n/P_J$ the partial flag variety associated to $J$, where $P_J$ is the parabolic subgroup of $GL_n$ induced by $J$. That is, if $S\setminus J=\{i_1,\ldots, i_k\}$, then $\flag_J=\{V_{i_1}\subset V_{i_2}\subset \ldots \subset V_{i_k}\subset \mathbb{C}^n\}$. By the decomposition theorem there exist subvarieties $\h_{\alpha}\subset \flag_J$ and local systems on (open sets of) $\h_{\alpha}$ such that
 \[
 IH^*(\h_w(X)) = \bigoplus_{\alpha} IH^*(\h_\alpha(X), L_\alpha).
 \]
Results of Lusztig \cite{LusztigParabolicI} give a complete characterization of the subvarieties $\h_\alpha$, which are indexed by elements of \footnote{This is in analogy to the Schubert varieties in the partial flag variety $\flag_J$, which are also indexed by elements in ${}^JS_n$, although the varieties $\h_{\alpha}(X)$ have a much more involved definition, see Section \ref{sec:parabolic} below.}
\[
{}^JS_n:=\{w\in S_n; w^{-1}(j)<w^{-1}(j+1)\text{ for all }j\in J\}.
\]
We write $\h_{z,J}(X)$ for the \emph{parabolic Lusztig variety} in $\flag_J$ associated to $z\in {}^JS_n$. The splitting of the intersection cohomology becomes
\begin{equation}
\label{eq:decomposition_hw}
IH^*(\h_w(X)) = \bigoplus_{z\in {}^JW} IH^*(\h_{z,J}(X), L_{z,w}^J).
\end{equation}
 We could try to repeat the same process to split the cohomology even further: If $w\in {}^JS_n$, $J'\supset J$, and $F$ a suitable local system on $\h_{w,J}(X)$, we write
\begin{equation}
\label{eq:decomposition_hzJ}
IH^*(\h_{w,J}(X),F)=\bigoplus_{z\in {}^{J'}S_n} IH^*(\h_{z,J'}(X), L_{z, w}^{J',J}(F)),
\end{equation}
and so on.

\begin{Exa}
\label{exa:path}
Consider the permutation $w=2341\in S_4$ and let $X$ be a regular semisimple matrix. Then $\h_w(X) = \{V_\bullet; XV_1\subset V_2, XV_2\subset V_3\}$. Let $J_1=\{3\}$ and apply the decomposition theorem to the map $f_1\col \h_w(X)\to \flag_{J_1}$, then we get
\[
IH^*(\h_w(X)) = IH^*(\h_{2341, J_1}(X))\oplus (IH^*(\h_{2314,J_1}(X))\otimes \mathbb{C}[-2]),
\]
where (for now these can be taken as definitions, but see Definition \ref{def:lusztig_parabolic} and Proposition \ref{prop:Gamma_J_injective})
\begin{align*}
\h_{2341,J_1}(X)=&\{V_1\subset V_2\subset \mathbb{C}^4;XV_1\subset V_2 \},\\
\h_{2134,J_1}(X)=&\{V_1\subset V_2\subset \mathbb{C}^4;XV_2\subset V_2 \}.
\end{align*}
Indeed, since we are forgetting $V_3$, we have that the image of $f_1$ is precisely $\h_{2341,J_1}$. Moreover, the fiber of $f_1$ over a flag $(V_1\subset V_2)\in \h_{2134,J_1}(X)$ is $\mathbb{P}^1$, while $f_1$ is an isomorphism over $\h_{2341,J_1}(X)\setminus \h_{2134,J_1}(X)$. \par

Let us proceed to the next step. Letting $J_2=\{2,3\}$ and applying the decomposition theorem to $g_1\col \h_{2341,J_1}(X)\to \flag_{J_2}$ and $g_2\col \h_{2134,J_2}(X)\to\flag_{J_2}$, we get
\begin{align*}
IH^*(\h_{2341,J_1}(X))=&IH^*(\h_{2341,J_2}(X))\oplus(IH^*(\h_{1234,J_2}(X))\otimes (\mathbb{C}[-2]\oplus\mathbb{C}[-4])),\\
IH^*(\h_{2134, J_1}(X))=&IH^*(\h_{2134,J_2}(X)),
\end{align*}
where
\begin{align*}
    \h_{2341,J_2}(X) =& \{V_1\subset \mathbb{C}^4\} = \mathbb{P}^3,\\
    \h_{1234,J_2}(X) = & \{V_1\subset \mathbb{C}^4; XV_1=V_1\} = \{(1:0:0:0), (0:1:0:0), (0:0:1:0), (0:0:0:1)\},\\
    \h_{2134,J_2}(X) = &\{V_1\subset \mathbb{C}^4; V_1\subset <e_i,e_j>\text{ for some }i,j\in \{1,2,3,4\}\} = \bigcup_{i,j}\mathbb{P}^1_{i,j}.
\end{align*}
Indeed, the image of $g_2$ is $\h_{2134,J_2}$, the map $\h_{2134,J_1}(X)\to \h_{2134,J_2}(X)$ is a normalization map, the image of $g_1$ is $\h_{2341,J_2}(X)$, the map $g_1$ is an isomorphism over $\h_{2341,J_2}(X)\setminus \h_{1234,J_2}(X)$, and the fiber of $g_1$ over $\h_{1234,J_2}(X)$ is $\mathbb{P}^2$. The graded Frobenius characters satisfy
\[
\ch(\h_{w}(X))=\ch(IH^*(\h_{2341,J_2}(X)))+  (q+q^2)\ch (IH^*(\h_{1234, J_2}(X))) + q\ch(IH^*(\h_{2134,J_2}(X))).
\]
Since everything is in $\mathbb{P}^3$ (which carries a natural action of $S_4$), we can compute
\begin{align*}
    \ch(IH^*(\h_{2341,J_2}(X)))=&(1+q+q^2+q^3)h_4,\\
    \ch(IH^*(\h_{1234,J_2}(X)))=& h_{3,1},\\
    \ch(IH^*(\h_{2134,J_2}(X))) = & (1+q)h_{2,2},
\end{align*}
and arrive at the well-known expression (see \cite[Table 1]{Haiman})
\[
\ch(IH^*(\h_{w}(X)))= (q+q^2)h_{2,2}+(q+q^2)h_{3,1}+(1+q+q^2+q^3)h_4,
\]
which is the $\omega$-dual of the chromatic quasisymmetric function of the path graph with $4$ vertices.
\end{Exa}

Our first result is a certain characterization of the local systems appearing in Equation \eqref{eq:decomposition_hw}. More explicitly, we prove that every local system $L_{z,w}^J$ appearing in Equation \eqref{eq:decomposition_hw} is induced by a representation of a subgroup of $S_n$. Given a permutation $z\in {}^JS_n$, define $J_z:= \bigcap_{n\in \mathbb{Z}} z^{n}Jz^{-n} $, in particular $zJ_zz^{-1} = J_z$, and set $(S_n)_{J_z}^z:=\{w\in (S_n)_{J_z}; zw=wz\}$, where $(S_n)_{J_z}$ is the subgroup of $S_n$ generated by the simple transpositions in $J_z$.  We will prove that $(S_n)_{J_z}^z$ is isomorphic to a product of symmetric groups (Proposition \ref{prop:WJw_product}).\par

To better state our results, it is preferable to work with families and to consider the perverse sheaves point of view. Define $\h_{w}\subset GL_n\times \flag$ and $\h_{z,J}\subset GL_n\times \flag_J$ to be the subvarieties whose fibers over a point $X\in GL_n$ are precisely $\h_w(X)$ and $\h_{z,J}(X)$, and let $f\col GL_n\times \flag \to GL_n\times \flag_J$ be the forgetful map. Then we can write a relative version of Equation \eqref{eq:decomposition_hw}:

\begin{equation}
\label{eq:decomposition_hw_rel}
f_*(IC_{\h_w}) = \bigoplus_{z\in {}^JW} IC_{\h_{z,J}}(L_{z,w}^J).
\end{equation}

\begin{theorem}
\label{thm:UwJ_intro}
There exists an open set $\U_{z,J}\subset \h_{z,J}$ the fundamental group of which has a natural map $\pi_1(U_{z,J},(X,V_\bullet))\to (S_n)_{J_z}^z$. Moreover, every local system $L_{z,w}^{J}$ appearing in Equation \eqref{eq:decomposition_hw_rel} is induced by a representation of $(S_n)_{J_z}^z$.
\end{theorem}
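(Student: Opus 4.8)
\emph{Proof plan.}
The idea is to produce, over a well-chosen dense open subset $\U_{z,J}\subset\h_{z,J}$, a canonical finite étale cover with Galois group $(S_n)_{J_z}^z$, and then to show that after pulling back along it the decomposition \eqref{eq:decomposition_hw_rel} becomes monodromy-free, which forces each $L_{z,w}^J$ to be pulled back from a representation of $(S_n)_{J_z}^z$. First I would restrict everything over the open dense locus $GL_n^{\mathrm{rs}}\subset GL_n$ of regular semisimple elements; this is harmless, since $IC_{\h_w}$ and the intermediate extensions $IC_{\h_{z,J}}(L_{z,w}^J)$ are determined by their restrictions there. Let $\U_{z,J}$ be the smooth locus of $\h_{z,J}$ over $GL_n^{\mathrm{rs}}$ with all properly smaller parabolic Lusztig subvarieties $\h_{z',J}$ removed. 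Then $\U_{z,J}$ is open dense in $\h_{z,J}$ and $IC_{\h_{z,J}}(L_{z,w}^J)$ restricts there to $L_{z,w}^J$ up to shift; since an $IC$-sheaf is the intermediate extension of any shifted local system to which it restricts on a dense open smooth locus, it suffices to analyse the local systems $L_{z,w}^J$ on $\U_{z,J}$ itself.

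For the cover: using Lusztig's description of the parabolic Lusztig varieties (\cite{LusztigParabolicI}; Section \ref{sec:parabolic}), the fibre of $f|_{\h_w}$ over $(X,V_\bullet)\in\h_{z,J}$ is the variety of completions $V'_\bullet$ of the partial flag $V_\bullet$ to a full flag satisfying the rank conditions $\dim XV'_i\cap V'_j\ge r_{i,j}(w)$. By generic local triviality of $f|_{\h_w}$ along the stratum $\U_{z,J}$, the sheaf of connected components of these fibres is locally constant over $\U_{z,J}$, and I let
\[
q\col \widetilde{\U}_{z,J}\longrightarrow\U_{z,J}
\]
be the associated finite étale cover. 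Concretely a point of $\widetilde{\U}_{z,J}$ over $(X,V_\bullet)$ is the extra datum of a partial $X$-eigenspace flag that must be adjoined to $V_\bullet$ in order to reach a connected component of the fibre — equivalently, a choice of $X$-eigenspace flag realizing the relative position $z$, modulo the orderings internal to that component. The combinatorial input makes this precise: $J_z$ is the $z$-stable part of $J$, so $(S_n)_{J_z}$ is a $\langle z\rangle$-stable Young subgroup whose $z$-centralizer, by Proposition \ref{prop:WJw_product}, is a product of symmetric groups, each permuting a single $z$-orbit of eigenlines which $V_\bullet$ (as an object of $\flag_J$) cannot separate but which the condition ``$z$'' glues together; a linear-algebra check using that $X$ is regular semisimple and the combinatorics of ${}^JS_n$ and the Bruhat order on $\flag_J$ then identifies the fibre of $q$ with a torsor under $(S_n)_{J_z}^z$. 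The monodromy of $q$ is the asserted natural map $\pi_1(\U_{z,J},(X,V_\bullet))\to(S_n)_{J_z}^z$.

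Next I would pull back \eqref{eq:decomposition_hw_rel} along $q$. Over $\widetilde{\U}_{z,J}$ the $X$-eigenspace datum singled out above is a global section, so the pulled-back family of fibres of $f|_{\h_w}$ over $\U_{z,J}$ becomes Zariski-locally trivial with trivial global monodromy: its connected fibres form an equisingular family rigidified by this eigendatum, because any loop in $\widetilde{\U}_{z,J}$ either moves $(X,V_\bullet)$ inside a fixed combinatorial type — where the fibres are carried into one another by the connected centralizing torus of $X$ — or braids only the eigenlines lying inside a single fibre, which does not move the isomorphism type of that fibre. Hence $q^*Rf_*(IC_{\h_w})$ is a direct sum of constant complexes, so every summand $q^*L_{z,w}^J$ is a constant local system; equivalently, the monodromy of $L_{z,w}^J$ on $\U_{z,J}$ factors through $\pi_1(\U_{z,J})\to(S_n)_{J_z}^z$, and $L_{z,w}^J$ is induced by the representation of $(S_n)_{J_z}^z$ given by the deck action of $q$ on the now-constant fibre. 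This is the statement.

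The crux, and the step I expect to resist, is the identification of the Galois group of $q$ with exactly $(S_n)_{J_z}^z$: one must show the residual eigenspace ambiguity over a generic point of $\h_{z,J}$ is neither larger (it should not contain symmetric groups permuting eigenlines that $V_\bullet$ already separates) nor smaller, which amounts to a careful count of the $X$-eigenflags realizing the relative position $z$ with a generic $V_\bullet$, organized through Lusztig's description of the parabolic pieces and the Bruhat combinatorics of ${}^JS_n$, and matched with $C_{(S_n)_{J_z}}(z)$ via Proposition \ref{prop:WJw_product}. A subsidiary difficulty is that when $z$ is not of the top type the subvariety $\h_{z,J}$ lies in the boundary of $f(\h_w)$, where $f|_{\h_w}$ is not a fibration, so one must check that the cover $q$ — or the local $IC$-monodromy computation that replaces it there — is still governed by the same group.
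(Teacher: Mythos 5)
Your proposed open set is already ruled out by the paper's own computation. You take $\U_{z,J}$ to be the smooth locus of $\h_{z,J}$ over $GL_n^{rs}$ with the smaller parabolic strata removed, and you then assert that the connected components of the fibres of $f|_{\h_w}$ form a locally constant sheaf there, giving a finite étale cover $q$. For $w=3412$, $J=\{1,3\}$ this open set contains the point $(X,V_2)$ of Example \ref{exa:G24_not_galois} ($X$ regular semisimple, $XV_2\cap V_2=0$), where the fibre of $f$ is a single double point while the generic fibre on that stratum consists of two reduced points; so $f$ is ramified over your $\U_{z,J}$, no étale cover exists on it, and the monodromy map you want is not defined there. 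The correct open set is strictly smaller and is not cut out by regular semisimplicity of $X$ alone: it is defined by the twisted condition that the quasisemisimple part $\dw t\in \dw T_1$ be regular semisimple, i.e. $\U_{w,J}=\frac{G\times (\dw L_J)^{rs}U^J}{L_JU^J_w}$, and the Galois property with group $\N_{L_J}/\N_T=W_J^w=(S_n)_{J_z}^z$ comes from the explicit quotient descriptions of $\h_{w,J}^\circ$ and $(\h_w^\circ)^{w_{rs}}$ (Propositions \ref{prop:wPJYwJ_iso}, \ref{prop:wT1Ywiso}, \ref{prop:wT1_wLJ}, \ref{prop:NLT_galois}) together with Lusztig's structure theory of quasisemisimple classes in the disconnected group $N_G(L_J)$ (Proposition \ref{prop:N_properties }). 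That identification is exactly the step you call ``the crux'' and leave open, so the first assertion of the theorem is not established by your argument.

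The second assertion is also not reached. Even granting a Galois cover over a dense open subset of the top stratum, the summands $IC_{\h_{z,J}}(L_{z,w}^J)$ for the remaining $z\in {}^JS_n$ are supported on strata over which $f|_{\h_w}$ degenerates; their monodromy is not governed by connected components of fibres, and your ``rigidify by the eigendatum and trivialize'' step has no mechanism to exclude local systems on such $\h_{z,J}$ whose monodromy does not factor through $(S_n)_{J_z}^z$ (you flag this as a subsidiary difficulty, but it is the main one). In the paper this is the content of Section \ref{sec:classification}: Lusztig's theory of parabolic character sheaves on $G\times G/P_J$ reduces every full-support summand on $\h_{z,J}$ to a $1$-character sheaf on the twisted Levi $\dz L_{J_z}$, and then smallness of the twisted Grothendieck--Springer maps (Propositions \ref{prop:toLJ_small}, \ref{prop:towPJ_small}) plus the classification of $1$-character sheaves on $GL_k$ (Propositions \ref{prop:char_LJ}, \ref{prop:wz_to_w}) show these all come from representations of $W_{J_z}^z$, giving Theorem \ref{thm:local_system_parabolic}. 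Some input of this character-sheaf type (or a genuine substitute) is needed; the purely topological pull-back argument as written does not supply it.
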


The proof of Theorem \ref{thm:UwJ_intro} is done in Sections \ref{sec:monodromy} and \ref{sec:classification}.\par

One possible extension of the Stanley--Stembridge conjecture/Haiman's conjecture is the following.
\begin{conjecture}
The local systems $L_{z,w}^J$ of Equation \eqref{eq:decomposition_hw} are induced by permutations representations of $(S_n)_{J_z}^z$.
\end{conjecture}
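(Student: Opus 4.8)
The plan is to produce the open set $\U_{z,J}$ explicitly as the locus where the parabolic Lusztig variety behaves most generically, and then to compute the monodromy of the decomposition-theorem local systems on it by a covering-space argument. First I would give a concrete model for $\h_{z,J}(X)$ following Lusztig \cite{LusztigParabolicI}: a point of $\flag_J$ is a partial flag $(V_{i_1}\subset\cdots\subset V_{i_k})$, and the rank conditions defining $\h_{z,J}(X)$ cut out those partial flags whose (generic) relative position with respect to the standard flag attached to $X$ is governed by $z\in{}^JS_n$. Inside $\h_{z,J}$ I would take $\U_{z,J}$ to be the union, over regular semisimple $X$, of the stratum on which $X$ acts on each graded piece $V_{i_{m+1}}/V_{i_m}$ with the ``expected'' multiplicities, equivalently the locus where all the intersection inequalities $\dim XV_i\cap V_j\ge r_{i,j}(z)$ that do not follow from the others are equalities, and where the eigenvalues of $X$ that are ``shared'' between consecutive blocks are precisely those forced by the cyclic structure of $z$. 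This is the open dense stratum over which $f$ restricted to the preimage is a fiber bundle, so it is exactly where the local systems $L^J_{z,w}$ live as honest locally constant sheaves.

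The second step is to identify $\pi_1(\U_{z,J},(X,V_\bullet))$ and its natural map to $(S_n)^z_{J_z}$. Here the key point is that $J_z=\bigcap_n z^nJz^{-n}$ records exactly which boundaries of the partial flag are ``permuted among themselves'' by the cyclic dynamics of $z$: the blocks indexed by $J_z$ are the ones on which, generically, $X$ can have repeated eigenvalue patterns with a nontrivial residual symmetry, and that residual symmetry group is precisely $(S_n)^z_{J_z}$, which by Proposition \ref{prop:WJw_product} is a product of symmetric groups. Concretely, I would build a covering of $\U_{z,J}$ by choosing an ordering of the relevant eigenvalues/eigenspaces of $X$ within each $J_z$-block; monodromy around loops in $\U_{z,J}$ permutes these choices, and the braiding is constrained by the condition $zw=wz$ (the chosen orderings must be compatible with the $z$-action that glues the blocks cyclically). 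This gives the surjection $\pi_1(\U_{z,J})\to(S_n)^z_{J_z}$; I would check it is well-defined (independent of basepoint up to conjugacy) using connectedness of $\U_{z,J}$, which in turn follows because the regular semisimple matrices with a prescribed coincidence pattern of eigenvalues form a connected (indeed irreducible) set.

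The third and final step is to show the decomposition-theorem local systems $L^J_{z,w}$ factor through this map, i.e.\ are pulled back from representations of $(S_n)^z_{J_z}$. By semisimplicity of $f_*IC_{\h_w}$ and the fact that intermediate extensions are determined by their restriction to an open dense stratum, it suffices to analyze the monodromy on $\U_{z,J}$ of each simple summand. Over $\U_{z,J}$ the map $f$ is a fiber bundle whose fibers are (finite disjoint unions of) products of Hessenberg-type varieties for the smaller groups $(S_n)_{J_z}$; the $S_n$-equivariance of the whole picture, together with the identification of $\pi_1(\U_{z,J})$ above, forces the monodromy action on $IH^*$ of the fibers to be through $(S_n)^z_{J_z}$ only — there is no further monodromy because the ``non-$J_z$'' directions contribute no cycling of eigenvalues by the very definition of $J_z$. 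Assembling these over all summands $\alpha$ yields the claim. The main obstacle I expect is the second step: pinning down $\U_{z,J}$ so that it is simultaneously (a) open dense in $\h_{z,J}$, (b) contained in the smooth locus where $f$ is a genuine fiber bundle, and (c) has fundamental group surjecting precisely onto $(S_n)^z_{J_z}$ with no extra monodromy — getting all three at once requires a careful analysis of how the rank conditions $r_{i,j}(z)$ interact with the cyclic structure $z J_z z^{-1}=J_z$, and this is where Lusztig's explicit description of the parabolic varieties and Proposition \ref{prop:WJw_product} do the heavy lifting.
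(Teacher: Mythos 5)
Your proposal does not address the statement at hand. The statement is a \emph{conjecture}, not a theorem of the paper: it asserts that the local systems $L_{z,w}^J$ are induced by \emph{permutation} representations of $(S_n)_{J_z}^z$, and the paper explicitly leaves this open (when $J=\{1,\ldots,n-1\}$ it is equivalent to Haiman's $h$-positivity conjecture, hence to Stanley--Stembridge). What you outline is an argument for the weaker Theorem \ref{thm:UwJ_intro}, namely that the $L_{z,w}^J$ come from \emph{some} representation of $(S_n)_{J_z}^z$. Nothing in your three steps touches the actual content of the conjecture: knowing that the monodromy factors through $(S_n)_{J_z}^z$ says nothing about whether the resulting $(S_n)_{J_z}^z$-module admits a basis permuted by the group. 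Indeed the paper's own Example \ref{exa:ch_G24} shows how delicate this is: a local system induced by the regular (permutation) representation of $S_2$ already produces a non-$h$-positive character $\ch(IH^*(\Gr(2,4),L))$, so no soft equivariance or covering-space argument of the type you describe can decide the permutation-basis question; it would require genuinely new input (e.g.\ an explicit computation of the representations $\rho_{z,w}$, which the paper only conjectures to be permutation representations).

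Even as a sketch of Theorem \ref{thm:UwJ_intro}, your third step has a real gap. Semisimplicity of $f_*IC_{\h_w}$ and the determination of intermediate extensions by their restriction to an open stratum do not by themselves show that \emph{every} simple summand supported on $\h_{z,J}$ has monodromy factoring through your map $\pi_1(\U_{z,J})\to(S_n)_{J_z}^z$; a priori a summand could be the intermediate extension of a local system on $\U_{z,J}$ with monodromy not factoring through that quotient, and ``$S_n$-equivariance forces the monodromy through $(S_n)_{J_z}^z$ only'' is an assertion, not an argument. The paper closes exactly this gap in Section \ref{sec:classification} by invoking Lusztig's theory of parabolic character sheaves: the summands with full support on $\h_{z,J}$ are identified with $1$-character sheaves on the disconnected group $N_G(L_{J_z})$ (component $\dot z L_{J_z}$), and these are pinned down via the small map $G\times^BB\to G$, the fiber-product reductions of Propositions \ref{prop:PJLJ_bundle}, \ref{prop:toLJ_small}, \ref{prop:char_LJ} and \ref{prop:wz_to_w}, together with the explicit Galois cover $\frac{G\times(\dw T_1)^{rs}U^J}{\N_TU^J_w}\to\U_{z,J}$ with group $\N_{L_J}/\N_T=W_J^w$ constructed in Section \ref{sec:monodromy}. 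Your eigenvalue-ordering picture is a reasonable heuristic for that cover, but the classification step needs the character-sheaf machinery, and in any case none of this yields the permutation-representation claim that the conjecture actually makes.
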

When $J=\{1,\ldots, n-1\}$ the above conjecture is equivalent to Haiman's conjecture about $h$-positivity of $\ch(q^{\frac{\ell(w)}{2}}C'_w)$. In this case, we have that ${}^{J}S_n=\{e\}$ and $(S_n)_{J_e}^e=S_n$. Moreover, $\h_{e,J}=GL_n$. A potentially stronger conjecture still would ask for the local systems $L_{z,w}^{J',J}(F)$ in Equation \eqref{eq:decomposition_hzJ} to be induced by permutation representations whenever $F$ is induced by a permutation representation. Unfortunately, there are examples where $\ch(IH^*(\h_{z},L_{z,w}^J))$ is not $h$-positive.

\begin{Exa}
Consider $w=3412$ and $J=\{1,3\}$. Let $f\col \h_{w}(X)\to \Gr(2,4)=\h_{w,J}(X)$ be the natural map. The decomposition theorem implies (see Example \ref{exa:ch_G24} for explicit computations)
\[
IH^*(\h_{3412})=IH^*(\Gr(2,4), L)\oplus IH^*(\h_{1342,J})[-2] \oplus IH^*(\h_{3142,J}(X))[-2]\oplus IH^*(\h_{1234,J})[-4],
\]
where $L$ is the local system induced by the action of $S_2 = (S_4)_{J}^w$ on itself. Then we can compute the Frobenius character of each part and arrive at
\[
\ch(IH^*(\Gr(2,4), L)) = (q+q^2+q^3)h_{2,2}-(q+q^3)h_{3,1}+(1+2q+2q^2+2q^3+q^4)h_4.
\]
\end{Exa}
\medskip

One case where the group $(S_n)_{J_z}^z$ is easier to understand is when $J=\{n- k +1,\ldots, n-1\}$ for some positive integer $k$.  In this case, we can prove stronger results. This is done in  Section \ref{sec:J}. For instance, for each $z\in {}^JS_n$ we have that $J_z=\{n - k'+1,\ldots, n-1\}$ for some integer $k'=k'_J(z)\leq k$ and $(S_n)_{J_z}^z = (S_n)_{J_z} = S_1^{n-k'}\times S_{k'}$. Note that $k'$ can also be defined as the largest integer less than or equal to $k$ such that $z\in S_{n-k'}\times S_{1}^{k'}$. This means that whenever $k'>0$, $\h_{z,J}(X)$ is reducible and can be described as a union of irreducible components isomorphic to $\h_{\overline{z},\overline{J}}(\overline{X})\subset GL_{n-k'}/B_{n-k'}$ where $\overline{z}\in S_{n-k'}$ and $\overline{J} = \{n-k+1,\ldots, n-k'-1\}$.

For $z\in S_{n-k'}\times S_{1}^{k'}$ let $\overline{z}$ be the permutation in $S_{n-k'}$ that corresponds to $z$.

\begin{theorem}
\label{thm:chJnk1}
Let $J=\{n-k+1,\ldots, n-1\}$ be a subset of the set of simple transpositions of $S_n$ and let $w\in S_n$ be a permutation. Then we have that
\[
IH^*(\h_w)=\bigoplus_{z\in {}^JS_n}IH^*(\h_{z, J}, L_{z,w}),
\]
where $L_{z,w}$ is a local system on (an open set of) $\h_{z,J}$ induced by a representation $\rho_{z,w}$ of $S_{k'_J(z)}$. Moreover,
\[
\ch(IH^*(\h_{z,J}(X), L_{z,w})) =\ch(IH^*(\h_{\overline{z}, \overline{J}}(\overline{X})))\ch(\rho_{z,w}).
\]
\end{theorem}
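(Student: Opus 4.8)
The plan is to read off the statement about $\rho_{z,w}$ from Theorem~\ref{thm:UwJ_intro}, and to prove the character formula by recognizing $IH^*(\h_{z,J}(X),L_{z,w})$ as an \emph{induction product} built from $\h_{\overline z,\overline J}(\overline X)$ and $\rho_{z,w}$. The first assertion is immediate: by the description of $J_z$ recalled before the theorem, $(S_n)_{J_z}$ is the symmetric group on the last $k'=k'_J(z)$ letters, and since $z$ fixes each of those letters it centralizes $(S_n)_{J_z}$, so $(S_n)_{J_z}^z=(S_n)_{J_z}\cong S_{k'}$; Theorem~\ref{thm:UwJ_intro} then says that $L_{z,w}$ is induced by a representation $\rho_{z,w}$ of $S_{k'}$, and the decomposition displayed in the first line follows from \eqref{eq:decomposition_hw_rel}.

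For the character formula I would fix a regular semisimple $X$, with eigenbasis $e_1,\dots,e_n$, so that the $S_n$-action on $IH^*(\h_{z,J}(X),L_{z,w})$ is the monodromy of the family $\h_{z,J}$ over the regular semisimple locus. By Section~\ref{sec:J} the fibre $\h_{z,J}(X)$ is the union of its irreducible components $Z_A$, indexed by the $(n-k')$-subsets $A\subset\{1,\dots,n\}$ arising as supports of the ``$X$-invariant part'', with $Z_A\cong\h_{\overline z,\overline J}(\overline X_A)$ for $\overline X_A$ the restriction of $X$ to $\langle e_a:a\in A\rangle$; in particular $\h_{z,J}(X)$ is equidimensional. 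Since the intersection complex of a reducible equidimensional variety is the direct sum of the intersection complexes of its irreducible components, and since the dense open set $\U_{z,J}$ carrying $L_{z,w}$ meets every component, one gets $IH^*(\h_{z,J}(X),L_{z,w})=\bigoplus_A IH^*(Z_A,L_{z,w}|_{Z_A})$. Now, as $X$ varies, $\pi_1$ of the regular semisimple locus acts through $S_n$; because $\h_{z,J}$ is irreducible this action is transitive on $\{Z_A\}$, and by the description in Section~\ref{sec:J} the resulting $S_n$-set is $S_n/(S_{n-k'}\times S_{k'})$, the stabilizer of $Z_{A_0}$ (for $A_0=\{1,\dots,n-k'\}$) being $S_{A_0}\times S_{A_0^c}$. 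Restricting to this stabilizer, the factor $S_{A_0}$ permutes the eigenvalues of $\overline X_{A_0}$ and so acts on $Z_{A_0}\cong\h_{\overline z,\overline J}(\overline X)$ as the defining monodromy of the smaller Lusztig variety, whereas the factor $S_{A_0^c}\cong S_{k'}$ fixes $Z_{A_0}$ pointwise and acts on the restriction $L_{z,w}|_{Z_{A_0}}$ (which is trivial on a single fibre) through the monodromy map of Theorem~\ref{thm:UwJ_intro}, i.e.\ via $\rho_{z,w}$. Hence $IH^*(Z_{A_0},L_{z,w}|_{Z_{A_0}})\cong IH^*(\h_{\overline z,\overline J}(\overline X))\boxtimes\rho_{z,w}$ as $(S_{n-k'}\times S_{k'})$-modules, so
\[
IH^*(\h_{z,J}(X),L_{z,w})\cong\mathrm{Ind}_{S_{n-k'}\times S_{k'}}^{S_n}\bigl(IH^*(\h_{\overline z,\overline J}(\overline X))\boxtimes\rho_{z,w}\bigr),
\]
and taking graded Frobenius characters, under which induction from a Young subgroup is multiplication of symmetric functions, gives the asserted formula.

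The step I expect to be the main obstacle is this last identification of the monodromy: one must check that the representation $\rho_{z,w}$ of $(S_n)_{J_z}^z$ produced by Theorem~\ref{thm:UwJ_intro} is exactly the action of the $S_{k'}$-factor of the stabilizer on the coefficients of $L_{z,w}|_{Z_{A_0}}$, that the complementary $S_{n-k'}$-factor acts on $Z_{A_0}$ precisely as the defining monodromy of $\h_{\overline z,\overline J}$, and that there is neither cross-interaction between the two factors nor any monodromy of $L_{z,w}$ along loops internal to a single fibre $Z_{A_0}(X)$. I would handle this by base changing to the $S_n$-cover of the regular semisimple locus that orders the $n$ eigenlines, where each component $Z_A$ is globally defined and the isomorphisms $Z_A\cong\h_{\overline z,\overline J}(\overline X_A)$ hold in families, so that the statements about $\U_{z,J}$ and the map $\pi_1(\U_{z,J})\to(S_n)_{J_z}^z$ from Sections~\ref{sec:monodromy}--\ref{sec:classification} can be matched directly with the component decomposition of Section~\ref{sec:J}; one then descends back down.
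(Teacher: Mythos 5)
Your proposal is correct in substance, and its first half is exactly the paper's argument: the decomposition and the identification $(S_n)_{J_z}^z\cong S_{k'_J(z)}$ come from Theorem \ref{thm:local_system_parabolic} (i.e.\ Theorem \ref{thm:UwJ_intro}) together with Lemma \ref{lem:wJw=Sn-k}, Corollary \ref{cor:W_J^w_nk1} and the lemma showing $J_z=\{n-k'+1,\ldots,n-1\}$. For the character formula you take a genuinely different route: you work with an arbitrary $\rho_{z,w}$, decompose the fibre into the $\binom{n}{n-k'}$ pieces $Z_A$, let the monodromy of the family permute them with stabilizer $S_{n-k'}\times S_{k'}$, and identify the stabilizer action as $IH^*(\h_{\overline z,\overline J}(\overline X))\boxtimes\rho_{z,w}$, so the whole space is an induced module. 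The paper (Proposition \ref{prop:split_local_sytem}) instead reduces, by additivity in $\rho$, to the permutation representations $\rho=\mathrm{Ind}_{S_\lambda}^{S_{k'}}1$; for these, $IC_{\h_{z,J}}(L)$ is realized as the pushforward of $IC_{\h_{z,J_\lambda}}$ along the finite (hence small) forgetful map from the larger parabolic Lusztig variety remembering an $X$-invariant partial flag, whose fibre is literally a disjoint union of copies of $\h_{\overline z,\overline J}$, so the induced structure is immediate with no monodromy computation on coefficients. That device is engineered precisely to avoid the step you flag as the main obstacle (matching the $S_{k'}$-action on $L_{z,w}|_{Z_{A_0}}$ with $\rho_{z,w}$, the $S_{n-k'}$-action on $Z_{A_0}$ with the intrinsic monodromy, and triviality of $L_{z,w}$ along loops inside one $Z_{A_0}$); your plan of passing to the eigenline-ordering cover, using the description of $\pi_1(\U_{z,J})\to S_{k'}$ via the invariant subspace $V_{n-k'}$ from Section \ref{sec:J}, should carry it out, so your route buys uniformity in $\rho$ while the paper's buys a purely geometric argument with a genuinely disjoint decomposition at the cost of the linearity reduction.
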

 Changing $\h_w$ for a parabolic Lusztig variety $\h_{w,J}$ in the theorem above we get the immediate corollary.
\begin{corollary}
Let $J=\{n-k+1,\ldots, n-1\}$ and $J'=\{n-k'+1,\ldots, n-1\}$ be subsets of simple transpositions of $S_n$ with $k'>k$ and $w\in {}^JS_n$ a permutation. Also, let $f_{J,J'}\col GL_n\times \flag_J\to  GL_n\times \flag_{J'}$ be the forgetful map. Then
\[
(f_{J,J'})_*(IC_{\h_{w,J}})=\bigoplus_{z\in {}^{J'}S_n}IC_{\h_{z, J'}}(L_{z,w}^{J',J}),
\]
where $L_{z,w}^{J',J}$ is a local system on (a open set of) $\h_{z,J'}$ induced by a representation $\rho_{z,w}^{J',J}$ of $S_{k'_{J'}(z)}$. Moreover,
\[
\ch(IH^*(\h_{z,J'}(X), L_{z,w}^{J',J})) =\ch(IH^*(\h_{\overline{z}, \overline{J}}(\overline{X})))\ch(\rho_{z,w}^{J',J}).
\]
\end{corollary}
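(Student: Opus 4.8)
The plan is to observe that the statement is Theorem~\ref{thm:chJnk1} with the Lusztig variety $\h_w$ replaced by the parabolic Lusztig variety $\h_{w,J}$ and the forgetful map $GL_n\times\flag\ra GL_n\times\flag_J$ replaced by $f_{J,J'}$, and that the proof of Theorem~\ref{thm:chJnk1} never uses anything about $\h_w$ beyond its being a parabolic Lusztig variety fibered over $GL_n$. Since $w\in{}^JS_n$, the variety $\h_{w,J}$ is exactly such an object, so I would repeat the three steps of that proof. First, I would apply the relative (over $GL_n$) decomposition theorem of \cite{BBD} to $f_{J,J'}$ restricted to $\h_{w,J}$; Lusztig's description of the images of forgetful maps between partial flag varieties — the instance of \eqref{eq:decomposition_hzJ} with trivial coefficients, established exactly as \eqref{eq:decomposition_hw_rel} — identifies the supports of the summands with the parabolic Lusztig varieties $\h_{z,J'}$, $z\in{}^{J'}S_n$, yielding
\[
(f_{J,J'})_*\big(IC_{\h_{w,J}}\big)=\bigoplus_{z\in{}^{J'}S_n}IC_{\h_{z,J'}}\big(L_{z,w}^{J',J}\big)
\]
for suitable local systems $L_{z,w}^{J',J}$ on open subsets of $\h_{z,J'}$.

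Next, I would run the monodromy argument of Sections~\ref{sec:monodromy} and~\ref{sec:classification} (the proof of Theorem~\ref{thm:UwJ_intro}) for $f_{J,J'}$ in place of $f$: this produces the open set $\U_{z,J'}\subset\h_{z,J'}$ together with a map $\pi_1(\U_{z,J'})\ra(S_n)_{J'_z}^z$ and shows that $L_{z,w}^{J',J}$ is pulled back from a representation of $(S_n)_{J'_z}^z$. Here the hypothesis $J'=\{n-k'+1,\ldots,n-1\}$ enters precisely as in Section~\ref{sec:J}: for $J'$ of this shape and any $z\in{}^{J'}S_n$ one has $(S_n)_{J'_z}^z=(S_n)_{J'_z}\cong S_{k'_{J'}(z)}$, so $L_{z,w}^{J',J}$ is induced by a representation $\rho_{z,w}^{J',J}$ of $S_{k'_{J'}(z)}$.

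Finally, to get the character identity, write $m=k'_{J'}(z)$ and invoke the structural description of $\h_{z,J'}$ from Section~\ref{sec:J}: over the regular semisimple locus this variety is a union of irreducible components permuted transitively by $S_n$ with stabilizer a Young subgroup $S_{n-m}\times S_m$, each component isomorphic to $\h_{\overline z,\overline J}(\overline X)$ in the partial flag variety of $GL_{n-m}$ (with $\overline z$ and $\overline J$ attached to $z$ and $J'$ as in Theorem~\ref{thm:chJnk1}), and with the $S_m$-factor acting on the local system through $\rho_{z,w}^{J',J}$. Consequently, as graded $S_n$-modules,
\[
IH^*\big(\h_{z,J'}(X),L_{z,w}^{J',J}\big)\cong\operatorname{Ind}_{S_{n-m}\times S_m}^{S_n}\big(IH^*(\h_{\overline z,\overline J}(\overline X))\boxtimes\rho_{z,w}^{J',J}\big),
\]
and applying the Frobenius characteristic — which carries induction from a Young subgroup to multiplication of symmetric functions — gives $\ch(IH^*(\h_{z,J'}(X),L_{z,w}^{J',J}))=\ch(IH^*(\h_{\overline z,\overline J}(\overline X)))\,\ch(\rho_{z,w}^{J',J})$.

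The only step that is more than bookkeeping is the last one: I would have to check that the $S_n$-action on $IH^*(\h_{z,J'}(X),L_{z,w}^{J',J})$ arising from monodromy of the regular semisimple parameter $X$ is compatible with the decomposition of $\h_{z,J'}$ into components and restricts to exactly $\rho_{z,w}^{J',J}$, not a twist of it, on a single component. This, however, is a statement about the geometry of the parabolic Lusztig varieties $\h_{z,J'}$ alone — it is insensitive to whether the perverse sheaf being pushed forward is $IC_{\h_w}$ or $IC_{\h_{w,J}}$ — and is precisely what is verified in the proof of Theorem~\ref{thm:chJnk1}, so it transfers without change. All other ingredients (the first two steps) are direct appeals to \cite{BBD}, to \cite{LusztigParabolicI}, and to Theorem~\ref{thm:UwJ_intro} together with the group computation of Section~\ref{sec:J}.
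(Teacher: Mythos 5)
Your proposal is correct and is essentially the paper's own argument: the paper derives this corollary exactly by the substitution you describe, noting that the proof of Theorem \ref{thm:chJnk1} (via the classification of Theorem \ref{thm:local_system_parabolic}, the group computation $W_{J'_z}^z=S_{k'_{J'}(z)}$ of Section \ref{sec:J}, and the induced-module character identity of Proposition \ref{prop:split_local_sytem}) only uses that the source is a parabolic Lusztig variety pushed forward over $GL_n$, never that it is $\h_w$ itself. Your closing verification that the $S_n$-monodromy action is induced from $S_{n-m}\times S_m$ on a single component is precisely the content of Proposition \ref{prop:split_local_sytem}, so nothing further is needed.
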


Understanding the local systems $L_{z,w}^{J',J}$  when $k'=k+1$ is sufficient to completely characterize $\ch(\h_w(X))$.  We make the following conjecture, which implies Haiman's $h$-positivity conjecture as well as Stanley--Stembridge conjecture.

\begin{conjecture}
Let $J=\{n-k+1,\ldots, n-1\}$ and $J'=\{n-k,\ldots, n-1\}$. Consider $w\in {}^JS_n$ an irreducible permutation (a permutation that is not contained in any proper Young subgroup of $S_n$) and write
\[
IH^*(\h_{w,J})=\bigoplus_{z\in {}^{J'}S_n} IH^*(\h_{z, J'}, L_{z,w}^{J',J}).
\]
Then $L_{z,w}^{J',J}$ is induced by a permutation representation of $(S_n)_{J'_z}$.
\end{conjecture}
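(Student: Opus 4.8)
Since $J'=J\cup\{n-k\}$, the corollary to Theorem~\ref{thm:chJnk1} (applied with $k$ and $k+1$ playing the roles of its $k$ and $k'$) already supplies the stated shape of the decomposition: each $L_{z,w}^{J',J}$ is induced by a representation $\rho:=\rho_{z,w}^{J',J}$ of $S_m$ with $m:=k'_{J'}(z)\le k+1$, and $\ch(IH^*(\h_{z,J'}(X),L_{z,w}^{J',J}))=\ch(IH^*(\h_{\overline z,\overline J}(\overline X)))\,\ch(\rho)$. So the conjecture is equivalent to the assertion that every $\rho_{z,w}^{J',J}$ is a permutation representation of $S_m$ with Young-subgroup point stabilizers — equivalently, that $\ch(\rho)$ is a nonnegative integer combination of the $h_\lambda$, i.e.\ that $\ch(\rho)$ is genuinely $h$-positive (Schur-positivity would not suffice). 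Moreover, feeding this back into the character factorization and iterating along the chain of tail parabolics $\emptyset\subset\{n-1\}\subset\{n-2,n-1\}\subset\cdots\subset\{1,\dots,n-1\}$ — using the trivial product reduction for reducible permutations and the fact that $\overline J$ is again a tail parabolic of a smaller $GL_{n'}$ — recovers the $h$-positivity of $\ch(\h_w(X))$, hence Stanley--Stembridge and Haiman. The plan is therefore: (i) realize $\rho$ concretely as a monodromy representation, and (ii) prove $h$-positivity of $\ch(\rho)$ by exhibiting a permutation basis.

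\emph{Step (i): identify the monodromy.} The one-step forgetful map $f=f_{J,J'}$ forgets the largest subspace $V_{n-k}$ in a flag of $\flag_J$, so on the ambient varieties it is a $\mathbb{P}^{k}$-bundle, and $\rho$ is the monodromy representation of $\pi_1(\U_{z,J'})$ — which factors through $S_m=(S_n)^z_{J'_z}$ by Theorem~\ref{thm:UwJ_intro} — on the top nonvanishing piece of $f_*(IC_{\h_{w,J}})$ along $\U_{z,J'}$. Using the description of $\h_{z,J'}$ from Section~\ref{sec:J} as a union of copies of $\h_{\overline z,\overline J}(\overline X)$ permuted by the $m$ ``free'' eigenlines of $X$, one should be able to write the generic fiber of $f$ over $\U_{z,J'}$ as a fixed $S_m$-stable projective subvariety $\Phi\subseteq\mathbb{P}^{k}$ built out of linear subspaces, and $\rho$ as the $S_m$-module $IH^{\mathrm{top}}(\Phi,IC_{\h_{w,J}}|_\Phi)$. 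Step (i) thus amounts to making the arrangement $\Phi$ and the restricted $IC$-sheaf explicit.

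\emph{Step (ii): build the permutation basis.} To prove $\rho$ is a permutation module I would look for an $S_m$-stable affine paving of $\Phi$ (better: a Bia\l ynicki--Birula decomposition for a torus commuting with the $S_m$-action) whose cells are permuted by $S_m$ up to the monodromy; the classes of the cell closures then furnish the permutation basis, and their stabilizers are read off from the combinatorics of the paving. The hypothesis that $w$ is irreducible should be exactly what forces $m=k+1$ on the ``extremal'' summands and pins down $\Phi$ enough to carry this out; for $m\le k$ the contribution should be inherited, via the recursive structure of Section~\ref{sec:J}, from a strictly smaller $GL_{n'}$, so that only finitely many genuinely new cases (those with $m=k+1$) require direct treatment.

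\emph{Main obstacle.} Step (ii) is the crux, and it is essentially a one-step version of the Stanley--Stembridge problem itself: even once $\rho$ is identified as an explicit monodromy — equivalently $\ch(\rho)$ is identified with an explicit alternating sum, say via localization on $\Phi$ — there is no known general recipe for the permutation basis, and naive candidate pavings need not be $S_m$-equivariant. An alternative is to match $\ch(\rho)$ with a transition coefficient in the Grojnowski--Haiman hybrid basis of $\He$ and establish positivity there, which is the route the final section of the paper is designed to set up, but that positivity is likewise open. A secondary, more technical, point is to control the stabilizers of the permutation basis: to deduce Haiman's statement one needs Young-subgroup stabilizers, not merely some permutation action of $S_m$.
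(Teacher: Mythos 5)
You have not proved the statement, and neither does the paper: this is one of the paper's open conjectures (it is strictly stronger than, and designed to imply, Haiman's $h$-positivity conjecture and Stanley--Stembridge, via the recursion sketched after Theorem \ref{thm:chJnk1}). What your proposal establishes is only the part the paper already proves: by Theorem \ref{thm:chJnk1} and its corollary (equivalently Theorem \ref{thm:local_system_parabolic} together with Corollary \ref{cor:W_J^w_nk1} and Proposition \ref{prop:split_local_sytem}), each $L_{z,w}^{J',J}$ is induced by \emph{some} representation $\rho_{z,w}^{J',J}$ of $S_{k'_{J'}(z)}$, and the character factors as stated. The conjecture is precisely the additional claim that $\rho$ is a permutation representation, and for that your text supplies no argument: Step (i) is a plausible but unverified identification of $\rho$ as the monodromy on an explicit fiber arrangement $\Phi$ (the claim that the generic fiber over $\U_{z,J'}$ is an $S_m$-stable union of linear subspaces of $\mathbb{P}^k$ with the restricted $IC$-sheaf under control is asserted, not proved, and the relevant cohomology of $f_*(IC_{\h_{w,J}})$ along $\U_{z,J'}$ is in general not concentrated in a single degree), and Step (ii) — producing an $S_m$-equivariant paving whose cell closures give a permutation basis with Young-subgroup stabilizers — is exactly the open problem, as you yourself say in the ``Main obstacle'' paragraph. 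A plan whose decisive step is acknowledged to be unknown is a research outline, not a proof.

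Two further cautions if you pursue this outline. First, the role of irreducibility of $w$ is not what you suggest: nothing in the paper says irreducibility forces $m=k+1$ on the relevant summands, and the hypothesis is there because for reducible $w$ the analogous positivity can fail or factor differently; you would need to locate precisely where irreducibility enters. Second, $h$-positivity of $\ch(\rho)$ is necessary but not sufficient for the conjecture as stated at this level of generality — Example \ref{exa:ch_G24} shows that even permutation local systems can push forward to non-$h$-positive characters, so any argument must genuinely produce an equivariant basis (or an identification with a manifestly positive structure such as the Grojnowski--Haiman hybrid-basis coefficients of Section \ref{sec:open}), not merely a positive character computation.
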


When $J=\{2,\ldots, n-1\}$ and $X$ is a regular semisimple diagonal matrix, it is easier to characterize the varieties $\h_{z,J}(X)$. We have the following corollary.

\begin{corollary}
\label{cor:chHi}
Let $\Hi_i\subset \mathbb{P}^{n-1}$ be the union of all coordinate planes of codimension $i$. Then $IH(\h_w(X)) = \bigoplus_{i=0}^{n-1} IH^*(\Hi_i, L_{i,w})$, where $L_{i,w}$ is a local system on $\Hi_i$ induced by a representation $\rho_i$ of $S_i$. Moreover, $\ch(IH^*(\Hi_i,L_{i,w}))=[n-i]_qh_{n-i}\ch(\rho_i)$. In particular, if $L_{i,w}$ is induced by a permutation representation of $S_i$, then the same holds for $IH^*(\Hi_i,L_i)$.
\end{corollary}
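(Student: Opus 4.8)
The plan is to obtain Corollary~\ref{cor:chHi} as the case $J=\{2,\dots,n-1\}$ (that is, $k=n-1$) of Theorem~\ref{thm:chJnk1}, together with a concrete description of the varieties and cohomology that occur. First I would make ${}^JS_n$ explicit: for $J=\{2,\dots,n-1\}$ the condition $z^{-1}(j)<z^{-1}(j+1)$ for $j=2,\dots,n-1$ says that $z^{-1}$ is increasing on $\{2,\dots,n\}$, so $z$ is determined by $z^{-1}(1)$ and equals the cycle $(1\ 2\ \cdots\ z^{-1}(1))$; hence $|{}^JS_n|=n$, and $k'_J(z)=n-z^{-1}(1)$ is a bijection ${}^JS_n\to\{0,1,\dots,n-1\}$. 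Write $z_i$ for the element with $k'_J(z_i)=i$, i.e.\ the $(n-i)$-cycle $(1\ 2\ \cdots\ (n-i))$. Then $\overline{z_i}$ is the full cycle of $S_{n-i}$ and $\overline J=\{2,\dots,n-i-1\}$, so $GL_{n-i}/P_{\overline J}\cong\mathbb{P}^{n-i-1}$. Re-indexing the decomposition of Theorem~\ref{thm:chJnk1} by $i=k'_J(z)$ gives $IH^*(\h_w(X))=\bigoplus_{i=0}^{n-1}IH^*(\h_{z_i,J}(X),L_{i,w})$, with $L_{i,w}:=L_{z_i,w}$ induced by a (graded) representation $\rho_i:=\rho_{z_i,w}$ of $S_{k'_J(z_i)}=S_i$ (some of the $L_{i,w}$ may vanish).

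The main step is the identification $\h_{z_i,J}(X)=\Hi_i$ for $X$ a regular semisimple diagonal matrix. By the structural description of Section~\ref{sec:J} underlying Theorem~\ref{thm:chJnk1}, $\h_{z_i,J}(X)$ is a union of irreducible components, each isomorphic to $\h_{\overline{z_i},\overline J}(\overline X)$, and these components form a single $S_n$-orbit with stabilizer $S_{n-i}\times S_i$, so there are $\binom{n}{i}$ of them. Next I would show $\h_{\overline{z_i},\overline J}(\overline X)=\mathbb{P}^{n-i-1}$: since $\overline{z_i}$ is the full cycle, $\h_{\overline{z_i}}(\overline X)$ is the path Hessenberg variety $\{W_\bullet:\overline X W_j\subseteq W_{j+1},\ j=1,\dots,n-i-1\}$, which is projective, and by Proposition~\ref{prop:Gamma_J_injective} the parabolic variety $\h_{\overline{z_i},\overline J}(\overline X)$ is its image under $W_\bullet\mapsto W_1$; this image is closed and contains every $\overline X$-cyclic line $W_1$ (for such $W_1$ the flag $W_j=W_1+\overline X W_1+\dots+\overline X^{j-1}W_1$ is forced and valid), hence it is all of $\mathbb{P}^{n-i-1}$. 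Finally, translating a single component through the $S_n$-orbit, the component attached to a size-$(n-i)$ subset of the eigenvectors $e_1,\dots,e_n$ of $X$ is the projectivization of their span, and the union over all such subsets is exactly the union $\Hi_i$ of codimension-$i$ coordinate planes. (Equivalently, unwinding Definition~\ref{def:lusztig_parabolic}, $\h_{z_i,J}(X)=\{V_1:\dim\sum_{\ell\ge 0}X^\ell V_1\le n-i\}$, which for diagonal $X$ is $\Hi_i$.) Note in particular that $\h_{\overline{z_i},\overline J}(\overline X)=\mathbb{P}^{n-i-1}$ does not depend on the regular semisimple matrix $\overline X$, so the monodromy action of $S_{n-i}$ on $H^*(\mathbb{P}^{n-i-1})=IH^*(\mathbb{P}^{n-i-1})$ is trivial; as this space is one-dimensional in each even degree $0,2,\dots,2(n-i-1)$, we get $\ch(IH^*(\h_{\overline{z_i},\overline J}(\overline X)))=(1+q+\dots+q^{\,n-i-1})\,h_{n-i}=[n-i]_q\,h_{n-i}$.

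Combining these, the character formula of Theorem~\ref{thm:chJnk1} yields $\ch(IH^*(\Hi_i,L_{i,w}))=\ch(IH^*(\h_{\overline{z_i},\overline J}(\overline X)))\,\ch(\rho_i)=[n-i]_q\,h_{n-i}\,\ch(\rho_i)$ with $\rho_i$ a representation of $S_i$, which is the assertion of the corollary. For the last statement, if $\rho_i$ is a permutation representation of $S_i$ then $\ch(\rho_i)=\sum_{j\ge 0}q^{j}\sum_{\nu}c_{j,\nu}h_\nu$ with all $c_{j,\nu}\in\mathbb{Z}_{\ge 0}$ (the inner sum over partitions $\nu$ of $i$), so $\ch(IH^*(\Hi_i,L_{i,w}))=\sum_{j,\ell\ge 0}q^{\,j+\ell}\sum_{\nu}c_{j,\nu}\,h_{(n-i,\nu)}$ is a nonnegative combination of the $h_\lambda$, i.e.\ $IH^*(\Hi_i,L_{i,w})$ has a permutation basis with Young stabilizers; conceptually, $S_n$ permutes the $\binom{n}{i}$ components of $\Hi_i$ transitively with stabilizer $S_{n-i}\times S_i$, whence $IH^*(\Hi_i,L_{i,w})\cong\mathrm{Ind}_{S_{n-i}\times S_i}^{S_n}\bigl(H^*(\mathbb{P}^{n-i-1})\boxtimes\rho_i\bigr)$, and induction sends permutation modules to permutation modules. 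The one genuinely non-formal point, I expect, is the identification $\h_{z_i,J}(X)=\Hi_i$ in the middle step — pinning down which coordinate subspaces occur and checking that the full-cycle parabolic Lusztig variety fills all of $\mathbb{P}^{n-i-1}$; everything else is bookkeeping with induced characters and the cohomology of projective space.
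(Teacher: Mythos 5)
Your proposal is correct and follows essentially the paper's own route: specialize Theorem~\ref{thm:chJnk1} to $J=\{2,\dots,n-1\}$, identify $\h_{z_i,J}(X)$ with $\Hi_i$ exactly as in Example~\ref{exa:projective} (via $\h_{z_i,J}(X)=\{V_1:\dim\sum_{\ell\ge 0}X^{\ell}V_1\le n-i\}$ for diagonal regular semisimple $X$), and combine $\ch(IH^*(\mathbb{P}^{n-i-1}))=[n-i]_q h_{n-i}$ with the induction structure of Proposition~\ref{prop:split_local_sytem} to get the character formula and the permutation-module statement. The only quibble is a citation: the fact that the parabolic variety is the image of the path Hessenberg variety under forgetting rests on Proposition~\ref{prop:t_iso_t1} and Remark~\ref{rem:w_double} rather than on Proposition~\ref{prop:Gamma_J_injective}, but your alternative unwinding of Definition~\ref{def:lusztig_parabolic} already covers that step.
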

In a subsequent work we will give a combinatorial description of $\ch(\rho_i)$ when $w$ is a codominant permutation.

Our final result concerns the character of the cohomology with compact support of the open cell of the parabolic Lusztig varieties, in Section \ref{sec:open}. In the non parabolic case, if we change the closed variety $\h_{w}(X)$ for its open cell $\h_{w}(X)^\circ:= \{V_\bullet; \dim XV_i\cap V_j= r_{i,j}(w)\}$, and the intersection cohomology for the cohomology with compact support, it is known that  $\ch(H^*_c(\h_w(X)^\circ) = \ch(T_w)$ where $T_w$ is the usual basis of the Hecke algebra. We can prove a similar result for the open cells $\h_{w,J}(X)^\circ$. Surprisingly, the character $\ch(H^*_c(\h_{w,J}(X)^\circ,L))$ agrees (up to multiplication by a polynomial in $q$) with the character of an element of the \emph{hybrid basis} $\{C'_{z}T_w\}_{z\in (S_n)_J, w \in {}^JS_n}$of the Hecke algebra introduced by Grojnowski--Haiman in \cite{HaimanGronj}  in order to prove the Schur positivity of LLT polynomials.

\begin{theorem}
\label{thm:groj_haiman}
Let $J$ and $w\in {}^J(S_n)$ be such that $wJw^{-1}=J$. Moreover, let $J'\subset J$ such that $wJ'w^{-1}=J'$ and let $L_J'$ be the local system on $\U_{w,J}\subset \h_{w,J}$ induced by the induced representation $\ind_{(S_n)_{J'}^w}^{(S_n)_J^w}$. Then
   \[
   \ch(IH^*(Y_{w,J}^\circ(X), IC_{\h_{w,J}^\circ(X)}(L_{J'})))=\ch(q^{\ell(w_{J'})}C'_{w_{J'}}T_w)/|(S_n)_{J'}|,
   \]
   where $w_{J'}$ is the maximum element of $(S_n)_{J'}$ and $|(S_n)_{J'}|:=\sum_{z\in (S_n)_{J'}}q^{\ell(z)}$.

\end{theorem}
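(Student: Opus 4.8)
The plan is to reduce the statement to a purely Hecke-algebraic identity via the dictionary between geometry of (parabolic) Lusztig varieties and the Hecke algebra, then verify that identity by a direct computation in the hybrid basis. First I would recall, as in the non-parabolic case discussed in Section \ref{sec:open}, that $\ch(H^*_c(\h_{w,J}(X)^\circ)) = \ch(T_w)$ up to the normalizations fixed earlier; the key new input here is the decomposition \eqref{eq:decomposition_hzJ} together with Theorem \ref{thm:UwJ_intro}, which says that a local system $L_{J'}$ on the open cell $\U_{w,J}$ is pulled back from a representation of $(S_n)_{J_w}^w$, and that the representation in question is $\ind_{(S_n)_{J'}^w}^{(S_n)_J^w}$. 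Under the hypothesis $wJw^{-1}=J$ and $wJ'w^{-1}=J'$, Proposition \ref{prop:WJw_product} guarantees these fixed-point subgroups are products of symmetric groups, so the induced representation and its character are combinatorially explicit.

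The heart of the argument is to compute $\ch$ of the twisted intersection cohomology $IC_{\h_{w,J}^\circ(X)}(L_{J'})$ on the open cell and match it with $\ch(q^{\ell(w_{J'})}C'_{w_{J'}}T_w)/|(S_n)_{J'}|$. I would proceed as follows. Since we are on the smooth open cell $\h_{w,J}^\circ(X)$, the intersection complex $IC$ with coefficients in $L_{J'}$ is just (a shift of) $L_{J'}$ itself, so the cohomology in question is $H^*_c$ of $\h_{w,J}^\circ(X)$ with coefficients in a local system induced by a representation $\rho$ of a product of symmetric groups. The Leray/Grothendieck spectral sequence for the map $\h_{w,J}^\circ \to GL_n \times \flag_J$, combined with the fiber structure of the Bruhat-type cell decomposition of $\h_{w,J}^\circ(X)$, expresses this as a weighted sum over the cosets indexing the cell, where each cell contributes $T_w$-type terms weighted by the character values of $\rho$. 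Because $\rho = \ind_{(S_n)_{J'}^w}^{(S_n)_J^w}(\mathbf{1})$ is a permutation representation, its character counts fixed cosets, and the resulting sum over $(S_n)_{J'}$-cosets of $T_w$-terms reorganizes precisely into $\left(\sum_{z \in (S_n)_{J'}} q^{\ell(z)} C'_{z}\right) T_w$ after accounting for the length shift $q^{\ell(w_{J'})}$; using that $C'_{w_{J'}} = \sum_{z \le w_{J'}} P_{z,w_{J'}}(q)\, q^{-\ell(z)/2} \cdots$ reduces, for the full parabolic $(S_n)_{J'}$ which is itself a product of symmetric groups, to the well-known closed form $q^{\ell(w_{J'})} C'_{w_{J'}} = \sum_{z \in (S_n)_{J'}} q^{\ell(z)} T_z$ (the Kazhdan--Lusztig element of the longest element of a parabolic). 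The normalization by $|(S_n)_{J'}| = \sum_{z \in (S_n)_{J'}} q^{\ell(z)}$ is exactly the factor needed to pass from the induced-representation count to a single copy, i.e.\ it divides out the "size" of the fiber over which we averaged.

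Concretely, the key steps in order are: (1) identify $IC_{\h_{w,J}^\circ(X)}(L_{J'})$ with a shifted local system on the smooth cell and express $\ch(H^*_c)$ of it via the cell decomposition as $\sum_{\sigma} q^{\ell(\sigma)}\,\chi_\rho(\ldots)\,\ch(T_{w})$-type contributions; (2) use that $\rho$ is the permutation representation on cosets $(S_n)_{J}^w / (S_n)_{J'}^w$, or equivalently (after the product-of-symmetric-groups identification) induced from the trivial representation of $(S_n)_{J'}$, to rewrite the character sum as a sum over $(S_n)_{J'}$; (3) recognize the sum $\sum_{z\in(S_n)_{J'}} q^{\ell(z)} T_z$ as $q^{\ell(w_{J'})} C'_{w_{J'}}$, and the prefactor $|(S_n)_{J'}|$ as the Poincaré polynomial of $(S_n)_{J'}$ that appears from the averaging; (4) assemble the right-hand side $q^{\ell(w_{J'})} C'_{w_{J'}} T_w / |(S_n)_{J'}|$ and take $\ch$. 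The main obstacle I anticipate is step (1)--(2): making rigorous the identification of the twisted compactly-supported cohomology of the open parabolic Lusztig cell with the hybrid-basis element. This requires a careful Deligne--Lusztig / Hotta-style character computation for the action of $S_n$ (equivalently, a trace-of-Frobenius count over $\mathbb{F}_q$) on $\h_{w,J}^\circ$ with nontrivial coefficients — the monodromy computation of Section \ref{sec:monodromy} tells us the local system is the right one, but converting that into the precise Hecke-algebra element, and in particular pinning down the length shift $q^{\ell(w_{J'})}$ and the normalization, is where the real work lies. A secondary subtlety is checking that the open set $\U_{w,J}$ on which $L_{J'}$ is defined is large enough (complement of codimension $\ge 2$, or at least that the cohomology computation is unaffected) so that $H^*_c$ of the cell with this local system is well-defined and agrees with what the decomposition theorem produces.
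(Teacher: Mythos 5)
Your proposal does not actually prove the statement: the step you yourself flag as ``where the real work lies'' --- identifying $\ch$ of the $L_{J'}$-twisted cohomology of the open cell with a specific Hecke-algebra element --- is precisely the content of the theorem, and neither the Leray/cell-decomposition sketch nor the proposed Deligne--Lusztig/trace-of-Frobenius computation with nontrivial coefficients is carried out. The paper needs no such character computation. Its proof is a two-step geometric reduction: first, the local system $L_{J'}$ attached to $\ind_{W_{J'}^w}^{W_J^w}$ is realized as the pushforward of the \emph{constant} sheaf along the forgetful map $\h_{w,J'}^\circ\to\h_{w,J}^\circ$ (the same smallness/covering argument as in Proposition \ref{prop:YwYwJ_pushforward}), so that $\ch$ of the twisted cohomology equals $\ch$ of ordinary compactly supported cohomology of the finer cell $\h_{w,J'}^\circ$; this reduces everything to the case $J'=J$ with trivial coefficients. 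Second, pulling back along $h\colon G\times G/B\to G\times G/P_{J'}$ and using $wJ'w^{-1}=J'$ (via Proposition \ref{prop:PJB_bundle}), one has $h^{-1}(\h_{w,J'}^\circ)=\bigsqcup_{z\in W_{J'}}\h_{zw}^\circ$, a $P_{J'}/B$-bundle over $\h_{w,J'}^\circ$; each cell contributes $\ch(T_zT_w)$ by Proposition \ref{prop:ch_lusztig}, and dividing by the Poincar\'e polynomial of the fiber gives $\sum_{z\in W_{J'}}\ch(T_zT_w)\,/\,|W_{J'}|_q$, which is the right-hand side because $\sum_{z\in W_{J'}}T_z=q^{\ell(w_{J'})/2}C'_{w_{J'}}$.

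Beyond the missing core step, the algebra you do spell out is wrong in two places. The cells do not contribute ``$T_w$-terms weighted by character values of $\rho$'': the Hecke element varies with the cell ($T_{zw}=T_zT_w$ for $z\in W_{J'}$), and the permutation representation enters only through the first reduction, not as a pointwise character weight. And the ``well-known closed form'' you invoke, $q^{\ell(w_{J'})}C'_{w_{J'}}=\sum_{z\in (S_n)_{J'}}q^{\ell(z)}T_z$, is false --- already for $(S_n)_{J'}\cong S_2$ one has $\sum_z q^{\ell(z)}T_z=1+qT_s$ while $q^{1/2}C'_s=1+T_s$; the correct identity is $q^{\ell(w_{J'})/2}C'_{w_{J'}}=\sum_{z\in W_{J'}}T_z$ (no $q^{\ell(z)}$ weights), and the weights $q^{\ell(z)}$ belong instead to the normalizing factor $|(S_n)_{J'}|$, which arises as the Poincar\'e polynomial of the fiber $P_{J'}/B$, not from ``averaging the induced-representation count.'' Your concern about the size of $\U_{w,J}$ is also moot once one works with the intermediate extension and the small forgetful map, as the paper does.
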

We note that the condition $wJw^{-1}=J$ is not restrictive, as $\h_{w,J}(X)^\circ$ is isomorphic to $\h_{w,J_w}(X)^\circ$ (see \ref{prop:t_iso_t1}). Moreover, specializing to $q=1$ we can give a more explicit formula in terms of plethysm. Write $(S_n)_{J'}=S_{\lambda_1}\times S_{\lambda_2}\times \ldots \times S_{\lambda_\ell(\lambda)}$. We have that $w$ acts on $\{1,\ldots, \ell(\lambda)\}$ as a permutation $\sigma\in S_{\ell(\lambda)}$ and $\lambda_j=\lambda_{\sigma(j)}$.

\begin{theorem}
\label{thm:plethysm}
Write $\tau_1\ldots \tau_m$ for the cycle decomposition of $\sigma$,  then
\[
(\ch(C'_{w_{J'}}T_w)/|(S_n)_{J'}|)|_{q=1}=\prod_{j=1}^mp_{|\tau_j|}[h_{\lambda_{\tau_j}}].
\]
\end{theorem}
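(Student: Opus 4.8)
The plan is to start from Theorem \ref{thm:groj_haiman} specialized at $q=1$, which gives $\ch(C'_{w_{J'}}T_w)/|(S_n)_{J'}|$ as the Frobenius character of an $S_n$-representation obtained geometrically. At $q=1$ all the intersection-cohomology bookkeeping collapses to a virtual character computation, so the cleanest route is to identify the $S_n$-module $IH^*(Y_{w,J}^\circ(X), IC_{\h_{w,J}^\circ(X)}(L_{J'}))|_{q=1}$ directly as an induced module and then apply the known Frobenius-characteristic formula for induced modules from wreath-type subgroups. Concretely, I would first unwind the definition: $L_{J'}$ is the local system on $\U_{w,J}$ induced by $\ind_{(S_n)_{J'}^w}^{(S_n)_J^w}$, and at $q=1$ the cohomology of the open cell with this local system, as an $S_n$-module, should be $\ind_{H}^{S_n}$ of the representation of the stabilizer $H$ on which $w$ acts by the permutation $\sigma$ of the blocks. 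Here $H$ is the subgroup $S_{\lambda_1}\times\cdots\times S_{\lambda_{\ell(\lambda)}}$ extended by the cyclic group generated by $w$ (which permutes equal-sized blocks), i.e. a "twisted" product governed by $\sigma$.

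Next I would reduce to the case of a single cycle. Since $\sigma = \tau_1\cdots\tau_m$ is a product of disjoint cycles and the blocks, the subgroups, and the $w$-action all factor accordingly, both sides of the claimed identity are multiplicative over the cycles $\tau_j$: the left side because $C'_{w_{J'}}T_w$ factors as a product over the $\sigma$-orbits (the parabolic pieces in distinct orbits commute and live in disjoint sets of coordinates), and the right side manifestly. So it suffices to prove, for a single cycle $\tau$ of length $r$ permuting $r$ equal blocks each of size $d$ (so $\lambda_{\tau_j}$ is the constant partition $(d,\ldots,d)$ on those $r$ blocks), that the corresponding Frobenius character equals $p_r[h_d]$ — more generally, for a cycle on blocks of a common composition, $p_{|\tau|}[h_{\lambda_\tau}]$, where $h_{\lambda_\tau}$ denotes the complete homogeneous symmetric function indexed by the (necessarily $\tau$-invariant) block.

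Then I would invoke the classical formula: if $G$ acts on $r$ copies of something with Frobenius characteristic $f$ by cyclically permuting the copies via an $r$-cycle, the induced/"plethystic power" module has Frobenius characteristic $p_r[f]$. This is exactly the standard description of $p_r[f]$ as the characteristic of $\ind_{C_r \ltimes (\text{diagonal})}^{S_{rk}}$ of a one-dimensional-on-$C_r$ twist — see the wreath-product / plethysm dictionary (e.g. Macdonald's discussion of $p_n[f]$). Applying this with $f = h_{\lambda_\tau}$ gives the single-cycle case, and multiplicativity finishes the proof.

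The main obstacle is the first step: pinning down precisely which $S_n$-module the geometric object $IH^*(Y_{w,J}^\circ(X), IC(L_{J'}))$ becomes at $q=1$, including the correct twist by $\sigma$ on the repeated blocks. This requires carefully tracking how the monodromy map $\pi_1(\U_{w,J}) \to (S_n)_{J}^w$ of Theorem \ref{thm:UwJ_intro} interacts with the $S_n$-action on cohomology, and checking that inducing the local system $L_{J'}$ up to the open cell and then taking compactly-supported cohomology commutes (at $q=1$) with the corresponding induction of $S_n$-modules. Once the module is correctly identified, the passage to the plethystic formula is routine representation theory; I would also double-check the compatibility of the normalization by $|(S_n)_{J'}|$ with the $q\to 1$ specialization, since that factor is what turns $C'_{w_{J'}}$ into (the characteristic of) the trivial representation of $(S_n)_{J'}$, i.e. into the $h_{\lambda_j}$ factors.
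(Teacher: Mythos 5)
Your reduction over the cycles of $\sigma$ and the final plethystic step are sound and essentially coincide with the paper's argument, but the first step --- the one you yourself flag as the main obstacle --- is a genuine gap, and it is not merely incomplete but wrongly framed. The quantity being computed is in general only a \emph{virtual} character: already for $J'=J=\emptyset$ the left-hand side is $\ch(T_w)|_{q=1}=p_{\lambda(w)}$, and the right-hand side $\prod_j p_{|\tau_j|}[h_{\lambda_{\tau_j}}]$ is not Schur-positive in general (for $n=2$, $w=s_1$ it is $p_2[h_1]=p_2=s_2-s_{1,1}$). So it cannot be the Frobenius characteristic of an honest induced module $\operatorname{Ind}_H^{S_n}(\cdot)$ as you propose; the relevant object is the twisted (coset) trace, i.e.\ the trace of $w$ acting on an induced module, and the classical wreath-product fact you need is the twisted-trace form of the plethysm dictionary, not the formula for the characteristic of an induction from $C_r\ltimes(\cdots)$. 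In addition, the detour through Theorem \ref{thm:groj_haiman} buys nothing here: that theorem is itself proved by evaluating the geometric side as the Hecke-algebra expression, so to use it you would still have to identify the $q=1$ module independently --- which is precisely the step you leave open.

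The statement is purely algebraic and has a short direct proof, which is the one the paper gives: at $q=1$ one has $C'_{w_{J'}}=\sum_{z\in W_{J'}}T_z$ (all Kazhdan--Lusztig polynomials for the longest element of a parabolic equal $1$) and $\ch(T_u)|_{q=1}=p_{\lambda(u)}$, so the theorem reduces to
\[
\frac{1}{|W_{J'}|}\sum_{z\in W_{J'}}p_{\lambda(zw)}=\prod_{j=1}^{m}p_{|\tau_j|}\big[h_{\lambda_{\tau_j}}\big].
\]
This sum factors over the cycles of $\sigma$ (your multiplicativity step, which is correct), and in the single-cycle case, with $z=(z_1,\ldots,z_\ell)\in S_k^\ell$ and $w$ cycling the $\ell$ blocks, the cycle type of $zw$ is $\ell\cdot\lambda(z_1\cdots z_\ell)$; hence $p_{\lambda(zw)}=p_\ell[p_{\lambda(z_1\cdots z_\ell)}]$, and the identity follows from linearity of $f\mapsto p_\ell[f]$ together with $\frac{1}{k!}\sum_{z\in S_k}p_{\lambda(z)}=h_k$. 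If you replace your geometric first step by this elementary specialization (or, equivalently, state and prove the twisted-trace wreath/plethysm fact you allude to), your outline becomes a complete proof.
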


Finally, we can ask for combinatorial interpretations of $\ch(\h_{w,J}(X))$. When $w'$ is a codominant permutation that is maximal in $W_Jw'W_J$ and $w$ is the minimum element in $W_Jw'$, we find such a combinatorial interpretation of $\ch(\h_{w,J}(X))$, which is given by a modification, introduced by Gasharov, of the chromatic (quasi)symmetric function in terms of multicolorings, see Example \ref{exa:chromatic_gasharov}.  In section \ref{sec:further_directions} we collect some more questions about parabolic Lusztig varieties.

\section{Preliminaires}
In this section we review some results and fix basic notation for algebraic groups, perverse sheaves, and character sheaves, following \cite[Sections 2, 3 and 4]{AN_hecke}.

\subsection{Algebraic groups}
 We write $G$ for a connected reductive algebraic group, $T$ a maximal torus of $G$, $B$ a Borel subgroup of $G$, and $U$ the unipotent subgroup such that $B=TU$. Let $B^-$ and $U^-$ be the opposed groups of $B$ and $U$ respectively. Let $W$ be the Weyl group of $G$, and for each $w\in W$ we write $\dw$ for a representative of $w$ in $G$. We also denote by $S$ the set of simple transpositions of $W$.

  We denote by $\Delta$ (respectively, $\Phi$, respectively, $\Phi^+$) the set of simple roots (respectively, roots, respectively, positive roots) of $G$ with respect to $T\subset B$. For $J\subset S$, let $P_J$ denote the induced parabolic subgroup of $G$, $L_J$ the induced Levi subgroup, and $\Delta_J$ (respectively, $\Phi_J$) the set of simple roots (respectively, roots) of $L_J$ with respect with $T\subset B_J:=L_J\cap B$. We also let $U^J$ be the unipotent subgroup such that $P_J=L_JU^J$. For each $w\in W$, we define $U^w := U\cap \dw U^-\dw^{-1}$, $U_w:=U\cap \dw U\dw^{-1}$,  and given $J\subset S$, set $J_w:= \bigcap_n w^{n}Jw^{-n} $. If $J$ is such that $wJw^{-1}=J$, then conjugation by $w$ induces an automorphism of $W_J$. We denote by $W_J^w$ the fixed points of this automorphism, $W_J^w=\{z\in W_J; wz=zw\}$.

\subsection{Hecke Algebras in type A}

The Hecke algebra $H_n$ of the symmetric group $S_n$ is a $q$-deformation of the group algebra $\mathbb{C}[S_n]$. It is defined as the algebra over $\mathbb{C}(q^{\pm\frac{1}{2}})$ generated by $T_{s_1},\ldots, T_{s_{n-1}}$ such that
\begin{align*}
        T_{s_i}^2=&(q-1)T_{s_i}+q\\
        T_{s_{i}}T_{s_{i+1}}T_{s_i}=&T_{s_{i+1}}T_{s_i}T_{s_{i+1}}\\
        T_{s_i}T_{s_j}=&T_{s_j}T_{s_i}\quad \quad\quad\text{ if } |i-j|>1.
    \end{align*}
If $w=\prod_{j=1}^{\ell(w)} s_{i_j}$ is a reduced expression for $w\in S_n$, we define $T_w=\prod_{j=1}^{\ell(w)}T_{s_{i_j}}$. These $T_w$ form a basis for $H_n$ as a $\mathbb{C}(q^{\pm\frac{1}{2}})$ vector space, which has an involution $\iota\col H_n\to H_n$ given by  $\iota(T_w)=T_{w^{-1}}^{-1}$ and $\iota(q^{\frac{1}{2}})=q^{-\frac{1}{2}}$. The Kazhdan--Lusztig basis $\{C'_w\}_{w\in S_n}$ of $H_n$ is defined by the following properties:
\begin{enumerate}
    \item $\iota(C'_w)=C'_w$,
    \item there exists polynomials $P_{z,w}(q)$, for each $z,w\in S_n$, such that:
\begin{enumerate}
    \item the degree of $P_{z,w}(q)$ is at most $\frac{\ell(w)-\ell(z)-1}{2}$ if $z< w$ (in the Bruhat order),
    \item if $z=w$ we have that $P_{w,w}(q)=1$,
    \item if $z\not \leq w$, then $P_{z,w}(q)=0$,
    \item we have an equality $q^{\frac{\ell(w)}{2}}C'_w=\sum_{z\leq w} P_{z,w}(q)T_z$.
\end{enumerate}
\end{enumerate}
    The polynomials $P_{z,w}$ are the so-called Kazhdan--Lusztig polynomials.

\subsection{Perverse sheaves}
In this brief section we collect some results about perverse sheaves and small maps needed in the sequel, first fixing some notation. For a variety $X$, denote by $\mathbb{C}_X$ the trivial sheaf on $X$ with stalk $\mathbb{C}$. If $L$ is a local system on a open set $U$ of $X$, denote by $IC_X(L)$ the intermediate extension of $L$ to $X$, also called the intersection cohomology sheaf with coefficients on $L$. We begin with the following result about summands of perverse sheaves and smooth maps.

\begin{proposition}[{\cite[4.2.5 and 4.2.6]{BBD}}]
 \label{prop:smooth_pullback_perverse}
 If $f\col \X\to \mS$ is a smooth map of relative dimension $d$ and $\F$ is a simple perverse sheaf on $\mS$ then $f^*(\F)[d]$ is a simple perverse sheaf on $\X$. In particular, a simple perverse sheaf $\F$ on $\mS$ is a summand of a complex $\F'$ if and only if $f^*(\F)[d]$ is a summand of $f^*(\F')$.
 \end{proposition}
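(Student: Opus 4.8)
The plan is to establish the two assertions of the proposition separately: first that $f^{*}(-)[d]$ is $t$-exact for the perverse $t$-structures and sends simple perverse sheaves to simple perverse sheaves, and then that this functor reflects direct summands (it obviously preserves them). Throughout I would read ``smooth map'' as ``smooth surjection with nonempty connected geometric fibres''; this is the only case used in the paper (the maps $f$ occurring later are locally trivial fibrations whose fibre is a flag variety of a Levi subgroup), and the connectedness is genuinely needed — for an étale double cover the pullback of a simple perverse sheaf need not be simple and summands are not reflected.

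For $t$-exactness I would first note that a smooth morphism is flat, so $f^{*}$ is exact for the ordinary $t$-structures and $\mathcal{H}^{i}(f^{*}\F[d])=f^{*}\mathcal{H}^{i+d}(\F)$. If $\F$ satisfies the perverse support condition then $\dim\Supp\mathcal{H}^{i+d}(\F)\le-(i+d)$, and since $f$ has $d$-dimensional fibres, $\dim f^{-1}\big(\Supp\mathcal{H}^{i+d}(\F)\big)\le-i$; hence $f^{*}\F[d]$ satisfies the support condition. For the cosupport condition I would dualize: from $\mathbb{D}_{\X}\circ f^{*}=f^{!}\circ\mathbb{D}_{\mS}$ together with the purity isomorphism $f^{!}\cong f^{*}[2d]$ for a smooth morphism of relative dimension $d$, one gets $\mathbb{D}_{\X}(f^{*}\F[d])\cong f^{*}(\mathbb{D}_{\mS}\F)[d]$, so the support estimate applied to $\mathbb{D}_{\mS}\F$ gives the cosupport condition for $f^{*}\F[d]$. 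Thus $f^{*}(-)[d]$ restricts to an exact functor between the hearts. To see it preserves simplicity, write $\F=IC_{\mS}(L)$ with $L$ an irreducible local system on a smooth irreducible locally closed $j\colon Z\hookrightarrow\mS$, so $\Supp\F=\overline{Z}$. Then $\widetilde{Z}:=f^{-1}(Z)$ is smooth and irreducible (smooth with connected fibres over an irreducible base), of dimension $\dim Z+d$, and open dense in $f^{-1}(\overline{Z})=\Supp(f^{*}\F)$. Since $f^{*}(-)[d]$ is exact and commutes with $j_{!}$ and with $j_{*}$ (the latter by smooth base change), and the intermediate extension is the image of $j_{!}(L[\dim Z])\to j_{*}(L[\dim Z])$ in perverse sheaves, we obtain $f^{*}\F[d]\cong IC_{\X}(f^{*}L)$; and $f^{*}L$ is again irreducible, because $\pi_{1}(\widetilde{Z})\to\pi_{1}(Z)$ is surjective (connected fibres), so $f^{*}L$ has the same monodromy image as $L$. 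Hence $f^{*}\F[d]$ is simple.

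For the summand statement the forward implication is immediate from additivity of $f^{*}$, so only the converse needs work. The key object is $R:=f_{*}\mathbb{C}_{\X}$ (derived pushforward): it is a ring object in $D^{b}_{c}(\mS)$, concentrated in non-negative degrees, with $\mathcal{H}^{0}(R)=R^{0}f_{*}\mathbb{C}_{\X}=\mathbb{C}_{\mS}$ because the fibres are connected; therefore the truncation $\rho\colon R\to\tau_{\le0}R=\mathbb{C}_{\mS}$ is a ring map splitting the unit $\eta\colon\mathbb{C}_{\mS}\to R$, in particular $\rho\circ m_{R}=\rho\otimes\rho$. By the projection formula $f_{*}f^{*}\mathcal{G}\cong\mathcal{G}\otimes^{L}R$, $R$-linearly, for every $\mathcal{G}\in D^{b}_{c}(\mS)$, and so, by adjunction and the free–forgetful adjunction for $R$-modules, $\operatorname{Hom}_{D(\X)}(f^{*}\mathcal{A},f^{*}\mathcal{G})\cong\operatorname{Hom}_{D(\mS)}(\mathcal{A},\mathcal{G}\otimes^{L}R)$ compatibly with composition, the composition on the right being built from $m_{R}$. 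Now suppose $f^{*}\mathcal{A}$ is a summand of $f^{*}\mathcal{G}$, witnessed by $\widetilde{\beta}\widetilde{\alpha}=\mathrm{id}_{f^{*}\mathcal{A}}$; transport $\widetilde{\alpha},\widetilde{\beta}$ to maps on $\mS$ valued in $-\otimes^{L}R$ and post-compose with $\rho$ to obtain $\alpha_{0}\colon\mathcal{A}\to\mathcal{G}$ and $\beta_{0}\colon\mathcal{G}\to\mathcal{A}$. Because $\rho\circ m_{R}=\rho\otimes\rho$, applying $\rho$ is compatible with composition, and since $\rho$ sends the class of $\mathrm{id}_{f^{*}\mathcal{A}}$ (which is $\mathrm{id}_{\mathcal{A}}\otimes\eta$) to $\mathrm{id}_{\mathcal{A}}$, we conclude $\beta_{0}\alpha_{0}=\mathrm{id}_{\mathcal{A}}$, i.e.\ $\mathcal{A}$ is a summand of $\mathcal{G}$. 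Taking $\mathcal{A}=\F[d]$ and $\mathcal{G}=\F'[d]$ gives the statement, the shift $[d]$ being merely the normalization that makes $f^{*}(-)[d]$ land in perverse sheaves.

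I expect the two steps needing real care to be the commutation of the intermediate extension with the smooth pullback $f^{*}(-)[d]$ in the second paragraph — where exactness together with the base-change isomorphisms for $j_{!}$ and $j_{*}$ does the work — and the compatibility of the retraction $\rho$ with composition of morphisms in the third; everything else is bookkeeping with the perverse $t$-structure and dimension counts. All of this is contained in \cite[\S 4.2, in particular 4.2.4--4.2.6]{BBD}.
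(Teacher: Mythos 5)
The paper offers no argument of its own here: the proposition is quoted verbatim from \cite[4.2.5, 4.2.6]{BBD}, so your proposal is measured against BBD's proof. Your first two paragraphs are essentially that proof and are correct: the support estimate plus $f^{!}\cong f^{*}[2d]$ gives $t$-exactness of $f^{*}(-)[d]$; exactness together with $f^{*}j_{!}\cong \tilde{j}_{!}\tilde{f}^{*}$ and smooth base change for $Rj_{*}$ gives $f^{*}IC_{\mS}(L)[d]\cong IC_{\X}(f^{*}L)$; and $\pi_{1}$-surjectivity (connected fibres) gives irreducibility of $f^{*}L$. You are also right that surjectivity and connectedness of the fibres must be read into the statement — exactly the hypotheses of BBD 4.2.6 — and that these hold for every map to which the paper applies the proposition.

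The third paragraph, however, has a genuine gap: the identification $Rf_{*}f^{*}\mathcal{G}\cong\mathcal{G}\otimes^{L}Rf_{*}\mathbb{C}_{\X}$ is \emph{false} for a general non-proper smooth surjection with connected fibres. Take $f\colon\mathbb{A}^{2}\setminus\{0\}\to\mathbb{A}^{1}$, $(x,y)\mapsto x$ (smooth, surjective, all fibres connected of dimension $1$), and $\mathcal{G}$ the skyscraper at $0$: the stalk of $Rf_{*}f^{*}\mathcal{G}$ at $0$ is $H^{*}(\mathbb{C}^{*})$, concentrated in degrees $0,1$, while the stalk of $\mathcal{G}\otimes Rf_{*}\mathbb{C}$ is $H^{*}$ of a punctured $4$-ball, concentrated in degrees $0,3$. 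The projection formula for $Rf_{*}$ needs $f$ proper, or $\mathcal{G}$ locally constant of finite rank, or $f$ Zariski-locally trivial (Künneth); so your monadic transport of the retraction $\widetilde{\beta}\widetilde{\alpha}=\mathrm{id}$ collapses in the generality you work in. (It would survive in the paper's actual uses, where $f$ is either a proper $P_{J}/B$-bundle or a torsor under the special group $L_{J}U^{J}_{w}$, hence Zariski-locally trivial — but then you must say so and prove local triviality.) The standard repair, and in effect what BBD 4.2.6 provides, avoids the projection formula altogether: first reduce to $\F'$ perverse, since ${}^{p}\mathcal{H}^{0}$ is functorial and fixes perverse objects, so a perverse summand of any complex is a summand of its ${}^{p}\mathcal{H}^{0}$ (using the $t$-exactness of $f^{*}(-)[d]$ to commute this reduction with pullback); then invoke full faithfulness of $f^{*}(-)[d]$ on perverse sheaves for smooth surjective $f$ with connected fibres (BBD 4.2.6.1) — a statement you never establish and which your $R$-module argument was implicitly trying to reprove — to descend the section and retraction, and faithfulness to conclude $\beta_{0}\alpha_{0}=\mathrm{id}_{\F}$. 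Alternatively, in the semisimple situations where the paper uses the proposition, Krull--Schmidt plus your identification $f^{*}IC(L)[d]\cong IC(f^{*}L)$ and the full faithfulness of pulling back local systems along a $\pi_{1}$-surjection suffice.
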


Now, we  turn our attention to small maps. A proper morphism $f\col X\to Y$ with $X$ smooth  is called  \emph{small} if, for every $i\geq 0$, we have that
\[
\dim \big( \{y\in Y; \dim(f^{-1}(y))\geq i\}\big ) < \dim(Y) -2i.
\]
We let $U$ be a open set of $Y$ where the restriction  $g:=f|_{f^{-1}(U)}\col f^{-1}(U)\to U$ is smooth, in particular, $g$ is an unramified covering. We let $L$ be the local system $g_*(\mathbb{C}_{f^{-1}(U)})$. We have the following result due to Borho--Macpherson.

\begin{proposition}[{\cite{BorhoMacpherson}}]
\label{prop:small_local_system}
  If $f\col X\to Y$ is small, and $U\subset Y$ is an open subset such that $g:=f|_{f^{-1}(U)}\col f^{-1}(U)\to U$ is smooth,  then $f_*(\mathbb{C}_X)=IC_Y(L)$.
\end{proposition}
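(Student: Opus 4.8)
The plan is to prove the statement directly, by showing that the shifted pushforward $K := f_*(\mathbb{C}_X)[n]$, where $n=\dim X$, is a perverse sheaf on $Y$ which restricts to $L[n]$ over $U$ and which satisfies the \emph{strict} support and cosupport conditions; the proposition will then follow from the uniqueness characterisation of the intermediate extension $IC_Y(L)=j_{!*}(L[n])$ for $j\colon U\hookrightarrow Y$. I would deliberately avoid invoking the decomposition theorem here: this proposition is, on the contrary, one of the standard tools for \emph{establishing} such decompositions.

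First I would make the harmless reductions. We may assume $Y=f(X)$ is irreducible of dimension $n$ (arguing componentwise otherwise), and since $f$ is small it is in particular generically finite, so $\dim X=\dim Y=n$. Over $U$ the map $g=f|_{f^{-1}(U)}$ is proper and smooth with finite generic fibre, hence finite \'etale, so $L=g_*(\mathbb{C}_{f^{-1}(U)})$ genuinely is a local system (of rank $\deg f$) and $K|_U\cong L[n]$. Since $X$ is smooth, $\mathbb{C}_X[n]$ is perverse and Verdier self-dual, and since $f$ is proper we have $f_*=f_!$, which commutes with Verdier duality; hence $\mathbb{D}K\cong f_*(\mathbb{D}(\mathbb{C}_X[n]))\cong K$. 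This self-duality is what reduces the verification of perversity, and of the strict cosupport condition, to the single support estimate $\dim\Supp\mathcal{H}^j(K)\leq -j$ for all $j$, with strict inequality for $j>-n$.

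That support estimate is the one step with any content, and it comes from combining proper base change with smallness. By proper base change the stalk of $\mathcal{H}^j(K)=R^{j+n}f_*\mathbb{C}_X$ at a point $y$ is $H^{j+n}(f^{-1}(y);\mathbb{C})$, which vanishes unless $0\leq j+n\leq 2\dim f^{-1}(y)$; hence $\Supp\mathcal{H}^j(K)$ is contained in the closed set $Y_i=\{y\in Y\colon \dim f^{-1}(y)\geq i\}$ with $i=\lceil (j+n)/2\rceil$ (closed by upper semicontinuity of fibre dimension). For $j=-n$ this is just $Y_0=Y$, of dimension $n=-(-n)$; for $j>-n$ one has $i\geq 1$, and the smallness inequality gives $\dim Y_i<n-2i\leq n-(j+n)=-j$. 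Thus $K\in{}^pD^{\leq 0}$, and by self-duality $K\in{}^pD^{\geq 0}$ as well, so $K$ is perverse and satisfies the strict support and (by self-duality) cosupport conditions for $j>-n$.

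To conclude I would invoke the characterisation of the intermediate extension (as in \cite{BBD}): a perverse sheaf on $Y$ whose restriction to the dense open $U$ is $L[n]$ and which satisfies $\dim\Supp\mathcal{H}^j(-)<-j$ and $\dim\Supp\mathcal{H}^j(\mathbb{D}(-))<-j$ for all $j>-n$ must equal $j_{!*}(L[n])$. Applying this to $K$ gives $K=IC_Y(L)[n]$, i.e. $f_*(\mathbb{C}_X)=IC_Y(L)$. I expect no serious obstacle: beyond the support estimate everything is formal manipulation in the derived category, so the only real care needed is bookkeeping — keeping the shift $[n]$ and the perversity conventions consistent, and applying the small-map inequality with the correct index $i=\lceil (j+n)/2\rceil$.
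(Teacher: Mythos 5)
The paper does not prove this proposition at all: it is quoted from Borho--MacPherson (Proposition \ref{prop:small_local_system} is a citation), so there is no internal argument to compare with. Your proof is correct and is essentially the standard proof of the cited result: the only substantive input is the stalk computation $\mathcal{H}^j(f_*\mathbb{C}_X[n])_y\cong H^{j+n}(f^{-1}(y))$ via proper base change, which combined with the smallness inequality at $i=\lceil (j+n)/2\rceil\geq 1$ yields the strict support bound for $j>-n$; self-duality of $f_*(\mathbb{C}_X[n])$ (proper pushforward of the self-dual $\mathbb{C}_X[n]$) converts this into the strict cosupport bound, and the support/cosupport characterization of $j_{!*}(L[n])$ from \cite{BBD} finishes the argument. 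Two bookkeeping remarks, both consistent with the conventions you implicitly adopt. First, the paper's displayed definition of small states the inequality for all $i\geq 0$, which is vacuous at $i=0$; the intended (standard) range is $i\geq 1$, which is exactly the range your argument uses. Second, your claims that $\mathbb{C}_X[n]$ is perverse and self-dual and that $f$ is generically finite require $X$ to be equidimensional with every component dominating $Y$: a map such as $\mathbb{P}^3\sqcup\mathbb{P}^1\to\mathbb{P}^3$, with $\mathbb{P}^1$ contracted to a point, satisfies the fibre-dimension inequalities for $i\geq 1$ yet violates the conclusion, so the ``componentwise'' reduction should be understood under that implicit hypothesis, which holds in all of the paper's applications.
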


An important example of a small map is the natural map $G\times^BB\to G$. As usual, $G\times^BB$ is the quotient $\frac{G\times B}{B}$, where $B$ acts on $G\times B$ as $b\cdot(g, b_0)=(gb^{-1},bb_0b^{-1})$. And the map is the projection onto the first factor. Lusztig proved:

\begin{proposition}[{\cite[Proposition 4.5]{LusztigIntersectionCohomology}}]
\label{prop:Grothe_Springer_Small}
    The natural map $\frac{G\times B}{B}\to G$ is small.
\end{proposition}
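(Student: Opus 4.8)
The statement to prove is Lusztig's result that the Grothendieck–Springer-type map $\pi\colon G\times^B B \to G$, $[g,b_0]\mapsto gb_0g^{-1}$ (projection to the first factor under the identification with the incidence variety $\{(g B, x)\in G/B\times G : g^{-1}xg\in B\}$), is small in the sense defined above. The plan is to reduce the smallness inequality to a dimension count on the fibers stratified by the conjugacy-type of the target element, and then to carry out that count using the structure theory of centralizers and Borel subgroups.

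First I would set up the right model: identify $G\times^B B$ with $\widetilde{G}:=\{(x,B')\in G\times \flag : x\in B'\}$, where $\flag=G/B$ is the flag variety, via $[g,b_0]\mapsto (gb_0g^{-1}, gBg^{-1})$; this is an isomorphism, $\widetilde G$ is smooth of dimension $\dim G$ (it fibers over $\flag$ with fibers isomorphic to the Borel subalgebra/subgroup, i.e. of dimension $\dim B$, so $\dim \widetilde G = \dim\flag + \dim B=\dim G$), and $\pi$ is proper since $\flag$ is projective. The fiber $\pi^{-1}(x)$ is the set of Borel subgroups containing $x$, i.e. (after passing to the unipotent part, since semisimple parts sit in tori hence in lots of Borels, or more carefully using the Jordan decomposition) it is controlled by the Springer fiber of the unipotent part $x_u$ inside the centralizer $G_{x_s}$ of the semisimple part. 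The key input is Steinberg's dimension formula: for the variety $\flag^x$ of Borels containing $x$, one has $\dim\flag^x = \tfrac12(\dim G_x - \mathrm{rk}\,G)$ where $G_x$ is the centralizer of $x$. Over the open dense locus of regular semisimple $x$, $G_x$ is a maximal torus, so $\dim\flag^x=0$ and $\pi$ is an unramified covering there — this identifies the open set $U$ and the local system $L$ of the proposition (a Galois covering with group $W$).

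Next I would verify the smallness inequality $\dim\{x\in G:\dim\pi^{-1}(x)\ge i\}<\dim G-2i$ for $i\ge 1$. Stratify $G$ by the pieces $G_{(i)}:=\{x: \dim\flag^x = i\}$; by Steinberg's formula $\dim\flag^x=i$ means $\dim G_x=\mathrm{rk}\,G+2i$. It then remains to bound $\dim G_{(\ge i)}$. Decomposing $x=x_sx_u$ and fibering the locus of $x$ with fixed semisimple part $x_s$ (with $G_{x_s}$ of a given type) over the class of $x_s$, one reduces to: the union of conjugacy classes $\mathcal O$ in $G$ with $\dim G_x = \dim G - \dim\mathcal O = \mathrm{rk}\,G + 2i$ has dimension at most $\dim G - 2i$ — indeed equality would force $\dim\mathcal O = 2i$ too large; concretely $\dim\{x:\dim G_x\ge \mathrm{rk}\,G+2i\}$ is a union of conjugacy classes each of dimension $\le \dim G - \mathrm{rk}\,G - 2i$, and the union over such classes (parametrized by finitely many types plus a central torus direction of dimension $\mathrm{rk}\,G$) has dimension $\le (\dim G - \mathrm{rk}\,G - 2i) + \mathrm{rk}\,G = \dim G - 2i$. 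The inequality is then strict because for $i\ge 1$ there is no single conjugacy class of the full borderline dimension that also achieves the full torus direction — one always loses at least one more dimension, e.g. because a regular unipotent in a Levi of positive semisimple rank has strictly smaller centralizer than the bound, or more cleanly because the closure relations force a drop. I would make this last strictness point precise by citing Steinberg's analysis of the "subregular" stratification, or by the direct observation that the only stratum of dimension exactly $\dim G$ is the regular one (where $i=0$), and every other stratum has codimension $\ge 2i+1$ in $G$.

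**Main obstacle.** The genuinely delicate point is the \emph{strict} inequality rather than $\le$: getting $\dim G_{(\ge i)}\le \dim G - 2i$ is a soft centralizer-dimension count, but upgrading "$\le$" to "$<$" requires knowing that the borderline configuration (a stratum of codimension exactly $2i$) cannot occur, which is exactly where one needs either Steinberg's theorem on the codimension of the singular/subregular loci or a careful case analysis of unipotent classes in Levi subgroups. Given the paper's framing, I would simply invoke Lusztig's original argument in \cite[Proposition 4.5]{LusztigIntersectionCohomology} for this step, since it is precisely the content being cited, and present the above as the conceptual skeleton: reduce to Steinberg's dimension formula for $\flag^x$, stratify by centralizer type, and observe that only the regular semisimple stratum is dense while all deeper strata drop by codimension strictly exceeding $2i$, which is the definition of smallness. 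The payoff, recorded via Proposition \ref{prop:small_local_system}, is $\pi_*(\mathbb{C}_{\widetilde G}) = IC_G(L)$ with $L$ the rank-$|W|$ local system on the regular semisimple locus coming from the $W$-covering — the starting point for everything that follows.
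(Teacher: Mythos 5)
The paper offers no argument for this statement at all: it is quoted directly from Lusztig (\cite[Proposition 4.5]{LusztigIntersectionCohomology}), so there is no internal proof to compare yours against. Your outline is the standard proof and is sound: identify $\frac{G\times B}{B}$ with $\{(x,B')\colon x\in B'\}$ (so the map in question is $(x,gB)\mapsto x$ on $\{(x,gB)\colon g^{-1}xg\in B\}$, exactly your $[g,b_0]\mapsto gb_0g^{-1}$), note properness and smoothness of the source, identify the fiber over $x$ with the variety $\mathcal{B}^x$ of Borel subgroups containing $x$, invoke Steinberg's formula $\dim\mathcal{B}^x=\tfrac12(\dim Z_G(x)-\rk G)$, and estimate the strata $\{x\colon \dim\mathcal{B}^x\ge i\}$.

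The only substantive remark is that the step you flag as the ``main obstacle'' and defer back to Lusztig --- upgrading $\le$ to $<$ --- is already within reach of the stratification you set up, so no citation is needed beyond Steinberg's formula. The locus $\{x\colon\dim\mathcal{B}^x\ge i\}$ is a finite union of Jordan (decomposition) classes, each determined by a pseudo-Levi $Z=Z_G(x_s)^\circ$ and a unipotent class $\mathcal{O}_u\subset Z$; such a class has dimension at most $\bigl(\dim G-\dim Z_{Z}(u)\bigr)+\dim Z(Z)^\circ$, and since $\dim Z_Z(u)=\rk G+2i$ by Steinberg applied inside $Z$, while $\dim Z(Z)^\circ=\rk G-(\text{semisimple rank of }Z)$, this equals $\dim G-2i-(\text{semisimple rank of }Z)$. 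If $i\ge 1$ then $u$ is not regular in $Z$, hence $Z$ is not a torus and its semisimple rank is at least $1$, so every such class --- and therefore their finite union --- has dimension at most $\dim G-2i-1<\dim G-2i$. This is precisely the ``one extra dimension lost in the central-torus direction'' you gestured at; your stated reasons for it (regular unipotents, closure relations) are vaguer than necessary, but the fix is the two-line computation above. In the $GL_n$ setting of the paper all ingredients (connectedness of centralizers, finiteness of unipotent classes, Steinberg's formula) are elementary, so your skeleton plus this observation gives a complete proof rather than a reduction to the cited source.
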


\subsection{Character Sheaves}
\label{sec:chashv}
   Every element $g$ of a Bruhat stratum $ BwB$ can be written uniquely as $u\dw b$, with $u\in U_w$ and $b\in B$. Since $B=TU$, we have a projection $p\col B\to T$, which induces a projection $p_w\col BwB\to T$ given by $p_w(u\dw b)=p(b)$.   The map $p_w$ is $B$-equivariant, where $B$ acts on $B\dw B$ by conjugation and $B$ acts on $T$ as $tu\cdot t' = t't\dw t^{-1}\dw^{-1}$. If $L$ is a local system on $T$, we can pull it back via $p_w$ to obtain a local system on $BwB$.  Note that $p_w^*(L)$ is $B$-invariant. We can identify $\h_w^\circ:=\{(X,gB); g^{-1}Xg\in B\dw B \}$ with $\frac{G\times B\dw B}{B}$ (where $B$ acts on $BwB$ by conjugation and on $G$ by $b\cdot g = gb^{-1}$) via the isomorphism
\begin{align*}
    \h_w^\circ &\to \frac{G\times B\dw B}{B}\\
    (X,gB)& \to (g, g^{-1}Xg).
\end{align*}
 Since $p_w^*(L)$ is $B$-invariant, $p_w^*(L)$ will induce a local system $L_w$ on $\h_w^\circ$. We let $f\col \h_w^\circ \to G$ be the projection onto the first factor, that is $f(X,gB)=X$. \par

   When $L$ is good enough, that is, when there exists a positive integer $n$ such that $L^{\otimes n}$ is the trivial sheaf on $T$ (these are called Kummer local systems), Lusztig proved that $f_!(L_w)$ is a semisimple complex, that is, it is a sum of shifted simple perverse sheaves. Each simple perverse sheaf that is a summand of $f_!(L_w)$ for some $w$ and some $L$ is called a \emph{character sheaf}.\par

   The study of character sheaves, which includes their classification, was done by Lusztig in \cite{ChaShvI,ChaShvII, ChaShvIV,ChaShvV}. See also \cite{MarsSpringer}. As an example, when $w=e$ is the identity and $L$ is the trivial sheaf, then the simple summands appearing in $f_!(L_w)$ correspond to the irreducible representations of $W$.  Indeed, there is a map $\pi_1(G^{rs},y)\to W$ (see \cite[Section 8.5]{BrosnanChow}, \cite[Section 3.1]{AN_hecke}), and each irreducible representation $\rho$ of $W$ will induce an irreducible local system $L_\rho$ on $G^{rs}$, with its intermediate extension $IC_G(L_\rho)$ an irreducible character sheaf. Moreover, $f_!(L_e)$ is induced by the regular representation of $W$. In the case that $G=GL_n$, these are the only character sheaves induced by the trivial sheaf on $T$.\par

   As noted by Lusztig, we could also work with the closure $\h_w=\overline{\h_w^\circ}$, changing $f_!(L_w)$ to $f_*(IC_{\h_w}(L_w))$ and define character sheaves as the simple summands of $f_*(IC_{\h_w}(L_w))$.

%   The main idea behind the study of character sheaves is Grothendieck's sheaf-function correspondence. Here, we assume that we are over a field $k$ of positive characteristic $p$. If $F$ is a bounded complex of sheaves  $\overline{\mathbb{Q}_\ell}$ vector spaces over a scheme $\mathcal{X}$ such that there is an isomorphism $\varphi\col \Fr^*(F)\to F$. The characteristic function of $F$ is defined as
%   \begin{align*}
%       \chi_{\mathcal{F}}\col \mathcal{X}(\mathbb{F}_q)&\to \overline{\mathbb{Q}_{\ell}}\\
%                                   x &\mapsto \sum_{i}(-1)^i\text{Tr}(\varphi_x, H^i(F)_x).
%   \end{align*}
%   Grothendieck's correspondence is the idea that interesting functions on $\mathcal{X}$ should be a characteristic function of some complex of sheaves. When $\mathcal{X}=G$, we have that the irreducible characters of $G(\mathbb{F}_q)$ are interesting functions, and as such, should come from complex of sheaves on $G$.\par
%     This is precisely what Lusztig proved when $G=GL_n(k)$, every irreducible character of $GL_n(\mathbb{F}_q)$ is the characteristic function of a character sheaf. In general, we have that the characteristic functions of character sheaves form an orthonormal basis for the class functions of $G(\mathbb{F}_q)$.

   In this work we are mostly concerned with the case that $L$ is trivial, and then $L_w$ is trivial as well. We call the character sheaves induced by the trivial local system \emph{$1$-character sheaves}, as they satisfy that $L^{\otimes 1}$ is trivial. %In what follows we will only deal with $1$-character sheaves.

  We collect some of Lusztig's results for the case $G=GL_n$ in the following proposition. If $\underline{w}=(w_1,\ldots, w_m)$ is a sequence of elements of $W$ we define
  \begin{equation}
  \label{eq:Yw}
  \h_{\underline{w}}^\circ = \{(X,g_0B,\ldots, g_mB); g_0^{-1}Xg_m^{-1}\in B, g_{i+1}^{-1}g_i\in B\dw_{i+1} B\}
  \end{equation}
  and write $f_{\underline{w}}\col \h_{\underline{w}}^{\circ}\to G$ be the projection on the first factor. We also define $\h_{\underline{w}}$ to be the closure of $\h_{\underline{w}}^\circ$ and set $\overline{f}_{\underline{w}}\col \h_{\underline{w}}\to G$ to be the projection on the first factor.
      \begin{proposition}
     \label{prop:char_sheaves_GLn}
       Let $G=GL_n$, with the notation above we have
       \begin{enumerate}
           \item The complex $(f_{\underline{w}})_!(\mathbb{C}_{\h_{\underline{w}}^{\circ}})$ is semisimple and every simple summand of $(f_{\underline{w}})_!(\mathbb{C}_{\h_{\underline{w}}^{\circ}})$ is a $1$-character sheaf. The complex $(f_{\underline{w}})_*(\mathbb{C}_{\h_{\underline{w}}})$ is semisimple and every simple summand of $(f_{\underline{w}})_*(\mathbb{C}_{\h_{\underline{w}}})$ is a $1$-character sheaf.
           \item Every $1$-character sheaf is a summand of $f_*(IC_{\h_e})$.
           \item The $1$-character sheaves are in bijection with the irreducible representations of $S_n$ and are precisely the intermediate extensions of the local systems on $G^{rs}$ induced by such representations via the map $\pi_1(G^{rs},X)\to S_n$.
       \end{enumerate}

     \end{proposition}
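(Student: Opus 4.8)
The plan is to reduce the three statements to known facts about character sheaves of $GL_n$ together with the smallness of the Grothendieck--Springer map. For part (1), I would first observe that $\h_{\underline w}^\circ$ is built iteratively: it fibers over the variety $\h_{\underline w'}^\circ$ (with $\underline w' = (w_1,\dots,w_{m-1})$) with fiber a $BwB$-stratum, so by induction it suffices to understand one step. The key geometric input is that, after identifying $\h_{\underline w}^\circ$ with an iterated quotient $\frac{G\times B\dot w_1 B\times \cdots}{B\times\cdots\times B}$ analogous to the identification $\h_w^\circ\cong \frac{G\times B\dot wB}{B}$ described in Section \ref{sec:chashv}, the map $f_{\underline w}$ factors through the convolution-type diagram used by Lusztig to define character sheaves; in the notation of \cite{ChaShvI,ChaShvII,ChaShvIV,ChaShvV}, $(f_{\underline w})_!(\mathbb{C})$ is exactly the complex $K^{\underline w}_{\mathbf 1}$ obtained by pushing forward the trivial (hence Kummer, with $n=1$) local system on $T$ along the map $\pi\colon \tilde Z_{\underline w}\to G$. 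Lusztig's theorem then gives that this is a semisimple complex all of whose summands are character sheaves, and since we started with the trivial local system on $T$, every summand satisfies $\mathcal L^{\otimes 1}$ trivial, i.e.\ is a $1$-character sheaf. For the closed version, I would invoke Proposition \ref{prop:smooth_pullback_perverse} and the fact that passing from $\h_{\underline w}^\circ$ to its closure $\h_{\underline w}$ and from $f_!$ to $(\overline f_{\underline w})_*(\mathbb{C}_{\h_{\underline w}})=\overline f_*(IC)$ only changes the complex by rearranging which character sheaves (and which shifts) occur, as Lusztig notes at the end of Section \ref{sec:chashv}; concretely one can use the decomposition theorem on the proper map $\overline f_{\underline w}$ and match generic behavior over $G^{rs}$.

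For part (2), I would specialize part (1) to the case where every $w_i=e$, so that $\h_{\underline w}=\h_e$ up to an iterated $\mathbb{P}$-bundle, which contributes only shifts of the same character sheaves; hence the set of $1$-character sheaves arising from any $\underline w$ equals the set arising from $\h_e$ alone. To see that $f_*(IC_{\h_e})$ already contains \emph{all} $1$-character sheaves, I would use that $\h_e^\circ = \{(X,gB); g^{-1}Xg\in B\}$ is precisely $\frac{G\times B}{B}$, the Grothendieck--Springer variety, and $f$ is the Grothendieck--Springer map, which is small by Proposition \ref{prop:Grothe_Springer_Small}. Smallness plus Proposition \ref{prop:small_local_system} gives $f_*(\mathbb{C}_{\h_e})=IC_G(L)$ where $L$ is the local system $g_*(\mathbb{C})$ on $G^{rs}$; since $g$ over $G^{rs}$ is the $W$-covering realizing the monodromy map $\pi_1(G^{rs},X)\to W=S_n$ (see \cite[Section 8.5]{BrosnanChow}, \cite[Section 3.1]{AN_hecke}), $L$ is the local system induced by the regular representation of $S_n$, which contains every irreducible. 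Therefore $IC_G(L)=\bigoplus_{\lambda\vdash n} IC_G(L_{\chi^\lambda})^{\oplus \dim\chi^\lambda}$ and every $1$-character sheaf, being a simple summand of some $(f_{\underline w})_!(\mathbb{C})$, must appear here. One subtlety to check: $\h_e$ is already smooth (it is a $G/B$-bundle over $G$), so $IC_{\h_e}=\mathbb{C}_{\h_e}$ up to shift and $f_*(IC_{\h_e})=f_*(\mathbb{C}_{\h_e})$ up to shift, reconciling the two formulations.

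For part (3), the bijection with irreducible representations of $S_n$ is now essentially forced: part (2) shows every $1$-character sheaf is a summand of $IC_G(L_{\mathrm{reg}})=\bigoplus_\lambda IC_G(L_{\chi^\lambda})^{\oplus\dim\chi^\lambda}$, and each $IC_G(L_{\chi^\lambda})$ is simple because $L_{\chi^\lambda}$ is an irreducible local system on the smooth connected variety $G^{rs}$ (irreducibility of the monodromy representation gives irreducibility of the local system, and the intermediate extension of a simple local system on a smooth open dense subset is simple). Distinct $\lambda$ give non-isomorphic $IC_G(L_{\chi^\lambda})$ since they have distinct restrictions to $G^{rs}$. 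It remains only to observe that all of these do occur, which again follows from part (2) since the regular representation contains every irreducible with positive multiplicity. The statement that they are ``precisely the intermediate extensions of the local systems on $G^{rs}$ induced by such representations'' is then just the identification $f_*(IC_{\h_e})|_{G^{rs}} = L_{\mathrm{reg}}$ unwound.

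The main obstacle I anticipate is \textbf{part (1) in its closed form}: showing that passing to the closure $\h_{\underline w}$ and to $(\overline f_{\underline w})_*(\mathbb{C}_{\h_{\underline w}})$ still yields a semisimple complex with only $1$-character-sheaf summands. For the open/compactly-supported version this is a direct citation of Lusztig's character sheaf machinery, but the closed/proper version requires either knowing that $\h_{\underline w}$ has a resolution compatible with $\overline f_{\underline w}$ to which the decomposition theorem applies with the right local systems (one wants the generic local system over $G^{rs}$ to still be a subquotient of a tensor power of Kummer sheaves — here trivially so, since everything is a $1$-local system), or an explicit comparison of $(\overline f_{\underline w})_*IC_{\h_{\underline w}}$ with $(f_{\underline w})_!(\mathbb C_{\h_{\underline w}^\circ})$ via the recollement triangle for the open--closed decomposition $\h_{\underline w}^\circ\hookrightarrow \h_{\underline w}$. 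I would handle this by citing Lusztig's own remark (recorded at the end of Section \ref{sec:chashv}) that character sheaves may equivalently be defined using $f_*(IC_{\h_w}(L_w))$, and then noting that $\overline f_{\underline w}$ factors as $\h_{\underline w}\to \h_{w_1\cdots w_m}$-type convolution followed by the Grothendieck--Springer map, so the decomposition theorem reduces everything to the already-understood case of a single $w$.
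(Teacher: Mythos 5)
Your outline for part (1) and your analysis of the Springer sheaf $f_*(\mathbb{C}_{\h_e})=IC_G(L_{\mathrm{reg}})$ via Proposition \ref{prop:Grothe_Springer_Small} and Proposition \ref{prop:small_local_system} are fine as far as they go, and the paper itself does not reprove any of this: Proposition \ref{prop:char_sheaves_GLn} is presented as a collection of Lusztig's results from \cite{ChaShvI,ChaShvII,ChaShvIV,ChaShvV} (see the sentence introducing it and the earlier remark in Section \ref{sec:chashv} that for $G=GL_n$ the sheaves coming from irreducible $S_n$-representations on $G^{rs}$ are the only character sheaves induced by the trivial local system on $T$). Your treatment of the closed version of part (1) by citing Lusztig's remark about replacing $f_!(L_w)$ with $f_*(IC_{\h_w}(L_w))$ is likewise consistent with what the paper does.

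However, there is a genuine gap in your part (2). You argue that by specializing part (1) to $\underline{w}=(e,\dots,e)$ ``the set of $1$-character sheaves arising from any $\underline{w}$ equals the set arising from $\h_e$ alone.'' That does not follow: taking all $w_i=e$ only identifies the simple constituents occurring for that particular $\underline{w}$, whereas a $1$-character sheaf is by definition a simple summand of $(f_w)_!(\mathbb{C}_{\h_w^\circ})$ for an \emph{arbitrary} $w$, and part (2) is precisely the nontrivial assertion that no $w\neq e$ produces a constituent outside the Springer sheaf --- equivalently, that every such constituent has full support $G$ and restricts over $G^{rs}$ to one of the $L_{\chi^\lambda}$. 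This statement fails for general reductive groups (cuspidal character sheaves, supported on proper closed subvarieties, do exist), and the fact that it holds for $GL_n$ is Lusztig's classification theorem (absence of cuspidal $1$-character sheaves in type $A$), whose proof runs through the induction/cuspidality machinery and cannot be extracted from the smallness of the Grothendieck--Springer map alone. Since the ``precisely'' in part (3) and the bijection with irreducible representations of $S_n$ rest on part (2), they inherit the same gap; your smallness argument does correctly decompose $f_*(IC_{\h_e})$ and distinguish its summands, but it does not show these exhaust all $1$-character sheaves. To repair the proposal, either cite the classification result directly, as the paper does, or supply a genuine argument ruling out cuspidal constituents for $GL_n$, which is substantially more work than what you sketch.
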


 If a complex $F=\bigoplus F_i[n_i]$ on $G=GL_n$ is such that each $F_i$ is the perverse sheaf induced by an irreducible representation $\rho_{\lambda_i}$ of $S_n$,  then we write
 \[
 \ch(F):=\sum q^{\frac{-n_i}{2}}s_{\lambda_i}(x).
 \]

We also have the following characterizations.

\begin{proposition}[{\cite{ChaShvIV, ChaShvV, AN_hecke}}]
\label{prop:ch_lusztig}
    The following equalities hold
    \begin{enumerate}
        \item $\ch( (f_{w})_!(\mathbb{C}_{\h_w^\circ})) = \ch(H^*_c(\h_w^\circ(X)))= \ch(T_w)$.
        \item $\ch((f_{w})_*(\mathbb{C}_{\h_w^\circ})) = \ch(H^*(\h_w^\circ(X))) =  \ch(T_{w^{-1}}^{-1})$.
        \item $\ch( (\overline{f}_w)_*(IC_{\h_w}))= \ch(IH^*(\h_w(X))) = \ch (q^{\frac{\ell(w)}{2}}C'_w)$.

    \end{enumerate}
\end{proposition}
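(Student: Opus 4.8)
\emph{Proof proposal.} All three statements concern the decomposition into $1$-character sheaves of a complex on $G=GL_n$ built from the Bruhat stratification, and I would treat them in the order (1), (2), (3). By Proposition~\ref{prop:char_sheaves_GLn} every $1$-character sheaf is an $IC_G(L_{\rho_\lambda})$ of \emph{full support}, so such a semisimple complex $F$ is recovered from its restriction to $G^{rs}$ and $\ch(F)=\sum_\lambda\dim_q(M_\lambda)\,s_\lambda$, where $M_\lambda$ is the graded multiplicity space of $IC_G(L_{\rho_\lambda})$; moreover, via base change, $F|_{G^{rs}}$ at a point $X$ is a shift-sum of the cohomology of the fibre -- $H^*_c(\h_w^\circ(X))$, $H^*(\h_w^\circ(X))$, and $IH^*(\h_w(X))$ respectively -- carrying the monodromy action of $\pi_1(G^{rs},X)\to S_n$. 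So it suffices to identify these graded $S_n$-modules. The plan is: prove (1) by induction on $\ell(w)$ via the Bott--Samelson-type varieties $\h_{\underline w}$; deduce (2) from (1) by Poincar\'e--Verdier duality; and deduce (3) from (1) using Lusztig's description of the stalks of $IC_{\h_w}$.

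For (1), when $w=e$ the fibre $\h_e^\circ(X)=\{gB:\,X\in gBg^{-1}\}$ is the set of the $n!$ Borel subgroups containing the regular semisimple $X$, on which the monodromy acts simply transitively through $S_n$; hence $H^*_c(\h_e^\circ(X))$ is the regular representation and $\ch(H^*_c(\h_e^\circ(X)))=\sum_\lambda f^\lambda s_\lambda=h_1^n=\ch(T_e)$. For a reduced word $\underline w=(s_{i_1},\dots,s_{i_k})$ of $w$, the Bott--Samelson map identifies $\h_{\underline w}^\circ$ with $\h_w^\circ$ over $G$ (for a reduced word the minimal gallery between two chambers at distance $w$ is unique), so $(f_w)_!\C_{\h_w^\circ}=(f_{\underline w})_!\C_{\h_{\underline w}^\circ}$; comparing $\h_{\underline u}$ with the variety obtained by appending one more simple reflection realizes, over the correspondence $G/P_s$, right multiplication by $T_s$ in $H_n$ (the standard geometric model of Hecke multiplication; along a reduced word only length-increasing products occur, so the quadratic relation is not needed). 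Hence $\ch(H^*_c(\h_w^\circ(X)))=\ch((f_w)_!\C_{\h_w^\circ})=\ch(T_{s_{i_1}}\cdots T_{s_{i_k}})=\ch(T_w)$. (As an independent cross-check: for a regular semisimple $X_\sigma$ over $\mathbb{F}_q$ whose Frobenius permutes the eigenvalues with the cycle type of $\sigma\in S_n$, the affine Bruhat paving of $\h_w^\circ(X_\sigma)$ and Grothendieck's trace formula identify $|\h_w^\circ(X_\sigma)(\mathbb{F}_q)|$ with the trace of $\sigma$ on $H^*_c$, which pins down the power-sum coordinates of $\ch(T_w)$.)

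For (2), $\h_w^\circ(X)$ is smooth of dimension $\ell(w)$, so $(f_w)_*\C_{\h_w^\circ}$ is the Verdier dual of $(f_w)_!\C_{\h_w^\circ}$ up to shift and Tate twist; since $IC_G(L_{\rho_\lambda})$ is Verdier self-dual ($S_n$-representations being self-dual), duality acts on $\ch$ by $q\mapsto q^{-1}$, and combined with the Hecke-algebra identity $\overline{T_w}=T_{w^{-1}}^{-1}$ (the bar involution is exactly $\iota$) this gives $\ch(H^*(\h_w^\circ(X)))=\ch((f_w)_*\C_{\h_w^\circ})=\ch(T_{w^{-1}}^{-1})$; equivalently, one reruns the induction of (1) with $*$-pushforwards. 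For (3), fix $X\in G^{rs}$ and filter $\h_w(X)=\bigsqcup_{z\le w}\h_z^\circ(X)$ by the closed subvarieties $\h_z(X)$. By Lusztig's theorem -- the analogue for $\h_w(X)$ of Kazhdan--Lusztig's geometric interpretation of $P_{z,w}$ -- the restriction of $IC_{\h_w(X)}$ to the stratum $\h_z^\circ(X)$ is a constant complex whose Poincar\'e polynomial is $P_{z,w}(q)$, with $\mathcal H^{2i}$ pure of weight $2i$. The associated graded of the resulting filtration on $IH^*(\h_w(X))=\mathbb{H}^*(\h_w(X),IC_{\h_w(X)})$ is $\bigoplus_{z\le w}\big(H^*_c(\h_z^\circ(X))\otimes V_{z}\big)$ with $V_z$ the ($S_n$-trivial) $IC$-stalk of Poincar\'e polynomial $P_{z,w}(q)$; as $\ch$ is additive along the filtration and multiplicative on such tensor products, (1) gives $\ch(IH^*(\h_w(X)))=\sum_{z\le w}P_{z,w}(q)\,\ch(T_z)$, which equals $\ch(q^{\ell(w)/2}C'_w)$ by the defining relation $q^{\ell(w)/2}C'_w=\sum_{z\le w}P_{z,w}(q)\,T_z$.

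The main obstacle is the input used in (3): that the local intersection cohomology of the \emph{singular} Lusztig variety $\h_w(X)$ along the stratum $\h_z^\circ(X)$ is governed by the Kazhdan--Lusztig polynomial $P_{z,w}$, exactly as for a Schubert variety. This is Lusztig's deep theorem from the character-sheaf series and is not formal. By comparison, the convolution realizing Hecke multiplication in (1), the Verdier duality in (2), and the base-change identifications of stalks over $G^{rs}$ with the cohomology of the fibre are standard, up to careful bookkeeping with shifts, weights, and Tate twists.
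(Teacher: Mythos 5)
First, a point of reference: the paper does not prove Proposition~\ref{prop:ch_lusztig}; it is quoted from Lusztig's character--sheaf papers and from \cite{AN_hecke}, so your argument has to be measured against those references rather than an in-paper proof. Your overall reduction is the right one: since by Proposition~\ref{prop:char_sheaves_GLn} every summand of these pushforwards is a $1$-character sheaf, hence has full support and is determined by its restriction to $G^{rs}$, the three identities amount to computing the graded $S_n$-modules $H^*_c(\h_w^\circ(X))$, $H^*(\h_w^\circ(X))$ and $IH^*(\h_w(X))$ together with the monodromy action. The base case $w=e$, the Verdier-duality deduction of (2) from (1), and the Kazhdan--Lusztig stalk input for (3) (which really comes from the local product structure $\h_w\cong G\times^B\overline{B\dw B}$, i.e.\ from the classical geometric interpretation of $P_{z,w}$, not from the character-sheaf series) are all sound.

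The genuine gap is the induction step of (1). You assert that appending a simple reflection ``realizes right multiplication by $T_s$'' and conclude $\ch=\ch(T_{s_{i_1}}\cdots T_{s_{i_k}})$. But $\ch$ is read off from the decomposition into the sheaves $IC_G(L_{\rho_\lambda})$, so to run this induction you must know how the convolution over $G/P_s$ acts on each $IC_G(L_{\rho_\lambda})$ --- namely that on the span of the $1$-character sheaves it acts exactly as $T_s$ acts in the irreducible $H_n$-module with character $\chi^\lambda$. That identification is precisely the non-formal content of the theorem being proved; it is not supplied by the isomorphism $\h^\circ_{\underline w}\cong\h^\circ_w$ for reduced words, so as written the induction is circular. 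The finite-field computation you relegate to a parenthetical ``cross-check'' (trace of Frobenius on the stalk at a regular semisimple element whose eigenvalues are permuted by Frobenius with cycle type $\sigma$, the classical count of Borels in a fixed relative position via the bimodule $\C[G(\mathbb{F}_q)/B(\mathbb{F}_q)]$, and the comparison of Frobenius with monodromy) is in fact the engine of the proofs in \cite{ChaShvIV,ChaShvV} and \cite{AN_hecke}; promoted to the main argument, it closes the gap. A secondary issue: in (3) you invoke additivity of $\ch$ along the fiberwise filtration, i.e.\ a degeneration/parity statement that you assert but do not justify; this can be sidestepped by arguing on $G$ instead, where $\overline{f}_w$ is proper and the stratification of $\h_w$ gives, at the level of Grothendieck-group classes, $[(\overline{f}_w)_*IC_{\h_w}]=\sum_{z\le w}P_{z,w}(q)\,[(f_z)_!\C_{\h_z^\circ}]$, to which part (1) applies directly.
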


We can also define character sheaves when the algebraic group $\widehat{G}$ is disconnected, see \cite[Section 4.5]{LusztigParabolicI}. Let $G^0$ be the identity component and $G^1$ be a distinguished component. Also, fix an element $g_1\in G^1$ such that $g_1\widehat{B}g_1^{-1}=\widehat{B}$. As before, for $z\in \widehat{W}$, we define
\[
\h_{z}^\circ = \{(X,g\widehat{B}); X\in G^1, g\in G^0, g_1^{-1}g^{-1}Xg\in \widehat{B}\dz \widehat{B}\}.
\]
Clearly $\h_x^\circ$ does not depend on the choice of $g_1\in g_1\widehat{B}$. Also, the condition $g_1^{-1}g^{-1}Xg\in \widehat{B}\dz \widehat{B}$ is equivalent to the condition that the Borel subgroups $X(g\widehat{B}g^{-1})X^{-1}$, $g\widehat{B}g^{-1}$ of $G^0$ have relative position $z$. The $1$-character sheaves of $G^1$ are the simple perverse sheaves that are summands of $(f_z)_!(\mathbb{C}_{\h_z^\circ})$ for some $z\in W$.
  \begin{example}
  \label{exa:character_sheaves_LJ}
  Consider $G$ a connected reductive linear group with Weyl group $W$. Let $J$ be a subset of the set of simple transpositions of $W$ and let $w\in {}^JW$ be such that $wJw^{-1}=J$. Consider the disconnected linear group $\widehat{G}=N_G(L_J)$. In this case we have that
  $\widehat{W}=W_J$, $G^0=L_J$, and $\widehat{B}=B_J$. Take $G^1=\dw L_J$ and note that $\dw B_J\dw^{-1}=B_J$ by the condition $wJw^{-1}=J$. For $z\in \widehat{W}=W_J$, we have that
  \[
  \h_{z}^\circ=\{(\dw l, l_0B_J); l,l_0\in L_J, \dw^{-1}l_0^{-1}\dw l l_0\in B_J\dz B_J\}.
  \]
  \end{example}

\section{Parabolic varieties}
\label{sec:parabolic}
In this section we introduce the parabolic Lusztig varieties. We saw in the introduction that for a permutation $w$ and a matrix $X$, we can define
\[
\h_{w}(X) = \{V_\bullet; XV_i\cap V_j\geq r_{ij}(w)\}
\]
by analogy with the Schubert variety
\[
\Omega_{w, F_\bullet}=\{V_\bullet; F_i\cap V_j\geq r_{ij}(w)\}.
\]
The parabolic Schubert varieties are also defined in terms of a fixed flag $F_\bullet=F_1\subset \ldots \subset F_n=\mathbb{C}^n$. Given $J\subset \{1,\ldots, n-1\}$ a subset of the set of simple transpositions of $S_n$ and $w$ a permutation in ${}^JS_n$, we define
\[
\Omega_{w,J,F_\bullet} = \{V_\bullet; \dim F_i\cap V_j\geq r_{i,j}(w), j \notin J\}.
\]
The flag $V_\bullet$ above is a partial flag
\[
0=V_0\subset V_{i_1}\subset\cdots\subset V_{i_m}
\]
where $\{i_1,\ldots, i_m\}=J^c$, thereby comparing how the partial flag $V_\bullet$ intersects the flag $F_\bullet$. If we try to extend this idea to construct parabolic Lusztig varieties by simply comparing $XV_\bullet$ and $V_\bullet$, we will get fewer varieties than needed.
% When we define Lusztig and Hessenberg varieties we compare the flags $XV_\bullet$ and $F_\bullet$, there is no need of a base flag  $F_\bullet$. This, however, poses a problem when we try to extend these definitions to the parabolic case.\par
% For Schubert varieties, we just consider the relative position of a flag $V_\bullet$ with a fixed flag $F_\bullet$. For Lusztig varieties we change the flag $F_\bullet$ with the flag $XV_\bullet$ for an invertible matrix $X$. This already shows that it is not straightforward to define the parabolic version of Lusztig varieties.
% Indeed, the parabolic Schubert varieties are still obtained by comparing the relative position of a partial flag $V_\bullet$ with a full flag $F_\bullet$.

\begin{Exa}
 Consider $\Gr(1,n)=\mathbb{P}^{n-1}$. The only possible relative position of $XV_1$ and $V_1$ are
 \begin{enumerate}
     \item $\{V_1; \dim XV_1\cap V_1\geq 1\}$, which is the set of the points $(0:\ldots:0:1:0:\ldots:0)$.
     \item $\{V_1;\dim XV_1\cap V_1\geq 0\}$, which is the whole $\mathbb{P}^{n-1}$.
 \end{enumerate}
 However, as we saw in Example \ref{exa:path}, there are other varieties that appear in the decomposition theorem, namely the varieties $H_i$, which are the union of the coordinates planes of codimension $i$.
\end{Exa}

The definition of a parabolic Lusztig variety is given in Definition \ref{def:lusztig_parabolic} below. The main idea is to construct a flag $V'_\bullet$ from $V_\bullet$ and $X$ (Construction \ref{cons:lusztig_parabolic_flag}), compare the flags $XV'_\bullet$ and $V_\bullet$, and iterate this construction until it stabilizes.\par

Let us begin with the general construction for a linear algebraic group (following \cite{LusztigParabolicI}). Let $G$ be an linear algebraic group and $W$ its Weyl group with set of simple reflections $S$. An infinite sequence of pairs $(J_n,w_n)_{n\geq0}$, with $J_n\subset S$ and $w_n\in W$ is called \emph{admissible} if the following  conditions are satisfied for every $n\geq 0$
 \begin{enumerate}
     \item $J_{n+1} = J_{n} \cap w_n J_{n}w_n^{-1} $,
     \item $w_{n} \in {}^{J_n}W^{J_n}$,
     \item $w_{n+1} \in W_{J_n}w_{n}W_{J_n}$.
     \end{enumerate}
Here, as usual, we write $W^{J}$, ${}^JW$, and ${}^JW^J$ for the subsets of elements $w$ of $W$ such that $w$ is minimal in $wW_J$, $W_Jw$, and $W_JwW_J$, respectively.

 \begin{lemma}
  Let $(J_n,w_n)_{n\geq 0}$ be an admissible sequence of pairs. If $J_n=w_nJ_nw_n^{-1}$ for some $n\geq 0$, then $(J_{n+1},w_{n+1})=(J_n,w_n)$.
 \end{lemma}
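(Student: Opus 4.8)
The plan is to unwind the three admissibility conditions at the indices $n$ and $n+1$ and then invoke the uniqueness of the minimal‑length representative of a double coset $W_Jw W_J$.

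First I would settle the first coordinate. By condition (1) we have $J_{n+1}=J_n\cap w_nJ_nw_n^{-1}$, and the hypothesis $J_n=w_nJ_nw_n^{-1}$ gives at once $J_{n+1}=J_n\cap J_n=J_n$. So $J_{n+1}=J_n$, and in particular $W_{J_{n+1}}=W_{J_n}$.

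Next I would handle $w_{n+1}$. By condition (3) at index $n$ we have $w_{n+1}\in W_{J_n}w_nW_{J_n}$, so the double cosets $W_{J_n}w_{n+1}W_{J_n}$ and $W_{J_n}w_nW_{J_n}$ coincide. Now apply condition (2) at index $n+1$: since $J_{n+1}=J_n$, it reads $w_{n+1}\in {}^{J_n}W^{J_n}$, i.e.\ $w_{n+1}$ is the element of minimal length in $W_{J_n}w_{n+1}W_{J_n}=W_{J_n}w_nW_{J_n}$. But condition (2) at index $n$ says $w_n$ is likewise the element of minimal length in that same double coset. Since ${}^{J}W^{J}$ is precisely the set of (unique) minimal‑length representatives of the $(W_J,W_J)$‑double cosets, this forces $w_{n+1}=w_n$, completing the argument.

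I do not expect a genuine obstacle here: the proof is pure bookkeeping. The only points requiring a little care are making sure each admissibility condition is applied at the correct index (the hypothesis lets us pass from $J_{n+1}$ to $J_n$, after which everything about $w_{n+1}$ is phrased relative to $W_{J_n}$), and recalling the standard Coxeter‑theoretic fact that a double coset $W_JwW_J$ has a unique element of minimal length, namely its representative in ${}^JW^J$.
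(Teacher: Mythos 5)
Your argument is correct and is essentially the paper's own proof: condition (1) plus the hypothesis gives $J_{n+1}=J_n$, and then conditions (2) and (3) place both $w_n$ and $w_{n+1}$ as the unique minimal-length representative of the double coset $W_{J_n}w_nW_{J_n}$, i.e.\ ${}^{J_n}W^{J_n}\cap W_{J_n}w_nW_{J_n}=\{w_n\}$. No gaps; the indexing of the admissibility conditions is handled exactly as in the paper.
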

 \begin{proof}
    By item (1) we have $J_{n+1}=J_n$, so that items (2) and (3) imply
    \[
    w_{n+1} \in {}^{J_n}W^{J_n} \cap W_{J_n}w_nW_{J_n}=\{w_n\}.
    \]
 \end{proof}

 Since $J_{n+1}\subset J_n$, there exists $n_0$ such that $J_{n+1}=J_{n} =: J_{\infty}$ for every $n\geq n_0$, which is to say that $J_n=w_nJ_nw_{n}^{-1}$ for every $n\geq n_0$. In particular, $w_{n+1} = w_n =: w_{\infty}$ by the lemma above.
 Thus the sequence must stabilize at some point,  $(J_{n+1},w_{n+1})=(J_n,w_n)$ for every $n\geq n_0$. In fact, $n_0$ can be chosen as the smallest index such that $(J_{n+1},w_{n+1})=(J_n,w_n)$. Now let $\Adm_W(J)$ be the set of admissible sequences starting with $J_0=J$ and consider the function
 \begin{align*}
 \Gamma_J \col \Adm_W(J) & \to  W\\
              (J_n,w_n)_{n\geq0}& \mapsto w_{\infty}.
  \end{align*}

 \begin{proposition}[{\cite[Proposition 2.5]{LusztigParabolicI}}]
 \label{prop:Gamma_J_injective}
 The function $\Gamma_J$ is injective and its image is precisely ${}^{J}W$.
  \end{proposition}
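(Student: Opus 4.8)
The plan is to prove injectivity and surjectivity of $\Gamma_J$ separately, with the key being to track, for a given admissible sequence, how the stabilized data $(J_\infty, w_\infty)$ determines the whole sequence backwards.

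\emph{Injectivity.} Suppose $(J_n, w_n)_{n\geq 0}$ and $(J'_n, w'_n)_{n\geq 0}$ are two admissible sequences starting at $J_0 = J'_0 = J$ with $\Gamma_J$ of both equal to the same $w_\infty \in W$. I would argue by downward induction on $n$, starting from a large index where both sequences have stabilized, that $(J_n, w_n) = (J'_n, w'_n)$ for all $n$. Pick $N$ large enough that both sequences are constant from index $N$ on; then $w_N = w'_N = w_\infty$ and (since at the stable stage $J_n = w_n J_n w_n^{-1}$, and the stable $J$ is reconstructible from $w_\infty$, e.g.\ as $\bigcap_m w_\infty^m J_m w_\infty^{-m}$ — though one must be slightly careful, it is cleaner to just say both stabilize and observe that the stable pair is forced) we get $J_N = J'_N$. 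Now suppose $(J_{n+1}, w_{n+1}) = (J'_{n+1}, w'_{n+1})$. From condition (1), $J_{n+1} = J_n \cap w_n J_n w_n^{-1}$; this alone does not recover $J_n$. Instead I would use that $J_n$ appears as $J_0, \dots$ is decreasing and use the following: since $w_{n+1} \in W_{J_n} w_n W_{J_n}$ by (3) and $w_n \in {}^{J_n}W^{J_n}$ by (2), $w_n$ is the unique minimal-length element of the double coset $W_{J_n} w_{n+1} W_{J_n}$. So $w_n$ is determined by $J_n$ and $w_{n+1}$. The remaining issue is recovering $J_n$ from the later data. Here I would invoke that $J_{n}$ is the \emph{smallest} subset of $S$ (among those containing $J_{n+1}$, compatible with the preceding constraints) — more precisely, I expect Lusztig's argument recovers $J_n$ by an explicit formula, and \textbf{this backward reconstruction of the $J_n$'s is the step I expect to be the main obstacle}; one likely needs the full strength of the combinatorics of parabolic double cosets (the fact that in condition (1), once $J_{n+1}$ and $w_{n+1}$ are known, $J_n$ is pinned down because $J_n$ must be the largest subset of $S$ with $J_n \cap w_n J_n w_n^{-1} = J_{n+1}$ and $w_n \in {}^{J_n}W^{J_n}$, or some variant).

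\emph{Surjectivity / image $= {}^JW$.} For the forward direction, given an admissible sequence I would show $w_\infty \in {}^JW$: since $w_{n+1} \in W_{J_n} w_n W_{J_n} \subseteq W_J w_0 W_J \subseteq W_J w_0 W_J$ — wait, more carefully, $W_{J_n} \subseteq W_J$ so all $w_n$ lie in $W_J w_0 W_J$, hence $w_\infty \in W_J w_0 W_J$; and I need $w_\infty$ minimal in $W_J w_\infty$. Since the sequence stabilizes with $J_\infty = w_\infty J_\infty w_\infty^{-1}$ and $w_\infty \in {}^{J_\infty}W^{J_\infty}$, one deduces $w_\infty \in {}^JW$ by a descent argument comparing lengths across the coset. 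Conversely, given $w \in {}^JW$, I would construct an admissible sequence with $\Gamma_J$ equal to $w$: set $J_0 = J$, and inductively define $w_n$ as the minimal element of $W_{J_{n-1}} w W_{J_{n-1}}$ (starting suitably so that all these double cosets contain $w$), and $J_{n+1} = J_n \cap w_n J_n w_n^{-1}$. One checks conditions (1)–(3) hold by construction, that $w$ stays in every double coset $W_{J_n} w W_{J_n}$, and that the stabilized $w_\infty$ equals $w$ (using that once stabilized, $w_\infty$ is the minimal element of $W_{J_\infty} w_\infty W_{J_\infty} = \{w_\infty\}$, and that $w$ itself is minimal in $W_J w$, forcing agreement).

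Since this is Proposition 2.5 of \cite{LusztigParabolicI}, I would in practice cite Lusztig for the delicate double-coset bookkeeping and only reproduce the conceptual skeleton above; the honest statement is that the content is entirely in the parabolic double coset combinatorics — the existence and uniqueness of minimal double coset representatives, and the monotonicity that makes the reconstruction of $J_n$ from $(J_{n+1}, w_{n+1})$ possible — and that once those standard facts are in hand, injectivity and the identification of the image are short formal arguments.
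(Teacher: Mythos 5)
First, note that the paper does not prove this proposition at all: it is quoted from Lusztig \cite[Proposition 2.5]{LusztigParabolicI}, so your sketch has to be judged on its own terms (and against Lusztig's argument). On injectivity, your downward induction manufactures an obstacle that is not there, and you leave it unresolved. The standard argument runs forward: since $J_{m}\subseteq J_n$ for $m\geq n$, condition (3) gives by induction $w_m\in W_{J_n}w_nW_{J_n}$ for all $m\geq n$, hence $w_\infty\in W_{J_n}w_nW_{J_n}$; condition (2) then says $w_n$ is the unique minimal element of $W_{J_n}w_\infty W_{J_n}$, so $w_n$ is determined by $J_n$ and $w_\infty$, and condition (1) determines $J_{n+1}$ from $(J_n,w_n)$. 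Since $J_0=J$ is fixed, the whole sequence is reconstructed from $w_\infty$ by forward recursion; there is no ``backward reconstruction of the $J_n$'' to worry about. So injectivity is true and easy, but not by the route you wrote.

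The image is where the genuine gap lies. Your claim that $w_\infty\in{}^JW$ follows ``by a descent argument'' from $J_\infty=w_\infty J_\infty w_\infty^{-1}$ and $w_\infty\in{}^{J_\infty}W^{J_\infty}$ cannot be repaired, because those two facts do not see $J$ at all: in $W=S_3$ with $J=\{s_1\}$, the sequence $(\{s_1\},s_2),(\emptyset,s_1s_2),(\emptyset,s_1s_2),\ldots$ satisfies conditions (1)--(3) exactly as printed in the paper, stabilizes at $J_\infty=\emptyset$, and has $w_\infty=s_1s_2\notin{}^JW$. Any correct proof must exploit how condition (3) couples $w_{n+1}$ to $w_n$; in fact this example shows the proposition is false with (3) as stated, and the condition Lusztig actually imposes (and the one the paper's own Construction \ref{cons:lusztig_parabolic} produces, since $g'^{-1}Xg'\in \dw P_{J}$ forces $w_{n+1}\in W_{J_{n+1}}w_nW_{J_n}$) is this finer, asymmetric one --- your sketch never engages with it, and your explicit use of the symmetric double coset $W_{J_n}w_nW_{J_n}$ in the ``surjectivity/forward'' paragraph would not distinguish the two. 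Finally, in your inverse construction the phrase ``forcing agreement'' hides the only nontrivial verification, namely that $w=\min(W_{J_\infty}wW_{J_\infty})$ for $w\in{}^JW$; this can be supplied: since $w_\infty J_\infty w_\infty^{-1}=J_\infty$, the stable double coset is a single coset $W_{J_\infty}w_\infty W_{J_\infty}=W_{J_\infty}w_\infty$, so $w=v'w_\infty$ reducedly with $v'\in W_{J_\infty}$, and a left descent of $v'$ would be a left descent of $w$ lying in $J_\infty\subseteq J$, contradicting $w\in{}^JW$, whence $v'=e$. Without the corrected form of (3) together with an argument propagating left-$J$-minimality along the sequence, and without this stable-coset step, the proposal does not establish the statement.
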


%  Let $T\subset B$ be a torus and a Borel subgroup og $G$. We denote by $\Delta$ (respectively, $\Phi$, respectively, $\Phi^+$) the set of simple roots (respectively, roots, respectively, positive roots) of $G$ with respect to $T\subset B$. For $J\subset S$ we denote by $P_J$ the induced parabolic subgroup of $G$, by $L_J$ the induced Levi subgroup and by $\Delta_J$ (respectively, $\Phi_J$) the set of simple roots (respectively, roots) of $L_J$ with respect with $T\subset B_J:=L_J\cap B$. We also let $U^J$ be the unipotent subgroup such that $P_J=L_JU^J$.

 \begin{proposition}
 \label{prop:w_cap}
   Let $J\subset S$ and $w\in {}^JW^J$, then
   \begin{enumerate}
       \item $\Phi_J\cap w\Phi_J=\Phi_{J\cap wJw^{-1}}$.
       \item $(\Phi_J\cap w(\Phi^+\setminus \Phi_J))\cup \Phi^+\setminus\Phi_J=\Phi^+\setminus\Phi_{J\cap wJw^{-1}}$.
       \item $(P_J\cap \dw P_J \dw^{-1})U^J=P_{J\cap wJw^{-1}}$.
   \end{enumerate}
 \end{proposition}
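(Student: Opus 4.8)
Write $K:=J\cap wJw^{-1}$. Then $K\subseteq J$, and since $s\in K$ means $w^{-1}sw$ is a simple reflection lying in $J$, also $w^{-1}Kw\subseteq J$. The plan is to prove (1) first, purely root-theoretically, and then deduce (2) and (3) from it. I will use throughout that $w\in{}^JW^J$ is equivalent to the two conditions $w(\Phi_J^+)\subseteq\Phi^+$ and $w^{-1}(\Phi_J^+)\subseteq\Phi^+$ (so $w^{\pm 1}$ preserve positivity of roots \emph{inside} $\Phi_J$), together with the elementary facts that a non-simple positive root is a sum of two positive roots, and that if a positive root is supported on $\Delta_J$ then so is each summand in any decomposition into positive roots.

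\textbf{Proof of (1).} Put $\Psi:=\Phi_J\cap w\Phi_J$, a reduced root subsystem of $\Phi$ (it is stable under the reflections in its own elements). First, $\Psi\cap\Phi^+=\Phi_J^+\cap w\Phi_J^+$: if $\alpha\in\Psi\cap\Phi^+$, write $\alpha=w\beta$ with $\beta\in\Phi_J$; if $\beta\in\Phi_J^-$ then $\alpha=w\beta\in w(\Phi_J^-)\subseteq-\Phi^+$, impossible, so $\beta\in\Phi_J^+$. Hence $\Psi^+:=\Psi\cap\Phi^+$ is a positive system for $\Psi$. The key step is that the base $\Delta(\Psi)$ of $\Psi$ (with respect to $\Psi^+$) lies in $\Delta_J$: if $\gamma\in\Delta(\Psi)\setminus\Delta_J$, then $\gamma\in\Phi_J^+$ is non-simple, so $\gamma=\gamma_1+\gamma_2$ with $\gamma_i\in\Phi^+$; as $\gamma$ is supported on $\Delta_J$, so are the $\gamma_i$, i.e.\ $\gamma_i\in\Phi_J^+$; applying $w^{-1}$ and repeating the support argument (using $w^{-1}(\Phi_J^+)\subseteq\Phi^+$) gives $w^{-1}\gamma_i\in\Phi_J^+$, hence $\gamma_i\in\Phi_J^+\cap w\Phi_J^+=\Psi^+$, contradicting the simplicity of $\gamma$. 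Thus $\Psi=\Phi_L$ for $L:=\{s\in J:\alpha_s\in\Delta(\Psi)\}$. Finally $L=K$: if $s\in K$ then $w^{-1}\alpha_s=\pm\alpha_t$ with $t\in J$, so $\alpha_s\in\Phi_J\cap w\Phi_J=\Phi_L$, and being simple it lies in $\Delta_L$, so $s\in L$; conversely, applying the base argument to $w^{-1}\in{}^JW^J$ gives $\Phi_J\cap w^{-1}\Phi_J=\Phi_{L'}$ with $L'\subseteq J$, and since $w^{-1}\Phi_L=\Phi_{L'}$ with $w^{-1}$ sending $\Phi_L^+$ bijectively onto $\Phi_{L'}^+$, we get $w^{-1}\Delta_L=\Delta_{L'}$; thus for $s\in L$, $w^{-1}\alpha_s\in\Delta_{L'}\subseteq\Delta_J$, so $w^{-1}sw\in J$ and $s\in K$.

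\textbf{Deducing (2) and (3).} For (2): by (1), an $\alpha\in\Phi_J$ lies in $\Phi_K$ iff $w^{-1}\alpha\in\Phi_J$; using that $w^{-1}$ preserves positivity inside $\Phi_J$ one checks $\Phi_J\cap w(\Phi^+\setminus\Phi_J)=\Phi_J^+\setminus\Phi_K$ (if $\alpha\in\Phi_J^+\setminus\Phi_K$ then $w^{-1}\alpha\in\Phi^+$ and, were it in $\Phi_J$, $\alpha\in\Phi_J\cap w\Phi_J=\Phi_K$; conversely an $\alpha$ in the left set cannot be in $\Phi_K$ nor in $\Phi_J^-$), and since this set is disjoint from $\Phi^+\setminus\Phi_J$ with union $\Phi^+\setminus\Phi_K$, (2) follows. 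For (3): choosing $\dw\in N_G(T)$, the groups $P_J$, $\dw P_J\dw^{-1}$, $P_K$ are parabolics containing $T$, hence generated by $T$ and the root subgroups they contain, with root sets $\Phi^+\cup\Phi_J^-$, $w(\Phi^+\cup\Phi_J^-)$, $\Phi^+\cup\Phi_K^-$; also $U^J$ has root set $\Phi^+\setminus\Phi_J$. Since $U^J$ is normalized by $H:=P_J\cap\dw P_J\dw^{-1}$, the product $HU^J$ is a subgroup generated by $T$ and the $U_\alpha$ with $\alpha\in(\Phi^+\setminus\Phi_J)\cup\big((\Phi^+\cup\Phi_J^-)\cap w(\Phi^+\cup\Phi_J^-)\big)$; each such $\alpha$ lies in $\Phi^+\cup\Phi_K^-$ (the only nontrivial case is $\alpha\in\Phi_J^-$, where $-\alpha\in\Phi_J^+$ forces $w^{-1}\alpha\in\Phi^-$, hence $-\alpha\in\Phi_J^+\cap w\Phi_J^+=\Phi_K^+$ by (1)), so $HU^J\subseteq P_K$. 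Conversely $L_K\subseteq H$ because $\Phi_K\subseteq\Phi_J$ and $\Phi_K=\Phi_J\cap w\Phi_J\subseteq w\Phi_J$, and $U_\alpha\subseteq H$ for $\alpha\in\Phi_J^+\setminus\Phi_K$ because then $w^{-1}\alpha\in\Phi^+\setminus\Phi_J$ by (2); as $T$, the $U_\alpha$ with $\alpha\in\Phi_K$, the $U_\alpha$ with $\alpha\in\Phi_J^+\setminus\Phi_K$, and the $U_\alpha$ with $\alpha\in\Phi^+\setminus\Phi_J$ together generate $P_K$, we get $P_K\subseteq HU^J$, so $HU^J=P_K$.

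\textbf{Main obstacle.} The crux is the ``base of $\Psi$ lies in $\Delta_J$'' step in (1); this is precisely where one needs $w$ minimal in the \emph{double} coset $W_JwW_J$ (not merely on one side), and it amounts to Kilmoyer's theorem on intersections of parabolic subgroups, which one could alternatively just cite. In (3), the only non-combinatorial ingredient is the standard structural fact that an intersection of two parabolic subgroups sharing a maximal torus is connected and is generated by that torus together with the root subgroups it contains.
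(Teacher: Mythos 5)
Your proof is correct, and while its overall strategy (pure root combinatorics for (1)–(2), then structure theory for (3)) matches the paper's, the way you execute (1) and (3) is genuinely different. For (1) the paper argues directly on supports: for $\beta\in\Phi_J\cap w\Phi_J\cap\Phi^+$ it shows every simple root $\alpha$ occurring in $\beta$ satisfies $w^{-1}(\alpha)\in\Delta_J$, hence $\beta\in\Phi_{J\cap wJw^{-1}}$; you instead identify the base of the intersection subsystem $\Psi=\Phi_J\cap w\Phi_J$ and show it lies in $\Delta_J$, i.e.\ you essentially reprove Kilmoyer's theorem. Both arguments run on the same engine (positivity of $w^{\pm1}$ on $\Phi_J\cap\Phi^+$ for $w\in{}^JW^J$); yours is more structural and makes explicit where double-coset minimality is used, but it silently invokes the standard fact that a subsystem is recovered from its base (needed for the assertion $\Psi=\Phi_L$, which follows e.g.\ from $\Psi$ being closed, or from $\Psi=W(\Psi)\cdot\Delta(\Psi)$). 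Part (2) is the same computation in both, just phrased as an element chase instead of first proving $\Phi_J\cap w(\Phi^+)=\Phi_J\cap\Phi^+$. For (3) the paper quotes the decomposition $(P_J\cap \dw P_J\dw^{-1})U^J=(L_J\cap \dw L_J\dw^{-1})(L_J\cap \dw U^J\dw^{-1})U^J$ from Digne--Michel together with the Borel--Tits descriptions of $L_J\cap\dw L_J\dw^{-1}$ and $L_J\cap\dw U^J\dw^{-1}$, and then applies (2); you instead prove both inclusions by comparing generators, using normality of $U^J$ in $P_J$ and the fact that an intersection of two parabolic subgroups containing $T$ is generated by $T$ and the root subgroups it contains. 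This is the same structural input packaged as a generation argument rather than a product decomposition: your route is more self-contained at the level of root subgroups, while the paper's is shorter because it outsources the group-theoretic bookkeeping to the cited results.
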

 \begin{proof}
   Since $w\in {}^JW^J$, for each $\beta\in \Phi_J\cap \Phi^+$ we have $w(\beta)\in \Phi^+$ and $w^{-1}(\beta)\in \Phi^+$. Hence, if $\beta\in \Phi_J\cap w(\Phi_J)\cap \Phi^+$, then $\beta=w(\beta')$ with $\beta'\in \Phi_J\cap \Phi^+$. Let $\alpha\in \Delta$ be a simple root satisfying $\alpha <\beta$. Since $\beta\in \Phi_J$, we have $\alpha\in \Delta_J\subset \Phi_J\cap \Phi^+$, hence $w(\alpha)\in \Phi^+$. Moreover, since $\beta = w(\beta')$, $\alpha<w(\beta')$ and both are in $\Phi_J$. Hence, $w(\beta')=\alpha+\gamma$ with $\gamma\in \Phi_J\cap \Phi^+$. This means that $w^{-1}(\alpha)+w^{-1}(\gamma) = \beta'$, where $\beta',w^{-1}(\gamma)\in \Phi_J\cap \Phi^+$ in particular $w^{-1}(\alpha)\in \Phi_J\cap \Phi^+$. We claim that $w^{-1}(\alpha)\in \Delta_J$. Indeed, if there exists $\alpha'\in \Phi_J\cap \Phi^+$ such that $\alpha'<w^{-1}(\alpha)$, then applying $w$, we would have $w(\alpha')\in \Phi^+ $ and $w(\alpha')<\alpha$, which contradicts the fact that $\alpha\in \Delta_J$.\par
    This proves that if $\beta\in \Phi_J\cap w\Phi_J\cap \Phi^+$, then $\beta$ is a sum of simple roots in $\Delta_{J\cap wJw^{-1}}$. Arguing analogously for $\beta\in \Phi_J\cap w\Phi_J\cap \Phi^-$ we see that $\Phi_J\cap w\Phi_J\subset \Phi_{J\cap wJw^{-1}}$. The reverse inclusion is straight-forward. This proves item (1).\par

   To prove item (2), we need only show $\Phi_J\cap w(\Phi^+)=\Phi_J\cap \Phi^+$. Let $\beta\in \Phi_J\cap \Phi^+$. Since $w\in {}^JW^J$, we have $w^{-1}(\beta)\in \Phi^+$ and hence $\beta\in w(\Phi^+)$. This proves that $\Phi_J\cap \Phi^+\subset \Phi_J\cap w(\Phi^+)$. Vice versa, assume that $\beta\in \Phi_J\cap \Phi_-$. Then $w^{-1}(\beta)\in \Phi_-$, which implies that $\Phi_J\cap \Phi_-\subset \Phi_J\cap w(\Phi_-)$. This finishes the proof of item (2).\par

   To prove item (3), write $P_J=L_J U^J$. By \cite[Proposition 2.1]{DigneMichelReps} we have that
   \begin{align*}
       (P_J\cap \dw P_J\dw^{-1})U^J = (L_J\cap \dw L_J\dw^{-1})(L_J\cap \dw U^J\dw^{-1})U^J
   \end{align*}
   By \cite{BT} (see also \cite[Proposition 0.34]{DigneMichelReps}) we have that
   \[
   L_J\cap \dw L_J\dw^{-1}=L_{J\cap w Jw^{-1}}
   \]
   and
   \[
   L_J\cap \dw U^J\dw^{-1}=<U_\beta>_{\beta\in \Phi_J\cap w(\Phi^+\setminus \Phi_J)}.
   \]
   By item (2), we have
   \[
   (L_J\cap \dw U^J\dw^{-1})U^J=<U_\beta>_{\beta\in \Phi^+\setminus\Phi_{J\cap wJw^{-1}}}=U^{J\cap wJw^{-1}}.
   \]
   Hence $(P_J\cap \dw P_J\dw^{-1})U^J=P_{J\cap wJw^{-1}}$.
  \end{proof}

  \begin{proposition}
    Let $g=p_1\dw p_2$ for $p_1,p_2\in P_J$ and $w\in {}^JW^J$ then
    \[
    (P_J\cap gP_Jg^{-1})U^{J}= p_1P_{J\cap wJw^{-1}}p_1^{-1}.
    \]
  \end{proposition}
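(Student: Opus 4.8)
The plan is to reduce this to Proposition \ref{prop:w_cap}(3), i.e.\ the case $g=\dw$, by exploiting that $p_1,p_2\in P_J$ normalize both $P_J$ and its unipotent radical $U^J$.

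First I would absorb $p_2$. Since $p_2\in P_J$ we have $p_2P_Jp_2^{-1}=P_J$, so
\[
gP_Jg^{-1}=p_1\dw(p_2P_Jp_2^{-1})\dw^{-1}p_1^{-1}=p_1\dw P_J\dw^{-1}p_1^{-1},
\]
and hence $P_J\cap gP_Jg^{-1}=P_J\cap p_1\dw P_J\dw^{-1}p_1^{-1}$. Next I would pull out $p_1$: since $p_1\in P_J$ gives $p_1P_Jp_1^{-1}=P_J$, and conjugation commutes with intersection, we get
\[
P_J\cap p_1\dw P_J\dw^{-1}p_1^{-1}=p_1\big(P_J\cap\dw P_J\dw^{-1}\big)p_1^{-1}.
\]
Now using that $U^J$ is normal in $P_J$, so that $p_1U^Jp_1^{-1}=U^J$, the set-theoretic product behaves well under conjugation:
\[
(P_J\cap gP_Jg^{-1})\,U^J=p_1\big(P_J\cap\dw P_J\dw^{-1}\big)p_1^{-1}\cdot p_1U^Jp_1^{-1}=p_1\Big(\big(P_J\cap\dw P_J\dw^{-1}\big)U^J\Big)p_1^{-1}.
\]
Finally I would invoke Proposition \ref{prop:w_cap}(3), which identifies $(P_J\cap\dw P_J\dw^{-1})U^J=P_{J\cap wJw^{-1}}$, to conclude $(P_J\cap gP_Jg^{-1})U^J=p_1P_{J\cap wJw^{-1}}p_1^{-1}$.

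There is no real obstacle here: the argument is a formal manipulation, and the only structural inputs are the standard facts that an element of $P_J$ normalizes $P_J$ and that $U^J\trianglelefteq P_J$, together with the substantive content already supplied by Proposition \ref{prop:w_cap}(3). If anything needs a word of care it is the step where $U^J$ is pulled inside the conjugation by $p_1$, which is exactly where normality of the unipotent radical is used.
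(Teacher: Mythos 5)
Your argument is correct and is essentially identical to the paper's proof: absorb $p_2$ using $p_2P_Jp_2^{-1}=P_J$, pull $p_1$ out of the intersection, use normality of $U^J$ in $P_J$ to move $U^J$ inside the conjugation, and conclude with Proposition \ref{prop:w_cap}(3). No gaps.
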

  \begin{proof}
   Since $p_1^{-1}U^{J}=U^Jp_1^{-1}$, because $U^J$ is normal in $P_J$ (see \cite{AN_hecke}), we have that
   \begin{align*}
       (P_J\cap gP_Jg^{-1})U^J = & (p_1P_Jp_1^{-1}\cap p_1\dw P_J\dw^{-1}p_1^{-1})U^J\\
                               = & p_1(P_J\cap \dw P_j\dw^{-1})p_1^{-1}U^J\\
                               = & p_1(P_J\cap \dw P_j\dw^{-1})U^Jp_1^{-1}.
   \end{align*}
   The result follows from Proposition \ref{prop:w_cap} item (3).
  \end{proof}

\begin{Cons}
\label{cons:lusztig_parabolic}
  Let $J\subset S$ be a subset of simple transpositions of $W$. Consider a pair $(X, gP_J)\in G\times G/P_J$. By the Bruhat decomposition there exists a unique $w\in {}^JW^J$ such that $g^{-1}Xg\in P_J\dw P_J$. Write $g^{-1}Xg = p_1\dw p_2$ and define $J' = J \cap wJw^{-1}$. Define $g' = gp_1$ and consider the pair $(X, g'P_{J'})$. Let $w'$ be the unique element in ${}^{J'}W^{J'}$ such that $g'^{-1}Xg'\in P_{J'}\dw'P_{J'}$. Note that the construction above does not depend on the choice of $g$ in $gP_J$, and we can choose $g$ in such a way that $g^{-1}Xg \in \dw P_J$. In this case, $g' = g$.

\end{Cons}
The analogous construction using flags is given below.
\begin{Cons}
\label{cons:lusztig_parabolic_flag}
Let $G=GL_n$ and let $J\subset S$ be a subset of the set of simple transpositions of $W=S_n$. We identify $S=\{1,\ldots, n-1\}$ and write $S\setminus J = \{ i_1,\ldots, i_k\}$. Let $(V_\bullet^J,X)$ be a pair in $G\times G/P_J$, where
\[
V_\bullet^J = 0 = V_0 \subset V_{i_1}\subset V_{i_2}\subset \ldots \subset V_{i_k}\subset V_n=\mathbb{C}^n.
\]
We construct the flag $V'_\bullet$ by coalescing (we remove vector spaces with the same dimension) the flag
\begin{align*}
   &0 \subseteq V_{i_1}\cap XV_{i_1} \subseteq V_{i_1}\cap XV_{i_2} \subseteq \ldots V_{i_1}\cap XV_{i_k}\subseteq V_{i_1}\subseteq \\
   &\subseteq V_{i_1}+(V_{i_2}\cap X{V_{i_1}}) \subseteq V_{i_1}+(V_{i_2}\cap XV_{i_2}) \subseteq\ldots V_{i_1}+(V_{i_2}\cap XV_{i_k})\subseteq V_{i_2}\subseteq \\
   & \vdots \\
   &\subseteq V_{i_j}+(V_{i_{j+1}}\cap XV_{i_1})\subseteq V_{i_j}+(V_{i_{j+1}}\cap XV_{i_2})\subseteq \ldots \subseteq V_{i_j}+(V_{i_{j+1}}\cap XV_{i_{k}}) \subseteq V_{i_{j+1}} \subseteq\\
   &\vdots \\
   &\subseteq V_{i_k}+ XV_{i_1}\subseteq V_{i_k}+ XV_{i_2}\subseteq \ldots \subseteq V_{i_k}+ XV_{i_{k}} \subseteq V_{n} =\mathbb{C}^n.
  \end{align*}

As in Construction \ref{cons:lusztig_parabolic}, there exists a unique permutation $w\in {}^JS_n^J$ such that $\dim XV_{i_j}\cap V_{i_{j'}}=r_{i_j,i_{j'}}(w)$ . Moreover, there exists $J'$ such that $V'_\bullet$ is a partial flag in $G/P_{J'}$ and we can construct $w'$ analogously.

\end{Cons}

\begin{proposition}
 For $G=GL_n$, constructions \ref{cons:lusztig_parabolic} and \ref{cons:lusztig_parabolic_flag} are equivalent.
\end{proposition}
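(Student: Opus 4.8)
The plan is to make the dictionary between partial flags and parabolic cosets precise and to check that it intertwines the two constructions. Identify a coset $gP_J\in GL_n/P_J$ with the partial flag $gE_\bullet^J$, where $E_\bullet^J$ denotes the standard coordinate flag with jump set $S\setminus J=\{i_1,\dots,i_k\}$; then the pair $(X,gP_J)$ corresponds to $(gE_\bullet^J,X)$, the translate $XgP_J$ corresponds to $XgE_\bullet^J$, and the Bruhat membership $g^{-1}Xg\in P_J\dw P_J$ with $w\in{}^JW^J$ is equivalent to the statement that the ordered pair of flags $(gE_\bullet^J,\,XgE_\bullet^J)$ has relative position $w$, i.e. $\dim XgE_{i_a}\cap gE_{i_b}=r_{i_a,i_b}(w)$ for all $a,b$, with $w$ the unique such element of ${}^JW^J$. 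This standard fact already matches the two definitions of $w$ (and, at the next stage, of $w'$), so what remains is to identify the new coset $g'P_{J'}$ with the coalesced flag $V'_\bullet$, together with the two meanings of $J'$.

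First I would observe that both constructions are $GL_n$-equivariant: conjugating $X$ by $h$ and translating the coset $gP_J$ (resp. the flag $V_\bullet^J$) by $h$ leaves $g^{-1}Xg$ unchanged and carries $g'=gp_1$ to $hg'$ on the group side, while on the flag side it multiplies every subspace $V_{i_a}\cap XV_{i_b}$ and every sum $V_{i_a}+(V_{i_{a+1}}\cap XV_{i_b})$ by $h$, hence carries $V'_\bullet$ to $hV'_\bullet$. Using the normalisation noted in Construction \ref{cons:lusztig_parabolic} we may pick the representative $g$ with $g^{-1}Xg\in\dw P_J$, so that $p_1=e$ and $g'=g$, and then act by $g^{-1}$ to reduce to the base case $V_\bullet^J=E_\bullet^J$, $X=\dw p$ with $p\in P_J$; in this case Construction \ref{cons:lusztig_parabolic} outputs $g'P_{J'}=P_{J\cap wJw^{-1}}$, i.e. the coordinate flag $E_\bullet^{J\cap wJw^{-1}}$.

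It then remains to compute the coalesced flag of Construction \ref{cons:lusztig_parabolic_flag} in this base case and check it equals $E_\bullet^{J\cap wJw^{-1}}$. Since $p\in P_J$ fixes every term of $E_\bullet^J$, we get $XE_{i_b}=\dw E_{i_b}=\langle e_{w(1)},\dots,e_{w(i_b)}\rangle$, so each space $E_{i_a}+(E_{i_{a+1}}\cap \dw E_{i_b})=\langle e_c: c\le i_a\ \text{or}\ (i_a<c\le i_{a+1}\ \text{and}\ w^{-1}(c)\le i_b)\rangle$ is a coordinate subspace, and the coalesced flag is the coordinate flag $E_\bullet^{J''}$ of some type $S\setminus J''$. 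The key point is that $w\in{}^JW^J$ forces $w^{-1}$ to be increasing along each block $(i_a,i_{a+1}]$, so that $\{c\in(i_a,i_{a+1}]:w^{-1}(c)\le i_b\}$ is an initial segment of that block; reading off which $m$ occur as dimensions of $E_\bullet^{J''}$, one finds that a new jump appears inside $(i_a,i_{a+1})$ at $m$ exactly when $w^{-1}(m)$ and $w^{-1}(m+1)$ lie in different blocks, i.e. exactly when $\alpha_m\notin\Phi_J\cap w\Phi_J$. By Proposition \ref{prop:w_cap}(1) this is precisely the condition $m\notin J\cap wJw^{-1}$, so $J''=J\cap wJw^{-1}=J'$ and $E_\bullet^{J''}=E_\bullet^{J'}$; equivariance then gives $V'_\bullet=g'E_\bullet^{J'}$ in general. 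After one step we are back in the setting of the two constructions with data $(J',w')$, so the full admissible sequences $(J_n,w_n)$ and the stabilised data agree by induction.

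The main obstacle is this last step: the bookkeeping that identifies the type $J''$ of the coalesced coordinate flag with $J\cap wJw^{-1}$. Everything there rests on the fact that $w$ and $w^{-1}$ are increasing on the blocks of $J$ — the ${}^JW^J$ condition — which is exactly what converts ``the subspaces occurring in the coalescing chain'' into ``coordinate subspaces indexed by initial segments of blocks'', after which Proposition \ref{prop:w_cap} translates the resulting jump set into $J\cap wJw^{-1}$. By comparison, the flag/coset dictionary, the relative-position translation, and the equivariance reduction are routine.
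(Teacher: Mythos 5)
Your proof is correct and follows essentially the same route as the paper: pick the representative with $g^{-1}Xg\in\dw P_J$ (your equivariance reduction to $E_\bullet^J$ is the same as the paper's computation in the frame of the columns of $g$), use the ${}^JW^J$ condition to see that every term of the coalescing chain is a coordinate subspace given by an initial segment, and identify the resulting jump set with $S\setminus(J\cap wJw^{-1})$. The only difference is cosmetic: you obtain the jump-set identification by invoking Proposition \ref{prop:w_cap}(1), whereas the paper proves the corresponding index-set identities and the combinatorial description of $J\cap wJw^{-1}$ directly inside the proof.
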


  \begin{proof}
   % Let us assume that $G=GL_n(\mathbb{C})$ and $W=S_n$. Let $J\subset S=\{s_1,\ldots, s_{n-1}\}$\footnote{é assim que escrevemos as reflexoes simples de $S_n$}, in this example we will write $J=\{j_1,\ldots, j_k\}$ instead of $J=\{s_{j_1},\ldots, s_{j_k}\}$.

    $G/P_{J}$ can be identified with the partial flag variety parametrizing flags
    \[
    V^J_\bullet = 0=V_0 \subset V_{i_1}\subset \ldots \subset V_{i_l} \subset \ldots V_{n}=\mathbb{C}^n
    \]
    where $\{i_1,\ldots, i_k\}=\{1,\ldots,n-1\}\setminus J=: J^c$. We briefly recall this identification. If a matrix $g$ has $v_1,\ldots, v_n$ as columns, then the point $gP_J$ is associated to the partial flag $V_\bullet$ where $V_{i_k}=\langle v_1,\ldots, v_{i_k}\rangle$. If $X$ is an invertible matrix, then $XgP_J$ corresponds to the flag $XV_\bullet^J = (XV_{i_k})$. The condition that $g^{-1}Xg\in P_J\dw P_J$ for $w\in {}^JS_n^J$ is equivalent to the equalities $\dim V_{i_k}\cap XV_{i_{k'}} = r_{i_k,i_{k'}}(w)$ (in fact, there exists exactly one permutation $w\in {}^JS_n^J$ satisfying these equalities).\par
    It is clear that given a flag $V^J_\bullet$ there exist multiple choices of $g$ such that $gP_J$ induces $V^J_\bullet$. In fact any element in $gP_J$ will give rise to the same flag. As in Construction \ref{cons:lusztig_parabolic}, we can choose $g$ in a way that $g^{-1}Xg\in \dw P_J$. This is equivalent to saying that $Xg$ and $g \dw$ induces the same partial flag $XV^J_\bullet$. In other words, we have that $\langle Xv_1,\ldots, Xv_{i_j} \rangle = \langle v_{w(1)},v_{w(2)},\ldots, v_{w(i_j)} \rangle$ for every $j\in 1,\ldots, k$.\par

    Before continuing, let us characterize the permutations in ${}^JS_n^J$. We have that $w\in {}^JS_n^J$ if and only if $w(i)<w(i+1)$ and $w^{-1}(i)<w^{-1}(i+1)$ for every $i \in J$. Also, we have that $J_1=J\cap wJw^{-1}$ consists precisely of the elements $j$ of $J$ such that there exists $i\in J$ with $j=w(i)$ and $j+1=w(i+1)$.\par

    Let $J'$ and $g'$ be as in Construction \ref{cons:lusztig_parabolic}. We will see that $g'P_{J'}$ induces the flag $V'_\bullet$ given by Construction \ref{cons:lusztig_parabolic_flag}. In fact, we have that $g'=g$ (recall that we chose $g$ satisfying $g^{-1}Xg\in \dw P_J$). We let $F_\bullet$ be the partial flag induced by $g'P_{J'}$. Writing $\{i'_1,\ldots, i'_{k'}\}:=\{1,\ldots, n-1\}\setminus J'$, we have that $F_{i'_j}=\langle v_1,\ldots, v_{i'_j} \rangle$. If $i'_j\notin J$ then $F_{i'_j}=V_{i'_j} = V'_{i'_j}$. Let us assume $i'_j\in J$, in particular that $w^{-1}(i'_j)< w^{-1}(i'_j +1)$. Let $i_{j_1}$ be the maximum element of $\{0\} \cup (J^c\cap \{1,\ldots, i'_k\})$ and let $i_{j_2}$ be an element of $\{w^{-1}(i'_j),w^{-1}(i'_{j})+1,\ldots, w^{-1}(i'_j+1)-1\}\setminus J$. We will see that
    \begin{equation}
    \label{eq:FVX}
      F_{i'_j} = V_{i_{j_1}}+(V_{i_{j_1+1}}\cap XV_{i_{j_2}}).
    \end{equation}

    First, let us prove that $\{w^{-1}(i'_j),w^{-1}(i'_{j})+1,\ldots, w^{-1}(i'_j+1)-1\}\setminus J$ is nonempty.  Assume by contradiction that $\{w^{-1}(i'_j),w^{-1}(i'_{j})+1,\ldots, w^{-1}(i'_j+1)-1\}\subset J$. Since $w(i)<w(i+1)$  for every $i\in J$, we have that
    \[
      i'_j<w(w^{-1}(i'_j)+1)<w(w^{-1}(i'_j)+2)<\ldots <w(w^{-1}(i'_j+1)-1)< i'_j+1,
    \]
    which can only happen if $w^{-1}(i'_j+1)= w^{-1}(i'_j)+1$, but that implies that $i'_j\in wJw^{-1}$ and hence $i'_j\in J_1$, a contradiction.

    %and since $i'_j\notin J_1 = J \cap wJw^{-1}$ that means that there are no elements $i\in J$ such that $i'_j=w(i)$ and $i'_j+1=w(i+1)$. By the hypothesis, we have that $w^{-1}(i'_j)\in J$ which means that $w^{-1}(i'_j+1)>w^{-1}(i'_j)+1$. Then
    %\[
    %  i'_j<w(w^{-1}(i'_j)+1)<w(w^{-1}(i'_j)+2)<\ldots <w(w^{-1}(i'_j+1)-1)< i'_j+1
    %\]
    %which is a contradiction. Hence, in both cases, we have that $\{w^{-1}(i'_k),w^{-1}(i'_k)+1,\ldots, w^{-1}(i'_k+1)-1\}$ is not contained in $J$. Let $i_{k_1}$ be the maximum element of the set $J^c\cap \{0,\ldots, i'_k\}$ and let $i_{k_2}$ be an element of $\{w^{-1}(i'_k),w^{-1}(i'_k)+1,\ldots, w^{-1}(i'_k+1)-1\}\setminus J$, in particular $w^{-1}(i'_k)\leq i_{k_2} < w^{-1}(i'_k+1)$. We claim that
    %\[
    %V^{J_1}_{i'_k}=V^{J}_{i_{k_1}}+(V^J_{i_{k_1+1}}\cap XV_{i_{k_2}}).
    %\]

    Finally, let us prove Equation \eqref{eq:FVX}. We have that $F_{i'_j}=\langle v_1,\ldots v_{i'_j}\rangle$, $V_{i_{j_1}}=\langle v_1,\ldots, v_{i_{j_1}} \rangle$, $V_{i_{j_1+1}}=\langle v_1,\ldots, v_{i_{j_1+1}} \rangle $ and $XV_{i_{j_2}}=\langle v_{w(1)}, v_{w(2)},\ldots, v_{w(i_{j_2})}\rangle$. Then it is enough to prove that
    \begin{equation}
    \label{eq:iw}
    \{1,\ldots, i'_j\} = \{1,\ldots, i_{j_1}\} \cup (\{1,\ldots, i_{j_1+1}\} \cap \{w(1),\ldots, w(i_{j_2})\} ).
    \end{equation}
    By the definition of $i_{j_1}$, we have that $i_{j_1}<i_{j_1}+1 <\ldots < i'_{j} < i'_j+1<\ldots < i_{j_1+1}$ hence $\{i_{j_1}+1,\ldots, i_{j_1+1}-1\}\subset J$ and then
    \[
    w^{-1}(i_{j_1}+1)<w^{-1}(i_{j_1}+2)<\ldots < w^{-1}(i'_j) < w^{-1}(i'_j+1) < \ldots <w^{-1}(i_{j_1+1}).
    \]
    Since $w^{-1}(i'_j)\leq i_{j_2} < w^{-1}(i'_j+1)$, we have that
    \begin{equation}
        \label{eq:i'k}
    w^{-1}(i_{j_1}+1)<w^{-1}(i_{j_1}+2)<\ldots < w^{-1}(i'_j)\leq i_{j_2} < w^{-1}(i'_j+1) < \ldots <w^{-1}(i_{j_1+1}).
    \end{equation}
    This means that $\{i_{j_1}+1,\ldots, i'_j\}\subset \{w(1),\ldots, w(i_{j_2})\}$ while $\{i'_j+1,\ldots, i_{j_1+1}\}\cap \{w(1),\ldots, w(i_{k_2})\}=\emptyset$, proving Equation \eqref{eq:iw} (and, hence, Equation \eqref{eq:FVX} as well).\par

    Note that any choice of an element $i_{j_2}\in \{w^{-1}(i'_j),w^{-1}(i'_j)+1,\ldots, w^{-1}(i'_j+1)-1\}\setminus J$ will satisfy Equation \eqref{eq:FVX}. Conversely, given any $i_{j_1}, i_{j_2} \in J^c$, the set
    \[
    \{1,\ldots, i_{j_1}\} \cup (\{1,\ldots, i_{j_1+1}\} \cap \{w(1),\ldots, w(i_{j_2})\}
    \]
    is of the form $\{1,\ldots, i'_j\}$. Indeed, just choose $i'_j$ such that Equation \eqref{eq:i'k} holds, if it exists. If it does not exits, then it means that $i_{j_2}<w^{-1}(i_{j_1}+1)$. If $i'_j$ does not exists or $i'_j=i_{j_1}+1$, then
    \[
     V_{i_{j_1}}+(V_{i_{j_1+1}}\cap XV_{i_{j_2}}) = F_{i_{j_1}}\text{ or } F_{i_{j_1+1}}.
    \]
    Otherwise, we have that $w^{-1}(i'_k)\leq i_{k_2} < w^{-1}(i'_k+1)$, which means that $i'_k\notin J_1$ and hence
    \[
    V^J_{i_{k_1}}\cup(V^{J}_{i_{k_1+1}}\cap XV_{i_{k_2}}) = V^{J_1}_{i'_k}.
    \]

  \end{proof}

Given a pair $(X, g_0P_{J_0})$ we have an associated pair $(J_0,w_0)$ with $w_0\in {}^{J_0}W^{J_0}$ such that $g^{-1}Xg\in P_{J_0}\dw P_{J_0}$. We can iterate Construction \ref{cons:lusztig_parabolic} and construct a sequence $(X,g_nP_{J_n})$ with an associated sequence $(J_n, w_n)$. We claim that the latter sequence is admissible.

  % Given a pair $(g_nP_{J_n},X)\in G/P_{J_n}\times G$, we define the iterated pair $(g_{n+1}P_{J_{n+1}},X)\in G/P_{J_{n+1}}\times G$ as follows. Let $g_n^{-1}Xg_n=p_1\dw_np_2$ with $p_1,p_2\in P_{J_n}$ and $w_{n}\in {}^{J_n}W^{J_n}$ and define $J_{n+1}=J_n\cap w_nJ_nw_n^{-1}$ and $g_{n+1}=g_np_1$. We remark that one could choose another representative $g_n'=g_np_1\in g_nP_{J_n}$ such that $g_n'^{-1}Xg_n'\in \dw_NP_{J_n}$ and in this case $g_{n+1}=g'_n$.

  \begin{proposition}[{\cite[Section 2.8]{LusztigParabolicI}}]
    Starting with a pair $(X, g_0P_{J_0})$ and constructing the sequence $(J_n,w_n)$ following the steps above, we have that the sequence $(J_n,w_n)$ is admissible.
  \end{proposition}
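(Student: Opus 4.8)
The plan is to check the three defining conditions of admissibility for the sequence $(J_n,w_n)_{n\geq 0}$ produced by iterating Construction \ref{cons:lusztig_parabolic}. Conditions (1) and (2) are immediate from the construction itself: at stage $n$ one applies the Bruhat decomposition $G=\bigsqcup_{w\in {}^{J_n}W^{J_n}}P_{J_n}\dot w P_{J_n}$ to obtain the \emph{unique} $w_n\in {}^{J_n}W^{J_n}$ with $g_n^{-1}Xg_n\in P_{J_n}\dot{w}_n P_{J_n}$, which is (2), and then one \emph{defines} $J_{n+1}:=J_n\cap w_nJ_nw_n^{-1}$, which is (1). Hence the only real content is condition (3), namely $w_{n+1}\in W_{J_n}w_nW_{J_n}$.

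For (3), the key computation is the following. Write $g_n^{-1}Xg_n=p_1\dot{w}_n p_2$ with $p_1,p_2\in P_{J_n}$, so that by construction $g_{n+1}=g_np_1$ and therefore
\[
g_{n+1}^{-1}Xg_{n+1}=p_1^{-1}\bigl(g_n^{-1}Xg_n\bigr)p_1=p_1^{-1}p_1\dot{w}_np_2p_1=\dot{w}_np_2p_1 .
\]
Since $p_1,p_2\in P_{J_n}$, the right-hand side lies in $P_{J_n}\dot{w}_nP_{J_n}$. On the other hand, $w_{n+1}$ is by definition the element of ${}^{J_{n+1}}W^{J_{n+1}}$ with $g_{n+1}^{-1}Xg_{n+1}\in P_{J_{n+1}}\dot{w}_{n+1}P_{J_{n+1}}$, and because $J_{n+1}\subseteq J_n$ we have $P_{J_{n+1}}\subseteq P_{J_n}$, so $g_{n+1}^{-1}Xg_{n+1}\in P_{J_n}\dot{w}_{n+1}P_{J_n}$. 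Thus the two double cosets $P_{J_n}\dot{w}_nP_{J_n}$ and $P_{J_n}\dot{w}_{n+1}P_{J_n}$ both contain the element $g_{n+1}^{-1}Xg_{n+1}$, hence coincide; since the double cosets $P_{J_n}\backslash G/P_{J_n}$ are indexed by $W_{J_n}\backslash W/W_{J_n}$ (with $P_{J_n}\dot wP_{J_n}=P_{J_n}\dot w'P_{J_n}$ iff $W_{J_n}wW_{J_n}=W_{J_n}w'W_{J_n}$), this gives $W_{J_n}w_nW_{J_n}=W_{J_n}w_{n+1}W_{J_n}$, i.e.\ $w_{n+1}\in W_{J_n}w_nW_{J_n}$, which is (3).

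The only point requiring a word of care is that the factorization $g_n^{-1}Xg_n=p_1\dot{w}_np_2$ is not unique, so a priori $g_{n+1}P_{J_{n+1}}$ could depend on the choice of $p_1$; but the identity $g_{n+1}^{-1}Xg_{n+1}=\dot{w}_np_2p_1$ holds for every admissible choice, and condition (3) involves $w_{n+1}$ only through the double coset of $g_{n+1}^{-1}Xg_{n+1}$ modulo $P_{J_n}$, so the argument above is insensitive to this ambiguity. (Alternatively, as observed in Construction \ref{cons:lusztig_parabolic}, one may normalize $g_n$ so that $g_n^{-1}Xg_n\in\dot{w}_nP_{J_n}$, i.e.\ $p_1=e$ and $g_{n+1}=g_n$, making the computation collapse to $g_{n+1}^{-1}Xg_{n+1}=\dot{w}_np_2\in P_{J_n}\dot{w}_nP_{J_n}$.) I do not expect a genuine obstacle here; the main thing is to keep the parabolic double-coset bookkeeping clean and not to conflate the representatives $\dot w$ with the elements $w\in W$.
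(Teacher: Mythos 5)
Your argument is correct. Conditions (1) and (2) are indeed built into Construction \ref{cons:lusztig_parabolic}, and your verification of (3) — that $g_{n+1}^{-1}Xg_{n+1}=\dot{w}_np_2p_1$ lies in $P_{J_n}\dot{w}_nP_{J_n}$ while also lying in $P_{J_{n+1}}\dot{w}_{n+1}P_{J_{n+1}}\subseteq P_{J_n}\dot{w}_{n+1}P_{J_n}$, forcing $W_{J_n}w_nW_{J_n}=W_{J_n}w_{n+1}W_{J_n}$ by the parabolic Bruhat decomposition — is exactly the right point. The paper gives no proof of this proposition, citing \cite[Section 2.8]{LusztigParabolicI} instead, so there is nothing to compare against beyond noting that your double-coset bookkeeping is the standard argument; your aside about independence of the choice of $p_1$ is also handled in the paper by the preceding proposition showing $(P_J\cap gP_Jg^{-1})U^J=p_1P_{J\cap wJw^{-1}}p_1^{-1}$, which makes $g_{n+1}P_{J_{n+1}}$ itself well defined.
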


  %If $(V_\bullet^J,X)$ is a pair consisting of a flag variety and an invertible matrix, then we can construct the flag variety $V_\bullet^{J_1}$ by
  %\begin{align*}
  % V_\bullet^{J_1} =& 0 \subset V_{i_1}\cap XV_{i_1} \subset V_{i_1} \subset V_{i_1}+(V_{i_2}\cap X{V_{i_1}}) \subset V_{i_1}+(V_{i_2}\cap XV_{i_2}) \subset V_{i_2} \subset V_{i_2}+(V_{i_3}\cap XV_{i_1})\subset\\
  % &\subset V_{i_2}+(V_{i_3}\cap XV_{i_2})\subset \ldots \subset V_{i_k}+(V_{i_{k+1}}\cap X_{i_{k'}}) \subset \ldots \subset \mathbb{C}^n
  %\end{align*}

  %\begin{proposition}
  %In the case that $G=GL_n$ we have that both constructions agree. That is, if $(V_\bullet^J,X)$ corresponds to $(gP_J,X)$, then $(V_\bullet^{J_1}, X)$ corresponds to $(g'P_{J_1},X)$.
  %\end{proposition}

  \begin{definition}
  \label{def:lusztig_parabolic}
    For every sequence $\mathbf{t}=(J_n,w_n)$ with $J_0=J$, we define the \emph{parabolic Lusztig cell} and \emph{variety} as
    \begin{align*}
    \mathcal{Y}_{\mathbf{t}}^\circ&:=\{(X, gP_J), \mathbf{t}(X, gP_J)=\mathbf{t}\},\\
    \mathcal{Y}_{\mathbf{t}} &:= \overline{\mathcal{Y}_{\mathbf{t}}^\circ},
    \end{align*}
    respectively.    Since every sequence $\mathbf{t}$ with $J_0=J$ is in bijection with ${}^JW$, we write $\mathcal{Y}_{w,J}^{\circ}:=\mathcal{Y}_{\mathbf{t}}^\circ$ and $\mathcal{Y}_{w,J} := \mathcal{Y}_{\mathbf{t}}$.
  \end{definition}

\begin{example}
  If $W=S_n$ and $w\in {}^JW$ is such that $w^{-1}Jw=J$ (in particular $w\in {}^JW^J$), then there is only one admissible sequence starting with $(w,J)$, and it is the constant sequence $(w_n,J_n)=(w,J)$. This means that we have the following characterization of $\h_{w,J}^\circ$
\begin{align*}
    \h_{w,J}^\circ:=\{(X, V_\bullet); X\in G, XV_i\cap V_j = r_{ij}(w)\text{ for }i,j\in [n]\setminus J\}.
\end{align*}

\end{example}

  \begin{proposition}[{\cite[Section 3.11]{LusztigParabolicI}}]
  \label{prop:loc_closed}
   The variety $\Y_{\mathbf{t}}^{\circ}$ is a locally closed subvariety of $G/P_J\times G$.
  \end{proposition}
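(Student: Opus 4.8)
The plan is to realize $\Y_{\mathbf t}^\circ$ as an iterated preimage of locally closed subsets under morphisms that implement the individual steps of Construction \ref{cons:lusztig_parabolic}, and to use that an admissible sequence stabilizes after finitely many steps (so only finitely many conditions intervene). Throughout I work with a general reductive $G$, as in Construction \ref{cons:lusztig_parabolic} and Proposition \ref{prop:w_cap}.

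First I would isolate a single relative-position stratum. Fix $J\subseteq S$ and $w\in {}^JW^J$. The diagonal $G$-action on $G/P_J\times G/P_J$ has, among its orbits (which are locally closed), the orbit $O_{J,w}$ of $(eP_J,\dw P_J)$; because $w\in{}^JW^J$ one checks $O_{J,w}=\{(g_0P_J,g_1P_J)\col g_0^{-1}g_1\in P_J\dw P_J\}$. The map $m\col G\times G/P_J\to G/P_J\times G/P_J$, $(X,gP_J)\mapsto(gP_J,XgP_J)$, is a morphism, so
\[
Z_{J,w}:=m^{-1}(O_{J,w})=\{(X,gP_J)\col g^{-1}Xg\in P_J\dw P_J\}
\]
is locally closed in $G\times G/P_J$. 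This is exactly the locus on which the first pair produced by Construction \ref{cons:lusztig_parabolic} from $J_0=J$ is $(J,w)$.

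Next I would check that the transition step of Construction \ref{cons:lusztig_parabolic} is a morphism on $Z_{J,w}$. Set $J'=J\cap wJw^{-1}$ and $Q=P_J\cap\dw P_J\dw^{-1}$, so that $O_{J,w}\cong G/Q$ as a $G$-variety (the stabilizer of $(eP_J,\dw P_J)$ is $Q$). By Proposition \ref{prop:w_cap}(3), $QU^J=P_{J'}$; in particular $Q\subseteq P_{J'}$, so the natural projection $\pi\col G/Q\to G/P_{J'}$ is a morphism. I claim the composite
\[
\Phi_{J,w}\col Z_{J,w}\xrightarrow{(\pr_G,\,m)}G\times O_{J,w}\xrightarrow{\ \mathrm{id}\times\pi\ }G\times G/P_{J'}
\]
is the assignment $(X,gP_J)\mapsto(X,gp_1P_{J'})$ of Construction \ref{cons:lusztig_parabolic}, where $g^{-1}Xg=p_1\dw p_2$ with $p_i\in P_J$. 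Indeed, writing $\mathcal P=gP_Jg^{-1}$ and $\mathcal Q=X\mathcal PX^{-1}$, the proposition immediately following Proposition \ref{prop:w_cap} gives, after conjugating by $g$, that $(\mathcal P\cap\mathcal Q)(gU^Jg^{-1})=gp_1P_{J'}p_1^{-1}g^{-1}$; under the identification $hP_{J'}\leftrightarrow hP_{J'}h^{-1}$ of $G/P_{J'}$ with parabolics of type $J'$, this reads $\pi(\mathcal P,\mathcal Q)=gp_1P_{J'}$, and it simultaneously shows that $gp_1P_{J'}$ does not depend on the chosen Bruhat factorization (nor on the coset representative $g$, since $U^J$ is normal in $P_J$). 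Hence $\Phi_{J,w}$ is a well-defined morphism carrying out one step of the construction.

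Finally I would iterate by induction on the stabilization index $n_0=n_0(\mathbf t)$, the least $n$ with $(J_{n+1},w_{n+1})=(J_n,w_n)$. If $n_0=0$ then $J_0=w_0J_0w_0^{-1}$ and, since $p_1\in P_{J_0}$ forces $gp_1P_{J_0}=gP_{J_0}$, every step from a point of $Z_{J_0,w_0}$ reproduces the same pair; thus $\Y_{\mathbf t}^\circ=Z_{J_0,w_0}$, which is locally closed by the first step. If $n_0\geq1$, let $\mathbf t'=(J_{n+1},w_{n+1})_{n\geq0}$, an admissible sequence starting at $J_1$ with $n_0(\mathbf t')=n_0-1$; by induction $\Y_{\mathbf t'}^\circ$ is locally closed in $G\times G/P_{J_1}$. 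Unwinding the definition of $\mathbf t(X,gP_J)$ shows $\Y_{\mathbf t}^\circ=\Phi_{J_0,w_0}^{-1}(\Y_{\mathbf t'}^\circ)$, which is locally closed in $Z_{J_0,w_0}$, hence in $G\times G/P_J$ (a locally closed subset of a locally closed subset is locally closed). The one substantive point is the transition step: showing that the non-canonical choice of $p_1$ in Construction \ref{cons:lusztig_parabolic} globalizes to a morphism. This is handled by the $G$-equivariant closed formula $g_1P_{J'}g_1^{-1}=(\mathcal P\cap\mathcal Q)(gU^Jg^{-1})$, which exhibits the transition as the fibration $G/Q\to G/P_{J'}$; the remaining ingredients — orbits are locally closed, preimages and finite intersections of locally closed sets are locally closed, and admissible sequences stabilize — are formal or already proved above.
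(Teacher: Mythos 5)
Your argument is correct. Note that the paper itself gives no proof of this statement: it is quoted directly from Lusztig \cite[Section 3.11]{LusztigParabolicI}, so there is nothing internal to compare against, and what you have produced is a sound self-contained justification in the spirit of Lusztig's stratification argument. The three pillars all hold up under scrutiny: (i) the locus $Z_{J,w}$ where the first pair of the associated sequence is $(J,w)$ is the preimage under the morphism $(X,gP_J)\mapsto(gP_J,XgP_J)$ of the diagonal $G$-orbit of $(eP_J,\dw P_J)$, hence locally closed; (ii) the transition step of Construction \ref{cons:lusztig_parabolic} is a genuine morphism $Z_{J,w}\to G\times G/P_{J'}$, because the stabilizer of $(eP_J,\dw P_J)$ is $Q=P_J\cap\dw P_J\dw^{-1}$, Proposition \ref{prop:w_cap}(3) gives $Q\subseteq QU^J=P_{J'}$ so that $G/Q\to G/P_{J'}$ is defined, and the point $(gP_J,XgP_J)=(gp_1)\cdot(eP_J,\dw P_J)$ maps to $gp_1P_{J'}$ independently of the Bruhat factorization (your closed formula via the proposition following \ref{prop:w_cap} makes the canonicity explicit, and the identification $O_{J,w}\cong G/Q$ is harmless in characteristic zero); (iii) the induction on the stabilization index is clean, since the shifted sequence $\mathbf{t}'$ is again admissible with smaller index, $\Y_{\mathbf t}^\circ=\Phi_{J_0,w_0}^{-1}(\Y_{\mathbf t'}^\circ)$ by unwinding the definition of the associated sequence, and in the base case $p_1\in P_{J_0}$ forces the construction to reproduce the same pair, giving $\Y_{\mathbf t}^\circ=Z_{J_0,w_0}$. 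Your preimage description along the transition morphism is also consistent with (indeed essentially reproves the set-theoretic content of) Proposition \ref{prop:t_iso_t1}, so the proof fits coherently with the surrounding results.
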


  \begin{proposition}[{\cite[Section 3.12]{LusztigParabolicI}}]
  \label{prop:t_iso_t1}
      We have an isomorphism $\h_{\mathbf{t}}^\circ\to \h_{\mathbf{t}_1}^\circ$ where $\mathbf{t}_1 = (J_n,w_n)_{n\geq 1}$. In particular, $\h_{w,J}^\circ$ is naturally isomorphic to $\h_{w, J_w}^\circ$.
  \end{proposition}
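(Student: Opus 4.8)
The statement to prove is Proposition~\ref{prop:t_iso_t1}: namely, that $\h_{\mathbf{t}}^\circ \cong \h_{\mathbf{t}_1}^\circ$ where $\mathbf{t}_1=(J_n,w_n)_{n\geq 1}$, and in particular $\h_{w,J}^\circ \cong \h_{w,J_w}^\circ$.

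The plan is to exhibit the isomorphism explicitly using Construction~\ref{cons:lusztig_parabolic}. Given a point $(X, gP_{J})\in \h_{\mathbf{t}}^\circ$ with $J_0=J$, the construction produces (after normalizing $g$ so that $g^{-1}Xg\in \dw_0 P_{J}$, which forces $g'=g$) a canonically associated point $(X, gP_{J_1})\in \h_{\mathbf{t}_1}^\circ$, since $P_{J_1}\subset P_{J_0}$ and the associated sequence of the new point is exactly $\mathbf{t}_1$. So the first step is to check that the assignment $(X,gP_J)\mapsto (X,gP_{J_1})$ is well-defined as a map of varieties: the subtlety is that $g$ is only determined up to right multiplication by $P_J$, so one must verify that the coset $gP_{J_1}$ is independent of the choice of representative within the fiber over $(X,gP_J)$ that satisfies the normalization $g^{-1}Xg\in \dw_0 P_J$ — and more robustly, give a coordinate-free description. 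The clean way is: $(X,gP_J)$ determines the point $gP_J$ together with the parabolic $g'P_{J_1}g'^{-1} = (P_J\cap g^{-1}Xg P_J g^{-1}X^{-1}g)U^J$-conjugate, i.e. use the preceding Proposition (the one computing $(P_J\cap gP_Jg^{-1})U^J = p_1 P_{J\cap wJw^{-1}}p_1^{-1}$) to see that $g P_{J_1} g^{-1}$, equivalently the coset $gP_{J_1}$ refined inside $gP_J$, is canonically determined by $(X, gP_J)$ alone. This gives a morphism $\varphi\colon \h_{\mathbf{t}}^\circ \to \h_{\mathbf{t}_1}^\circ$.

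Next I would construct the inverse. Given $(X, g_1 P_{J_1})\in \h_{\mathbf{t}_1}^\circ$, simply push forward along the projection $G/P_{J_1}\to G/P_{J_0}$, sending it to $(X, g_1 P_{J_0})$. One must check this lands in $\h_{\mathbf{t}}^\circ$ (not merely in $\h_{\mathbf{t}'}^\circ$ for some other $\mathbf{t}'$): the key point is that the relative position datum $w_0\in {}^{J_0}W^{J_0}$ with $g_1^{-1}Xg_1\in P_{J_0}\dw_0 P_{J_0}$, together with the refinement already recorded by the $J_1$-coset, forces the full associated admissible sequence of $(X,g_1P_{J_0})$ to begin with $(J_0,w_0)$ and then agree with $\mathbf{t}_1$ — i.e. it is $\mathbf{t}$. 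Here I would invoke the uniqueness in the Bruhat decomposition and the fact that Construction~\ref{cons:lusztig_parabolic} applied to $(X, g_1 P_{J_0})$ reproduces exactly the coset $g_1 P_{J_1}$ we started from (since $g_1$ can be chosen, within $g_1 P_{J_0}$, so that $g_1^{-1}Xg_1\in \dw_0 P_{J_0}$, and then the construction's $g'$ equals $g_1$). Then $\varphi$ and this projection are mutually inverse morphisms, both being restrictions of algebraic maps between the (partial) flag varieties, hence $\varphi$ is an isomorphism of varieties. The final claim about $\h_{w,J}^\circ \cong \h_{w,J_w}^\circ$ follows by iterating: the admissible sequence stabilizes after finitely many steps at $(J_\infty, w_\infty)=(w,J_w)$ up to relabeling (recall $w_\infty=w$ since $\Gamma_J$ sends the sequence to $w$, and $J_\infty=J_w=\bigcap_n w^nJw^{-n}$), so composing finitely many of the isomorphisms $\h_{\mathbf{t}}^\circ\cong\h_{\mathbf{t}_1}^\circ$ yields $\h_{w,J}^\circ\cong\h_{w,J_w}^\circ$, and over the stabilized data the constant sequence shows $\h_{w,J_w}^\circ = \{(X,gP_{J_w}); g^{-1}Xg\in P_{J_w}\dw P_{J_w}\}$.

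The main obstacle I anticipate is the well-definedness in both directions — specifically, showing that the extra coset refinement from $P_{J_0}$ down to $P_{J_1}$ is genuinely canonical (depends only on $(X, gP_{J_0})$ and not on auxiliary choices of representative or of the elements $p_1, p_2$ in the factorization $g^{-1}Xg=p_1\dw_0 p_2$), and conversely that forgetting down to $P_{J_0}$ does not lose information, i.e. that distinct cells $\h_{\mathbf{t}}^\circ, \h_{\mathbf{t}'}^\circ$ mapping to the same $G/P_{J_0}$-datum must have coincided. Both reduce to the preceding proposition's identity $(P_J\cap gP_Jg^{-1})U^{J}= p_1P_{J\cap wJw^{-1}}p_1^{-1}$, which pins down $g P_{J_1}$ intrinsically; once that is in hand the rest is bookkeeping with the Bruhat decomposition and with the bijection $\Gamma_J$ of Proposition~\ref{prop:Gamma_J_injective}.
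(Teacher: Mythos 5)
The paper offers no proof of this proposition (it is quoted from \cite{LusztigParabolicI}), and the remark following it only describes the isomorphism as the refinement map $(X,V^{J_0}_\bullet)\mapsto(X,V'_\bullet)$; your forward map is exactly that, and your treatment of its well-definedness via the identity $(P_J\cap gP_Jg^{-1})U^{J}= p_1P_{J\cap wJw^{-1}}p_1^{-1}$ is correct. The genuine gap is in the inverse direction, precisely at the point you flag and then dismiss as "bookkeeping". For $(X,g_1P_{J_1})\in\h_{\mathbf{t}_1}^\circ$ write $h=g_1^{-1}Xg_1=q_1\dw_1q_2$ with $q_1,q_2\in P_{J_1}$, and write $w_1=aw_0b$ with $a,b\in W_{J_0}$ (possible since $w_1\in W_{J_0}w_0W_{J_0}$). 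Then $h=(q_1\dot a)\dw_0(\dot b t q_2)$ is a factorization as in Construction \ref{cons:lusztig_parabolic}, so the refined coset of the projected point $(X,g_1P_{J_0})$ is $g_1q_1\dot aP_{J_1}$, and it equals $g_1P_{J_1}$ if and only if $a$ can be taken in $W_{J_1}$, i.e.\ if and only if $w_1\in W_{J_1}w_0W_{J_0}$ (equivalently: the normalized representative $g$ with $g^{-1}Xg\in\dw_0P_{J_0}$ can be chosen inside the small coset $g_1P_{J_1}$, not merely inside $g_1P_{J_0}$). Your parenthetical "and then the construction's $g'$ equals $g_1$" is not correct as written: after re-choosing the representative, $g'$ equals the new representative, and the whole issue is whether that representative can be found in $g_1P_{J_1}$. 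This statement does not follow from the quoted identity (which only identifies the refined coset as \emph{some} $P_{J_1}$-coset inside $g_1P_{J_0}$), nor from uniqueness in the Bruhat decomposition, nor from the bijectivity of $\Gamma_J$ in Proposition \ref{prop:Gamma_J_injective} (which concerns sequences, not points). With the admissibility conditions as stated in the paper one only knows $w_1\in W_{J_0}w_0W_{J_0}$ and $w_1\in{}^{J_1}W^{J_1}$, and these do not imply $w_1\in W_{J_1}w_0W_{J_0}$: for $J_0=\{1,3\}$, $w_0=1324$, $J_1=\emptyset$, the element $w_1=2314=s_1s_2$ satisfies both yet $W_{J_1}w_0W_{J_0}=w_0W_{J_0}=\{1324,3124,1342,3142\}$ does not contain it. Without this input the forgetful map need not land in $\h_{\mathbf{t}}^\circ$, surjectivity of your $\varphi$ is unproven, and the proof is incomplete.

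To close the gap you need the stronger compatibility $w_{n+1}\in W_{J_{n+1}}w_nW_{J_n}$ along the sequence; this is part of the Lusztig--B\'edard combinatorics of these sequences (in Lusztig's formulation it is essentially built into their definition, which is slightly stronger than the three conditions reproduced in this paper), or it can be proved directly using the minimality of $w_{n+1}$ in $W_{J_{n+1}}w_{n+1}$ together with the standard length-additive decomposition of elements of double cosets $W_{J_1}\backslash W/W_{J_0}$. Once that lemma is in place, your argument does go through: the only possible preimage of $(X,g_1P_{J_1})$ is $(X,g_1P_{J_0})$ (since the refined coset is contained in the original one), the lemma shows the refinement of $(X,g_1P_{J_0})$ is $g_1P_{J_1}$, hence its associated sequence is $(J_0,w_0)$ followed by $\mathbf{t}_1$, i.e.\ $\mathbf{t}$, and the two morphisms are mutually inverse; iterating and identifying the stabilized subset $J_\infty$ with $J_w$ then gives $\h_{w,J}^\circ\cong\h_{w,J_w}^\circ$.
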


  \begin{remark}
  In view of Construction \ref{cons:lusztig_parabolic_flag} the isomorphism $\h_{\mathbf{t}}^\circ\to \h_{\mathbf{t}_1}^\circ$ is given by $(V_\bullet^{J_0}, X)\mapsto (V'_\bullet, X)$. Essentially, a pair $(X, V_\bullet^{J_0})$ is in $\h_{\mathbf{t}}^\circ$ if, when we apply Construction \ref{cons:lusztig_parabolic_flag}, the flag $V'_\bullet$ always lies in the same partial flag variety and has the same intersection numbers $\dim XV'_i\cap V'_j$. Moreover, this remains true after iterating the construction for $(X,V'_\bullet)$.\par
  \end{remark}

  \begin{Rem}
  \label{rem:w_double}
  If $J$ and $w$ satisfy $w\in {}^JW^J$ and $wJw^{-1}=J$, then the sequence $\mathbf{t}=(J_n,w_n)_{n\geq 0}$ given by $J_n=J$ and $w_n=w$ for every $n\geq 0$ is admissible. In this case, we have that
  \[
  \h_{w,J}^\circ =\h_\mathbf{t}^\circ=\{(X,gP_J); g^{-1}Xg\in P_J\dw P_J\}.
  \]
  Moreover, applying Proposition \ref{prop:t_iso_t1} repeatedly, we have that $\h_{w,J}^\circ$ is isomorphic to $\h_{w,J_w}^{\circ}$. This means that every pair $(X,V_\bullet)\in \h_{w,J}^\circ$ is obtained by the restriction of a pair $(X,\overline{V}_\bullet)\in \h_{w,J_w}^\circ$.
  \end{Rem}

  \begin{Exa}
     Let $G=GL_4(\mathbb{C})$ and $J=\{1,3\}$, so $G/P_J$ is the Grassmanian $Gr(2,4)$ and fix a regular semisimple matrix $X\in G$. The admissible sequences are
     \[
     \begin{array}{lll}
      \big( (\{1,3\}, 1234) \big);  &  \big( (\{1, 3\}, 1324), (\emptyset, 1324) \big); & \big(  (\{1,3\}, 1324), (\emptyset, 1342)\big);\\
      \big( (\{1,3\}, 1324), (\emptyset, 3124)\big); & \big( (\{1,3\}, 1324), (\emptyset, 3142)\big); &   \big( (\{1, 3\},3412)\big).
     \end{array}
     \]
     Note that the permutations $1234, 1324, 1342, 3124, 3142, 3412$ are precisely the elements of ${}^JS_4$. Let us describe the varieties $\h_{w,J}$ for $w\in {}^JS_4$. We begin with $w=1234, 3412$ (which are the permutations in ${}^JS_4^J$. Since the sequences have only one term, then $\h_{w,J}^\circ(X)=\{V_2; XV_2\cap V_2= r_{2,2}(w)\}$, hence
     \[
     \h_{1234,J}^{\circ}(X)=\{V_2; XV_2=V_2\}\quad \text{ and }\quad \h_{3412,J}^\circ(X) = \{ V_2; \dim XV_2\cap V_2 =0\}.
     \]
     All other sequences start with $(\{1,3\}, 1324)$ so we must have the condition $\dim XV_2\cap V_2=1$. Next, we use construction \ref{cons:lusztig_parabolic_flag} to construct the flag $V'_\bullet$ associated to the pair $(V_2,X)$. We have that
     \[
     V'_\bullet = 0\subset V'_1=V_2\cap XV_2 \subset V'_2=V_2 \subset V'_3=V_2+XV_2\subset \mathbb{C}^4.
     \]
     So we have that
     \[
     \h_{w,J}^\circ(X) =\{V_2; V'_\bullet \in \h_{w,\emptyset}^\circ\}= \{V_2; \dim XV'_i\cap V'_j=r_{i,j}(w)\}
     \]
     for $w\in \{1324, 1342, 3124, 3142\}$. Therefore, we have
     \begin{align*}
     \h_{1324,J}^\circ(X)=&\{ V_2;  \dim XV'_1\cap V'_1=1,\dim XV'_2\cap V'_2 =1, \dim XV'_3\cap V'_3 =3\}\\
                =&\bigg\{ V_2; \begin{array}{l}\dim X^2V_2\cap XV_2\cap V_2=1, \dim XV_2\cap V_2=1,\\ \dim (XV_2+X^2V_2)\cap (V_2+XV_2)=3\end{array}\bigg\}\\
                =&\{ V_2; \dim X^2V_2\cap XV_2\cap V_2=1,  \dim X^2V_2+XV_2+V_2=3\},
     \end{align*}
     \begin{align*}
     \h_{1342,J}^\circ(X)=&\{ V_2;  \dim XV'_1\cap V'_1=1, \dim XV'_2\cap V'_3=2, \dim XV'_3\cap V'_2=1 \}\\
                =&\{ V_2; \dim X^2V_2\cap XV_2\cap V_2=1, \dim (XV_2+X^2V_2)\cap V_2=1\}\\
                =&\{ V_2; \dim X^2V_2\cap XV_2\cap V_2=1,  \dim X^2V_2+XV_2+V_2=4\}.
     \end{align*}
     The last two are
     \begin{align*}
         \h_{3124,J}^\circ(X)=&\{ V_2; \dim X^2V_2\cap XV_2\cap V_2=0, \dim X^2V_2+ XV_2 + V_2=3\}\\
         \h_{3142,J}^\circ(X)=&\{ V_2;\dim XV_2\cap V_2=1, \dim X^2V_2\cap XV_2\cap V_2=0, \dim X^2V_2+ XV_2 + V_2=4\}.
     \end{align*}
    We note that, for $w\in \{1324, 1342, 3124, 3142\}$, the varieties $\h_{w,J}^\circ(X)$ are precisely the image of $\h_{w}^\circ(X)$  via the forgetful map $\flag_4\to \Gr(2,4)$ (see Remark \ref{rem:w_double}).

     Let us compare these varieties with the Schubert varieties of $\Gr(2,4)$. Fix a flag
\[
F_\bullet= (F_1\subset F_2\subset F_3\subset F_4=\mathbb{C}^4).
\]
Instead of using permutations, we can use partitions in the rectangle $2\times 2$ to index both the Schubert and Lusztig varieties in $\Gr(2,4)$. Below, we write both varieties for each partition.
\begin{align*}
    \h_{\emptyset, J}(X)&=\{V_2; XV_2=V_2\}, & \Omega_{\emptyset,J,F_\bullet}&=\{V_2; V_2=F_2\},\\
    \h_{\yd{1}, J}(X)&=\bigg\{V_2; \begin{array}{l}\dim(V_2\cap XV_2\cap X^2V_2)\geq 1\\\dim(V_2+ XV_2+ X^2V_2)\leq 3\end{array}\bigg\}, &  \Omega_{\yd{1,0} ,J,F_\bullet}&=\{V_2; F_1\subset V_2\subset F_3\}, \\
    \h_{\yd{2}, J}(X)&=\{V_2;  \dim(V_2\cap XV_2\cap X^2V_2)\geq 1\},& \Omega_{\yd{2,0},J,F_\bullet}&=\{V_2; F_1\subset V_2\},\\
    \h_{\yd{1,1}, J}(X)&=\{V_2; \dim(V_2+ XV_2+ X^2V_2)\leq 3  \}, & \Omega_{\yd{1,1},J,F_\bullet}&=\{V_2;  V_2\subset F_3\},\\
    \h_{\yd{2,1}, J}(X)&=\{V_2; \dim(XV_2\cap V_2)\geq 1  \}, & \Omega_{\yd{2,1},J,F_\bullet}&=\{V_2; \dim( V_2\cap F_2)\geq 1\},\\
    \h_{\yd{2,2}, J}(X)&=\Gr(2,4), & \Omega_{\yd{2,2}, J,F_\bullet}&=\Gr(2,4).\\
\end{align*}
The varieties $\h_{\emptyset,J}(X), \h_{\yd{1}}(X)$, $\h_{\yd{2}}(X)$ and $\h_{\yd{1,1}}(X)$ are actually finite unions of translations of Schubert varieties. Indeed,  the condition $\dim(V_2\cap XV_2\cap X^2V_2)\geq 1$ is equivalent to the fact that $V_2$ contains a 1-dimensional subspace invariant by $X$ (and there is only a finite number of them, because $X$ is regular). Analogously, $\dim(V_2+XV_2+X^2V_2)\leq 3$ is equivalent to the fact that $V_2$ is contained in a $3$-dimensional subspace invariant by $X$ and the condition $XV_2=V_2$ is equivalent to the fact that $V_2$ is a subspace invariant by $X$.  Below we give a more detailed description of the geometry of parabolic Lusztig varieties in the Grassmanian $\Gr(2,4)$.
     \begin{enumerate}
         \item The variety $\h_{\emptyset,J}(X)$ is the collection of the $6$ points fixed by the torus action on $\Gr(2,4)$. The Schubert variety $\Omega_{\emptyset, F_\bullet}$ is the single point $V_2 = F_2$.
         \item The variety $\h_{\yd{1}, J}(X)$ is the union of $12$ copies of $\mathbb{P}^1$, where each one is of the form $\{V_2; \langle e_i\rangle\subset V_2\subset \langle e_i,e_j,e_k\rangle\}$ where $e_1,e_2, e_3, e_4$ are the eigenvectors of $X$. The Schubert Variety $\Omega_{\yd{1},F_\bullet}$ is isomorphic to a single $\mathbb{P}^1$.
         \item The variety $\h_{\yd{2},J}(X)$ ($\h_{\yd{1,1}, J}(X)$, respectively) is the union of $4$ copies of $\mathbb{P}^2$, each of the form $\{V_2, \langle e_i\rangle\subset V_2\}$ ($\{V_2; V_2\subset \langle e_i, e_j, e_k\rangle\}$, respectively). The Schubert Varieties $\Omega_{\yd{2}, F_\bullet}$ and $\Omega_{\yd{1,1}, F_\bullet}$ are isomorphic to $\mathbb{P}^2$.
         \item The variety $\h_{\yd{2,1},J}(X)$ can be described as follows. Consider the forgetful  map from the variety \[
         \h_{2341, \{3\}}(X) = \{V_1\subset V_2; XV_1\subset V_2\}
         \]
         to the Grassmanian $\Gr(2,4)$. The image is precisely $\h_{\yd{2,1},J}(X)$ and the map is birational, because for a generic $V_2\in \h_{\yd{2,1},J}(X)$ there exists only one $V_1$ satisfying $XV_1\subset V_2$, which is $V_1=V_2\cap X^{-1}V_2$. Moreover, the preimages of the points $V_2\in \h_{\emptyset,J}(X)$ (which are the points that satisfy $V_2=XV_2$) are isomorphic to $\mathbb{P}^1$. \par
         As seen in Example \ref{exa:path}, we have that $\h_{2341, {3}}(X)$ is obtained by blowing up $\mathbb{P}^3$ at the $4$ special points. So, $\h_{\yd{2,1},J}(X)$ can be obtained by blowing up $\mathbb{P}^3$ at the $4$ special points and contracting the strict transforms of the $6$ special lines. In particular the map $BL_{p_1,p_2, p_3, p_4}\mathbb{P}^3\to \h_{\yd{2,1},J}(X)$ is small. \par
          The Schubert variety $\Omega_{\yd{2,1}, F_\bullet}$ is a cone over the quadric surface. It can be constructed as the contraction of of the rational curve $\{V_1\subset V_2; V_2 = F_2\}$ in the Schubert variety $\Omega_{3142,\{3\}, F_\bullet} = \{V_1\subset V_2; V_1\subset F_2\}$. The map $\Omega_{3142, \{3\}, F_\bullet} \to \Omega_{\yd{2,1},F_\bullet}$ is small and the domain is a $\mathbb{P}^2$-bundle over $\mathbb{P}^1$.

     \end{enumerate}

Schubert varieties have the following property: if $w$ is a permutation in $S_n$ and $w_0\in {}^JS_n$ is such that $w\in w_0S_n$, then the image of $\Omega_{w,\emptyset, F_\bullet}^\circ$ is precisely $\Omega_{w_0, J, F_\bullet}^\circ$. In particular if $f\col \flag_4\to Gr(2,4)$ is the forgetful map $f(V_\bullet)=V_2$, then we have that for each $w\in S_4$ there exists a partition $\lambda_w$ inside the $2\times 2$ square, such that $f(\Omega_{w,F_\bullet})=\Omega_{\lambda,F_{\bullet}}$. We abuse notation and denote by $f\col S_4\to \parti(4)$ the function defined by $f(w)=\lambda_w$. We have that
  \begin{align*}
      f^{-1}(\emptyset)&=\{1234, 2134, 1243, 2143\},\\
      f^{-1}(\yd{1})&=\{1324,2314,1423,2413\},\\
      f^{-1}(\yd{2,0})&=\{1342,2341,1432,2431\},\\
      f^{-1}(\yd{1,1})&=\{3124,3214,4132,4213\},\\
      f^{-1}(\yd{2,1})&=\{3142,3241,4132,4231\},\\
      f^{-1}(\yd{2,2})&=\{3412,3421,4312,4321\}.
  \end{align*}
These are precisely the right cosets $S_2\times S_2\backslash S_4$ .\par

      However, for Lusztig varieties the situation is more complicated. First, it can happen that the image of $\h_{w}^\circ(X)$ via the forgetful map is not any of $\h_{w, J}^\circ(X)$. Second, even though the image of $\h_w(X)$ is $\h_{w_0,J}(X)$ for some $w_0\in {}^JS_n$, it is no longer true that $w_0$ is the minimum representative of $w$. For each $w\in S_4$, there exists a partition $\mu_w$ such that $f(\h_w(X))=\h_{\mu_w}(X)$. It can happen that $\mu_w\neq \lambda_w$. For instance, if $w=2314$, then
  \[
  \h_{2314}(X)=\{V_\bullet; XV_1\subset V_2, XV_3=V_3\}
  \]
  and
  \[
  f(\h_{2314}(X))=\{V_2; \dim(V_2+ XV_2+ X^2V_2)\leq 3  \}=\h_{\yd{1,1}}(X).
  \]

  Indeed, since $XV_3=V_3$, $V_2$ is contained in a $3$-dimensional subspace invariant by $X$. Moreover, for each $V_2$ inside such a $3$-dimensional subspace, we can always choose $V_1\subset V_2\cap X^{-1}V_2$, because $\dim (V_2\cap X^{-1}V_2)=4-\dim(V_2+XV_2)\geq 1$. If $g\col S_4\to \parti(4)$ is the function given by $g(w)=\mu_w$, then
   \begin{align*}
      g^{-1}(\emptyset)&=\{1234, 1243, 2134, 2143\},\\
      g^{-1}(\yd{1})&=\{1324\},\\
      g^{-1}(\yd{2,0})&=\{1342, 1423, 1432\},\\
      g^{-1}(\yd{1,1})&=\{2314, 3124, 3214\},\\
      g^{-1}(\yd{2,1})&=\{2341, 2413, 2431, 3142, 3241, 4123, 4132, 4213, 4231  \},\\
      g^{-1}(\yd{2,2})&=\{3412, 3421, 4312, 4321\}.
  \end{align*}

    Using the geometric description above, we see that $IH^*(\h_{\lambda, J}(X))$ has a natural $S_4$-module structure and its Frobenius characters can be computed. The result is:
  \begin{align*}
    \ch(IH^*(\h_{\emptyset}(X)))&=h_{2,2},\\
    \ch(IH^*(\h_{\yd{1}}(X)))&=(1+q)h_{2,1,1},\\
    \ch(IH^*(\h_{\yd{2}}(X)))&=(1+q+q^2)h_{3,1},\\
    \ch(IH^*(\h_{\yd{1,1}}(X)))&=(1+q+q^2)h_{3,1},\\
    \ch(IH^*(\h_{\yd{2,1}}(X)))&=(1+q+q^2+q^3)h_4+(q+q^2)h_{3,1}, \\
    \ch(IH^*(\h_{\yd{2,2}}(X)))&=(1+q+2q^2+q^3+q^4)h_4=\binom{4}{2}_qh_4.
  \end{align*}
  As for the case of complete flag varieties (see \cite[Equation 2.1]{Haiman}),
  the sum of the $h$-coefficients is the Poincaré polynomial of the associated Schubert variety. Indeed, we have that $\Omega_{\yd{1},F_\bullet}$ is isomorphic to $\mathbb{P}^1$, $\Omega_{\yd{2},F_\bullet}$ and $\Omega_{\yd{1,1},F_\bullet}$ are isomorphic to $\mathbb{P}^2$, $\Omega_{\yd{2,1},F_\bullet}$ is a cone over $\mathbb{P}^1\times \mathbb{P}^1$ (in particular, its small resolution is a $\mathbb{P}^2$-bundle over $\mathbb{P}^1$ and hence its Poincaré polinomial is $(1+q)(1+q+q^2)=1+2q+2q^2+q^3$), and $\Omega_{\yd{2,2},F_\bullet}$ is the Grassmanian $Gr(2,4)$ that has Poincaré polinomial equal to $\binom{4}{2}_q$.\par

  \end{Exa}

\begin{Exa}[The projective space]
\label{exa:projective}
When $G=GL_n$ and $J=\{2,\ldots, n-1\}$, we have that $G/P_J=\mathbb{P}^{n-1}$ and
\[
{}^JS_n = \{12\ldots n,213\ldots n, 231\ldots n,\ldots ,234\ldots n1\}.
\]
Define $w_k:=23\ldots k 1\ldots n$ ($w_1$ is the identitiy). We also have that $J_{w_k}=\{k+1,\ldots, n-1\}$. In particular, we have
\[
\h_{w,J_{w_k}} = \{(X,V_1\subset V_2\subset \ldots\subset V_k); XV_k=V_k, XV_i\subset V_{i+1}\},
\]
and hence
\[
\h_{w,J} = \{(X,V_1); \dim (V_1 +XV_1+\ldots+X^kV_1 )\leq k\},
\]
or, equivalently,
\[
\h_{w,J} = \{(X,V_1); V_1\text{ is contained in a dimension $k$ subspace invariant by $X$}\}.
\]
When $X$ is a diagonal matrix, the condition that $V_1$ is contained in a dimension $k$ subspace invariant by $X$ is equivalent to $V_1\in \mathbb{P}^{n-1}$ belonging to a dimension $k-1$ coordinate plane. In particular, this proves that $\h_{w_k,J}(X) = \Hi_{n-k}$ (see \ref{cor:chHi}).

The normalization $\widetilde{\Hi}_{i}$ of $\Hi_{i}$ is the union of $\binom{n}{i}$ copies of $\mathbb{P}^{n-i-1}$. Hence the intersection cohomology of $\Hi_{n-k}$ is the cohomology  $H^*(\widetilde{\Hi}_{i})$. The latter has a natural structure of $S_n$-module and
\[
\ch(H^*(\widetilde{\Hi}_{i})) = [n-i]_qh_{n-i, i}.
\]
\end{Exa}

\section{Monodromy actions on parabolic varieties}
\label{sec:monodromy}

The goal of this section is to prove the following.

\begin{theorem}
\label{thm:pi1_WJw}
 Let $G=GL_n$, $w\in S_n$  and $J\subset S$ be such that $w\in {}^JW$ and $wJw^{-1}=J$. There exists an open set $\U_{w,J}\subset \h_{w,J}$ and a natural map
 \[
\pi_1(\U_{w,J}, (X,gP_J))\to W_J^w:=\{z\in W_J; wz=zw\}.
\]
In particular, every representation $W_J^w\to GL(V)$ of $W_J^w$ induces a local system $L$ on $\U_{w,J}$ with fiber $V$, and this local system induces a perverse sheaf $IC_{\h_{w,J}}(L)$ on $\h_{w,J}$. Moreover, if $f\colon \h_{w}\to \h_{w,J}$ is the forgetful map and $g=f|_{f^{-1}(\U_{w,J})}$, then $g$ is a $W_J^w$-Galois cover, which means that $g_*(\mathbb{C}_{f^{-1}(U_{w,J})})$ is the local system on $\U_{w,J}$ induced by the representation of $W_J^w$ on itself.
\end{theorem}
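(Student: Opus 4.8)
The strategy is to identify, over the open cell $\h_{w,J}^\circ$, the fibres of the forgetful map $f\colon\h_w\to\h_{w,J}$ with the fibres of the Grothendieck--Springer map $\h_e^\circ\to G^1$ for the disconnected group $\widehat G=N_G(L_J)$ of Example~\ref{exa:character_sheaves_LJ}, and then to read off the monodromy from the classical picture for such groups. Since $wJw^{-1}=J$ we have $w^nJw^{-n}=J$ for all $n$, hence $J_w=J$; moreover $w\in{}^JW$ together with $wJw^{-1}=J$ forces $w\in{}^JW^J$ (if $s\in J$ then $\dw s\dw^{-1}=s_\beta$ with $\beta=w(\alpha_s)\in\Phi_J$, and $w^{-1}\in W^J$ has $w^{-1}(\Phi_J^+)\subseteq\Phi^+$, so $\beta$ cannot be negative, whence $w(\alpha_s)>0$ and $\ell(ws)>\ell(w)$). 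So Remark~\ref{rem:w_double} applies: $\h_{w,J}^\circ=\{(X,gP_J):g^{-1}Xg\in P_J\dw P_J\}$, and $P_J\dw P_J=\dw P_J$ because $\dw$ normalizes $P_J$; also $f(\h_w^\circ)\subseteq\h_{w,J}^\circ$.

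Now fix $(X,g_0P_J)\in\h_{w,J}^\circ$; by the remark in Construction~\ref{cons:lusztig_parabolic} we may take $g_0$ with $g_0^{-1}Xg_0=\dw q$, $q\in P_J$; write $q=\ell_0u_0$ with $\ell_0\in L_J$, $u_0\in U^J$, and set $Y_0:=\dw\ell_0\in G^1:=\dw L_J$. A point of the fibre of $f$ is $g_0pB$ with $p\in P_J$, and since $U^J\subseteq U\subseteq B$ it only depends on the image $\ell\in L_J$ of $p$, i.e.\ on $\ell B_J\in L_J/B_J$. Unwinding $p^{-1}\dw q\,p\in B\dw B$ and absorbing all $U^J$-factors into $B$, the condition becomes $\dw m\in B\dw B$ with $m:=\ad(\dw^{-1})(\ell^{-1})\,\ell_0\,\ell\in L_J$; writing $m\in B_J\dz B_J$ with $z\in W_J$ and using $\ell(wz)=\ell(w)+\ell(z)$ (valid for $w\in{}^JW$, $z\in W_J$) gives $B\dw m B=B\dw\dz B$, so $\dw m\in B\dw B\iff z=e\iff m\in B_J\iff\ell^{-1}Y_0\ell\in\dw B_J$. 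Since $N_{\widehat G}(B_J)\cap G^1=\dw B_J$, this says exactly that $Y_0$ normalizes the Borel $\ell B_J\ell^{-1}$ of $L_J$. Thus the fibre of $f$ over $(X,g_0P_J)$ is canonically the set of Borels of $L_J$ normalized by $Y_0$, i.e.\ the fibre over $Y_0$ of the projection $\h_e^\circ\to G^1$ of Example~\ref{exa:character_sheaves_LJ}.

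Changing $g_0$ inside $g_0P_J$ replaces $Y_0$ by an $L_J$-conjugate, so $(X,g_0P_J)\mapsto[Y_0]$ is a well-defined morphism $\kappa\colon\h_{w,J}^\circ\to G^1/\!\!/L_J$ onto the adjoint quotient of the component $G^1$; it is surjective (realize any class by $X=g_0Y_0g_0^{-1}$). Define $\U_{w,J}:=\kappa^{-1}\big((G^1)^{\mathrm{rs}}/\!\!/L_J\big)$: this is open and dense in $\h_{w,J}^\circ$, hence open in $\h_{w,J}=\overline{\h_{w,J}^\circ}$. Over $(G^1)^{\mathrm{rs}}$ the family of $y$-stable Borels of $L_J$ is finite étale and $L_J$-equivariant, so it descends to a finite étale cover of $(G^1)^{\mathrm{rs}}/L_J$, and by the fibre computation $g:=f|_{f^{-1}(\U_{w,J})}$ is its pullback along $\kappa$, hence finite étale. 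For regular semisimple $y\in G^1$ the set of $y$-stable Borels of $L_J$ is a torsor under $W_J^w=\{z\in W_J:wz=zw\}=(W_J)^{\ad(\dw)}$ (the classical statement for a regular semisimple element of a non-identity component of a disconnected reductive group), and the monodromy of this family realizes a surjection $\pi_1\big((G^1)^{\mathrm{rs}}/L_J\big)\twoheadrightarrow W_J^w$, the disconnected analogue of the map $\pi_1(G^{\mathrm{rs}})\to W$ of Section~\ref{sec:chashv}. Composing with $\kappa_*$ gives the asserted natural map $\pi_1(\U_{w,J},(X,gP_J))\to W_J^w$, hence for every representation $W_J^w\to GL(V)$ a local system $L$ on $\U_{w,J}$ with fibre $V$ and its intermediate extension $IC_{\h_{w,J}}(L)$.

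Finally, each fibre of $g$ is a $W_J^w$-torsor, so the monodromy of $g$ lands in the copy of $W_J^w$ acting on itself by left translation; it is therefore enough to show this monodromy surjects onto $W_J^w$, equivalently that $\kappa_*$ is surjective, for which (given the torsor statement) it suffices that the fibres of $\kappa$ over $(G^1)^{\mathrm{rs}}/\!\!/L_J$ be connected --- and the fibre over $[Y_0]$ is the image of the connected variety $G\times L_J$ under $(g,h)\mapsto(gY_0g^{-1},gh^{-1}P_J)$, hence connected. Then $g$ is connected and Galois with group $W_J^w$, so $g_*(\mathbb{C}_{f^{-1}(\U_{w,J})})$ is the local system attached to the left-regular representation of $W_J^w$, as claimed. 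The main obstacle is turning the fibrewise identification of the first two paragraphs into an honest statement about varieties --- i.e.\ producing a genuine Cartesian square relating $g$ to the étale cover over $(G^1)^{\mathrm{rs}}/L_J$, so that ``finite étale'' and the $W_J^w$-action are justified globally rather than fibre by fibre --- together with the input, which we take from the structure theory of disconnected reductive groups, that the $y$-stable Borels form a $W_J^w$-torsor for regular semisimple $y\in G^1$.
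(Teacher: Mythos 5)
Your fibrewise analysis in the first two paragraphs is sound and runs parallel to the paper's own mechanism: the identification of the fibre of $f$ over a point of $\h_{w,J}^\circ$ with the set of Borel subgroups of $L_J$ normalized by the $\dw L_J$-part $Y_0$ of $g_0^{-1}Xg_0$ is exactly what underlies Propositions \ref{prop:wPJYwJ_iso} and \ref{prop:PJLJ_bundle}, and your open set $\U_{w,J}$ (the locus where $Y_0$ is regular semisimple) essentially coincides with the one the paper takes, namely $\frac{G\times (\dw L_J)^{rs}U^J}{L_JU^J_w}$. But the argument is not complete, and the gap is the one you flag yourself: everything that turns the fibrewise observation into the theorem is asserted or cited rather than proved. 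Concretely, (i) a $W_J^w$-Galois cover requires a global free $W_J^w$-action on $f^{-1}(\U_{w,J})$ over $\U_{w,J}$; fibrewise torsor structures do not assemble into such an action on their own. The paper manufactures this action explicitly, by identifying $\h_{w,J}^\circ$ with $\frac{G\times \dw P_J}{L_JU^J_w}$ (Proposition \ref{prop:wPJYwJ_iso}) and $f^{-1}(\U_{w,J})$ with $\frac{G\times (\dw T_1)^{rs}U^J}{\N_TU^J_w}$ (Proposition \ref{prop:wT1Ywiso}), so that $g$ is literally the quotient by the residual free action of $\N_{L_J}/\N_T$ (Proposition \ref{prop:NLT_galois}). (ii) The identification of the deck group with $W_J^w$, which you invoke as ``the classical statement'' about stable Borels for a regular semisimple element of a nonidentity component, is itself part of what the paper proves: it is the computation $\N_{L_J}/\N_T=W_J^w$, item (5) of Proposition \ref{prop:N_properties }, whose Bruhat-decomposition argument is supplied in the paper and not merely quoted from Lusztig. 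Leaving both (i) and (ii) as black boxes leaves the main content of the theorem unproved.

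There are also smaller soft spots in the globalization you sketch. The conjugation action of $L_J$ on $(\dw L_J)^{rs}$ is not free (stabilizers are intersections of maximal tori of $G$ with $L_J$), so descending the family of $y$-stable Borels to a finite \'etale cover of $(G^1)^{rs}/L_J$ is not automatic. Your connectedness argument for the fibres of $\kappa$ presupposes that every point of the fibre is of the form $(gY_0g^{-1},gh^{-1}P_J)$, which requires showing that $\dw\ell' u'$ with $u'\in U^J$ is $U^J$-conjugate to its ``semisimple part'' $\dw\ell'$; this is exactly the kind of statement the paper establishes via the isomorphism of Equation \eqref{eq:UJUJiso}, using $Z_G(\dw t_0)\cap U_J=\{1\}$, in the proof of Proposition \ref{prop:wT1Ywiso}. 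Nonemptiness (hence density) of $\U_{w,J}$ needs $(\dw T_1)^{rs}\neq\emptyset$, which is special to $GL_n$ and is Proposition \ref{prop:non_empty}; you do not address it. Finally, note that surjectivity of the monodromy onto $W_J^w$ is not actually required for the statement: what is needed is precisely the global free action in (i), from which the description of $g_*(\mathbb{C}_{f^{-1}(\U_{w,J})})$ as the regular-representation local system follows whether or not the cover is connected.
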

This theorem implies the first statement of Theorem \ref{thm:UwJ_intro}.

\begin{example}
\label{exa:G24_not_galois}
Consider $w=3412$ and $J=\{1,3\}$, then $\h_w=\{(X,V_\bullet); XV_1\subset V_3, V_1\subset XV_3\}$ and the map $f$ in the theorem is
\begin{align*}
    f\colon \h_{w}&\to \h_{w,J}= G\times \Gr(2,4)\\
    (X, V_\bullet)&\to (X, V_2).
\end{align*}
We note that even on the locus $G^{rs}\times \Gr(2,4)^\circ$, the restriction of $f$ is not a Galois cover (nor even a cover at all). Indeed, take $X$ to be the diagonal matrix with entries $(1,2,-1,-2)$, define the vectors $v_1=(1,1,1,1)$, $v_2=(1,-1,0,0)$, $v_3=Xv_1=(1,2,-1,-2)$ and $v_4=Xv_2=(1,-2,0,0)$, and let $V_2=\langle v_1,v_2\rangle$. Then $v_1,v_2,v_3,v_4$ are linearly independent, and hence $XV_2\cap V_2 = 0$, so $(X,V_2)\in G^{rs}\times \Gr(2,4)^\circ$.\par
 To find the preimage $f^{-1}(X,V_2)$, we have to find $((a:b),(c:d))\in \mathbb{P}^1\times \mathbb{P}^1$ such that $XV_1\subset V_3$ and $V_1\subset XV_3$ where $V_1=\langle av_1+bv_2\rangle$ and $V_3=\langle v_1,v_2,cv_3+dv_4\rangle$. That is, we must have that the determinants $\det(X(av_1+bv_2), v_1, v_2,cv_3+dv_4)$ and $\det((av_1+bv_2), Xv_1, Xv_2, X(cv_3+dv_4))$ are $0$. Writing the equations
 \[
 \left | \begin{array}{cccc}
 a+b    & 1 & 1  &  c + d   \\
 2a- 2b & 1 & -1 &  2c - 2d \\
 -a     & 1 & 0  &  -c      \\
 -2a     & 1 & 0  &  -2c     \\
 \end{array}
 \right |= 0\quad\quad\text{ and }\quad \quad
 \left | \begin{array}{cccc}
 a+b   & 1  & 1  &  c + d   \\
 a- b  & 2  & -2 &  4c - 4d \\
 a     & -1 & 0  &  c       \\
 a     & -2 & 0  &  4c      \\
 \end{array}
 \right | = 0,
  \]
More explicitly, we get the equations
\begin{align*}
    bc-ad &= 0\\
    -24ac - 2bc + 2ad&=0,
\end{align*}
which correspond to the double point $((0:1),(0:1))\in \mathbb{P}^1\times \mathbb{P}^1$. This means that the map $f$ is ramified over $(X,V_2)$.
\end{example}

Although the theorem above is only for $G=GL_n$, some of the results below hold for any $G$. Throughout this section, we fix $J\subset S$ and $w\in {}^JW^J$ such that $wJw^{-1}=J$. With these conditions, we have that $\dw L_J \dw^{-1} = L_J$, in particular we have that $\dw L_J$ is a component of $N_G(L_J)$. We define $T_1:=(T^w)^{0}=(\{t\in T; \dw t=t\dw\})^0$ (here $G^0$ means the identity component of $G$). We let $\N_{L_J}:=\{\ell \in L_J; \ell wT_1\ell^{-1}=wT_1\}$ and $\N_T:=\{t\in T; twT_1t^{-1}=wT_1\}$. We also define $W_J^w:=\{z\in W_J; wz=zw\}$. \par

 \begin{Exa}
 Let $G=GL_4$, $J=\{1,3\}$ and consider $w = 3412$. Then $wJw^{-1} = J$. The torus $T_1$ is the torus of diagonal matrices whose diagonal is of the form $(t_1, t_2, t_1, t_2)$. In particular, we have
 \[
 wT_1=\Bigg\{ \left( \begin{array}{cccc}
  0 & 0 & t_1 & 0 \\
  0 & 0 & 0 & t_2 \\
  t_1 & 0 & 0 & 0 \\
  0 & t_2 & 0 & 0
 \end{array}\right )\Bigg\}.
 \]
 The group $\N_T$ is disconnected with four components  consisting of diagonal matrices with diagonal $(\lambda_1, \lambda_2, \pm \lambda_1, \pm \lambda_2)$.
 The group $\N_{L_J}$ is also disconnected and has eight components, namely
 \[
 \N_{L_J} = \Bigg\{ \left( \begin{array}{cccc}
  a & 0 & 0 & 0 \\
  0 & d & 0 & 0 \\
  0 & 0 & \pm a & 0 \\
  0 &0 & 0 & \pm d
 \end{array}\right )\Bigg\} \cup \Bigg\{ \left( \begin{array}{cccc}
  0 & b & 0 & 0 \\
  c & 0 & 0 & 0 \\
  0 & 0 & 0 & \pm b  \\
  0 &0 & \pm c & 0
 \end{array}\right )\Bigg\}.
 \]
 The identity components $(\N_{L_J})^0$, $(\N_T)^0$ coincide with $T_1$. Moreover, $\N_{L_J}/ \N_T$ is naturally identified with the subgroup of $S_4$ generated by $2143$, which is the same as $(S_4)_J^{3412}$. Also, we note that the normalizer $N_G(L_J)$ is equal to $L_J \cup \dw L_J$.
 \end{Exa}

 Given an element $g$ in a component $G^1$ of a possibly disconnected linear algebraic group $G$, we say that $g$ is \emph{quasisemisimple} if there exists a Borel subgroup $B$ and a maximal torus $T\subset B$ in the identity component $G^0$ of $G$ such that $gBg^{-1} = B$ and $gTg^{-1} = T$. Every semisimple element is quasisemisimple \cite[Sections 7.5 and 7.6]{Stein68}. In what follows we will call an element of $wL_J$ quasisemisimple if it quasisemisimple in $N_G(L_J)$.

  \begin{lemma}[{\cite[Section 1.2]{LusztigDisconnectedI}}]
  \label{lem:T1T_map_surjective}
  The map
  \begin{align*}
      T_1\times T& \to T\\
      (t_1, t) & \mapsto tt_1\dw t^{-1}\dw^{-1}
  \end{align*}
  is surjective.
  \end{lemma}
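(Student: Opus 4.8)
The plan is to use the commutativity of $T$ to rewrite the map as a product. First I would note that for $t_1\in T_1$ and $t\in T$ one has $tt_1\dw t^{-1}\dw^{-1}=t_1\cdot\bigl(t\,\dw t^{-1}\dw^{-1}\bigr)$, since $T$ is abelian, so the image of the map equals $T_1\cdot\phi(T)$ where $\phi\col T\to T$ is given by $\phi(t)=t\,\dw t^{-1}\dw^{-1}$. Commutativity of $T$ also makes $\phi$ a homomorphism of algebraic groups, so its image $T':=\phi(T)$ is a subtorus of $T$, and its kernel is $T^w:=\{t\in T;\ \dw t=t\dw\}$, whose identity component is $T_1$ by definition. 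Writing $\theta=\ad(\dw)$ for the automorphism $t\mapsto\dw t\dw^{-1}$ of $T$ (a genuine automorphism, since $\dw$ normalizes $T$), we have $\phi(t)=t\,\theta(t)^{-1}$, and $\theta$ has finite order because $w$ does.

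Next I would pass to Lie algebras. Let $\mathfrak{t}=\mathrm{Lie}(T)$, on which $\theta$ acts with finite order; since we work in characteristic $0$, $\theta$ is diagonalizable on $\mathfrak{t}$, so $\mathfrak{t}=\ker(\mathrm{id}-\theta)\oplus\Ima(\mathrm{id}-\theta)$, namely the $1$-eigenspace of $\theta$ together with the span of the remaining eigenspaces. I would then identify $\mathrm{Lie}(T_1)=\mathrm{Lie}(T^w)=\ker(\mathrm{id}-\theta)$, using that $T_1$ is the identity component of $T^w$, and, since the differential of $\phi$ at the identity is $\mathrm{id}-\theta$ and $\phi$ is a homomorphism of algebraic groups in characteristic $0$, I would identify $\mathrm{Lie}(T')=\Ima(\mathrm{id}-\theta)$. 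Hence $\mathrm{Lie}(T_1)+\mathrm{Lie}(T')=\mathfrak{t}$.

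It follows that the multiplication morphism $T_1\times T'\to T$ has surjective differential at the identity, so its image is a subgroup of $T$ containing an open neighbourhood of the identity; an open subgroup is closed, and $T$ is connected, so this image is all of $T$. Since that image is exactly $T_1\cdot T'=T_1\cdot\phi(T)$, which is the image of the original map, surjectivity follows. I do not expect a genuine obstacle here: this is the standard structural fact about a torus equipped with a finite-order automorphism, and the only points needing (routine) care are the finiteness of the order of $\theta$ and the identity $\mathrm{Lie}(\phi(T))=\Ima(d\phi)$, both of which rely on characteristic $0$ (equivalently, smoothness). If one prefers to avoid Lie algebras, the same conclusion comes from a rational dimension count on cocharacter lattices, since $X_*(T_1)\otimes\mathbb{Q}=(X_*(T)\otimes\mathbb{Q})^\theta$ and $X_*(T')\otimes\mathbb{Q}=(\mathrm{id}-\theta)(X_*(T)\otimes\mathbb{Q})$ together span $X_*(T)\otimes\mathbb{Q}$.
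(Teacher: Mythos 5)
Your argument is correct: the paper offers no proof of this lemma (it simply cites Lusztig), and your reduction to the homomorphism $\phi(t)=t\,\dw t^{-1}\dw^{-1}=t\theta(t)^{-1}$ followed by the eigenspace (or cocharacter-lattice) decomposition for the finite-order automorphism $\theta$ is exactly the standard argument behind the cited result. The only point worth making explicit is why $\theta$ has finite order on $T$: if $w^m=e$ then $\dw^m\in T$, and conjugation by an element of $T$ is trivial on the abelian group $T$, so $\theta^m=\mathrm{id}$; with that noted, all the remaining steps (smoothness of $T^w$ and $\mathrm{Lie}(\phi(T))=\Ima(d\phi)$ in characteristic $0$, and the open-subgroup argument in the connected group $T$) go through as you say.
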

  The following proposition gives properties about the groups $T_1$, $\N_{L_J}$ and $\N_T$.

 \begin{proposition}[{\cite[Section 1.14]{LusztigDisconnectedI}}]
 \label{prop:N_properties }
 The following properties hold.
 \begin{enumerate}
     \item $\N_{L_J}^\circ = T_1$, in particular $\N_{L_{J}}/T_1$ is finite.
     \item Every element of $gT_1$ is quasisemisimple in $N_G(L_J)$.
     \item Every quasisemisimple element of $wL_J$ is conjugated via $L_J$ to an element in $wT_1$.
     \item Two elements $wt_0, wt_1\in wT_1$ are conjugated via $L_J$ if and only if, they are conjugated by an element of $\N_{L_J}$.
     \item $N_{L_J}/N_{T}=W_J^w$.
 \end{enumerate}
 \end{proposition}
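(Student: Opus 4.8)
The plan is to carry out everything inside the (possibly disconnected) reductive group $\widehat G:=N_G(L_J)$, whose identity component is $L_J$ and in which the hypotheses $w\in{}^JW^J$, $wJw^{-1}=J$ guarantee $\dw L_J\dw^{-1}=L_J$ and $\dw B_J\dw^{-1}=B_J$; thus $\dw$ is a quasisemisimple element of the component $\dw L_J$ normalizing the pair $(B_J,T)$, $T_1=(T^w)^0$, and $\phi:=\ad(\dw)$ is a finite-order (hence, on $X_*(T)_{\mathbb Q}$, semisimple) automorphism with $X_*(T_1)=X_*(T)^{\phi}$. The first thing I would establish — the one non-formal input — is the identity $C_{L_J}(T_1)=T$: were some root $\alpha\in\Phi_J$ trivial on $T_1$ it would be orthogonal to $X_*(T)^{\phi}$, i.e. $\sum_k\phi^k(\alpha)=0$ in $X^*(T)_{\mathbb Q}$, but $\dw$ preserves $\Phi_J^+$ so all the $\phi^k(\alpha)$ have the same sign and the sum cannot vanish. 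In particular $N_{L_J}(T_1)$ normalizes $C_{L_J}(T_1)=T$, so $N_{L_J}(T_1)\subseteq N_{L_J}(T)$ with $N_{L_J}(T_1)/T\hookrightarrow W_J$.

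Items (2) and (3) I would then get from the standard quasisemisimple yoga (\cite{Stein68}): each $\dw t_1$, $t_1\in T_1$, still normalizes $(B_J,T)$, hence is quasisemisimple; and any quasisemisimple $\sigma\in\dw L_J$ may be $L_J$-conjugated to normalize $(B_J,T)$, which forces $\dw^{-1}\sigma\in N_{L_J}(B_J)\cap N_{L_J}(T)=T$, so $\sigma\in\dw T$. Finally $\dw t$ ($t\in T$) can be carried by a conjugation from $T$ into $\dw T_1$ precisely when $t\in T_1\cdot\{\phi^{-1}(s)s^{-1}:s\in T\}$; after applying $\phi$ this reads $T=T_1\cdot\{s\phi(s)^{-1}:s\in T\}$, which is exactly the surjectivity asserted in Lemma \ref{lem:T1T_map_surjective}.

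For (1) and (5): since conjugation by $T_1$ fixes $\dw T_1$ pointwise one has $T_1\subseteq\N_{L_J}$, and since $\N_{L_J}$ also normalizes $T_1=(\dw T_1)(\dw T_1)^{-1}$ we get $\N_{L_J}\subseteq N_{L_J}(T_1)\subseteq N_{L_J}(T)$ (the second inclusion using $C_{L_J}(T_1)=T$). Thus reduction modulo $T$ is a homomorphism $\N_{L_J}\to W_J$ with kernel $\N_{L_J}\cap T=\N_T$; its image lands in $W_J^w$ because $\ell\dw\ell^{-1}\in\dw T_1$ says, modulo $T$, that the image of $\ell$ commutes with $w$ in $\widehat W=W_J\rtimes\langle w\rangle$, and it is onto $W_J^w$ because any representative $\dz\in N_{L_J}(T)$ of $z\in W_J^w$ already normalizes $T_1$ (as $zw=wz$) and can be corrected by a $t''\in T$ — furnished once more by Lemma \ref{lem:T1T_map_surjective} — so that $\dz t''\in\N_{L_J}$. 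Hence $\N_{L_J}/\N_T\cong W_J^w$, which is (5). Passing to identity components gives $\N_{L_J}^0=\N_T^0$, and since $\N_T=\{t\in T:\phi^{-1}(t)t^{-1}\in T_1\}$ corresponds on $X_*(T)$ to $\{\mu:(\phi-1)^2\mu=0\}=X_*(T)^{\phi}=X_*(T_1)$ (here semisimplicity of $\phi$ is used), we obtain $\N_{L_J}^0=T_1$, which is (1).

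For (4) I would argue: if $\ell\dw t_0\ell^{-1}=\dw t_1$ with $\ell\in L_J$, then $(B_J,T)$ and $(\ell^{-1}B_J\ell,\ell^{-1}T\ell)$ are both normalized by the quasisemisimple $\dw t_0$, so by Steinberg's transitivity of $C_{L_J}(\dw t_0)^0$ on its stable Borel--torus pairs there is $c\in C_{L_J}(\dw t_0)^0$ with $\ell c\in N_{L_J}(B_J)\cap N_{L_J}(T)=T$; then $t':=\ell c\in T$ conjugates $\dw t_0$ to $\dw t_1$, and the relation forces $\phi^{-1}(t')t'^{-1}\in T_1$, so $t'\in\N_T\subseteq\N_{L_J}$. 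The step I expect to be the real obstacle is the surjectivity claim in (5): the naive lift $\dz$ of $z\in W_J^w$ genuinely need not lie in $\N_{L_J}$, and producing the $T$-valued correction is possible only thanks to the surjectivity of $T_1\times T\to T$ in Lemma \ref{lem:T1T_map_surjective}; everything else is formal once $C_{L_J}(T_1)=T$ is in hand. (The Proposition is of course Lusztig's \cite[Section 1.14]{LusztigDisconnectedI}; the above is just one way to organize the proof.)
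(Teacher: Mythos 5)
Your proposal is correct, and for the one item the paper actually proves — item (5) — it follows essentially the same route: show $\N_{L_J}\subset N_{L_J}(T)$ via $Z_{L_J}(T_1)=T$, get the injection $\N_{L_J}/\N_T\hookrightarrow W_J$, check the image lies in $W_J^w$, and obtain surjectivity by correcting a representative $\dz$ of $z\in W_J^w$ with an element of $T$ supplied by Lemma \ref{lem:T1T_map_surjective}; you correctly identified this correction step as the one place where the surjectivity of $T_1\times T\to T$ is indispensable. The differences are of scope and packaging rather than substance: the paper simply cites \cite[Section 1.14]{LusztigDisconnectedI} for items (1)--(4) and also imports $\ell T_1\ell^{-1}=T_1$ and $Z_{L_J}(T_1)=T$ from Lusztig (Sections 1.14 and 1.4(d)), whereas you reprove all of this — the root-theoretic argument that no $\alpha\in\Phi_J$ vanishes on $T_1$ (using that $w$ permutes $\Delta_J$, so the $\phi$-orbit of $\alpha$ has constant sign), Steinberg's quasisemisimple machinery for (2)--(4), and the cocharacter computation $\ker(\phi-1)^2=\ker(\phi-1)$ giving $\N_{L_J}^\circ=\N_T^\circ=T_1$; this makes your write-up self-contained at the cost of length, and it does make (1) logically depend on the finiteness coming from (5), which is fine since your proof of (5) does not use (1). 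The only genuine variation inside item (5) is that you verify $zw=wz$ by computing directly in $N_{\widehat G}(T)$ modulo $T$, while the paper deduces it from the Bruhat decomposition ($\ell\dw T_1\subset B\dz\dw B$ and $\dw T_1\ell\subset B\dw\dz B$); both are valid, and yours is arguably the more direct argument.
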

 \begin{proof}
    The first four items are in \cite[Section 1.14]{LusztigDisconnectedI}.

 Let us prove item (5). We begin by proving that $\N_{L_J}\subset N_{L_J}(T)$. Take $\ell\in \N_{L_J}$, that is $\ell \dw T_1\ell^{-1}=\dw T_1$. By \cite[Secion 1.14]{LusztigDisconnectedI} we have that $\ell T_1\ell^{-1}=T_1$, and by \cite[Section 1.4(d)]{LusztigDisconnectedI} we get
 \[
 \ell T\ell^{-1}= \ell Z_{L_J}(T_1)\ell^{-1} = Z_{L_J}(\ell T_1\ell^{-1})=Z_{L_J}(T_1)=T.
 \]
 In particular, since $\N_T=\N_{L_J}\cap T$ we have that $\N_T$ is normal in $\N_{L_J}$, and there is a natural injection $\N_{L_J}/\N_T\hookrightarrow \N_{L_J}(T)/T=W_J$. We prove that the image of this map is precisely $W_J^w$. Given $z$ in the image, there exists $\ell \in \N_{L_J}$ such that $\ell \in \dz T$. Since
   \[
   \ell \dw T_1 =\dw T_1 \ell
   \]
   and
   \[
   \ell \dw T_1 \subset B \dz B \dw B = B\dz\dw B \text{ and }  \dw T_1\ell \subset B \dw B \dz B = B\dw\dz B,
   \]
   we must have, by the Bruhat decomposition, that $wz=zw$, i.e., $z\in W_J^w$.

   Conversely, take $z\in W_J^w$. This means that $\dz \dw = \dw \dz t_0$ for some $t_0\in T$ (or, equivalently, $\dz\dw = t_0'\dw\dz$ for $t_0'=\dw\dz t_0\dz^{-1}\dw^{-1}\in T$), in particular if $t\in T_1$, we have
   \[
   \dw \dz t \dz^{-1}\dw^{-1} =t_0'^{-1}\dz\dw t \dw^{-1}\dz^{-1}t_0' = t_0'^{-1}\dz t\dz^{-1}t_0'=  \dz t\dz^{-1},
   \]
   hence $\dz t \dz^{-1}\in T_1$, which means that $\dz T_1\dz^{-1}=T_1$.
   Then we have
   \[
   \dz \dw T_1 \dz^{-1} = \dz \dw \dz^{-1} T_1 = \dz\dw\dz^{-1}\dw^{-1} \dw T_1= t_0' \dw T_1.
   \]
   We claim that upon changing $\dz$ with other representative in $\dz T$ we can assume that $t_0'\in T_1$ and hence $t_0'\dw T_1=\dw t_0'T_1=\dw T_1$. Take $\dz' =t \dz $ with $t\in T$. Then
   \[
   \dz'\dw \dz'^{-1}\dw^{-1}=t \dz \dw \dz^{-1}\dw^{-1} \dw t^{-1} \dw^{-1} = t t_0' \dw t \dw^{-1}.
   \]
   By Lemma \ref{lem:T1T_map_surjective} we have that $t_0'=t^{-1}t_1\dw t \dw^{-1}$ for some $(t_1,t)\in T_1\times T$, then
   $tt_0'\dw t\dw^{-1} = t_1\in T_1$ which concludes the proof.
    \end{proof}

    Define $U^J_w := U^J\cap \dw U^J \dw^{-1} = U^J\cap \dw U \dw^{-1}$. We claim that for every $\ell \in L_J$, we have $\ell U^J_w\ell^{-1} = U^J_w$, and indeed there exists $\ell'\in L_J$ such that $\ell \dw = \dw \ell'$, hence
    \begin{align*}
    \ell U^J_w \ell^{-1} =& \ell U^J\ell^{-1} \cap \ell\dw U^J\dw^{-1}\ell^{-1}\\
     = & U^J\cap \dw \ell'U^J\ell'^{-1}\dw^{-1} \\
     = & U^J\cap \dw U^J\dw^{-1}\\
     = & U^J_w.
    \end{align*}
    The equality $\ell'U^J\ell'^{-1}= U^J$ follows from the fact that $U^J$ is normal in $P_J=L_JU^J$ (see \cite[Proposition 12.6]{MalleTesterman}). This means that $L_JU^J_w$ is a subgroup of $G$.  We have a natural action of $L_JU^J_w$ on $G\times \dw P_J$ given by
   \[
    \ell u\cdot(g,wp) = (gu^{-1}\ell^{-1}, \ell u \dw p u^{-1}\ell^{-1}).
   \]
   This action is well defined, indeed we have
   \[
   \ell u \dw p u ^{-1} \ell^{-1}= \dw (\dw^{-1}\ell \dw) (\dw^{-1}u \dw) p u^{-1}\ell^{-1} \in \dw P_J
   \]
   because $\dw L_J\dw^{-1}=L_J$ and $\dw^{-1}u\dw\in U^J$ by the definition of $U^J_w$.

 \begin{proposition}
 \label{prop:wPJYwJ_iso}
    % We have that
    % \[
    % \Y_{w,J}^\circ = \{(X,gP_J); g^{-1}Xg\in P_JwP_J\}.
    % \]
    % Moreover,
    There is an isomorphism
    \begin{align*}
      f\col \frac{G\times \dw P_J}{L_JU^J_w} &\to \Y_{w,J}^{\circ}\\
      (g, \dw p)& \mapsto (g\dw pg^{-1}, gP_J).
    \end{align*}
   \end{proposition}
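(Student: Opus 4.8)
The plan is to exhibit $f$ as a well-defined morphism, prove it is bijective, and then produce an explicit inverse morphism; the technical heart is a single identity of algebraic subgroups. First, the assignment $(g,\dw p)\mapsto (g\dw pg^{-1},gP_J)$ is $L_JU^J_w$-invariant: for $\ell u\in L_JU^J_w$ one has $(gu^{-1}\ell^{-1})(\ell u\dw pu^{-1}\ell^{-1})(gu^{-1}\ell^{-1})^{-1}=g\dw pg^{-1}$ and $gu^{-1}\ell^{-1}P_J=gP_J$ (since $u\in U^J_w\subseteq P_J$, $\ell\in L_J\subseteq P_J$), so it descends to the quotient, which is a smooth variety because $L_JU^J_w$ acts freely on the smooth variety $G\times\dw P_J$. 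Its image lies in $\Y_{w,J}^\circ$: by Remark~\ref{rem:w_double} (valid as $w\in{}^JW^J$ and $wJw^{-1}=J$) we have $\Y_{w,J}^\circ=\{(X,gP_J)\,;\,g^{-1}Xg\in P_J\dw P_J\}$, and $g^{-1}(g\dw pg^{-1})g=\dw p\in\dw P_J$; since $\Y_{w,J}^\circ$ is locally closed in $G/P_J\times G$ (Proposition~\ref{prop:loc_closed}), $f$ is a morphism.

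The crux is the identity $P_J\cap\dw P_J\dw^{-1}=L_JU^J_w$. The inclusion $\supseteq$ is immediate, since $L_J=\dw L_J\dw^{-1}\subseteq P_J\cap\dw P_J\dw^{-1}$, $U^J_w=U^J\cap\dw U\dw^{-1}\subseteq P_J\cap\dw P_J\dw^{-1}$, and $L_JU^J_w$ is a subgroup (as recalled before the proposition). For $\subseteq$ I would compare Lie algebras: since $w\in{}^JW$ and $wJw^{-1}=J$ force $w\Phi_J=\Phi_J$ and $w\Phi_J^+=\Phi_J^+$, a direct root-space computation gives
\[
\mathrm{Lie}(P_J)\cap\ad(\dw)\mathrm{Lie}(P_J)=\mathfrak{l}_J\oplus\bigoplus_{\alpha\in(\Phi^+\setminus\Phi_J)\cap w\Phi^+}\mathfrak{g}_\alpha=\mathrm{Lie}(L_J)\oplus\mathrm{Lie}(U^J_w),
\]
and since $P_J\cap\dw P_J\dw^{-1}$ is connected and contains the connected group $L_JU^J_w$ with the same Lie algebra, they coincide (in particular $U^J\cap\dw P_J\dw^{-1}=U^J_w$). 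This is the step I expect to require the most care.

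Now for bijectivity: given $(X,gP_J)\in\Y_{w,J}^\circ$, factor $g^{-1}Xg=p_1\dw p_2$; then $f([(gp_1,\dw p_2p_1)])=(X,gP_J)$, so $f$ is surjective. If $f([(g_1,\dw p_1)])=f([(g_2,\dw p_2)])$, comparing $G/P_J$-coordinates gives $g_2=g_1q$ with $q\in P_J$ and comparing $G$-coordinates gives $\dw p_1=q\dw p_2q^{-1}$, i.e. $p_1=(\dw^{-1}q\dw)(p_2q^{-1})$; as $p_1,p_2q^{-1}\in P_J$ this forces $\dw^{-1}q\dw\in P_J$, hence $q\in P_J\cap\dw P_J\dw^{-1}=L_JU^J_w$, and writing $q^{-1}=\ell'u'$ with $\ell'\in L_J$, $u'\in U^J_w$ (using $\ell U^J_w\ell^{-1}=U^J_w$) one checks $(g_2,\dw p_2)=\ell'u'\cdot(g_1,\dw p_1)$. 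So $f$ is bijective. To upgrade this to an isomorphism I would exhibit the inverse $\phi(X,gP_J):=[(gp_1,\dw p_2p_1)]$, where $g^{-1}Xg=p_1\dw p_2$: the factorization ambiguity $(p_1,p_2)\mapsto(p_1s^{-1},(\dw^{-1}s\dw)p_2)$ with $s\in P_J\cap\dw P_J\dw^{-1}=L_JU^J_w$ changes $(gp_1,\dw p_2p_1)$ into $s\cdot(gp_1,\dw p_2p_1)$, and replacing $g$ by $gq$ ($q\in P_J$) also leaves the class unchanged, so $\phi$ is well defined; it is a morphism because the multiplication $P_J\times P_J\to P_J\dw P_J$, $(p_1,p_2)\mapsto p_1\dw p_2$, is an $L_JU^J_w$-torsor admitting Zariski-local sections (for $G=GL_n$ the group $L_JU^J_w$ is special, being an extension of a product of general linear groups by a unipotent group), and $G\to G/P_J$ has local sections. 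A direct computation gives $\phi\circ f=\mathrm{id}$ and $f\circ\phi=\mathrm{id}$, so $f$ is an isomorphism.

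In summary, the main obstacle is the group identity $P_J\cap\dw P_J\dw^{-1}=L_JU^J_w$, which simultaneously identifies the fibers of $f$ and guarantees the coherence of $\phi$; once it is established the remaining verifications are routine manipulations with the decomposition $P_J\dw P_J$. The only other point needing attention is the passage from ``bijective morphism'' to ``isomorphism'', which I handle via the explicit inverse $\phi$ rather than by appealing to normality (equivalently, smoothness) of the parabolic Lusztig cell $\Y_{w,J}^\circ$.
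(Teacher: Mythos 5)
Your proof is correct, and its overall skeleton (well-definedness, surjectivity via a factorization $g^{-1}Xg=p_1\dw p_2$, injectivity via a stabilizer computation, all using $G$-equivariance and the description $\Y_{w,J}^\circ=\{(X,gP_J);\,g^{-1}Xg\in P_J\dw P_J\}$) matches the paper's; the difference lies in how injectivity is closed and in the final upgrade to an isomorphism. The paper writes $P_J=U^wL_JU^J_w$, uses the $L_JU^J_w$-action to reduce to $g_2\in U^w$, and then invokes the uniqueness of the expression $u\dw p$ in $P_J\dw P_J=U^w\dw P_J$ to force $g_2=1$; you instead prove the group identity $P_J\cap\dw P_J\dw^{-1}=L_JU^J_w$ and absorb the whole ambiguity into the action at once. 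Both arguments rest on the same root-theoretic fact that $w\Phi_J^+=\Phi_J^+$ when $w\in{}^JW^J$ and $wJw^{-1}=J$, so the routes are essentially equivalent in substance. Two small remarks: for your key identity you can avoid the (true but unreferenced) appeal to connectedness of $P_J\cap\dw P_J\dw^{-1}$ by using the Levi-type decomposition of an intersection of parabolic subgroups from \cite{DigneMichelReps}, exactly as in the proof of Proposition \ref{prop:w_cap}; since $\dw L_J\dw^{-1}=L_J$, it yields $P_J\cap\dw P_J\dw^{-1}=L_J\,(U^J\cap\dw U^J\dw^{-1})=L_JU^J_w$ directly. Also, your torsor $P_J\times P_J\to P_J\dw P_J$ is in fact trivial, since $u\dw p\mapsto(u,p)$ is a global section coming from $P_J\dw P_J=U^w\dw P_J\cong U^w\times\dw P_J$, so the specialness argument is not needed. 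Your explicit inverse goes beyond the paper's proof, which only verifies bijectivity of the map; this is a useful addition, as it settles the isomorphism at the level of varieties without any smoothness or normality considerations.
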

 \begin{proof}

   Recall that
    \[
    \Y_{w,J}^\circ = \{(X,gP_J); g^{-1}Xg\in P_JwP_J\}.
    \]
    The map $f$ is $G$-equivariant, where $G$ acts on $G\times \dw P_J$ by left multiplication on $G$ and acts on $Y_{w,J}^\circ$ by conjugation on $X$ and left multpilication on $gP_J$. Let us prove that $f$ is surjective. By the above observation we can assume that $g\in P_J$ and $X\in P_JwP_J$. Therefore, $X=p_0\dw p_1$, hence
    \[
    (X,P_J)=f(p_0, \dw p_1p_0).
    \]
    Now we prove injectivity. Assume that $f(g_1,wp_1)=f(g_2,wp_2)$. By the fact that $f$ is $G$-equivariant, we can assume that $g_1=1$. Hence $g_2\in P_J$ and $g_2\dw p_2g_2^{-1}= \dw p_1$. Since $g_2\in P_J = U^w L_JU^J_w$ and $L_JU^J_w$ acts on $G\times \dw P_J$, we can assume that $g_2\in U^w$. But now we have $g_2\dw (p_2g_2^{-1}) = \dw p_1 \in P_J\dw P_J= U^w\dw P_J$. On the other hand, every element of $U^w\dw P_J$ is uniquely written as $u\dw p$ with $u\in U^w$ and $p\in P_J$. Hence $g_2=1$ and we are done. \par
 \end{proof}

  Since $\N_T\subset T$, we have that $\N_TU^J_w$ is a subgroup of $G$. We have an action of $\N_TU^J_w$ on $G\times \dw T_1U^J$ given by
   \[
    tu\cdot (g, \dw t_0u_0) = (gu^{-1}t^{-1}, tu \dw t_0u_0 u^{-1}t^{-1}).
   \]
 This action is well-defined, because
 \[
 tu \dw t_0u_0 u^{-1}t^{-1} = t \dw t_0 t^{-1} (tt_0^{-1}\dw^{-1} u\dw t_0 t^{-1})(tu_0u^{-1}t^{-1}),
 \]
 and
 \begin{enumerate}
     \item $t\dw t_0t^{-1}\in \dw T_1$,, by the definition of $\N_T$,
     \item $\dw^{-1}u\dw \in U^J$, by the definition of $U^J_w$, and $tt_0^{-1}U^Jt_0t^{-1}=U^J$ because $tt_0^{-1}\in T$,
     \item $tu_0u^{-1}t^{-1}\in tU^Jt^{-1} = U^J$.
 \end{enumerate}

 We also define $(wT_1)^{rs}$ and $(\Y_w^{\circ})^{rs}$ as
\begin{align*}
(\dw T_1)^{rs} &= \{ \dw t \in \dw T_1; \dw t \text{ is regular semisimple}\},    \\
(\Y_w^{\circ})^{w_{rs}} & = \bigg\{ (X, gB); \begin{array}{l}
\text{For every $t\in T$, $u_0,u_1\in U$ such that $g^{-1}Xg = u_0\dw tu_1$,}\\
\text{we have that $\dw t$ is regular semisimple}
\end{array}
\bigg\}.
\end{align*}

\begin{example}
  Keeping the notation from Example \ref{exa:G24_not_galois}, let us prove that the single pair $(X,V_\bullet=V_1\subset V_2\subset V_3)$ in the preimage of $(X,V_2)$ is not in $(\h_{w}^\circ)^{w_{rs}}$. Indeed, we have that $V_1=\langle v_2\rangle $, $V_2=\langle v_2,v_1\rangle$ and $V_3=\langle v_2,v_1,v_4\rangle$. So, we have that $V_\bullet$ is associated to the coset $gB$ for $g=(v_2,v_1,v_4,v_3)$. A computation shows that
  \[
  g^{-1}Xg = \dw
\left(\begin{array}{rrrr}
1 & 0 & 3 & -18 \\
0 & 1 & 0 & -3 \\
0 & 0 & -2 & 24 \\
0 & 0 & 0 & -2
\end{array}\right)
  \]
  and hence
  \[
  \dw t = \left(\begin{array}{rrrr}
0 & 0 & -2 & 0 \\
0 & 0 & 0 & -2 \\
1 & 0 & 0 & 0 \\
0 & 1 & 0 & 0
\end{array}\right)
  \]
  which has characteristic polynomial $(x^2+2)^2$.
\end{example}

The following lemma proves that the condition defining $(\h_w^\circ)^{w_{rs}}$ is independent of the choice of representative in $gB$ and of $u_0$ and $u_1$.\par

\begin{lemma}
 Let $g, X \in G$, $t\in T$ and $u_0,u_1\in U$ be such that $g^{-1}Xg = u_0\dw t u_1$ and $\dw t$ is regular semisimple. If $g'\in gB$, $t'\in T$ and $u_0',u_1'\in U$ are such that $g'^{-1}Xg' = u_0'\dw t' u_1'$, then $\dw t'$ is regular semisimple.
\end{lemma}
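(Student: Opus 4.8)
The plan is to reduce the statement to the conjugacy-invariance of regular semisimplicity, by showing that the $G$-conjugacy class of $\dw t$ depends only on the point $(X,gB)$, and in particular not on the chosen representative $g\in gB$ nor on the chosen factorization $g^{-1}Xg = u_0\dw t u_1$.

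First I would eliminate the dependence on the factorization. Recall the projection $p\colon B\to T$ with kernel $U$ and the induced map $p_w\colon B\dw B\to T$, $p_w(u\dw b):=p(b)$, which is well defined by the uniqueness of the Bruhat factorization $y=u\dw b$ with $u\in U^w:=U\cap\dw U^-\dw^{-1}$ and $b\in B$. Using $U=U^wU_w$ with $U_w=U\cap\dw U\dw^{-1}$ together with $\dw^{-1}U_w\dw\subset U$, one checks that in fact $p_w(u'\dw b')=p(b')$ for \emph{arbitrary} $u'\in U$ and $b'\in B$: if $u'=ac$ with $a\in U^w$, $c\in U_w$, then $u'\dw b'=a\,\dw\,\bigl((\dw^{-1}c\dw)b'\bigr)$ is the Bruhat factorization. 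Since $g^{-1}Xg=u_0\,\dw\,(tu_1)$ with $u_0\in U$, $tu_1\in B$, and $p(tu_1)=t$, this forces $t=p_w(g^{-1}Xg)$; hence $\dw t=\dw\,p_w(g^{-1}Xg)$ does not depend on the factorization, and similarly $\dw t'=\dw\,p_w(g'^{-1}Xg')$.

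Next I would compare the two. Write $g'=gb_0$ with $b_0=t_0u_0\in B$, $t_0\in T$, $u_0\in U$, so that $g'^{-1}Xg'=b_0^{-1}(g^{-1}Xg)b_0$ (and conjugation by $B$ preserves the stratum $B\dw B$). Writing $g^{-1}Xg=u\dw c$ in Bruhat form and pushing $t_0^{-1}$ past $\dw$ via $t_0^{-1}\dw=\dw(\dw^{-1}t_0^{-1}\dw)$ yields
\[
g'^{-1}Xg'=\bigl[u_0^{-1}(t_0^{-1}ut_0)\bigr]\;\dw\;\bigl[(\dw^{-1}t_0^{-1}\dw)\,c\,t_0u_0\bigr],
\]
where the first bracket lies in $U$ and the second in $B$. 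Applying the previous step (and that $p$ is a homomorphism vanishing on $U$) gives $p_w(g'^{-1}Xg')=(\dw^{-1}t_0^{-1}\dw)\,p_w(g^{-1}Xg)\,t_0$, hence
\[
\dw t'=\dw\,p_w(g'^{-1}Xg')=t_0^{-1}\bigl(\dw\,p_w(g^{-1}Xg)\bigr)t_0=t_0^{-1}(\dw t)t_0 .
\]
So $\dw t'$ is conjugate to $\dw t$ in $G$ and is therefore regular semisimple.

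The one place where actual computation is needed is the last display, and the subtle point there is that $p_w$ (or $\dw\,p_w$) is \emph{not} invariant under arbitrary $G$-conjugation --- for $v\in U$ the elements $\dw t$ and $\dw t v$ typically have different characteristic polynomials yet the same $p_w$-value. The argument succeeds precisely because $g'^{-1}Xg'$ is a $B$-conjugate, not an arbitrary $G$-conjugate, of $g^{-1}Xg$: only the toral component $t_0$ of $b_0$ survives in the formula for $\dw\,p_w$, and it acts by honest conjugation on $\dw t$. Nothing in this uses $G=GL_n$, so the lemma holds for any connected reductive $G$.
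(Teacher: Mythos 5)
Your proof is correct and follows essentially the same route as the paper: first show that the toral factor $t$ in $g^{-1}Xg=u_0\dw t u_1$ depends only on $g^{-1}Xg$ (you prove this via $U=U^wU_w$ and the uniqueness of the Bruhat factorization, which the paper merely asserts), then conjugate by $b_0=t_0u\in B$ and observe that only the torus part survives, giving $\dw t'=t_0^{-1}(\dw t)t_0$, hence regular semisimplicity. Apart from the harmless reuse of the symbol $u_0$ for the unipotent part of $b_0$, there is nothing to correct.
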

\begin{proof}
First, we note that if $g^{-1}Xg=u_0\dw tu_1$, then $t$ depends only on $g^{-1}Xg$ and not on the choice of $u_0, u_1$. \par
Writing $g'=gt_0u$, we have that $u_0'\dw t'u_1' = g'^{-1}Xg' = u^{-1}t_0^{-1}(u_0\dw tu_1)t_0 u$. By the observation above, we can assume that $u=1$, as the conjugation by $u$ will not change the value of $t'$. We can write
\begin{align*}
    t_0u_0\dw tu_1t_0^{-1} = (t_0u_0t_0^{-1})(t_0\dw t t_0^{-1})(t_0u_1t_0^{-1}).
\end{align*}
Since $t_0u_0t_0^{-1}, t_0u_1t_0^{-1}\in U$, and
\[
t_0\dw t t_0^{-1} = \dw (\dw^{-1}t_0\dw)tt_0^{-1}\in \dw T,
\]
we have that $\dw t'=t_0\dw t t_0^{-1}$ which is regular semisimple.
\end{proof}

%We note that if $\dw t$ is quasisemisimple in $N_G(L_J)$ and $u_0\dw t u_1 = u_0'\dw t' u_1'$, then $t' = t$, and if we conjugate $u_0\dw t u_1$ with $b= \overline{t}u'$ and write $b^{-1} = u_0'\dw t' u_1'$ we have that $\dw t' = \overline{t}\dw t \overline{t}^{-1} $, which means that $\dw t'$ is regular quasisemisimple in $N_G(L_J)$.

Define the map $f$ as
   \begin{align*}
     f\colon\frac{G \times \dw T_1U^J}{\N_TU^J_w}& \to Y_w^{\circ}\\
       (g, \dw tu)&\mapsto (g\dw tu g^{-1}, gB).
 \end{align*}
 The map $f$ is well-defined, because $\N_TU^J_w\subset B$ and $\dw tu\in B\dw B$.
 \begin{proposition}
 \label{prop:wT1Ywiso}
   The map above restricts to an isomorpism
    \begin{align*}
      f\col \frac{G \times (\dw T_1)^{rs} U^J}{\N_TU^J_w}& \to (Y_w^{\circ})^{w_{rs}}\\
       (g, \dw tu)&\mapsto (g\dw tu g^{-1}, gB).
   \end{align*}
 \end{proposition}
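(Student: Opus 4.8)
The plan is to adapt the proof of Proposition~\ref{prop:wPJYwJ_iso}, reducing to a single fibre by equivariance and then feeding in the regular semisimplicity hypothesis at the two places where it is actually needed. First, $f$ is $G$-equivariant: $G$ acts on the source by left translation on the first factor and on $(\h_w^\circ)^{w_{rs}}$ by $h\cdot(X,gB)=(hXh^{-1},hgB)$, and the latter locus is $G$-stable because the condition defining it is invariant. Since $G$ acts transitively on $G/B$, it is enough to show that $f$ restricts to a bijection from the fibre of the source over $B\in G/B$ --- which is $\big(B\times(\dw T_1)^{rs}U^J\big)/\N_TU^J_w$, as $\N_TU^J_w\subseteq B$ preserves this fibre --- onto $\{X\in B\dw B\col (X,B)\in(\h_w^\circ)^{w_{rs}}\}$. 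Well-definedness of this restriction is immediate from the Lemma preceding the statement: with the representative $g=1$, $\dw tu$ is already a Bruhat expression $U^w\dw TU$ with torus part $\dw t$, regular semisimple by hypothesis. Exactly as in Proposition~\ref{prop:wPJYwJ_iso} the $\N_TU^J_w$-action is free, so the source is a smooth quotient of a smooth variety; since a bijective morphism of irreducible varieties onto a smooth (hence normal) target is an isomorphism, it will suffice to prove that $f$ is bijective on the fibre over $B$.

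\smallskip
\noindent\emph{Surjectivity.} Given $(X,B)\in(\h_w^\circ)^{w_{rs}}$ I would Bruhat-decompose $X=a\dw\bar t v'$ with $a\in U^w$, $\bar t\in T$, $v'\in U$, so that $\dw\bar t$ is regular semisimple; conjugating by $a^{-1}\in U^w\subseteq B$ reduces to $X=\dw\bar t v$, $v\in U$. Conjugation by $s_0\in T$ turns this into $\dw\big((\dw^{-1}s_0\dw)\bar ts_0^{-1}\big)\cdot(s_0vs_0^{-1})$, and Lemma~\ref{lem:T1T_map_surjective} lets me choose $s_0$ with $t_0:=(\dw^{-1}s_0\dw)\bar ts_0^{-1}\in T_1$; then $\dw t_0=s_0(\dw\bar t)s_0^{-1}$ is still regular semisimple, so we reach $X=\dw t_0v$ with $t_0\in T_1$ and $v\in U$. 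The last move is to conjugate the unipotent tail into $U^J$: writing $U=(U\cap L_J)U^J$ with $U^J$ normal in $U$, and $v=v_Lv^J$ accordingly, one has for $c\in U\cap L_J$ that $c\dw t_0vc^{-1}=\dw t_0\,\psi(c)vc^{-1}$ with $\psi:=\ad\big((\dw t_0)^{-1}\big)$ stabilizing $U\cap L_J$ (since $\dw$ and $t_0$ normalize $B_J$, hence its unipotent radical $U\cap L_J$), and $\psi(c)vc^{-1}\in U^J$ exactly when $\psi(c)^{-1}c=v_L$. The map $c\mapsto\psi(c)^{-1}c$ is surjective on $U\cap L_J$ as soon as $\ad(\dw t_0)$ has no eigenvalue $1$ on $\mathrm{Lie}(U\cap L_J)=\bigoplus_{\beta\in\Phi_J\cap\Phi^+}\g_\beta$. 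Here the hypotheses enter: $w\in{}^JW^J$ and $wJw^{-1}=J$ force $w$ to preserve $\Phi_J\cap\Phi^+$ and to permute the blocks of $\{1,\dots,n-1\}\setminus J$ order-preservingly, so that each cycle of $w$ meets each block at most once; hence for $\beta\in\Phi_J$ of $w$-orbit length $\ell$, $\ad(\dw t_0)^\ell$ acts on $\g_\beta$ by $\beta(t_0)^\ell$, where $\beta(t_0)$ is a ratio of the values of $t_0$ on two distinct cycles of $w$ and so is not an $\ell$-th root of unity, precisely because the matrix $\dw t_0$ has distinct eigenvalues. Thus the desired $c$ exists, $X$ becomes $B$-conjugate to some $\dw t_0u_0\in\dw T_1U^J$ with $\dw t_0$ regular semisimple, and $(X,B)=f(b_0,\dw t_0u_0)$ for the product $b_0\in B$ of the conjugating elements.

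\smallskip
\noindent\emph{Injectivity.} From $f(g_1,\dw t_1u_1)=f(g_2,\dw t_2u_2)$, equivariance lets me take $g_1=1$, so $g_2\in B$ and $g_2(\dw t_2u_2)g_2^{-1}=\dw t_1u_1$. I would decompose $g_2=s_0\,a\,m\,u$ with $s_0\in T$, $a\in U^w$, $m\in U\cap L_J$, $u\in U^J_w$ (a legitimate product decomposition of $B$, since $U_w=(U\cap L_J)U^J_w$ and $U=U^wU_w$), compute the Bruhat decomposition of $g_2(\dw t_2u_2)g_2^{-1}$, and compare with $\dw t_1u_1$ using uniqueness of the $U^-TU^+$-decomposition on the big cell. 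The torus parts give $(\dw^{-1}s_0\dw)t_2s_0^{-1}=t_1$, hence $(\dw^{-1}s_0\dw)s_0^{-1}=t_1t_2^{-1}\in T_1$, i.e.\ $s_0\in\N_T$; the $U^-$-component coming from $a$ must vanish, so $a=e$; and the $(U\cap L_J)$-component must vanish, which says $m$ commutes with $\dw t_2$ --- impossible for $m\ne e$ since $\dw t_2$ is regular semisimple, so $Z_G(\dw t_2)$ is a maximal torus with no nontrivial unipotent element. Hence $g_2=s_0u\in\N_TU^J_w$, and the leftover identity on the $U^J$-parts makes $(g_2,\dw t_2u_2)$ the $\N_TU^J_w$-translate of $(1,\dw t_1u_1)$. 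This completes the bijection, and hence the proposition.

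\smallskip
The step I expect to be the main obstacle is the end of the surjectivity argument --- establishing that $\ad(\dw t_0)$ fixes no nonzero vector of $\mathrm{Lie}(U\cap L_J)$. The clean route is the block/cycle picture of $w$ under $w\in{}^JW^J$, $wJw^{-1}=J$, combined with the matrix meaning of ``regular semisimple'' for $GL_n$; a group-independent proof would instead invoke that $\dw t_0$ is quasisemisimple in $N_G(L_J)$ and that its centralizer meets $L_J$ in a torus, which follows from the structure results on $\N_{L_J}$ and $\N_T$ recalled above.
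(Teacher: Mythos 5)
Your proof is correct and follows essentially the same route as the paper's: reduce by $G$-equivariance, normalize the torus part into $T_1$ via Lemma \ref{lem:T1T_map_surjective}, absorb the $U_J=U\cap L_J$ part of the unipotent tail by bijectivity of the twisted conjugation map on $U_J$, and, for injectivity, decompose $g_2\in B$, kill the $U^w$-factor by uniqueness of the $U^w\dw B$-decomposition, place the torus factor in $\N_T$, and rule out a nontrivial $U_J$-factor because it would centralize the regular semisimple element $\dw t_2$. The only real divergence is local: where the paper gets bijectivity of the map in Equation \eqref{eq:UJUJiso} from $Z_G(\dw t_0)\cap U_J=\{1\}$ (centralizers of regular semisimple elements consist of semisimple elements, valid for any $G$), you verify the no-fixed-vector condition by an explicit cycle-versus-block eigenvalue computation that is correct but specific to $GL_n$ (your closing remark already supplies the group-independent alternative), and you add the worthwhile observation that bijectivity onto the smooth, hence normal, target upgrades to an isomorphism.
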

\begin{proof}

 Let us prove that the map $f$ is surjective. Take $(X, gB)\in (\Y_w^\circ)^{w_{rs}}$. Since $f$ is $G$-equivariant, we can assume that $g=1$. This means that $X\in B\dw B$ and therefore we can write $X=b_0 \dw b_1$. In fact, after conjugation with $b_0^{-1}$ we can assume that $X = \dw t_0 u_0$, with $t_0\in T$, $u_0\in U$ and $\dw t_0$ is regular semisimple.

 By Lemma \ref{lem:T1T_map_surjective} we have that there exists $t\in T$ such that $t\dw t_0t^{-1}= \dw t_1\in \dw T_1$. Upon conjugation by $t$ (and changing $u_0$ appropriately), we can assume that $X=\dw t_0 u_0$ with $t_0\in T_1$. Take $u\in U_J$ and write $u_0 = u_0' u_0''\in U_JU^J $, then
 \[
   uXu^{-1}= \dw t_0 (t_0^{-1}\dw^{-1} u \dw t_0 u_0'u^{-1})  (u u_0'' u^{-1}).
 \]
 Note that $\dw U_J\dw^{-1}=U_J$ (because $wJw^{-1} = J$ and $w\in {}^JW^J)$ and $u u_0'' u^{-1}\in U^J$ (because $U^J$ is normal in $P_J$ by \cite[Proposition 12.6]{MalleTesterman}). So, all we have to do is to prove that there exists $u\in U_J$ such that $t_0^{-1}\dw^{-1} u \dw t_0 u_0'u^{-1}=1$, or equivalently, $u_0' =t_0^{-1}\dw^{-1}u^{-1}\dw t_0u  $.
 We will prove that the following map
 \begin{equation}
 \label{eq:UJUJiso}
 \begin{aligned}
     U_J& \to U_J\\
     u& \mapsto t_0^{-1}\dw^{-1}u\dw t_0u
 \end{aligned}
 \end{equation}
 is an isomorphism. For this it is sufficient to prove injectivity, that is, $Z_{G}(\dw t_0)\cap U_J = \{1\}$. Since $\dw t_0$ is regular semisimple, $Z_{G}(\dw t_0)$ consists of semisimple elements (see \cite[2.11]{Stein65}), and as such, only intersects $U_J$ at the identity. Since the map in Equation \eqref{eq:UJUJiso} is an isomorphism, that means that there exists $u\in U_J$ such that  $u_0' =t_0^{-1}\dw^{-1}u^{-1}\dw t_0u $, so $f$ is surjective.

%We need
%\begin{align*}
%(\dw T_1)^{J-rqs} &= \{ \dw t \in \dw T_1; \dw t \text{ is regular in $N_G(L_J)$ }\}    \\
%(\Y_w^{\circ})^{J-rqs} & = \{ (X, gB); g^{-1}Xg = u_0\dw t u_1\text{ such that } \dw t \text{ is regular quasisemisimple in $N_G(L_J)$} \}
%\end{align*}

Let us prove that $f$ is injective. Since $f$ is equivariant, it is enough to prove that if  $(1,\dw t_1 u_1)$ and $(g_2, \dw t_2 u_2)$ are such that $f(1, \dw t_1 u_1)=f(g_2, \dw t_2 u_2)$ then $(1, \dw t_1u_1)=(g_2,\dw t_2u_2)$ in $\frac{G \times (\dw T_1)^{rs} U^J}{\N_TU^J_w}$. The equality $f(1, \dw t_1u_1)=f(g_2, \dw t_2u_2)$ means that $g_2\in B$ and
\[
\dw t_1 u_1 = g_2\dw t_2 u_2 g_2^{-1}.
\]
Since $g_2\in B = TU_JU^wU^J_w$, and we are modding out by the action of $U^J_w$, we can assume that $g_2\in TU_JU^w = U^wTU_J$. We write $g_2= v_1tv_2$ with $t\in T$, $v_1\in U^w$ and $v_2\in U_J$. Then
\begin{align*}
 \dw t_1u_1= g_2\dw t_2 u_2 g_2^{-1} &= v_1tv_2\dw t_2u_2v_2^{-1}t^{-1}v_1^{-1}\\
  &= v_1 \dw (\dw^{-1}t\dw )(\dw^{-1}v_2\dw )t_2u_2v_2^{-1}t^{-1}v_1^{-1}.
 \end{align*}
 Since $\dw^{-1} t\dw \in T$ and $\dw^{-1}v_2\dw\in \dw^{-1}U_J\dw = U_J$, we have, by the uniqueness of the decomposition $B\dw B=U^w\dw B$, that $v_1=1$. So $g_2=tv_2$ and hence
 \begin{align*}
     \dw t_1u_1= g_2\dw t_2 u_2 g_2^{-1} &= tv_2\dw t_2u_2v_2^{-1}t^{-1}\\
                        &=t\dw t_2 t^{-1} (tt_2^{-1}\dw^{-1}v_2\dw t_2t^{-1})(tu_2v_2^{-1}t^{-1}).
 \end{align*}
Since $(tt_2^{-1}\dw^{-1}v_2\dw t_2t^{-1})(tu_2v_2^{-1}t^{-1})\in U$ we must have, by the decomposition $B=TU$, that
\begin{align*}
    \dw t_1 &= t\dw t_2t^{-1},\\
    u_1 &= (tt_2^{-1}\dw^{-1}v_2\dw t_2t^{-1})(tu_2v_2^{-1}t^{-1}).
\end{align*}
From the first equation and Proposition \ref{prop:N_properties } item (4) (applied to case where $J=\emptyset$, that is $L_J=T$) we have that there exists $t'\in \N_T$ such that $\dw t_1 = t'\dw t_2 t'^{-1}$. Upon acting with $t'$ in $(g_2, \dw t_2 u_2)$, we can assume that $t_2 = t_1$. Then the first equation above is equivalent to $t^{-1}t_1\dw = \dw t^{-1}t_1$, which means that $t^{-1}t_1\in T^w$ from which we have $t^{-1}\in T^w \subset \mathcal{N}_T$. Again acting with $t$ we can assume that $g_2= tv_2t^{-1}\in U_J$. We will write $g_2=v_2$. All that is left to prove is that $v_2=1$. We have
\[
\dw t_2 u_1 = v_2\dw t_2u_2v_2^{-1}\text{ and } v_2\in U_J, u_1,u_2\in U^J,
\]
hence
\[
u_1 v_2u_2^{-1}v_2^{-1} = t_2^{-1}\dw^{-1} v_2\dw t_2v_2^{-1}.
\]
However, the left hand side is in $U^J$, because $u_1\in U^J$ and $v_2u_2^{-1}v_2^{-1}\in U^J$ (by the fact that $U^J$ is normal in $P_J$), and the right hand side is in $U_J$. So, both are equal to $1$. By the fact that the map in Equation \eqref{eq:UJUJiso} is an isomorphism, we have that $v_2=1$ and we are done.
\end{proof}
\begin{Rem}
Note that the main ingredient in the proof of Proposition \ref{prop:wT1Ywiso} is the fact that the map in Equation \eqref{eq:UJUJiso} is an isomorphism. So it is possible that the isomorphism locus of the map
\[
\frac{G \times \dw T_1U^J}{\N_TU^J_w} \to Y_w^{\circ}
\]
is larger.
\end{Rem}

\begin{proposition}
\label{prop:non_empty}
When $G=GL_n, SL_n$ we have that $(\dw T_1)^{rs}$ is nonempty.
\end{proposition}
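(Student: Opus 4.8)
The plan is to locate the required element inside $T_1$ itself, using that both $T_1=(T^w)^0$ and the set $(\dw T_1)^{rs}$ depend only on the permutation $w$, so I may forget the parabolic datum and regard $w$ as a permutation of $\{1,\ldots,n\}$ with cycles $C_1,\ldots,C_m$ of lengths $\ell_1,\ldots,\ell_m$. First I would describe $T_1$ concretely. The group $T^w$ is the set of diagonal matrices whose entries are constant along each cycle of $w$; for $GL_n$ this is already a subtorus, while for $SL_n$ one intersects further with $\{\prod_i t_i=1\}$ and passes to the identity component. Either way, a point $t\in T_1$ is recorded by its tuple of cycle-values $(c_1,\ldots,c_m)\in(\mathbb{C}^*)^m$, with no constraint for $GL_n$ and with $\prod_r c_r^{\ell_r}=1$ for $SL_n$; in particular $T_1$ is an irreducible affine torus.

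Next I would compute the spectrum of $M:=\dw t$ for $t\in T_1$. Fixing a monomial matrix representative $\dw$ of $w$ (so $\dw e_i=m_i e_{w(i)}$ with $m_i\neq 0$), the product $M=\dw t$ is again monomial with the same support, hence stabilizes each coordinate subspace $E_r=\langle e_i:i\in C_r\rangle$ and acts on $E_r$ as a cyclic monomial matrix whose characteristic polynomial is $\lambda^{\ell_r}-\mu_r$, where $\mu_r=c_r^{\ell_r}\nu_r$ and $\nu_r:=\prod_{i\in C_r}m_i\in\mathbb{C}^*$ depends only on $\dw$. Since $\mu_r\neq 0$, the $\ell_r$ eigenvalues of $M$ on $E_r$ are automatically distinct; thus $M$ has $n$ distinct eigenvalues, and therefore is regular semisimple in $G$, unless two distinct cycles $C_r,C_s$ share an eigenvalue $\alpha$ — in which case $\alpha^{\ell_r}=\mu_r$ and $\alpha^{\ell_s}=\mu_s$, forcing $\mu_r^{\ell_s}=\mu_s^{\ell_r}$. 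Hence the set of $t\in T_1$ for which $\dw t$ is not regular semisimple is contained in $\bigcup_{r\neq s}\{\,t\in T_1:\mu_r^{\ell_s}=\mu_s^{\ell_r}\,\}$.

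It then remains to see that this bad set is a proper closed subset of the irreducible variety $T_1$, so that its complement — whose image under $t\mapsto\dw t$ is exactly $(\dw T_1)^{rs}$ — is nonempty (indeed Zariski open). Writing the $(r,s)$-equation as $(c_r/c_s)^{\ell_r\ell_s}=\nu_s^{\ell_r}/\nu_r^{\ell_s}$, it suffices that the character $t\mapsto c_r/c_s$ be nontrivial on $T_1$: then it takes infinitely many values there and cannot satisfy a fixed nonzero algebraic relation identically. For $GL_n$ this is obvious; for $SL_n$ it holds because the character $e_r-e_s$ of $(\mathbb{C}^*)^m$ is not a rational multiple of the relation vector $(\ell_1,\ldots,\ell_m)$ (every $\ell_i\geq 1$), so it restricts nontrivially to the identity component $T_1$ of $\{\prod_r c_r^{\ell_r}=1\}$. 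When $m=1$ there are no pairs and every element of $\dw T_1$ is already regular semisimple. The one place calling for care is this final step in the $SL_n$ case: one must make sure that imposing $\det=1$ does not accidentally pin down $c_r/c_s$, including when the $\ell_i$ share a common factor and $T_1$ is a proper identity component — this is precisely the nontriviality-of-character assertion above. The remaining ingredients (the shape of $T_1$, the cyclic-block determinant, and the implication ``$n$ distinct eigenvalues $\Rightarrow$ regular semisimple'' valid for both $G=GL_n$ and $G=SL_n$) are routine.
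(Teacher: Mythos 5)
Your argument is correct and is essentially the paper's own proof: split $\dw t$ into cyclic blocks along the cycles of $w$, observe that the eigenvalues on the block of a cycle of length $\ell_r$ are the $\ell_r$-th roots of $c_r^{\ell_r}\nu_r$ (hence distinct within each block), and choose the cycle-values generically so that no two blocks share an eigenvalue. The only real difference is that you spell out the $SL_n$ case — checking that the character $c_r/c_s$ remains nontrivial on the identity component of the determinant-one locus — which the paper compresses into ``a similar argument holds for $SL_n$,'' and you allow a general monomial representative $\dw$ rather than a permutation matrix; both are harmless refinements of the same approach.
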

\begin{proof}
Let $w=\tau_1\ldots \tau_k$ the cycle decomposition of $w$. Then $T_1$ is the locus of diagonal matrices $t$ such $t_{ii}=t_{jj}=a_{\tau_\ell}$ whenever $i,j$ belongs to the same cicle $\tau_\ell$. Hence the eigenvalus of $t$ are $a_{\tau_{\ell}}\xi$ for a $|\tau_{\ell}|$-root of unity $\xi$. For a generic choice of $a_{\tau_{\ell}}$ these are all distinct. A similar argument holds for $SL_n$.
\end{proof}
\begin{proposition}
\label{prop:wT1_wLJ}
 We have an isomorphism
   \begin{align*}
       f\col\frac{G\times (\dw T_1)U^J}{\N_{L_J}U^J_w}&\to \frac{G\times (\dw L_J)^{qs}U^J}{L_JU^J_w}\\
        (g, \dw t u)& \mapsto (g, \dw tu).
   \end{align*}
   Moreover, this isomorphism restricts to an isomorphism
   \[
   \frac{G\times (\dw T_1)^{rs}U^J}{\N_{L_J}U^J_w}\to \frac{G\times (\dw L_J)^{rs}U^J}{L_JU^J_w}.
   \]
 \end{proposition}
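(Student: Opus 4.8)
The morphism $f$ is the one induced on the middle factor by the inclusion $\dw T_1\hookrightarrow(\dw L_J)^{qs}$ -- legitimate by Proposition \ref{prop:N_properties }(2) -- and on the acting groups by $\N_{L_J}U^J_w\subseteq L_JU^J_w$. So the plan is to show $f$ is well defined, bijective, and an isomorphism of varieties; the two assertions of the proposition then come out together, the second being the restriction of the first over the locus where the middle factor is regular semisimple in $G$ (a conjugation-invariant condition, with $(\dw T_1)^{rs}=(\dw L_J)^{rs}\cap\dw T_1$). For well-definedness of the actions: the formula $\ell u\cdot(g,\dw t u_0)=(gu^{-1}\ell^{-1},\,\ell u\dw t u_0u^{-1}\ell^{-1})$ defines an action of $\N_{L_J}U^J_w$ on $G\times\dw T_1U^J$ because $\dw^{-1}u\dw\in U^J$, both $T$ and $L_J$ normalise $U^J$ (as $U^J$ is normal in $P_J$), $\dw L_J\dw^{-1}=L_J$, and $\ell\dw T_1\ell^{-1}=\dw T_1$ for $\ell\in\N_{L_J}$; the identical computation with $\ell$ running over all of $L_J$, together with conjugation-invariance of quasisemisimplicity under $L_J\subseteq N_G(L_J)$, gives the $L_JU^J_w$-action on $G\times(\dw L_J)^{qs}U^J$. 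Both actions are free (already the $G$-factor forces this, using $L_J\cap U^J=\{1\}$, cf.\ Proposition \ref{prop:wPJYwJ_iso}) and commute with left translation of $G$ on the first coordinate, and $f$ is equivariant throughout, so $f$ descends to a morphism of the geometric quotients.

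\textbf{Surjectivity.} Given $(g,\dw\ell_0u_0)$ with $\dw\ell_0\in(\dw L_J)^{qs}$, Proposition \ref{prop:N_properties }(3) provides $\ell\in L_J$ with $\ell(\dw\ell_0)\ell^{-1}\in\dw T_1$; acting by $\ell$ carries $(g,\dw\ell_0u_0)$ to $(g\ell^{-1},(\ell\dw\ell_0\ell^{-1})(\ell u_0\ell^{-1}))\in G\times\dw T_1U^J$, using $\ell U^J\ell^{-1}=U^J$. Hence every $L_JU^J_w$-orbit meets the image of $f$, and the same argument shows every $(\dw L_J)^{rs}$-point is $L_J$-conjugate into $(\dw T_1)^{rs}$.

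\textbf{Injectivity over the regular semisimple locus.} Suppose $(g_1,\dw t_1u_1),(g_2,\dw t_2u_2)\in G\times(\dw T_1)^{rs}U^J$ lie in one $L_JU^J_w$-orbit, say $hu\cdot(g_1,\dw t_1u_1)=(g_2,\dw t_2u_2)$ with $h\in L_J$, $u\in U^J_w$. Comparing first coordinates forces $hu=g_2^{-1}g_1$, and since $L_J\cap U^J=\{1\}$ this factorisation is unique, so $g_2^{-1}g_1\in\N_{L_J}U^J_w$ exactly when $h\in\N_{L_J}$. Comparing second coordinates, write $u\dw t_1u_1u^{-1}=\dw t_1\tilde u$ with $\tilde u\in U^J$ (as $\dw^{-1}u\dw\in U^J$); then $(h\dw t_1h^{-1})(h\tilde uh^{-1})=\dw t_2u_2$, and since $h\dw t_1h^{-1}\in\dw L_J$ and $h\tilde uh^{-1}\in U^J$, uniqueness of the product decomposition $\dw P_J=\dw L_J\cdot U^J$ gives $h\dw t_1h^{-1}=\dw t_2$. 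So $\dw t_1,\dw t_2\in\dw T_1$ are $L_J$-conjugate; by Proposition \ref{prop:N_properties }(4) they are conjugate by some $n\in\N_{L_J}$, and after replacing $(g_1,\dw t_1u_1)$ by $n\cdot(g_1,\dw t_1u_1)$ (which changes neither the $\N_{L_J}U^J_w$-orbit of the first point nor its $L_JU^J_w$-orbit relative to the second) we may assume $\dw t_1=\dw t_2=:\dw t$, leaving $h\in Z_{L_J}(\dw t)$. Now $\dw t$ being regular semisimple, $Z_G(\dw t)$ is a maximal torus $T'$; since $T_1\subseteq Z_{L_J}(\dw t)\subseteq T'$ (that $T_1$ centralises $\dw t$ is immediate from $T_1\subseteq T^w$), the group $Z_{L_J}(\dw t)=T'\cap L_J$ is abelian, normalises $T_1$, hence normalises $\dw T_1=\dw t\cdot T_1$; thus $h\in\N_{L_J}$ and the two original points are $\N_{L_J}U^J_w$-conjugate. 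The unrestricted (quasisemisimple) statement follows by the same scheme; the only additional input is that $Z_{L_J}(\dw t)\subseteq\N_{L_J}$ for the relevant non-regular $\dw t\in\dw T_1$, which is where the finer analysis of centralisers of quasisemisimple elements in \cite{LusztigDisconnectedI} enters.

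\textbf{From bijection to isomorphism, and the main obstacle.} At this point $f$ is a bijective $G$-equivariant morphism between geometric quotients of smooth varieties by free group actions, so it suffices to exhibit an inverse morphism; I would assemble it from the surjectivity step, choosing the conjugator $\ell$ of Proposition \ref{prop:N_properties }(3) in algebraic families. Over the regular semisimple locus this is possible because the required normal form is reached by solving $t_0^{-1}\dw^{-1}u^{-1}\dw t_0u=u_0'$ for $u\in U_J$, which is exactly the isomorphism of Equation \eqref{eq:UJUJiso} used in the proof of Proposition \ref{prop:wT1Ywiso}. I expect the two genuinely delicate points to be (i) forcing the conjugating element into $\N_{L_J}$ in the injectivity step for non-regular quasisemisimple $\dw t$, and (ii) checking that the set-theoretic inverse is actually a morphism; both reduce, on the regular semisimple locus, to the invertibility of the map in \eqref{eq:UJUJiso}, while part of (i) off that locus requires the structure theory of \cite{LusztigDisconnectedI}.
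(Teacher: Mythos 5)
Your route is the same as the paper's: surjectivity from Proposition \ref{prop:N_properties } item (3), injectivity from item (4), and the regular semisimple statement by restriction. On the regular semisimple locus your argument is complete and in fact more careful than the paper's proof, which simply cites item (4) for injectivity: you correctly observe that, because the action on the $G$-factor is free and the decomposition $P_J=L_JU^J$ is unique, the conjugating element $hu$ is pinned down by the first coordinate, so item (4) (a statement about orbit sets of the middle elements only) does not by itself give injectivity of the map of quotients --- one must force the conjugator itself into $\N_{L_J}$ --- and your centralizer argument ($Z_G(\dw t)$ is a maximal torus containing $T_1$, hence $Z_{L_J}(\dw t)$ centralizes $T_1$, fixes $\dw t$, and therefore normalizes $\dw T_1$) supplies exactly this on the rs locus. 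That restriction is the only part of the proposition used later (in the proof of Theorem \ref{thm:pi1_WJw}). The point you leave open, that the bijective equivariant map is an isomorphism of quotients, is not addressed by the paper either and can be settled on the rs locus using Proposition \ref{prop:wT1Ywiso} and the covering description of Proposition \ref{prop:NLT_galois}.

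The genuine gap is your treatment of the unrestricted (quasisemisimple) statement: you reduce it to the containment $Z_{L_J}(\dw t)\subseteq\N_{L_J}$ for all $\dw t\in\dw T_1$ and defer that to \cite{LusztigDisconnectedI}, but this containment is false in general. Take the paper's own running example $G=GL_4$, $J=\{1,3\}$, $w=3412$, and $\dw$ the permutation matrix of $w$, which interchanges the two $GL_2$-blocks of $L_J=GL_2\times GL_2$; then $\dw=\dw\cdot 1\in\dw T_1$ is quasisemisimple but not regular, and $Z_{L_J}(\dw)=\{(A,A);A\in GL_2\}\cong GL_2$ is $4$-dimensional, whereas $\N_{L_J}$ is the $2$-dimensional group of monomial-type elements listed in the paper (with $(\N_{L_J})^0=T_1$), so $Z_{L_J}(\dw)\not\subseteq\N_{L_J}$. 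Worse, feeding $\ell=(A,A)$ with $A$ non-monomial into your own first-coordinate argument shows that $(g,\dw)$ and $(g\ell^{-1},\dw)$ lie in the same $L_JU^J_w$-orbit but in different $\N_{L_J}U^J_w$-orbits, so the map $f$ fails to be injective over such points: items (3) and (4) do give a bijection of conjugation-orbit sets $\dw T_1/\N_{L_J}\to(\dw L_J)^{qs}/L_J$, but with the free $G$-factor present the fibers also record the conjugator modulo the centralizer, and off the regular locus the centralizers are too large. So the quasisemisimple half cannot be completed along the lines you propose (your reduction in fact exposes that the paper's one-line appeal to item (4) glosses over the same issue); only the regular semisimple restriction, which is what the rest of the paper uses, is sound, and there your proof is correct.
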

\begin{proof}
  The surjectivity of $f$ follows from Proposition \ref{prop:N_properties } item (3), while the injectivity follows from Proposition \ref{prop:N_properties } item (4). The isomorphism on the regular semisimple locus follows from the fact that the property of being regular semisimple is invariant under conjugation.
\end{proof}

\begin{proposition}
\label{prop:NLT_galois}
 The map
 \begin{align*}
 \frac{G\times wT_1U^J}{\N_TU^J_w}\to \frac{G\times wT_1U^J}{\N_{L_J}U^J_w}
\end{align*}
is a $\N_{L_J}/\N_T$ Galois cover.
\end{proposition}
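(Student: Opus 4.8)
The plan is to present the map as a quotient in stages and to reduce the statement to the freeness of an action of the finite group $W_J^w=\N_{L_J}/\N_T$ on a variety.

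First I would record the group theory. Since $\N_{L_J}\subseteq L_J$ and $L_J$ normalizes $U^J_w$ (the computation just before Proposition \ref{prop:wPJYwJ_iso}), both $\N_TU^J_w$ and $\N_{L_J}U^J_w$ are subgroups of $G$ with $U^J_w$ normal in each, and $\N_T=\N_{L_J}\cap T$ is normal in $\N_{L_J}$ by (the proof of) Proposition \ref{prop:N_properties } item (5). Conjugating a typical element $tu\in\N_TU^J_w$ by $\ell u'\in\N_{L_J}U^J_w$ and using that $T$ and $\N_{L_J}$ normalize $U^J_w$ while $\N_T\triangleleft\N_{L_J}$, one checks that $\N_TU^J_w$ is normal in $\N_{L_J}U^J_w$. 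Moreover $\N_{L_J}\cap\N_TU^J_w=\N_T$: if $\ell=tu$ with $\ell\in\N_{L_J}$, $t\in\N_T\subseteq L_J$ and $u\in U^J_w\subseteq U^J$, then $u=t^{-1}\ell\in L_J\cap U^J=\{1\}$. Hence the inclusion $\N_{L_J}\hookrightarrow\N_{L_J}U^J_w$ induces an isomorphism
\[
\N_{L_J}U^J_w/\N_TU^J_w\;\cong\;\N_{L_J}/\N_T\;=\;W_J^w,
\]
which is finite by Proposition \ref{prop:N_properties } items (1) and (5).

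Next I would set $Z:=G\times\dw T_1U^J$, $X:=Z/\N_TU^J_w$ and $Y:=Z/\N_{L_J}U^J_w$. These quotients exist as varieties (parallel to Propositions \ref{prop:wPJYwJ_iso}, \ref{prop:wT1Ywiso} and \ref{prop:wT1_wLJ}), and both actions are free: already on the $G$-factor a subgroup element $h$ acts by $g\mapsto gh^{-1}$, so no nontrivial $h$ has a fixed point. Quotient-in-stages then gives $Y=X/\big(\N_{L_J}U^J_w/\N_TU^J_w\big)=X/W_J^w$, the residual $W_J^w$-action on $X$ being the one described in the statement. The key point is that this residual action is \emph{free}: if a class $\ell\N_T\in W_J^w$ fixes $[g,\dw tu]\in X$, then $\ell\cdot(g,\dw tu)=\eta\cdot(g,\dw tu)$ in $Z$ for some $\eta\in\N_TU^J_w$, so $\eta^{-1}\ell$ fixes $(g,\dw tu)$; comparing $G$-factors forces $\eta^{-1}\ell=1$, whence $\ell\in\N_{L_J}\cap\N_TU^J_w=\N_T$ and $\ell\N_T$ is trivial.

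Finally, a free action of the finite group $W_J^w$ on the variety $X$ with quotient $Y$ makes $X\to Y$ an \'etale $W_J^w$-Galois cover, which is exactly the assertion. I expect the main point requiring care to be the scheme-theoretic bookkeeping around the quotients — namely that $Z/\N_TU^J_w$ and $Z/\N_{L_J}U^J_w$ are genuine quotients so that quotient-in-stages applies, and that a free action of a finite group on a variety is automatically \'etale over its quotient — rather than anything deep; all of this is in the same spirit as, and follows from, the constructions already carried out in Propositions \ref{prop:wPJYwJ_iso}--\ref{prop:wT1_wLJ}.
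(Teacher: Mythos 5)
Your proposal is correct and follows essentially the same route as the paper: the paper's proof simply observes that $\N_{L_J}/\N_T$ acts freely on $\frac{G\times \dw T_1U^J}{\N_TU^J_w}$ and that the target is the quotient by this action, which is exactly your quotient-in-stages argument. You merely make explicit the group-theoretic bookkeeping (normality of $\N_TU^J_w$ in $\N_{L_J}U^J_w$, the identification $\N_{L_J}U^J_w/\N_TU^J_w\cong \N_{L_J}/\N_T$, and freeness via the $G$-factor) that the paper leaves implicit.
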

\begin{proof}
 We have a free action of $\N_{L_J}/\N_T$ on  $\frac{G\times wT_1U^J}{\N_TU^J_w}$ and
 \[
 \frac{G\times wT_1U^J}{\N_{L_J}U^J_w} = \frac{G\times wT_1U^J}{\N_TU^J_w}\Bigg / \frac{\N_{L_J}}{\N_T}.
 \]
 \end{proof}

We are ready to prove Theorem \ref{thm:pi1_WJw}.

\begin{proof}[Proof of Theorem \ref{thm:pi1_WJw}]
By Proposition \ref{prop:wPJYwJ_iso}, we have that $\h_{w.J}^\circ$ is isomorphic to $\frac{G\times \dw P_J}{L_JU^J_w} = \frac{G\times \dw L_JU^J}{L_JU^J_w}$. We define $\U_{w,J}$ to be the open subset of $\h_{w,J}^\circ$ given by $\frac{G\times (\dw L_J)^{rs}U^J}{L_JU^J_w}$ (which is nonempty by Propositions \ref{prop:non_empty} and \ref{prop:wT1_wLJ}). By Proposition \ref{prop:NLT_galois} we have that the following map is a $\N_{L_J}/\N_T=W_J^w$ Galois cover
\[
\frac{G\times (wT_1)^{rs}U^J}{\N_TU^J_w}\to \frac{G\times (wT_1)^{rs}U^J}{\N_{L_J}U^J_w}= \U_{w,J}.
\]
This gives a map $\pi_1(\U_{w,J},(X,gP_J))\to W_J^w$. \par
To prove the last statement, we just use Proposition \ref{prop:wT1Ywiso} to see that $\frac{G\times (wT_1)^{rs}U^J}{\N_TU^J_w}$ is precisely $f^{-1}(\U_{w,J})$ (recall that $f$ is the forgetful map $f\colon \h_w\to \h_{w,J}$).
\end{proof}

\section{Classification}
\label{sec:classification}

This section is devoted to prove that when $G=GL_n$ the perverse sheaves in Theorem \ref{thm:pi1_WJw} are the only perverse sheaves appearing in Equation \eqref{eq:decomposition_hw}, that is, every $1$-character sheaf with full support on $\h_{w,J}$ comes from a representation of $W_{J_{w}}^w$. From now on, we restrict ourselves to the case $G=GL_n$, $W=S_n$. %We begin with a description of the $1$-character sheaves on $\dw L_J$.
Let $J$ be a subset of the set of simple transpositions of $S_n$ and fix $w\in {}^JW^J$ such that $wJw^{-1}=J$. Let $\lambda=(\lambda_1,\ldots, \lambda_{\ell(\lambda)})$ be the composition such that
\[
W_J=S_{\lambda_1}\times S_{\lambda_2}\times\ldots S_{\lambda_{\ell(\lambda)}} \text{ and }L_J = \prod_{i=1}^{\ell(\lambda)} GL_{\lambda_i}.
\]

Since $wJw^{-1}=J$ and $w\in {}^JW^J$, $w$ acts on $W_J$ and $L_J$ by permuting coordinates, that is, there exists a permutation $\sigma\col [\ell(\lambda)]\to[\ell(\lambda)]$ such that
\[
w^{-1}\cdot (z_1,\ldots, z_{\ell(\lambda}) = (z_{\sigma(1)},\ldots, z_{\sigma(\ell(\lambda))})
\]
for every $(z_1,\ldots, z_{\ell(\lambda)})\in W_J$ and
\[
w^{-1}\cdot (g_1,\ldots, g_m) = (g_{\sigma(1)},\ldots, g_{\sigma(\ell(\lambda)})
\]
for every $(g_1,\ldots, g_{\ell(\lambda)})\in L_J$. In particular $\lambda_{\sigma(i)}=\lambda_i$ for every $i\in [\ell(\lambda)]$.

\begin{proposition}
\label{prop:WJw_product}
Let $\sigma=\tau_1\ldots \tau_k$ be the cycle decomposition of $\sigma$,  then  $W_J^w$ is isomorphic to $S_{\lambda_{i_1}}\times \ldots \times S_{\lambda_{i_k}}$, where $i_j\in[\ell(\lambda)]$ is an element not fixed by $\tau_j$.
\end{proposition}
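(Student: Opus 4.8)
The plan is to deduce $W_J^w$ straight from the coordinate description of the $w$-action on $W_J$ recorded in the paragraph just above, by recognising the fixed-point set of a coordinate permutation as a product over its cycles.

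First I would unwind the defining relation. By that paragraph, conjugation by $w$ acts on $W_J=S_{\lambda_1}\times\cdots\times S_{\lambda_{\ell(\lambda)}}$ by $w^{-1}(z_1,\ldots,z_{\ell(\lambda)})w=(z_{\sigma(1)},\ldots,z_{\sigma(\ell(\lambda))})$, where $\lambda_{\sigma(i)}=\lambda_i$, so that the $i$-th and $\sigma(i)$-th factors are canonically the same symmetric group on $\lambda_i$ letters and the equation makes sense coordinatewise. Thus for $z=(z_1,\ldots,z_{\ell(\lambda)})\in W_J$ we have $wz=zw$ if and only if $z=w^{-1}zw$, i.e.\ $z_i=z_{\sigma(i)}$ for every $i\in[\ell(\lambda)]$; equivalently, $W_J^w$ is exactly the set of tuples that are constant along the orbits of $\sigma$ on $[\ell(\lambda)]$. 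The only nonformal ingredient here is that $w$ permutes the factors of $W_J$ with no residual inner twisting inside the $S_{\lambda_i}$ --- which is precisely the content of $w\in{}^JW^J$ and is what makes the displayed formula literally an index relabeling; without it one would obtain a product of centralizers in the $S_{\lambda_i}$ instead.

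Next I would package this as the claimed isomorphism. The orbits of $\sigma$ on $[\ell(\lambda)]$ are exactly the supports of the cycles $\tau_1,\ldots,\tau_k$ of its cycle decomposition; choose for each $j$ an index $i_j$ in the support of $\tau_j$. Consider the restriction-to-representatives map
\[
W_J^w\to S_{\lambda_{i_1}}\times\cdots\times S_{\lambda_{i_k}},\qquad z\mapsto(z_{i_1},\ldots,z_{i_k}).
\]
It is a group homomorphism because multiplication in $W_J$ is coordinatewise, injective because a tuple constant on $\sigma$-orbits is determined by one value per orbit, and surjective because any $(g_1,\ldots,g_k)$ is the image of the ($\sigma$-invariant) tuple defined by $z_i=g_j$ whenever $i$ lies in the support of $\tau_j$ --- this being well defined since $\lambda_i=\lambda_{i_j}$ on that support. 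Hence $W_J^w\cong S_{\lambda_{i_1}}\times\cdots\times S_{\lambda_{i_k}}$, as asserted, with $S_{\lambda_{i_j}}$ the common factor attached to the cycle $\tau_j$.

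I do not expect a serious obstacle in this step itself; it is essentially immediate given the coordinate description of the $w$-action. The one point that genuinely uses the structure of $w$, and which I would make sure to cite cleanly, is the absence of twisting inside the factors $S_{\lambda_i}$ --- equivalently, that $w$ restricts to an order-preserving bijection on each block defining $W_J$ --- which is exactly where the hypothesis $w\in{}^JW^J$ enters.
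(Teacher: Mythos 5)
Your proposal is correct and follows essentially the same route as the paper: both use the coordinate description of conjugation by $w$ on $W_J$ to identify $W_J^w$ with the tuples constant along the $\sigma$-orbits, and then read off the product $S_{\lambda_{i_1}}\times\cdots\times S_{\lambda_{i_k}}$ by choosing one representative index per cycle. Your explicit restriction-to-representatives isomorphism is just a slightly more spelled-out version of the paper's final step.
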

\begin{proof}
We abuse notation and write $i\in \tau_j$ if $i$ is not fixed by $\tau_j$. We have to find $(z_1,\ldots, z_{\ell(\lambda)})\in W_J$ such that $z_i=z_{\sigma(i)}$ for every $i\in [\ell(\lambda)]$. This is equivalent to $z_i=z_{i'}$ whenever $i,i'$ belongs to the same $\tau_j$. Choosing $i_j\in \tau_j$, we have that
\[
W_J^w=\{(z_1,\ldots, z_{\ell(\lambda)}); z_i=z_{i_j}\text{ for every $i,j$ with }i\in \tau_j\},
\]
and the result follows.
\end{proof}

For $z \in W_J$ we define (recall Example \ref{exa:character_sheaves_LJ})
\begin{align*}
{}^w\h_{L_J,z}^{\circ}:=&\{(\ell, \ell_0B_J); \ell,\ell_0\in L_J; \dw^{-1}\ell_0^{-1}\dw \ell \ell_0\in B_J\dz B_J\}, \\
{}^w\h_{P_J,z}^{\circ}:=&\{(X, gB); X\in \dw P_J,g\in P_J; g^{-1} X g \in B\dw\dz B\}.
\end{align*}
\begin{proposition}
  \label{prop:PJLJ_bundle}
  We have a natural isomorphism ${}^w\h_{P_J,z}^{\circ}\to {}^w\h_{L_J,z}^{\circ}\times U^J$.
\end{proposition}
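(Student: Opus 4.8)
The plan is to write down the isomorphism by hand, using the semidirect product decompositions $P_J=L_JU^J$ and $B=B_JU^J$, where $B_J=B\cap L_J$ and $U^J\subseteq U\subseteq B$ is normal in $P_J$, hence in $B$. Since $U^J\subseteq B$, every $gB$ with $g\in P_J$ has a representative in $L_J$, and the rule $gB\mapsto\ell_0B_J$, with $\ell_0$ the $L_J$-component of $g$ in $P_J=L_JU^J$, identifies $P_J/B$ with $L_J/B_J$ as varieties. I would then define
\[
\Phi\colon {}^w\h_{P_J,z}^{\circ}\to {}^w\h_{L_J,z}^{\circ}\times U^J,\qquad (X,gB)\longmapsto\bigl((\ell_X,\ell_0B_J),\,u_X\bigr),
\]
where $X=\dw\ell_Xu_X$ and $g=\ell_0u_0$ are the unique factorizations with $\ell_X,\ell_0\in L_J$ and $u_X,u_0\in U^J$, together with the evident candidate inverse $\Psi\bigl((\ell,\ell_0B_J),u\bigr)=(\dw\ell u,\ell_0B)$. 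Both maps are morphisms (the passage $X\mapsto(\ell_X,u_X)$ is the inverse of the multiplication isomorphism $L_J\times U^J\xrightarrow{\ \sim\ }P_J$ followed by left translation by $\dw$), and, once one knows they land in the claimed targets, checking that $\Phi$ is independent of the representative of $gB$ (replacing $g$ by $gb$, $b\in B$, only moves $\ell_0$ inside $\ell_0B_J$) and that $\Psi\circ\Phi$ and $\Phi\circ\Psi$ are identities is routine bookkeeping with these factorizations.

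So the work is to match the Bruhat-position conditions. For $\Psi$ this is direct: if $\dw^{-1}\ell_0^{-1}\dw\ell\ell_0\in B_J\dz B_J$ then $\ell_0^{-1}\dw\ell\ell_0\in\dw B_J\dz B_J=B_J\dw\dz B_J$ (using $\dw B_J\dw^{-1}=B_J$, which holds because $wJw^{-1}=J$ and $w\in{}^JW^J$), whence $\ell_0^{-1}(\dw\ell u)\ell_0=(\ell_0^{-1}\dw\ell\ell_0)(\ell_0^{-1}u\ell_0)\in(B_J\dw\dz B_J)\,U^J\subseteq B\dw\dz B$ since $\ell_0^{-1}u\ell_0\in U^J\subseteq B$. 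For $\Phi$ I would isolate two small lemmas. The first, a \emph{parabolic reduction of conjugation}, says that for $X=\dw\ell_Xu_X\in\dw P_J$ and $g=\ell_0u_0\in P_J$ one has
\[
g^{-1}Xg=v_1\,Z\,v_2,\qquad v_1,v_2\in U^J,\qquad Z:=\ell_0^{-1}\dw\ell_X\ell_0\in\dw L_J,
\]
which comes out in two lines upon inserting $\ell_0\ell_0^{-1}$ and using only that $U^J$ is normal in $P_J$. The second is the standard cell-intersection identity $\dw L_J\cap B\dw\dz B=B_J\dw\dz B_J$ for $z\in W_J$: from the Bruhat decomposition $L_J=\bigsqcup_{y\in W_J}B_J\dot yB_J$ one gets $\dw L_J=\bigsqcup_{y}B_J\dw\dot yB_J$ with $B_J\dw\dot yB_J\subseteq B\dw\dot yB$, and since the cells $B\dw\dot yB$ ($y\in W_J$) are pairwise distinct, an element $\dw\ell$ lying in both $\dw L_J$ and $B\dw\dz B$ is forced into $B_J\dw\dz B_J$.

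Granting these, well-definedness of $\Phi$ follows: for $(X,gB)\in{}^w\h_{P_J,z}^{\circ}$ we have $g^{-1}Xg\in B\dw\dz B$, so by the first lemma $Z=v_1^{-1}(g^{-1}Xg)v_2^{-1}\in B(B\dw\dz B)B=B\dw\dz B$ (as $v_1,v_2\in U^J\subseteq B$), and since $Z\in\dw L_J$ the second lemma yields $Z\in B_J\dw\dz B_J$, i.e.\ $\dw^{-1}\ell_0^{-1}\dw\ell_X\ell_0=\dw^{-1}Z\in B_J\dz B_J$, which is precisely the condition placing $(\ell_X,\ell_0B_J)$ in ${}^w\h_{L_J,z}^{\circ}$. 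The main obstacle is essentially organizational: pinning down the order conventions ($B=B_JU^J=U^JB_J$, $P_J=L_JU^J$) and carrying the $L_J$- and $U^J$-parts consistently through the computation. Neither lemma is deep — the first is purely formal, and the second is the familiar compatibility of the Bruhat decompositions of $G$ and of a standard Levi, whose only serious input is $\dw B_J\dw^{-1}=B_J$.
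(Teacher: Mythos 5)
Your proposal is correct and follows essentially the same route as the paper: define the map via the decompositions $X=\dw\ell u$, $g=\ell_0u_0$ coming from $P_J=L_JU^J$, use normality of $U^J$ to strip off the unipotent parts, and match the relative-position conditions via the Bruhat decomposition of $L_J$ together with disjointness of the cells $B\dw\dot yB$, $y\in W_J$ (your ``cell-intersection identity'' is exactly the in-line argument in the paper). The only difference is that you spell out the inverse map and the converse containment, which the paper dismisses as clear.
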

\begin{proof}
  Define the map $f\colon {}^w\h_{P_J,z}^{\circ}\to {}^w\h_{L_J,z}^{\circ}\times U^J$ as follows: Given a point $(X,gB)\in {}^w\h_{P_J,z}^{\circ}$, we can write $X=\dw \ell u$ and $g = \ell_0 u_0$, and set $f(X, gB)=((\ell, \ell_0B_J), u)$. We claim that this $f$ is well-defined. First note that if $g'=\ell_0'u_0'$ is such that $g'B=gB$ then $\ell_0'B=\ell_0$, and hence $\ell_0'B_J=\ell_0B_J$. Next, we have to show that if $\ell_0^{-1}\dw \ell u \ell_0 \in B\dw\dz B$, then $\dw^{-1}\ell_0^{-1}\dw \ell \ell_0\in B_J\dz B_J$. For this, note that $\ell_0^{-1}\dw \ell u \ell_0 \in B\dw\dz B$ is equivalent to $\ell_0^{-1}\dw \ell  \ell_0 \in B\dw\dz B$ because $\ell_0^{-1}u\ell_0\in U^J\subset B$. On the other hand $\dw^{-1}\ell_0^{-1}\dw\ell\ell_0\in L_J$, and by the Bruhat decomposition there is a unique $z'\in W_J$ such that $\dw^{-1}\ell_0^{-1}\dw\ell\ell_0\in B_J\dz' B_J$. Thus $\ell_0^{-1}\dw\ell\ell_0 \in \dw B_J\dz' B_J\subset B\dw B\dz' B= B\dw\dz' B$, so $z=z'$, hence the map is well-defined. Injectivity and surjectivity are clear.

\end{proof}

\begin{proposition}
\label{prop:toLJ_small}
The map ${}^w\h_{L_J,e}\to L_J$ given by the projection onto the first factor is small.
\end{proposition}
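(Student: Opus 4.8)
The plan is to realize ${}^w\h_{L_J,e}$ as the pullback of an ordinary Grothendieck--Springer resolution along a smooth surjection of groups, and then combine Proposition~\ref{prop:Grothe_Springer_Small} with the stability of smallness under smooth base change. The first move is to reinterpret the cell: since $w\in{}^JW^J$ and $wJw^{-1}=J$ we have $\dw B_J\dw^{-1}=B_J$, and the assignment $(\ell,\ell_0B_J)\mapsto(\dw\ell,\ell_0B_J\ell_0^{-1})$ identifies ${}^w\h_{L_J,e}^\circ$ with $\{(X,B')\ ;\ X\in\dw L_J,\ B'\text{ a Borel of }L_J,\ XB'X^{-1}=B'\}$, the variety of Borels of $L_J$ normalized by an element of the component $\dw L_J$ of $N_G(L_J)$; the projection to the first factor becomes $(X,B')\mapsto\dw^{-1}X$. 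This locus is closed in $\dw L_J\times(L_J/B_J)$, so it is already proper over $L_J$ and coincides with its own closure ${}^w\h_{L_J,e}$.

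Next comes the main computation, which exploits the cycle structure of $\sigma$. Writing $\sigma=\tau_1\cdots\tau_k$ for the cycle decomposition, we have $L_J=\prod_jM_j$ with $M_j=\prod_{i\in\tau_j}GL_{\lambda_i}\cong GL_{m_j}^{\,\ell_j}$ --- all $\lambda_i$ with $i\in\tau_j$ equal a common value $m_j$, and $\ell_j=|\tau_j|$ --- and $\dw$ cyclically permutes the factors inside each $M_j$. The condition $XB'X^{-1}=B'$ splits across the $M_j$, giving ${}^w\h_{L_J,e}=\prod_j{}^w\h_{M_j,e}$. For one cycle, writing the action as $\dw(k_1,\dots,k_\ell)\dw^{-1}=(k_\ell,k_1,\dots,k_{\ell-1})$ on $M_j=GL_m^{\ell}$, the equations $XB'X^{-1}=B'$ for $X=\dw(h_1,\dots,h_\ell)$ and $B'=(B'_1,\dots,B'_\ell)$ read $B'_r=h_{r-1}B'_{r-1}h_{r-1}^{-1}$ for $2\le r\le\ell$ together with $B'_1=h_\ell B'_\ell h_\ell^{-1}$; hence $B'_2,\dots,B'_\ell$ are determined by $B'_1$, and $B'_1$ may be arbitrary among Borels normalized by the norm $N:=h_\ell h_{\ell-1}\cdots h_1$. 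Thus ${}^w\h_{M_j,e}$ is the fibre product of the multiplication map $\psi_j\colon GL_m^{\ell}\to GL_m$, $(h_1,\dots,h_\ell)\mapsto h_\ell\cdots h_1$, with the Grothendieck--Springer map $\frac{GL_m\times B''}{B''}\to GL_m$ of Proposition~\ref{prop:Grothe_Springer_Small} (for a Borel $B''\subset GL_m$), compatibly with the projections to $GL_m^{\ell}=M_j$.

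Assembling over the cycles, set $M=\prod_jGL_{m_j}$ and $\psi=\prod_j\psi_j\colon L_J\to M$; then ${}^w\h_{L_J,e}\cong L_J\times_M\widetilde M$ over $L_J$, where $\widetilde M:=\frac{M\times B_M}{B_M}\to M$ is the Grothendieck--Springer resolution of the connected reductive group $M$, which is small by Proposition~\ref{prop:Grothe_Springer_Small}. The map $\psi$ is smooth and surjective --- each $\psi_j$ is, being an iterated multiplication on copies of $GL_{m_j}$ --- and $L_J,M$ are smooth, so $L_J\times_M\widetilde M$ is smooth and irreducible and its projection to $L_J$ is proper. Because $\psi$ is smooth, the fibres of this projection over $x\in L_J$ agree with the fibres of $\widetilde M\to M$ over $\psi(x)$; hence the loci $\{x:\dim(\text{fibre over }x)\ge i\}$ are exactly the $\psi$-preimages of the corresponding loci in $M$ and therefore satisfy the required codimension inequalities, and generic finiteness is pulled back as well. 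Consequently ${}^w\h_{L_J,e}\to L_J$ is small.

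The step I expect to require the most care is the single-cycle stability computation and the bookkeeping that recasts it as a fibre product over the multiplication map --- in particular verifying that ${}^w\h_{L_J,e}^\circ$ is already closed, so that one is genuinely dealing with the full variety ${}^w\h_{L_J,e}$, and reconciling the direction of the cyclic $\dw$-action with the conventions fixed earlier in the section. The concluding appeal to smooth base change preserving smallness is then routine, since fibres --- and hence the jump loci and the generic finiteness --- are merely transported along a smooth surjection of smooth irreducible varieties.
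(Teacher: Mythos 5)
Your proof is correct and takes essentially the same route as the paper: after splitting along the cycle decomposition of $\sigma$, both arguments exhibit ${}^w\h_{L_J,e}$ as the base change of the Grothendieck--Springer map (Proposition \ref{prop:Grothe_Springer_Small}) along the smooth, surjective norm/multiplication map $(\ell_1,\dots,\ell_m)\mapsto \ell_m\cdots\ell_1$, and then transfer smallness across the resulting cartesian square with smooth vertical maps. The only cosmetic differences are your reformulation in terms of Borel subgroups of $L_J$ normalized by an element of $\dw L_J$ and your assembling of all cycles into a single fibre product, where the paper instead treats one cycle at a time and uses that a product of small maps is small.
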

\begin{proof}
 Since $w^{-1}Jw = J$ and $w\in {}^JW^J$ we have that $W_J=S_{\lambda_1}\times S_{\lambda_2}\times\ldots S_{\lambda_{\ell(\lambda)}}$ and $L_J = \prod_{i=1}^{\ell(\lambda)} GL_{\lambda_i}$. Moreover, $w$ acts on $L_J$ by permuting coordinates, so there exists a permutation $\sigma\col [\ell(\lambda)]\to[\ell(\lambda)]$ such that $w^{-1}\cdot (g_1,\ldots, g_m) = (g_{\sigma(1)},\ldots, g_{\sigma(m)})$, and in particular $\lambda_{\sigma(i)}=\lambda_i$. Assume first that $\sigma = (12\ldots m)$ is a $m$-cycle (in particular $\lambda_i=\lambda_{i'} = k$ for every $i,i'\in[\ell(\lambda)]$). Then we have a fiber diagram
 \[
 \begin{tikzcd}
    {}^w\h_{L_J,e} =\{(\ell, gB_J); \ell,g\in L_J; \dw^{-1}g\dw \ell g\in B_J\} \ar[r, "f"] \ar[d, "p_2"]& L_J=GL_k\times GL_k\times\cdots \times GL_k \ar[d, "p_1"]\\
    \{(X, g_1B_k); g_1^{-1}Xg_1\in B_k\}\ar[r, "g"] & GL_k
 \end{tikzcd}
 \]
  where $p_1(\ell_1,\ldots, \ell_m)=\ell_m\ell_{m-1}\cdots \ell_1$ and
  \[
   p_2((\ell_1,\ldots, \ell_m), (g_1,\ldots, g_m)B_J)=(\ell_m\cdots \ell_1, g_1).
  \]
  The map $p_2$ is well defined since the condition $\dw^{-1}g\dw\ell g\in B_J$ is equivalent to
  \[
  (g_2^{-1}\ell_1g_1, g_3^{-1}\ell_2g_2,\ldots, g_1^{-1}\ell_mg_m)\in B_k\times\cdots\times B_k.
  \]
  Hence, $g_2B_k,\ldots, g_mB_k$ are uniquely determined by $(\ell_1,\ldots, \ell_m)$ and $g_1$. Moreover, the condition on $g_1$ becomes
  \[
   g_1^{-1}\ell_m\ell_{m-1}\cdots \ell_1 g_1\in B_k.
  \]
  The maps $p_1$ and $p_2$ are $GL_k\times \cdots\times GL_k$-bundles (the product has $\ell(\lambda)-1$ factors) and hence are flat. Since $g$ is small by Proposition \ref{prop:Grothe_Springer_Small}, $f$ is small. The general case when $\sigma$ is not a cycle follows from the cycle decomposition of $\sigma$ since the product of small maps is small, that is, if $f_i\col Y_i\to X_i$ are small maps, where $i$ runs through elements of a finite set $I$, then $(f_i)_{i\in I}\col \prod_{i\in I} Y_i\to \prod_{i\in I} X_i$ is small.
\end{proof}

\begin{proposition}
\label{prop:towPJ_small}
The map ${}^w\h_{P_J,e}\to wP_J$ is small.
\end{proposition}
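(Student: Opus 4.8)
The plan is to reduce the assertion to Proposition~\ref{prop:toLJ_small} via the bundle structure of Proposition~\ref{prop:PJLJ_bundle}. First I would note that the variety ${}^w\h_{L_J,e}^\circ$ is already closed: its defining condition $\dw^{-1}\ell_0^{-1}\dw\ell\ell_0\in B_J\dot{e}B_J=B_J$ places an element in the closed subgroup $B_J$, so ${}^w\h_{L_J,e}^\circ={}^w\h_{L_J,e}$, and this variety is smooth (it is the fibre product of smooth varieties along smooth maps exhibited in the proof of Proposition~\ref{prop:toLJ_small}). Plugging $z=e$ into Proposition~\ref{prop:PJLJ_bundle} gives an isomorphism ${}^w\h_{P_J,e}^\circ\cong {}^w\h_{L_J,e}\times U^J$ of varieties. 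Since ${}^w\h_{L_J,e}\times U^J$ is proper over $L_J\times U^J$ (as $f$ in Proposition~\ref{prop:toLJ_small} is proper) and, under the trivialization $\dw\ell u\mapsto(\ell,u)$ afforded by the uniqueness of the decomposition $P_J=L_JU^J$, one has $wP_J=\dw P_J\cong L_J\times U^J$ closed in $G$, it follows that ${}^w\h_{P_J,e}^\circ$ is already closed in $G\times G/B$. Hence ${}^w\h_{P_J,e}={}^w\h_{P_J,e}^\circ$, and it is smooth, being a product of the smooth variety ${}^w\h_{L_J,e}$ with the affine space $U^J$.

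Next I would identify the map. Reading off the explicit isomorphism in the proof of Proposition~\ref{prop:PJLJ_bundle}, namely $(X,gB)\mapsto((\ell,\ell_0B_J),u)$ with $X=\dw\ell u$ and $g=\ell_0u_0$, one sees that the first-factor projection ${}^w\h_{P_J,e}\to wP_J$, $(X,gB)\mapsto X$, corresponds under the identifications of the previous paragraph exactly to $f\times\mathrm{id}_{U^J}\col {}^w\h_{L_J,e}\times U^J\to L_J\times U^J$, where $f\col {}^w\h_{L_J,e}\to L_J$ is the small map of Proposition~\ref{prop:toLJ_small}. Verifying that the resulting square commutes is the one place demanding care, and it is immediate: the $X$-coordinate $\dw\ell u$ is recovered from the pair $(\ell,u)$, and $\ell$ is precisely the image of $(\ell,\ell_0B_J)$ under $f$.

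It then remains to record the elementary fact that the product of a small map with the identity on a smooth variety is small. If $f\col X\to Y$ is small and $Z$ is smooth, then $X\times Z$ is smooth; $f\times\mathrm{id}_Z$ is proper, being the base change of $f$ along the projection $Y\times Z\to Y$; and for every $i\ge 0$,
\[
\{(y,z)\in Y\times Z;\ \dim(f\times\mathrm{id}_Z)^{-1}(y,z)\ge i\}=\{y\in Y;\ \dim f^{-1}(y)\ge i\}\times Z,
\]
which has dimension $\dim\{y;\ \dim f^{-1}(y)\ge i\}+\dim Z<(\dim Y-2i)+\dim Z=\dim(Y\times Z)-2i$ by smallness of $f$. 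Applying this with $Z=U^J$ and using Proposition~\ref{prop:toLJ_small} yields that ${}^w\h_{P_J,e}\to wP_J$ is small.

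I do not expect a serious obstacle here: the mathematical content sits entirely in Propositions~\ref{prop:PJLJ_bundle} and~\ref{prop:toLJ_small}, and what is left is bookkeeping. The only genuinely delicate point is the first paragraph, i.e.\ making sure that passing to the closure ${}^w\h_{P_J,e}$ introduces nothing beyond ${}^w\h_{P_J,e}^\circ$ and that the resulting variety, together with its map to $wP_J$, really is $({}^w\h_{L_J,e}\to L_J)\times\mathrm{id}_{U^J}$; this is settled by the properness observation above.
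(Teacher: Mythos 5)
Your proof is correct and follows essentially the same route as the paper: both reduce the statement to Proposition \ref{prop:toLJ_small} via the $U^J$-bundle (here, product) structure of Proposition \ref{prop:PJLJ_bundle}, observing that smallness is preserved when taking the product with (equivalently, pulling back along) the trivial $U^J$-factor. Your explicit checks that the $e$-cells are already closed and that the square identifies the projection with $f\times\mathrm{id}_{U^J}$ are details the paper leaves implicit, but the argument is the same.
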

\begin{proof}
 Consider the following commutative diagram
 \[
 \begin{tikzcd}
   {}^w\h_{P_J,e} \ar[r]\ar[d, "p_2"] &  \dw P_J \ar[d, "p_1"]\\
   {}^w\h_{L_J,e} \ar[r] & L_J
 \end{tikzcd}
 \]
 where the map $p_2$ is the projection induced by Proposition \ref{prop:PJLJ_bundle} and the map $p_1$ is given by $p_1(\dw \ell u) = \ell$ (recall that $P_J=L_JU^J$). The diagram is actually fibered, where the fiber of $p_2$ over a point $(\ell, \ell_0B_J)$ is $\{(\dw \ell u, \ell_0B), u\in U^J\}$, mapping isomorphically (via ${}^w\h_{P_J,e}\to \dw P_J$) to $\dw \ell U^J \subset \dw P_J$, which is the fiber of $p_1$ over the point $\ell \in L_J$.  Since $p_1$ and $p_2$ are both $U^J$-bundles, the vertical maps are flat, and since the bottom map is small,  Proposition \ref{prop:toLJ_small} implies the result.

 %The map $p_2$ is well-defined. Indeed, the condition
 %\[
 %u_1^{-1}\ell_1^{-1}\dw \ell u \ell_1 u_1 \in B\dw B
 %\]
 %can be rewritten as
 %\[
 %  \dw \dw^{-1}\ell_1\dw \ell \ell_1 \in B\dw B
 %\]
 %because $u_1\in U^J\subset B$ and $\ell_1^{-1}u\ell_1\in U^J\subset B$. However the element $\dw \dw^{-1}\ell_1\dw \ell \ell_1$ is in $\dw L_J$, hence we have that $\dw \dw^{-1}\ell_1\dw \ell \ell_1 \in \dw B_J$, which means that $ \dw^{-1}\ell_1\dw \ell \ell_1\in B_J$.

\end{proof}

\begin{proposition}
\label{prop:char_LJ}
  Given $z\in W_J$, let $f_z\col{}^w\h_{L_J,z}^\circ\to L_J$ be the  projection onto the first factor. Then every simple summand of $(f_z)_!(\mathbb{C}_{{}^w\h_{L_J,z}^{\circ}})$ is also a summand of $(f_e)_!(\mathbb{C}_{{}^w\h_{L_J,e}})$. The same holds for $\overline{f}_z\col {}^w\h_{L_J,z}\to L_J$.
\end{proposition}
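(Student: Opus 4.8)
The plan is to reduce, in the manner of the proof of Proposition~\ref{prop:toLJ_small}, to the connected group $GL_k$, where the assertion becomes Proposition~\ref{prop:char_sheaves_GLn}.

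\emph{Step 1 (reduction to $\sigma$ a single cycle).} Since conjugation by $\dw$ permutes the factors of $L_J=\prod_i GL_{\lambda_i}$ according to $\sigma$, and $B_J=\prod_i B_{\lambda_i}$, $\dz=(\dot z_1,\dots,\dot z_{\ell(\lambda)})$, the defining condition $\dw^{-1}\ell_0^{-1}\dw\ell\ell_0\in B_J\dz B_J$ of ${}^w\h_{L_J,z}^\circ$ splits into one condition per cycle of $\sigma$; hence ${}^w\h_{L_J,z}^\circ$ and $f_z$ are exterior products indexed by the cycles $\tau_1,\dots,\tau_k$ of $\sigma$, and the ``identity sequence'' of the product is the product of the identity sequences. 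As the simple summands of an exterior product of complexes are exterior products of the simple summands of the factors, it suffices to treat a single cyclic factor, i.e.\ to assume $\sigma=(1\,2\,\cdots\,m)$, so $L_J=GL_k^m$ and $\dw$ acts by a cyclic shift. The same reduction applies to $\overline f_z$, the closure of a product being the product of the closures.

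\emph{Step 2 (the fibre diagram).} Writing $\ell=(\ell_1,\dots,\ell_m)$, $\ell_0=(g_1,\dots,g_m)$ and $z=(z_1,\dots,z_m)$, the defining condition of ${}^w\h_{L_J,z}^\circ$ unwinds (exactly as in the proof of Proposition~\ref{prop:toLJ_small}, now keeping track of the $z_i$) to $g_{i+1}^{-1}\ell_i g_i\in B_k\dot z_i B_k$ for all $i$, indices taken mod $m$ with $g_{m+1}=g_1$. Let $\mu\colon GL_k^m\to GL_k$ be $\mu(\ell_1,\dots,\ell_m)=\ell_m\cdots\ell_1$, a Zariski-locally trivial $GL_k^{m-1}$-bundle, in particular smooth and surjective of some pure relative dimension $d$. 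One then obtains a Cartesian square
\[
\begin{tikzcd}
{}^w\h_{L_J,z}^\circ \ar[r, "f_z"] \ar[d, "p_z"] & GL_k^m \ar[d, "\mu"]\\
\h_{\underline z'}^\circ \ar[r, "g_{\underline z'}"] & GL_k
\end{tikzcd}
\]
where $\h_{\underline z'}^\circ$ is the (connected-group) sequence variety of \eqref{eq:Yw} attached to $\underline z'=(z_m,z_{m-1},\dots,z_1)$, $g_{\underline z'}$ its projection to $GL_k$, and $p_z$ again a smooth surjective $GL_k^{m-1}$-bundle. For $z=e$ the sequence $\underline z'$ collapses and one recovers the square from the proof of Proposition~\ref{prop:toLJ_small}, with $\h_{(e,\dots,e)}^\circ\cong\h_e^\circ$ the Grothendieck--Springer variety of $GL_k$, which is small by Proposition~\ref{prop:Grothe_Springer_Small}. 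Checking that this square is genuinely Cartesian --- that is, identifying the ``chain'' description of ${}^w\h_{L_J,z}^\circ$ (whose intermediate flags $g_2,\dots,g_m$ no longer collapse once some $z_i\neq e$) with $GL_k^m\times_{GL_k}\h_{\underline z'}^\circ$ under the index conventions of \eqref{eq:Yw} --- is the one genuinely technical point, and the place where the argument needs the sequence varieties rather than a single Grothendieck--Springer map; I expect this bookkeeping to be the main obstacle.

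\emph{Step 3 (conclusion).} Smooth base change along $\mu$ gives $(f_z)_!(\mathbb{C}_{{}^w\h_{L_J,z}^\circ})\cong\mu^*\big((g_{\underline z'})_!(\mathbb{C}_{\h_{\underline z'}^\circ})\big)$ and likewise $(f_e)_!(\mathbb{C}_{{}^w\h_{L_J,e}})\cong\mu^*\big((g_e)_!(\mathbb{C}_{\h_e^\circ})\big)$, with the same $\mu$ in both. By Proposition~\ref{prop:char_sheaves_GLn}(1) the complex $(g_{\underline z'})_!(\mathbb{C}_{\h_{\underline z'}^\circ})$ is semisimple with every simple summand a $1$-character sheaf of $GL_k$, and by Proposition~\ref{prop:char_sheaves_GLn}(2) each such summand is a summand of $(g_e)_*(IC_{\h_e})=(g_e)_!(\mathbb{C}_{\h_e^\circ})[\dim\h_e]$ (Grothendieck--Springer being small, so $IC_{\h_e}=\mathbb{C}_{\h_e}[\dim\h_e]$, and proper, so $(g_e)_*=(g_e)_!$). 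Applying $\mu^*[d]$ and Proposition~\ref{prop:smooth_pullback_perverse}, which sends simple perverse sheaves to simple perverse sheaves and preserves the relation ``is a direct summand of'', one concludes that every simple summand of $\mu^*((g_{\underline z'})_!(\mathbb{C}_{\h_{\underline z'}^\circ}))$, hence of $(f_z)_!(\mathbb{C}_{{}^w\h_{L_J,z}^\circ})$, is a summand of $(f_e)_!(\mathbb{C}_{{}^w\h_{L_J,e}})$. For the assertion about $\overline f_z$ one runs the identical argument with closures throughout: the square remains Cartesian because $\mu$ is flat (so $GL_k^m\times_{GL_k}\h_{\underline z'}$ is the closure of ${}^w\h_{L_J,z}^\circ$ and thus equals ${}^w\h_{L_J,z}$), $\overline g_{\underline z'}$ and $\overline f_z$ are proper, and one invokes the closure versions in Proposition~\ref{prop:char_sheaves_GLn}(1) and~(2) together with Proposition~\ref{prop:smooth_pullback_perverse} as before.
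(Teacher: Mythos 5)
Your proposal is correct and follows essentially the same route as the paper: reduce to a single cycle of $\sigma$ via exterior products, fit $f_z$ into a fiber square over the multiplication map $GL_k^m\to GL_k$ whose bottom row is the sequence variety of \eqref{eq:Yw} mapping to $GL_k$, and conclude with Proposition \ref{prop:char_sheaves_GLn} together with Proposition \ref{prop:smooth_pullback_perverse}. The only point you leave open, the Cartesianness of the square, is exactly what the paper supplies by writing down the explicit smooth map $p_2((\ell_1,\ldots,\ell_m),(g_1B_k,\ldots,g_mB_k))=(\ell_m\cdots\ell_1,\, g_1B_k,\, \ell_1^{-1}g_2B_k,\,\ldots,\,\ell_1^{-1}\cdots\ell_m^{-1}g_1B_k)$, so that bookkeeping is routine rather than an obstacle.
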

\begin{proof}
As before we can assume that  $L_J=GL_k\times \cdots\times GL_k$, that $W_J=S_k\times \cdots \times S_k$, and that $\dw^{-1}\cdot(g_1,\ldots, g_m)=(g_2,\ldots, g_1)$. The general case follows from the fact that if $f_i\col Y_i\to X_i$ are continuous maps indexed by a finite set $i \in I$, then
 \[
 ((f_i)_{i\in I})_!(\mathbb{C}_{\prod_{i\in I}Y_i}) = \boxtimes_{i\in I} (f_i)_!(\mathbb{C}_{Y_i}).
 \]

Writing $z=(z_1,\ldots, z_m)\in W_J=S_k\times\cdots\times S_k$, we have a fiber diagram
 \[
 \begin{tikzcd}
    {}^w\h_{L_J,z}^\circ \ar[r, "f_z"] \ar[d, "p_2"]& L_J=GL_k\times GL_k\times\cdots \times GL_k \ar[d, "p_1"]\\
    \h_{GL_k,z}^{\circ}\ar[r, "g_z"] & GL_k
 \end{tikzcd}
 \]
 where (recall Equation \eqref{eq:Yw})
 \[
   \h_{GL_k,z}^{\circ} := \{(X, g_1B_k, g_2B_k,\ldots, g_{m+1}B_k); g_1^{-1}Xg_{m+1}\in B_k; g_{i+1}^{-1}g_i\in B_k\dz_i B_k\}
 \]
 and the map $p_2$ is given by
 \begin{multline*}
  p_2((\ell_1,\ldots, \ell_m),(g_1B_k,\ldots, g_mB_k))=\\
  =(\ell_m\cdots \ell_1, g_1B_k, \ell_1^{-1}g_2B_k, \ell_1^{-1}\ell_2^{-1}g_3B_k, \ldots, \ell_1^{-1}\cdots \ell_{m-1}^{-1}g_mB_k, \ell_1^{-1}\cdots \ell_m^{-1}g_1B_k).
 \end{multline*}
  By \ref{prop:char_sheaves_GLn}, every simple summand of $(g_z)_!(\mathbb{C}_{\h_{GL_k,z}^{\circ}})$ is a summand of $(g_e)_!(\mathbb{C}_{\h_{GL_k,e}})$. In particular, since the diagram is fibered and the maps $p_1$ and $p_2$ are smooth, every simple summand of $(f_z)_!(\mathbb{C}_{{}^w\h_{L_J,z}^\circ})$ is a summand of $(f_e)_*(\mathbb{C}_{{}^w\h_{L_J,e}^{\circ}})$.\par
   The proof for $\overline{f}_z$ is identical, simply swapping $p_2$ for $\overline{p}_2\colon {}^w\h_{L_J,z}\to \h_{GL_k,z}$.
\end{proof}

\begin{proposition}
We have a natural action of $L_JU^J_w$ on $\dw P_J$ given by conjugation. If we identify $P_J=L_JU^J$ then this action is given by
\begin{align*}
    (\ell,u)\cdot (\ell_0,u_0) = (\dw^{-1}\ell\dw \ell_0\ell^{-1}, \ell \ell_0^{-1}\dw^{-1}u\dw\ell_0u_0u^{-1}\ell^{-1}).
\end{align*}
\end{proposition}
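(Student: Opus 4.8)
The statement has two parts: that conjugation by $L_JU^J_w$ preserves the coset $\dw P_J\subseteq G$, so that conjugation defines an action of $L_JU^J_w$ on $\dw P_J$; and that, in the coordinates coming from $P_J=L_J\ltimes U^J$ (used for $\dw P_J$) and from $L_JU^J_w=L_J\ltimes U^J_w$ (a semidirect product, since $U^J_w$ is normalized by $L_J$ as shown in Section~\ref{sec:monodromy} and meets $L_J$ trivially), this action is given by the displayed formula. The plan is purely computational: first verify well-definedness, then rewrite the conjugate in the $L_J$, $U^J$ coordinates and read off the two factors. Beyond well-definedness there is no structural content, since conjugation in $G$ is automatically a group action.

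For well-definedness, write a point of $L_JU^J_w$ as $\ell u$ with $\ell\in L_J$, $u\in U^J_w$, and a point of $\dw P_J$ as $\dw\ell_0u_0$ with $\ell_0\in L_J$, $u_0\in U^J$. Then
\[
\ell u\,\dw\ell_0u_0\,u^{-1}\ell^{-1}\;=\;\dw\,(\dw^{-1}\ell\dw)(\dw^{-1}u\dw)\,\ell_0u_0\,u^{-1}\ell^{-1}.
\]
Here $\dw^{-1}\ell\dw\in L_J$ because $\dw L_J\dw^{-1}=L_J$ (which holds since $wJw^{-1}=J$ and $w\in{}^JW^J$, recalled in Section~\ref{sec:monodromy}), and $\dw^{-1}u\dw\in U^J$ because $u\in U^J_w=U^J\cap\dw U^J\dw^{-1}$. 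The three remaining factors $\ell_0u_0$, $u^{-1}$, $\ell^{-1}$ all lie in $P_J$, so the expression after $\dw$ lies in $P_J$, whence $\ell u\,\dw\ell_0u_0\,u^{-1}\ell^{-1}\in\dw P_J$. Thus the conjugation action of $G$ on itself restricts to an action of $L_JU^J_w$ on $\dw P_J$.

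For the formula, I would put the $P_J$ factor $(\dw^{-1}\ell\dw)(\dw^{-1}u\dw)\,\ell_0u_0\,u^{-1}\ell^{-1}$ into the shape $\ell'u'$ with $\ell'\in L_J$ and $u'\in U^J$ by repeatedly invoking the normality of $U^J$ in $P_J=L_JU^J$ (\cite[Proposition~12.6]{MalleTesterman}, as above): moving $\dw^{-1}u\dw$ to the right of $\ell_0$ replaces it by $\ell_0(\ell_0^{-1}\dw^{-1}u\dw\,\ell_0)$ with the parenthesized factor in $U^J$, and then moving the whole accumulated unipotent tail to the right of the final $\ell^{-1}$ replaces it by $\ell^{-1}$ times its conjugate by $\ell$, again in $U^J$. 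Collecting the two $L_J$ factors gives $\ell'=\dw^{-1}\ell\dw\,\ell_0\,\ell^{-1}$, and the unipotent remainder is $u'=\ell\,\ell_0^{-1}\dw^{-1}u\dw\,\ell_0\,u_0\,u^{-1}\ell^{-1}$, which is exactly the asserted expression. I do not expect a genuine obstacle; the one delicate point, and the reason the subgroup $U^J_w$ rather than $U^J$ appears, is the containment $\dw^{-1}u\dw\in U^J$ for $u\in U^J_w$, which follows at once from the definition of $U^J_w$ together with $\dw L_J\dw^{-1}=L_J$.
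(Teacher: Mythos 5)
Your proposal is correct and proceeds essentially as the paper does: it verifies the factorization $\ell u\,\dw\ell_0u_0\,u^{-1}\ell^{-1}=\dw\,(\dw^{-1}\ell\dw\,\ell_0\,\ell^{-1})(\ell\,\ell_0^{-1}\dw^{-1}u\dw\,\ell_0u_0u^{-1}\ell^{-1})$ and checks that the second factor lies in $U^J$ using $\dw^{-1}U^J_w\dw\subset U^J$ and the normality of $U^J$ in $P_J$, which are exactly the paper's two ingredients. The extra explicit well-definedness remarks are fine but add nothing beyond the paper's argument.
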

\begin{proof}
It is enough to check that
\[
\ell u \dw \ell_0u_0 u^{-1}\ell^{-1} = \dw (\dw^{-1}\ell \dw \ell_0\ell^{-1})(\ell \ell_0^{-1}\dw^{-1}u\dw\ell_0u_0u^{-1}\ell^{-1}).
\]
Also note that
\[
\ell \ell_0^{-1}\dw^{-1}u\dw\ell_0u_0u^{-1}\ell^{-1} = \ell ((\ell_0^{-1}(\dw^{-1}u\dw)\ell_0)u_0u^{-1})\ell^{-1}
\]
is actually in $U^J$ as each expression between two parenthesis is in $U^J$, either because $\dw^{-1}U^J_w\dw\subset U^J$ or because $\ell U^J\ell^{-1} = U^J$ for every $\ell \in L_J$ ($U^J$ is normal in $P_J$).
\end{proof}

\begin{corollary}
  We have a natural action of $L_JU^J_w$ on ${}^w\h_{P_J,z}$ given by
  \[
  (\ell u)\cdot (X, gB) = ((\ell u)X(\ell u)^{-1}, \ell ugB).
  \]
\end{corollary}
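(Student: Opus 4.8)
The plan is to obtain the corollary from the preceding proposition by first constructing the action on the ambient variety $\dw P_J\times G/B$ and then restricting it. For $h\in L_JU^J_w$ define $\phi_h\colon \dw P_J\times G/B\to \dw P_J\times G/B$ by $\phi_h(X,gB)=(hXh^{-1},hgB)$. The first coordinate lands in $\dw P_J$ precisely because the preceding proposition asserts that conjugation by elements of $L_JU^J_w$ preserves $\dw P_J$, and left translation by $h$ is an automorphism of $G/B$, so $\phi_h$ is a well-defined morphism. One checks immediately that $\phi_{h_1}\circ\phi_{h_2}=\phi_{h_1h_2}$ and $\phi_{e}=\Id$ (using that $L_JU^J_w$ is a subgroup of $G$), so $h\mapsto\phi_h$ is an action of $L_JU^J_w$ on $\dw P_J\times G/B$ by automorphisms, with $\phi_h$ invertible and $\phi_h^{-1}=\phi_{h^{-1}}$.

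Next I would show each $\phi_h$ preserves the locally closed stratum ${}^w\h_{P_J,z}^{\circ}=\{(X,gB)\colon X\in\dw P_J,\ g\in P_J,\ g^{-1}Xg\in B\dw\dz B\}$. Given a point of this set, $hg\in P_J$ since $h\in L_JU^J_w\subseteq L_JU^J=P_J$ and $g\in P_J$; moreover the Bruhat condition is left untouched, because
\[
(hg)^{-1}(hXh^{-1})(hg)=g^{-1}h^{-1}hXh^{-1}hg=g^{-1}Xg\in B\dw\dz B .
\]
Hence $\phi_h({}^w\h_{P_J,z}^{\circ})\subseteq {}^w\h_{P_J,z}^{\circ}$, and applying the same to $h^{-1}$ shows that $\phi_h$ restricts to an automorphism of ${}^w\h_{P_J,z}^{\circ}$.

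Finally, since $\phi_h$ is an automorphism of the ambient variety $\dw P_J\times G/B$ carrying the subset ${}^w\h_{P_J,z}^{\circ}$ onto itself, it carries its Zariski closure ${}^w\h_{P_J,z}$ onto itself as well; thus the $\phi_h$ restrict to an action of $L_JU^J_w$ on ${}^w\h_{P_J,z}$, which by construction is exactly the one given by the stated formula. The argument is routine, and I do not expect any genuine obstacle; the only point worth keeping in mind is that the formula should be verified on the smooth cell ${}^w\h_{P_J,z}^{\circ}$ and then transported to the closure via the elementary fact that an ambient automorphism fixing a subset setwise also fixes its closure, rather than attempting a direct verification on the possibly singular variety ${}^w\h_{P_J,z}$.
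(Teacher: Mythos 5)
Your argument is correct and is essentially the paper's (implicit) one: the preceding proposition supplies exactly the fact that conjugation by $L_JU^J_w$ preserves $\dw P_J$, the computation $(hg)^{-1}(hXh^{-1})(hg)=g^{-1}Xg$ together with $hg\in P_J$ gives invariance of the cell ${}^w\h_{P_J,z}^{\circ}$, and passing to the closure under an ambient automorphism is immediate. The paper treats this as a direct corollary with no written proof, and your write-up fills in precisely those routine steps.
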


\begin{proposition}
\label{prop:GLLUY}
We have a natural isomorphism
\begin{align*}
    \frac{G\times ({}^w\h_{P_J,z}^{\circ})}{L_JU^J_w}  &  \to \h_{wz}^\circ\\
      (g_0, (X,gB)) &\mapsto (g_0Xg_0^{-1}, g_0gB).
\end{align*}
\end{proposition}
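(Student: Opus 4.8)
The plan is to show that the displayed formula is a well-defined $G$-equivariant bijective morphism and then to conclude, exactly as in Propositions \ref{prop:wPJYwJ_iso} and \ref{prop:wT1Ywiso}, that it is an isomorphism: both sides are smooth (the quotient is a free quotient of a smooth variety, since $L_JU^J_w$ already acts freely on the $G$-factor by right translation by $(\ell u)^{-1}$). First I would check invariance. Recall the $L_JU^J_w$-action on ${}^w\h_{P_J,z}^{\circ}$, namely $(\ell u)\cdot(X,gB)=((\ell u)X(\ell u)^{-1},(\ell u)gB)$. Replacing $(g_0,(X,gB))$ by $(g_0(\ell u)^{-1},((\ell u)X(\ell u)^{-1},(\ell u)gB))$ and feeding it into $(g_0,(X,gB))\mapsto(g_0Xg_0^{-1},g_0gB)$ returns the same pair, so the map descends to the quotient; it is manifestly equivariant for left translation on $g_0$ against the conjugation action on $\h_{wz}^{\circ}$. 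To see the image lies in $\h_{wz}^{\circ}$, note that $(g_0g)^{-1}(g_0Xg_0^{-1})(g_0g)=g^{-1}Xg\in B\dw\dz B$, and $B\dw\dz B$ is precisely the Bruhat cell defining $\h_{wz}^{\circ}$, because $w\in{}^JW^J\subset W^J$ forces $\ell(wz)=\ell(w)+\ell(z)$.

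For surjectivity, given $(Y,hB)\in\h_{wz}^{\circ}$ I would write $h^{-1}Yh=u_1\dw\dz b_1$ with $u_1\in U$ and $b_1\in B$ (Bruhat decomposition), and set $g_0:=hu_1$, $X:=\dw\dz b_1u_1$, $g:=u_1^{-1}$. Then $X\in\dw P_J$ since $\dz b_1u_1\in P_J$; $g\in P_J$ since $u_1\in U\subset P_J$; and $g^{-1}Xg=u_1\dw\dz b_1=h^{-1}Yh\in B\dw\dz B$, so $(X,gB)\in{}^w\h_{P_J,z}^{\circ}$. Moreover $g_0Xg_0^{-1}=h(u_1\dw\dz b_1)h^{-1}=h(h^{-1}Yh)h^{-1}=Y$ and $g_0gB=hB$, so $(g_0,(X,gB))$ maps to $(Y,hB)$.

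For injectivity, using $G$-equivariance I may assume $g_0=1$, so the image is $(X,gB)$; if $(g_0',(X',g'B))$ has the same image then $X=g_0'X'(g_0')^{-1}$ and $gB=g_0'g'B$. From the second relation, with $g,g'\in P_J$, one gets $g_0'\in P_J$. Writing $X=\dw p$, $X'=\dw p'$ with $p,p'\in P_J$, the first relation rearranges to $\dw^{-1}g_0'\dw=p\,g_0'\,(p')^{-1}\in P_J$, hence $g_0'\in\dw P_J\dw^{-1}$, so $g_0'\in P_J\cap\dw P_J\dw^{-1}$. Since $wJw^{-1}=J$, the parabolics $P_J$ and $\dw P_J\dw^{-1}$ share the Levi factor $L_J=\dw L_J\dw^{-1}$, and the root-group computation underlying item (3) of Proposition \ref{prop:w_cap} (with $J\cap wJw^{-1}=J$) gives $P_J\cap\dw P_J\dw^{-1}=L_J(U^J\cap\dw U^J\dw^{-1})=L_JU^J_w$. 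Writing $g_0'=\ell u\in L_JU^J_w$, the element $g_0'$ itself carries $(g_0',(X',g'B))$ to $(g_0'(g_0')^{-1},(g_0'X'(g_0')^{-1},g_0'g'B))=(1,(X,gB))$, so the two points coincide in the quotient.

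The only ingredient that is not a bare rearrangement of Bruhat decompositions is the identity $P_J\cap\dw P_J\dw^{-1}=L_JU^J_w$, so that is the step I would treat most carefully; it is classical for parabolic subgroups sharing a common Levi and is essentially contained in the proof of Proposition \ref{prop:w_cap}, so I expect the overall write-up to be brief and entirely parallel to the proofs of Propositions \ref{prop:wPJYwJ_iso} and \ref{prop:wT1Ywiso}.
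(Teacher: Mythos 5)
Your proof is correct and follows essentially the same route as the paper's: check well-definedness, reduce by $G$-equivariance, get surjectivity from the Bruhat decomposition (choosing the representative so that $g^{-1}Xg\in \dw\dz B\subset \dw P_J$), and for injectivity pin the ambiguity down to the $L_JU^J_w$-action. The only (harmless) deviation is the final step of injectivity: the paper factors $g_0\in L_JU^J_wU^w$ and kills the $U^w$-part using uniqueness of the decomposition $P_J\dw P_J=U^w\dw P_J$, whereas you derive $g_0'\in P_J\cap \dw P_J\dw^{-1}$ and invoke the identity $P_J\cap \dw P_J\dw^{-1}=L_JU^J_w$ (valid here because $wJw^{-1}=J$ and $w\in {}^JW^J$, by the same root-group computation as in Proposition \ref{prop:w_cap}(3)), which is an equivalent use of the same facts.
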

\begin{proof}
We begin by noticing that the map is well defined. Indeed $(g_0g)^{-1}g_0Xg_0^{-1}g_0g = g^{-1}Xg\in B\dw\dz B$ and the image of $(g_0(\ell u)^{-1}, (\ell u)X(\ell u)^{-1}, \ell u gB)$ is the same as the image of $(g_0, (X,gB))$. Every pair $(X,gB)\in \h_{wz}^{\circ}$ satisfies $g^{-1}Xg\in B\dw\dz B$, and after changing the representative $g$ in $gB$, we may assume that $g^{-1}Xg\in \dw\dz B \subset \dw P_J$, hence $(X,gB)$ is the image of $(g, (g^{-1}Xg, B))$. This proves surjectivity. Given two pairs $(g_0, (X, gB))$ and $(g_0', (X', g'B))$ with the same image, we can assume $g_0'=1$. Then $g_0^{-1}Xg_0 = X'$ and $g_0gB=g'B$. Since $g,g'\in P_J$ we have that $g_0\in P_J$ as well. Then we can write $g_0=\ell_0u_0u_0' \in L_JU^{J}_wU^w = P_J$ and $X=\dw p\in \dw P_J$. Hence
\begin{align*}
    X' & = g_0^{-1}Xg_0\\
    & = u_0'^{-1}u_0^{-1}\ell_0^{-1}\dw p g_0\\
                & = u_0'^{-1}\dw (\dw^{-1}u_0^{-1}\dw)(\dw^{-1}\ell_0^{-1}\dw) p g_0\in u_0'^{-1}\dw P_J.
\end{align*}
Since $X'\in \dw P_J$ and  $P_J\dw P_J = U^w\dw P_J$, we conclude that $u_0'=1$, and hence $g_0\in L_JU^J_w$, so $(g_0,(X,gB))$ and $(1,(X', g'B))$ are the same point in $\frac{G\times ({}^w\h_{P_J,z}^{\circ})}{L_JU^J_w}$.
\end{proof}

\begin{proposition}
\label{prop:YwYwJ_pushforward}
Let $f\col \h_{w}^\circ \to \h_{w,J}^\circ$ be the natural forgetful map. Consider $L$ the local system on $\U_{w,J}$ corresponding to the permutation representation of $W_J^w$ on itself. Then $f_*(\mathbb{C}_{\h_w^\circ})$ is the intermediate extension of $L$ to $\h_{w,J}^\circ$.
\end{proposition}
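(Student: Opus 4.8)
The plan is to realize $f$ as the associated–bundle version of the small proper map ${}^w\h_{P_J,e}\to\dw P_J$ of Proposition \ref{prop:towPJ_small}, deduce that $f$ is itself small and proper with smooth source, apply Borho--MacPherson, and then identify the resulting local system with $L$ by restricting to $\U_{w,J}$ and invoking Theorem \ref{thm:pi1_WJw}.

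First I would make the shape of $f$ explicit. Proposition \ref{prop:wPJYwJ_iso} gives $\h_{w,J}^\circ\cong\frac{G\times\dw P_J}{L_JU^J_w}$, and Proposition \ref{prop:GLLUY} applied with $z=e$ gives $\h_w^\circ\cong\frac{G\times{}^w\h_{P_J,e}^\circ}{L_JU^J_w}$; since $g\in P_J$ forces $g_0gP_J=g_0P_J$, under these isomorphisms $f$ becomes the map induced by the projection $p\colon{}^w\h_{P_J,e}^\circ\to\dw P_J$, $(X,gB)\mapsto X$. I would then note that ${}^w\h_{P_J,e}^\circ$ is already closed: by Proposition \ref{prop:PJLJ_bundle} it is a $U^J$-bundle over ${}^w\h_{L_J,e}^\circ$, and the latter is the locus in $L_J\times L_J/B_J$ cut out by the closed condition $\dw^{-1}\ell_0^{-1}\dw\ell\ell_0\in B_J$, which descends to $L_J/B_J$ because $\dw^{-1}B_J\dw=B_J$ (as $\dw^{-1}U_J\dw=U_J$ when $w\in{}^JW^J$ and $wJw^{-1}=J$). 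So $p$ is precisely the map ${}^w\h_{P_J,e}\to\dw P_J$ of Proposition \ref{prop:towPJ_small}; in particular it is small, and by Proposition \ref{prop:PJLJ_bundle} together with the proof of Proposition \ref{prop:toLJ_small} it is proper with smooth source.

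Next I would transfer these properties to $f$ and apply Borho--MacPherson. The source $\h_w^\circ$ is smooth. The map $f$ is proper: since $w$ is minimal in $W_JwW_J$, every point of $\h_w$ lying over $\h_{w,J}^\circ$ already lies in $\h_w^\circ$, so $f$ is the base change of the proper forgetful map $\h_w\to\h_{w,J}$ along the open immersion $\h_{w,J}^\circ\hookrightarrow\h_{w,J}$. And $f$ is small: pulling $f$ back along the smooth surjection $G\times\dw P_J\to\h_{w,J}^\circ$ (the $L_JU^J_w$-action being free, since it is free on the $G$-factor) produces $\operatorname{id}_G\times p$, whose fibres coincide with those of $p$ while all ambient dimensions are shifted uniformly by $\dim G$, so the defining inequalities for smallness are preserved, and they descend back along the smooth surjection. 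Proposition \ref{prop:small_local_system} then yields $f_*(\mathbb{C}_{\h_w^\circ})=IC_{\h_{w,J}^\circ}(L'')$, where $L''=(f|_{f^{-1}(U)})_*\mathbb{C}$ for any open $U\subset\h_{w,J}^\circ$ over which $f$ restricts to a covering.

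Finally I would identify $L''$ with $L$. Taking $U=\U_{w,J}$, which is dense and open in $\h_{w,J}^\circ$, Theorem \ref{thm:pi1_WJw} tells us that $f$ restricts to a $W_J^w$-Galois cover over $\U_{w,J}$ and that $(f|_{f^{-1}(\U_{w,J})})_*\mathbb{C}$ is the local system $L$ attached to the representation of $W_J^w$ on itself; hence $L''|_{\U_{w,J}}=L$, and since the intermediate extension depends only on the restriction of the local system to a dense open, $f_*(\mathbb{C}_{\h_w^\circ})=IC_{\h_{w,J}^\circ}(L)$. The main technical point I expect is the transfer of smallness (with the accompanying properness and smoothness bookkeeping) through the induced-bundle construction $G\times^{L_JU^J_w}(-)$; once the identification of $f$ in the first step is in place, everything else is formal.
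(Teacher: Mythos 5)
Your proposal is correct and follows essentially the same route as the paper: identify $f$ with the quotient by the free $L_JU^J_w$-action of $\mathrm{id}_G\times({}^w\h_{P_J,e}\to\dw P_J)$, deduce smallness from Proposition \ref{prop:towPJ_small}, apply Proposition \ref{prop:small_local_system}, and identify the local system via the $W_J^w$-Galois cover over $\U_{w,J}$ from Theorem \ref{thm:pi1_WJw}. The extra bookkeeping you supply (properness over $\h_{w,J}^\circ$, closedness of ${}^w\h_{P_J,e}^\circ$, descent of smallness along the smooth surjection) only fills in details the paper leaves implicit.
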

\begin{proof}
We begin by proving that $f$ is small. In fact, it is enough to prove that
\[
\frac{G\times {}^{w}\h_{P_J,e}}{L_JU^J_w}\to \frac{G\times \dw P_J}{L_JU^J_w}
\]
is small. This follows from Proposition \ref{prop:towPJ_small} since $L_JU^J_w$ acts freely on both sides. Since $f$ is small, we can use Proposition \ref{prop:small_local_system} to conclude that $f_*(\mathbb{C}_{\h_w^\circ})$ is the intermediate extension of $(f|_{(\h_{w}^\circ)^{w_{rs}}})_*(\mathbb{C}_{(\h_{w}^\circ)^{w_{rs}}})$. Since the map $(\h_{w}^\circ)^{w_{rs}}\to \U_{w,J}$ is a $W_J^w$-Galois covering, the local system $(f|_{(\h_{w}^\circ)^{w_{rs}}})_*(\mathbb{C}_{(\h_{w}^\circ)^{w_{rs}}})$ is precisely the local system that corresponds to the permutation representation of $W_J^w$ on itself.
\end{proof}

\begin{proposition}
\label{prop:wz_to_w}
Let $z\in W_J$ and consider the map $f'\col Y_{wz}^{\circ}\to Y_{w,J}^{\circ}$. Then every simple summand of $f'_!(\mathbb{C}_{\h_{wz}^\circ})$ is induced by an irreducible representation of $W_J^w$.
\end{proposition}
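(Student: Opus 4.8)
The plan is to transport the statement to the disconnected Levi $\dw L_J$ by means of the models $\h_{wz}^\circ\cong \frac{G\times {}^w\h_{P_J,z}^\circ}{L_JU^J_w}$ and $\h_{w,J}^\circ\cong \frac{G\times \dw P_J}{L_JU^J_w}$ furnished by Propositions~\ref{prop:GLLUY} and~\ref{prop:wPJYwJ_iso}, and then to appeal to the classification of $1$-character sheaves of $GL_k$ (Proposition~\ref{prop:char_sheaves_GLn}) together with the smallness of Proposition~\ref{prop:toLJ_small}.

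First I would unwind $f'$ through these isomorphisms: writing a point of $\h_{wz}^\circ$ as the image of $(g_0,(X,gB))$ with $g\in P_J$ and $X\in\dw P_J$ (Proposition~\ref{prop:GLLUY}), the map $f'$ just forgets $B$ down to $P_J$, so on these models it is induced by the $L_JU^J_w$-equivariant map $\mathrm{id}_G\times q_z\colon G\times {}^w\h_{P_J,z}^\circ\to G\times \dw P_J$, $q_z(X,gB)=X$. Since $L_JU^J_w$ acts freely on the $G$-factor, the two quotient maps are principal $L_JU^J_w$-bundles, and equivariance of $\mathrm{id}_G\times q_z$ makes the square with bottom row $f'$ Cartesian. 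By base change for $(-)_!$ and the Künneth formula, the pullback of $f'_!(\mathbb{C}_{\h_{wz}^\circ})$ along the bundle map $\mathrm{pr}\colon G\times\dw P_J\to\h_{w,J}^\circ$ is $\mathbb{C}_G\boxtimes (q_z)_!(\mathbb{C}_{{}^w\h_{P_J,z}^\circ})$. Proposition~\ref{prop:PJLJ_bundle} identifies ${}^w\h_{P_J,z}^\circ\cong {}^w\h_{L_J,z}^\circ\times U^J$ and $\dw P_J\cong L_J\times U^J$ in a way that turns $q_z$ into $f_z\times\mathrm{id}_{U^J}$, so $(q_z)_!(\mathbb{C})=(f_z)_!(\mathbb{C}_{{}^w\h_{L_J,z}^\circ})\boxtimes\mathbb{C}_{U^J}$. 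As $\mathrm{pr}$ is smooth, Proposition~\ref{prop:smooth_pullback_perverse} lets me pass freely between simple summands upstairs and downstairs, so the proposition reduces to showing that every simple summand of $(f_z)_!(\mathbb{C}_{{}^w\h_{L_J,z}^\circ})$ is $IC_{L_J}(L_\rho)$ for an irreducible representation $\rho$ of $W_J^w$.

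For this, Proposition~\ref{prop:char_LJ} reduces matters to $z=e$, i.e.\ to the summands of $(\overline{f}_e)_*(\mathbb{C}_{{}^w\h_{L_J,e}})$. By Proposition~\ref{prop:toLJ_small} the map $\overline{f}_e$ is small, so this is $IC_{L_J}(\mathcal L)$ up to shift for $\mathcal L$ the local system attached to the covering over the smooth locus. Using the cycle decomposition $\sigma=\tau_1\cdots\tau_k$ of the permutation $\sigma$ by which $w$ acts on the $GL_{\lambda_i}$-factors, the datum $(L_J,{}^w\h_{L_J,e},\overline{f}_e)$ splits as an external product over the cycles, and by Proposition~\ref{prop:WJw_product} the cycle $\tau_j$ contributes the factor $S_{\lambda_{i_j}}$ of $W_J^w$. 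On a single cycle of length $m$ (so $L_J=GL_k^m$), the fibre square in the proof of Proposition~\ref{prop:toLJ_small} exhibits ${}^w\h_{L_J,e}\to GL_k^m$ as the pullback, along the smooth map $p_1\colon GL_k^m\to GL_k$, $(\ell_1,\dots,\ell_m)\mapsto\ell_m\cdots\ell_1$, of the Grothendieck--Springer map $\frac{GL_k\times B_k}{B_k}\to GL_k$; by Propositions~\ref{prop:Grothe_Springer_Small} and~\ref{prop:char_sheaves_GLn} the latter has pushforward $\bigoplus_{\rho\in\mathrm{Irr}(S_k)}IC_{GL_k}(L_\rho)^{\oplus\dim\rho}$ up to shift, and applying $p_1^*$ (Proposition~\ref{prop:smooth_pullback_perverse}) shows that $\mathcal L$ is a direct sum of the $p_1^*L_\rho$, which are the local systems on $p_1^{-1}(GL_k^{rs})$ attached to the irreducible representations of $S_k=W_J^w$. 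Taking external products over the cycles, the simple summands of $(\overline{f}_e)_*(\mathbb{C}_{{}^w\h_{L_J,e}})$ are exactly the $IC_{L_J}(L_\rho)$, $\rho\in\mathrm{Irr}(W_J^w)$.

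The step I expect to be the main obstacle is the \emph{compatibility of the two incarnations of $L_\rho$}: I must check that pulling back along $\mathrm{pr}$ the perverse sheaf $IC_{\h_{w,J}^\circ}(L_\rho)$ built in Theorem~\ref{thm:pi1_WJw} from the monodromy map $\pi_1(\U_{w,J})\to W_J^w$ yields precisely $\mathbb{C}_G\boxtimes\bigl(IC_{L_J}(L_\rho)\boxtimes\mathbb{C}_{U^J}\bigr)$ up to shift. Concretely this amounts to identifying $\mathrm{pr}^{-1}(\U_{w,J})$ with $G\times(\dw L_J)^{rs}U^J$ and matching the $W_J^w$-cover $\frac{G\times(\dw T_1)^{rs}U^J}{\N_TU^J_w}\to\U_{w,J}$ of Propositions~\ref{prop:wT1_wLJ} and~\ref{prop:NLT_galois} with the regular--semisimple cover of the Levi used above; both are extracted from the constructions of $\U_{w,J}$ and of $\pi_1\to W_J^w=\N_{L_J}/\N_T$, but keeping the two bookkeepings of $W_J^w$ aligned, especially through the cycle decomposition of $\sigma$, is the delicate point. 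Granting this, Proposition~\ref{prop:smooth_pullback_perverse} gives that $IC_{\h_{w,J}^\circ}(L)$ is a summand of $f'_!(\mathbb{C}_{\h_{wz}^\circ})$ if and only if $IC_{L_J}(L)$ is a summand of $(f_z)_!(\mathbb{C}_{{}^w\h_{L_J,z}^\circ})$, which by the previous paragraph forces $L=L_\rho$ for some irreducible representation $\rho$ of $W_J^w$, completing the proof.
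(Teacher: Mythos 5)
Your reduction chain is the same as the paper's: transport $f'$ through the models of Propositions \ref{prop:GLLUY}, \ref{prop:wPJYwJ_iso} and \ref{prop:PJLJ_bundle}, use Proposition \ref{prop:smooth_pullback_perverse} to pass between simple summands upstairs and downstairs, and invoke Proposition \ref{prop:char_LJ} to reduce from general $z$ to $z=e$. Up to that point the argument is fine. The problem is the endgame. The statement "induced by an irreducible representation of $W_J^w$" refers to the local systems $L_\rho$ on $\U_{w,J}$ obtained from the monodromy map of Theorem \ref{thm:pi1_WJw}, and your route forces you to match the Levi-side description of the summands (via the Grothendieck--Springer square in the proof of Proposition \ref{prop:toLJ_small} and the classification of Proposition \ref{prop:char_sheaves_GLn}) with that monodromy description. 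You correctly flag this compatibility as the delicate point, but you do not prove it ("Granting this\ldots"), so as written the proof is incomplete: nothing in your argument identifies the $W_J^w$-cover extracted from $\N_{L_J}/\N_T$ over $\U_{w,J}$ with the regular--semisimple covers of the factors of $L_J$ appearing in your cycle-by-cycle computation, and tracking that identification through the quotient by $L_JU^J_w$ and the cycle decomposition of $\sigma$ is genuinely nontrivial.

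The paper closes the argument without ever making that identification, and you can repair your proof the same way. Do not compute the summands of $(\overline{f}_e)_*(\mathbb{C}_{{}^w\h_{L_J,e}})$ explicitly; Proposition \ref{prop:char_LJ} only needs to tell you that every simple summand of $(f_z)_!(\mathbb{C}_{{}^w\h_{L_J,z}^\circ})$ is a summand of $(\overline{f}_e)_*(\mathbb{C}_{{}^w\h_{L_J,e}})$. Transporting both complexes back through the \emph{same} chain of models (Propositions \ref{prop:GLLUY} and \ref{prop:PJLJ_bundle}, plus Proposition \ref{prop:smooth_pullback_perverse}), this says every simple summand of $f'_!(\mathbb{C}_{\h_{wz}^\circ})$ is a summand of $f_*(\mathbb{C}_{\h_w^\circ})$, where $f\col\h_w^\circ\to\h_{w,J}^\circ$ is the forgetful map. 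Now Proposition \ref{prop:YwYwJ_pushforward} — whose proof already contains exactly the bookkeeping you were worried about, namely the smallness of Proposition \ref{prop:towPJ_small} together with the statement in Theorem \ref{thm:pi1_WJw} that $(\h_w^\circ)^{w_{rs}}\to\U_{w,J}$ is a $W_J^w$-Galois cover — identifies $f_*(\mathbb{C}_{\h_w^\circ})$ as the intermediate extension of the local system attached to the regular representation of $W_J^w$. Its simple summands are precisely the $IC_{\h_{w,J}^\circ}(L_\rho)$ with $\rho$ irreducible, which is the desired conclusion; the explicit Springer-type decomposition over each cycle, and the compatibility check it would require, are unnecessary.
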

\begin{proof}
We must show that every simple summand of $f'_!(\mathbb{C}_{\h_{wz}^{\circ}})$ is a summand of $f_*(\mathbb{C}_{\h_{w}^{\circ}})$ and use Proposition \ref{prop:YwYwJ_pushforward}. By Proposition \ref{prop:GLLUY}  we have isomorphisms
\begin{align*}
    \frac{G\times ({}^w\h_{P_J,z}^{\circ})}{L_JU^J_w} & \to \h_{wz}^{\circ},\\
    \frac{G\times ({}^w\h_{P_J,e}^{\circ})}{L_JU^J_w} & \to \h_{w}^{\circ}, \\
    \frac{G\times \dw P_J}{L_JU^J_w} & \to \h_{w}^{\circ}.
\end{align*}
By Proposition \ref{prop:smooth_pullback_perverse} it is enough to prove that every simple summand of $\overline{f}'_!(\mathbb{C}_{({}^w\h_{P_J,z}^{\circ})}))$ is a summand of $\overline{f}_*(\mathbb{C}_{({}^w\h_{P_J,e}))})$ where $\overline{f}'$ and $\overline{f}$ are the maps
\begin{align*}
    \overline{f}'\col {}^w\h_{P_J,z}^{\circ}\to P_J,\\
    \overline{f}\col {}^w\h_{P_J,e}\to P_J.
\end{align*}
By propositions \ref{prop:PJLJ_bundle} and \ref{prop:smooth_pullback_perverse} we can reduce even further and consider the maps
\begin{align*}
    \overline{f}'\col {}^w\h_{L_J,z}^{\circ}\to L_J,\\
    \overline{f}\col {}^w\h_{P_J,e}\to L_J.\\
\end{align*}
The result then follows from Proposition \ref{prop:char_LJ}.
\end{proof}

% \begin{proposition}
% \label{prop:wJ_to_wJ1}
% Let $J$ be a subset of the set of simple transpositions of $W$ and consider $w\in {}^{J}W$. Define $J_1=J\cap wJw^{-1}$. Then every $1$-character sheaf on $Y_{w,J}^{\circ}$ is the pullback, via the isomorphism in Proposition \ref{prop:t_iso_t1}, of a $1$-character sheaf on $Y_{w, J_1}^{\circ}$. So, every character sheaf on $Y_{w,J}^{\circ}$ is a pullback of  a $1$-character sheaf on $Y_{w, J_w}^{\circ}$.
% \end{proposition}
% \begin{proof}
% This follows directly from \cite[Lemma 4.10]{LusztigParabolicI}.
% \end{proof}

\begin{theorem}
\label{thm:local_system_parabolic}
 Let $w\in S_n$ and $J\subset \{1,\ldots, n-1\}$, and consider the map $f\col \h_w\to G\times G/P_J$. Then
 \[
 f_*(IC_{\h_w})=\bigoplus_{w'\in {}^JW} IC_{\h_{w', J}}(L_{w,w'}),
 \]
 where $L_{w,w'}$ is a local system on $\U_{w',J'}$ induced by a (graded) representation of $W_{J_{w'}}^{w'}$.
\end{theorem}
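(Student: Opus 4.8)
The plan is to feed the decomposition theorem into the geometric analysis carried out in Sections \ref{sec:monodromy} and \ref{sec:classification}. The map $f$ is proper (a closed immersion followed by the projective bundle $G\times\flag\to G\times G/P_J$) and $IC_{\h_w}$ is a simple perverse sheaf, so the decomposition theorem of \cite{BBD} gives $f_*(IC_{\h_w})\cong\bigoplus_\alpha IC_{Z_\alpha}(M_\alpha)[n_\alpha]$, a finite direct sum of shifted intersection cohomology sheaves of irreducible local systems $M_\alpha$ on smooth open dense subsets of irreducible closed subvarieties $Z_\alpha\subseteq G\times G/P_J$. By the results of Lusztig in \cite{LusztigParabolicI} recalled around Equation \eqref{eq:decomposition_hw}, the supports that occur are among the parabolic Lusztig varieties $\h_{w',J}$, $w'\in{}^JW$; collecting the summands according to their support yields the asserted shape $f_*(IC_{\h_w})=\bigoplus_{w'\in{}^JW}IC_{\h_{w',J}}(L_{w,w'})$, with $L_{w,w'}$ a graded (possibly zero) semisimple local system on a smooth open dense subset of $\h_{w',J}$. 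It remains to identify each $L_{w,w'}$.

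Fix $w'\in{}^JW$. By Proposition \ref{prop:loc_closed} the cell $\h_{w',J}^\circ$ is locally closed in $G\times G/P_J$ and dense in $\h_{w',J}$, hence open and smooth in $\h_{w',J}$, and by Proposition \ref{prop:t_iso_t1} it is isomorphic to $\h_{w',J_{w'}}^\circ$; since $w'\in{}^{J_{w'}}W^{J_{w'}}$ with $w'J_{w'}w'^{-1}=J_{w'}$, Theorem \ref{thm:pi1_WJw} provides inside it the nonempty open set $\U_{w',J_{w'}}$ with its natural map $\pi_1(\U_{w',J_{w'}})\to W_{J_{w'}}^{w'}$. Shrinking $\U_{w',J_{w'}}$ if necessary (which only composes this map with a surjection), we may assume it lies in a single stratum of a stratification with respect to which $f_*(IC_{\h_w})$ is constructible, so that $f_*(IC_{\h_w})|_{\U_{w',J_{w'}}}$ is a complex of local systems. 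Since $IC_{\h_{w',J}}(L_{w,w'})$ is a direct summand of $f_*(IC_{\h_w})$ and $\U_{w',J_{w'}}$ is a smooth open subset of $\h_{w',J}^\circ$, its restriction $IC_{\h_{w',J}}(L_{w,w'})|_{\U_{w',J_{w'}}}$ is, up to shift, the graded local system $L_{w,w'}$ itself, and it is a direct summand of $f_*(IC_{\h_w})|_{\U_{w',J_{w'}}}$. As the local systems on $\U_{w',J_{w'}}$ whose monodromy factors through the map $\pi_1(\U_{w',J_{w'}})\to W_{J_{w'}}^{w'}$ form a full subcategory closed under direct summands, it is enough to show that $f_*(IC_{\h_w})|_{\U_{w',J_{w'}}}$ is, in every cohomological degree, a local system of this form.

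For the core step, I would replace $IC_{\h_w}$ by the pushforward of a constant sheaf along a resolution: $IC_{\h_w}$ is a direct summand of $r_*(\mathbb{C}_{\widetilde\h_w})[\dim\h_w]$ for any resolution $r\colon\widetilde\h_w\to\h_w$, and choosing $r$ compatibly with the cell structure (for instance a Bott--Samelson-type resolution built from the closures $\h_{\underline w}$ of the cells of Equation \eqref{eq:Yw} for a reduced word $\underline w$), it is enough to analyze $h_*(\mathbb{C}_{\widetilde\h_w})$ for the proper map $h\colon\widetilde\h_w\to G\times G/P_J$ over $\U_{w',J_{w'}}$. Stratifying $\widetilde\h_w$ by its natural cells and restricting over $\h_{w',J_{w'}}^\circ\cong\h_{w',J}^\circ$, the isomorphisms of Propositions \ref{prop:wPJYwJ_iso} and \ref{prop:GLLUY} together with the $U^{J_{w'}}$- and $L_{J_{w'}}/B_{J_{w'}}$-bundle reductions of Propositions \ref{prop:PJLJ_bundle} and \ref{prop:smooth_pullback_perverse} exhibit the cells lying over the open cell as fibred over pieces of the form $\h_{w'z}^\circ$ with $z\in W_{J_{w'}}$ and $w'z\leq w$, in such a way that $h$ becomes the forgetful map $f'\colon\h_{w'z}^\circ\to\h_{w',J_{w'}}^\circ$ of Proposition \ref{prop:wz_to_w}. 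Hence every simple summand of $h_*(\mathbb{C}_{\widetilde\h_w})$ restricts over $\U_{w',J_{w'}}$ to a subquotient of some $f'_!(\mathbb{C}_{\h_{w'z}^\circ})|_{\U_{w',J_{w'}}}$; by Proposition \ref{prop:wz_to_w} each simple summand of $f'_!(\mathbb{C}_{\h_{w'z}^\circ})$ is induced by an irreducible representation of $W_{J_{w'}}^{w'}$, and by Proposition \ref{prop:YwYwJ_pushforward} and Theorem \ref{thm:pi1_WJw} it restricts to $\U_{w',J_{w'}}$ as precisely the local system induced by that representation through $\pi_1(\U_{w',J_{w'}})\to W_{J_{w'}}^{w'}$. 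Thus $f_*(IC_{\h_w})|_{\U_{w',J_{w'}}}$ is, in each degree, a local system of the required kind, and by the reduction above $L_{w,w'}$ is induced by a graded representation of $W_{J_{w'}}^{w'}$.

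The step I expect to be the main obstacle is the geometric identification in the third paragraph: showing precisely that the cells of $\widetilde\h_w$ (equivalently, the non-parabolic Lusztig cells $\h_{w''}^\circ$ contained in $\h_w$) lying over the open cell $\h_{w',J_{w'}}^\circ$ are exactly those mapping onto pieces $\h_{w'z}^\circ$ with $z\in W_{J_{w'}}$ and $w'z\leq w$, and that the restriction of $h$ to each of them is, after the bundle reductions, the map $f'$ of Proposition \ref{prop:wz_to_w}. This is what rules out the appearance in $f_*(IC_{\h_w})|_{\U_{w',J_{w'}}}$ of any simple perverse sheaf outside the range classified in Section \ref{sec:classification}, and carrying it out requires matching Lusztig's combinatorial description in \cite{LusztigParabolicI} of the stratification of $\h_{w,J}$ and of which cells dominate a given parabolic one with the explicit models of $\h_{w,J}^\circ$, $\h_{wz}^\circ$ and ${}^w\h_{P_J,z}^\circ$ from Section \ref{sec:classification}.
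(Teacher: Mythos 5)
Your skeleton (decomposition theorem, supports among the $\h_{w',J}$ via Lusztig, then identification of each local system by restricting to $\U_{w',J_{w'}}$ and feeding the result into Propositions \ref{prop:char_LJ}, \ref{prop:YwYwJ_pushforward} and \ref{prop:wz_to_w}) matches the endgame of the paper's argument, but the step you yourself flag as the ``main obstacle'' is a genuine gap, and moreover the statement you would need there is false as formulated. You claim that the strata of $\h_w$ (or of a cell-compatible resolution) lying over the open cell $\h_{w',J}^{\circ}$ are fibred over pieces of the form $\h_{w'z}^{\circ}$ with $z\in W_{J_{w'}}$, with $h$ becoming the forgetful map $f'$ of Proposition \ref{prop:wz_to_w}. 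This only works when $J_{w'}=J$ (where indeed the preimage of $\h_{w',J}^{\circ}$ is $\bigsqcup_{z\in W_J}\h_{w'z}^{\circ}$, which is essentially what Propositions \ref{prop:GLLUY} and \ref{prop:wz_to_w} exploit). For $J_{w'}\subsetneq J$ it fails: take $w=3412$, $J=\{1,3\}$, $w'=1342$, so $J_{w'}=\emptyset$ and $W_{J_{w'}}=\{e\}$; over a point of $(\h_{1342,J}^{\circ})^{rs}$ the fibre of $\h_{3412}\to G\times \Gr(2,4)$ is one-dimensional (cf.\ item (3) of Example \ref{exa:ch_G24}), its generic points lie in cells $\h_{w''}^{\circ}$ with $w''\neq w'z$ for any $z\in W_{J_{w'}}$, and the map on these cells is not the generically injective forgetful map $\h_{1342}^{\circ}\to\h_{1342,J}^{\circ}$. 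So your reduction ``every simple summand restricts over $\U_{w',J_{w'}}$ to a subquotient of some $f'_!(\mathbb{C}_{\h_{w'z}^{\circ}})$'' does not follow; to control the monodromy of the direct images over $\U_{w',J_{w'}}$ you must also handle the local systems arising from the cohomology of these positive-dimensional (and possibly reducible, cf.\ items (5)--(6) of Example \ref{exa:ch_G24}) fibres, and nothing in your sketch rules out monodromy that does not factor through $W_{J_{w'}}^{w'}$.

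This is precisely the point where the paper does something different: instead of a direct stratification argument, it invokes Lusztig's theory of parabolic character sheaves \cite[Section 4]{LusztigParabolicI} --- every simple summand of $f_*(IC_{\h_w})$ is a parabolic $1$-character sheaf, and these are exactly the intermediate extensions of local systems on $\h_{w',J}^{\circ}$ induced by $1$-character sheaves on the disconnected group $w'L_{J_{w'}}$ (the coincidence of Lusztig's classes $\mathcal{C}_{J,\delta}$ and $\mathcal{C}'_{J,\delta}$, together with the description of the component $w'L_{J_{w'}}$ of $N_G(L_{J_{w'}})$). Only after this structural input do Propositions \ref{prop:char_LJ} and \ref{prop:wz_to_w} enter, to show that all such character sheaves come from representations of $W_{J_{w'}}^{w'}$. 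Repairing your argument would essentially amount to reproving that part of Lusztig's classification; alternatively, you should cite it at the point where you currently assert the cell-by-cell identification.
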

\begin{proof}

% This follows directly from Propositions \ref{prop:wz_to_w} and \ref{prop:wJ_to_wJ1} and from \cite[Proposition 4.4]{LusztigParabolicI}.
% %By the results of Lusztig \cite{LusztigParabolic}, we have that $f_*(\mathbb{C}_{\hbs_{\si}})$ is a sum of shifts of perverse sheaves supported on $\h_{w', J}$ for $w'\leq w$, $w'\in {}^JW$. To find the local system $L$, we can restrict ourselves to $\h_{w,J}^\circ$. And in this case we have a map $\hbs_{\si}^\circ\cong \h_{w}^\circ\to \h_{w, J}^\circ$.

By the results in \cite[Section 4]{LusztigParabolicI}, the $1$-character sheaves on $G\times G/P_J$ are the intermediate extensions of the simple perverse sheaves on $\h_{w',J}^{\circ}$ induced by $1$-character sheaves on $w'L_{J_{w'}}$. In the notation of Lusztig, this just means the classes $\mathcal{C}_{J,\delta}$ and $\mathcal{C}'_{J,\delta}$ coincide. The variety $w'L_{J_{w'}}$ is the connected component $C$ of the normalizer $N_{G}(L_{J_{w'}})$, see \cite[Section 4.6]{LusztigParabolicI}. The 1-character-sheaves on $w'L_{J_{w'}}$ are precisely the summands of
\[
f_{z!}(\mathbb{C}_{{}^{w'}\h_{L',z}}),
\]
where $L'=L_{J_{w'}}$, $z\in W_{J_{w'}}$ and $f_z\colon {}^{w'}\h_{L',z}^{\circ}\to L_{J_{w'}}$ is the projection onto the first factor (see Example \ref{exa:character_sheaves_LJ}). The result follows from Propositions \ref{prop:char_LJ} and \ref{prop:wz_to_w}.
\end{proof}

Theorem \ref{thm:UwJ_intro} is now a direct corollary of Theorem \ref{thm:local_system_parabolic}.

We finish this section with a couple of examples. The first is a computation of $\ch(H^*(\h_{w,J},L))$ where $L$ is not trivial. The second is the interpretation in terms of the chromatic symmetric function of $\ch(\h_{w,J})$ for special pairs $(w,J)$.
\begin{example}
\label{exa:ch_G24}
Fix $w=3412\in S_4$, $J=\{1,3\}$ and $X$ a regular semisimple $4\times 4$ diagonal matrix. Recall that $\h_{w,J}^{rs} = G^{rs}\times \Gr(2,4)$ and $\h_{w,J}(X)=\Gr(2,4)$. Let us compute $\ch(IC_{G\times \Gr(2,4)}(L))$ where $L$ is the local system on $\U_{3412,\{1,3\}}$ induced by the regular representation of $W_J^w=S_2$. Consider the variety
\[
\mathcal{Z}= \left\{\left( X, \begin{array}{c}V_1\subset V_2\subset V_3\\ V_1\subset V_2'\subset V_3
  \end{array}\right);XV_1\subset V_2'\subset XV_3 , X\in G^{rs} \right\}
\]
and the natural map $f\col\mathcal{Z}\to G^{rs}\times \Gr(2,4)$ that keeps $V_2$. The map $f$ is generically finite, and factors through $\mathcal{Z}\to \h_{3412}^{rs}$ (which is a small map that is generically injective). This means that, $f_*(\mathbb{C}_{\mathcal{Z}})=IC_{\h_{w,J}^{rs}}(L) \oplus \mathcal{F}$, where $\mathcal{F}$ is a sum of simple perverse sheaves that do not have full support.

Let us prove that $f$ is semi-small. We will consider each strata of $G^{rs}\times \Gr(2,4)$
\begin{enumerate}
    \item If $(X,V_2)\in (\h_{3412, J}^\circ)^{rs}$ we have that the fiber $f^{-1}(X,V_2)$ is finite. Indeed, we have that $XV_2\cap V_2=\{0\}$ so we can write $V_2=\langle v_1,v_2\rangle$ such that $v_1,v_2,Xv_1,Xv_2$ are linearly indepedent. Let $(X,V_1\subset V_2'\subset V_3)\in f^{-1}(X,V_2)$. Write $V_1=\langle av_1+bv_2\rangle$, since $XV_1\subset V_2'\subset V_3$ and $XV_1\cap V_2=\{0\}$ we have that $V_3=V_2+XV_1$ and $V_2'=V_1+XV_1$. Moreover, we must have that $V_1\subset V_2'\subset XV_3$ so $X^{-1}V_1\subset V_2+XV_1$ which is equivalent to the vaninshing of the determinant of the matrix $(v_1,v_2, aXv_1+bXv_2,aX^{-1}v_1+bX^{-1}v_2)$ which is a non zero homogeneous polynomial of degree 2 in $a$ and $b$. So, we have at most two points in $f^{-1}(X,V_2)$.

    \item If $(X,V_2)\in (\h_{3142,J}^{\circ})^{rs}$ we have that the fiber $f^{-1}(X,V_2)$ is finite. Indeed, we have that $\dim XV_2\cap V_2=1$ so we can write $V_2=<v_1, Xv_1>$ and moreover $v_1, Xv_1, X^2v_1, X^3v_1$ are linearly independent. By repeating the arguments above, we have the vanishing of the determinant of the matrix $(v_1, Xv_1, bX^2v_1, aX^{-1}v_1)$, hence the fiber $f^{-1}(X,V_2)$ is given by $<v_1> \subset V_2'=V_2 \subset <v_1, Xv_1, X^{-1}v_1>$ and $<Xv_1>\subset <Xv_1, X^2v_1>\subset <v_1, Xv_1, X^2v_1>$.

    \item If $(X,V_2)\in (\h_{1342,J}^{\circ})^{rs}$ we have that fiber of $f^{-1}(X,V_2)$ is isomorphic to $\mathbb{P}^1$. Indeed, we have that there exists $v_1\in V_2$ such that $Xv_1=v_1$. For each $V_3\in \mathbb{P}(\mathbb{C}^4/V_2)$ we have that $<v_1>\subset V_3\cap X^{-1}V_3\subset V_3$ belongs to the fiber $f^{-1}(X,V_2)$ (we note that $XV_3\neq V_3$ because $(X,V_2)\notin \h_{1324,J}^\circ$). Let us prove now that if $V_1\subset V_2$ is different from $<v_1>$ then there does not exist any flag $V_1\subset V_2'\subset V_3$ in the fiber. Since $V_1\neq <v_1>$ we have that $XV_1\not\subset V_2$ (otherwise $XV_2=V_2$ and then $(X,V_2)\in \h_{1234,J}^\circ$). Writing $V_1=<v_2>$, we have that $V_2=<v_1,v_2>$ and since $XV_1\subset V_3$, we have $V_3=<v_1,v_2,XV_2>$. On the other hand, we must also have that $X^{-1}V_1\subset V_3$, which implies that $X^{-1}v_2\in <v_1,v_2, Xv_2>$, which means that $X^{-1}V_3=V_3$. However $(X,V_2)\notin \h_{1324,J}^{\circ}$, so there does not exist any $V_3$ containing $V_2$ satisfying $XV_3=V_3$.

    \item If $(X,V_2)\in (\h_{3124,J}^{\circ})^{rs}$ we have that the fiber of $f^{-1}(X,V_2)$ is isomorphic to $\mathbb{P}^1$. This is dual to the case above.

    \item If $(X,V_2)\in (\h_{1324,J}^{\circ})^{rs}$ we have that fiber of $f^{-1}(X,V_2)$ is isomorphic to a chain $\mathbb{P}^1\cup \mathbb{P}^1\cup \mathbb{P}^1$. Indeed, there exists $v_1,v_2,v_3$ with $Xv_i=v_i$ for $i=1,2,3$ and $<v_1>\subset V_2\subset <v_1,v_2,v_3>$. The three $\mathbb{P}^1$ are given as follows.
    \begin{enumerate}
        \item The flag  $<v_1>\subset V_2'\subset <v_1,v_2,v_3>$ is in the fiber for each $V_2'$.
        \item The flag $<v_1>\subset V_3\cap X^{-1}V_3\subset V_3$ is in the fiber for each $V_3\in \mathbb{P}(\mathbb{C}^4/V_2)$.
        \item The flag $V_1\subset V_1+XV_1\subset <v_1,v_2,v_3>$ is in the fiber for each $V_1\in \mathbb{P}(V_2)$.
    \end{enumerate}
    Let us prove that if $V_1\neq <v_1>$ and $V_3\neq <v_1,v_2,v_3>$ there does not exist $V_2'$. Since $XV_2\neq V_2$ and $XV_1\subset V_3$ we must have $V_3=V_2+XV_1$ and $XV_3\neq V_3$ because $V_2\neq <v_1,v_2,v_3>$. The argument now follows as in item (3).

    \item If $(X,V_2)\in (\h_{1234,J}^{\circ})^{rs}$ we have that fiber of $f^{-1}(X,V_2)$ is isomorphic to the blow up of $\mathbb{P}^1\times \mathbb{P}^1$ at the four points
    \begin{equation}
    \label{eq:points}
        ((1:0),(1:0)),\; ((0:1),(1:0)), \;((1:0),(0:1)),\; ((0:1),(0:1)).
    \end{equation}
    Indeed, since $XV_2=V_2$, we have that for every $V_1, V_3$ satisfying $V_1\subset V_2\subset V_3$ there exists $V_2'=V_2$ satisfying $V_1\subset V_2'\subset V_3$ and $XV_1\subset V_2'\subset V_3$. Since $V_1+XV_1\subset V_2'$ and $V_2'\subset V_3\cap XV_3$, we have that $V_2'$ is unique, except in the cases where $XV_1=V_1$ and $XV_3=V_3$. These cases correspond to the four points in Equation \eqref{eq:points} but now, any choice of $V_2'$ such that $V_1\subset V_2'\subset V_3$ will give a point in the fiber.
    \end{enumerate}

    This proves that the map $f$ is semi-small (see \cite{BorhoMacpherson}, \cite[Section 4.2]{CatMig09}) with relevant locus $\h_{1342,J}$, $\h_{3124,J}$ and $\h_{1234,J}$. Since the fibers in these loci are irreducible, we have that
    \[
    f_*(\mathbb{C}_\mathcal{Z})[-4]=IC_{\h_{3412,J}}(L)\oplus IC_{\h_{1342,J}}(\mathbb{C})[-1]\oplus IC_{\h_{3124,J}}(\mathbb{C})[-1]\oplus IC_{\h_{1234,J}}(\mathbb{C})[-2].
    \]

    We conclude that
    \begin{align*}
    \ch(IC_{\Gr(2,4)}(L)) = &(1+q)((q+q^2)h_{2,2}+(q+q^2)h_{3,1}+(1+q+q^2+q^3)h_4)\\
                            & - 2q(1+q+q^2)h_{3,1} - q^2h_{2,2}\\
                          = & (q+q^2+q^3)h_{2,2} - (q+q^3)h_{3,1} + (1+2q+2q^2+2q^3+q^4)h_4\\
                          = & (q+q^2+q^3)s_{2,2} + q^2s_{3,1} + (1+2q+3q^2+2q^3+q^4)s_4.
    \end{align*}

    \noindent Consequently, if $L'$ is the local system induced by the sign representation of $S_2$,
    \begin{align*}
        \ch(IC_{\Gr(2,4)}(L')) = &\ch(IC_{\Gr(2,4)}(L)) - \ch(IC_{\Gr(2,4)}(\mathbb{C}))\\
                                = &(q+q^2+q^3)h_{2,2} - (q+q^3)h_{3,1} + (q+q^3)h_4\\
                                = &(q+q^2+q^3)s_{2,2} + q^2s_{3,1} + (q+q^2+q^3)s_4
    \end{align*}
\end{example}

The following example was stablished in \cite[Section 4]{KiemLee}, where it is referred as  thegeneralized Shareshian--Wachs conjecture.
\begin{example}[Chromatic symmetric function of weighted graphs following Gasharov \cite{Gasharov}]
\label{exa:chromatic_gasharov}
Let $G$ be a graph with vertex set $[m]$ and let $f\col [n]\to [m]$ be a surjective non-decreasing function. A proper $\underline{f}$-coloring of $G$ is a function $\kappa\col [n]\to \mathbb{P}$ such that $\kappa(i)\neq \kappa(j)$ whenever $f(i)$ and $f(j)$ are adjacent in $G$. Moreover, an ascent of $\kappa$ is a pair $i<j$ such that $\kappa(i)<\kappa(j)$ and $f(i)<f(j)$. We define
\[
\csf_q(G,f;x,q) := \sum_{\text{proper } \kappa\col [n]\to \mathbb{P}}q^{\asc_{(G,f)}(\kappa)}\prod_{i=1}^{n}x_{\kappa(i)}.
\]
If $G^f$ is the graph obtained from substituting each vertex $j$ of $G$ with a clique $K_{f^{-1}(j)}$, then we have that
\[
\csf_q(G,f;x,q) = \frac{\csf_q(G^f;x,q)}{\prod_{j=1}^m (|f^{-1}(j)|)!_q}.
\]
Let $\m\col[n]\to[n]$ be a Hessenberg function and $w_\m$ be its associated codominant permutation. Define
\[
J:=\{j\in \{1,\ldots, n-1\}; j\notin \Ima(\m), \m(j)=\m(j+1)\}.
\]
We claim that $w_\m$ is maximal in $W_Jw_\m W_J$. This is equivalent to $w_\m(j+1)<w_\m(j)$ and $w_\m^{-1}(j+1)<w_\m^{-1}(j)$. If $\m(j)=\m(j+1)$, then $w_\m(j+1)<w_\m(j)$. If $j\notin \Ima(\m)$, then  $w_\m^{-1}(j+1)<w_\m^{-1}(j)$. We have
\[
\h_{w_\m}=\{(X,V_\bullet); XV_i\subset V_{\m(i)}, i\in [n]\setminus J\}.
\]
Let $w_0$ be the minimum element in $W_Jw_\m$. By Proposition \ref{prop:PJB_bundle} we have that
\[
\h_{w_0,J} = \{(X,V_\bullet); XV_i\subset V_{\m(i)}, i\in [n]\setminus J\}\subset G\times G/P_J
\]
and that $\h_{w_\m}\to \h_{w_0,J}$ is a $P_J/B$ bundle. Hence,
\[
\ch(H^*(\h_{w_0,J}))=\frac{\ch(H^*(\h_{w_\m}))}{|W_J|}.
\]
This means that
\[
\ch(H^*(\h_{w_0,J}))  = \omega(\csf_q(G,f_J;x,q)).
\]
Hence, for all pairs $w_0,J$ that appear in this way we have a combinatorial description of $\ch(H^*(\h_{w_0,J}))$.
\end{example}

\section{The character of the open cell}
\label{sec:open}
The goal of this section is to prove Theorems \ref{thm:groj_haiman} and \ref{thm:plethysm}. We begin with a proposition.
\begin{proposition}
\label{prop:PJB_bundle}
   Let $J\subset S$ and $w\in W$ be such that $w$ is maximal in $W_JwW_J$. If $w_0$ is the minimum element of $W_Jw$, then $f(\h_w)=\h_{w_0,J}$ and $\h_w\to \h_{w_0,J}$ is a $P_J/B$-bundle.
\end{proposition}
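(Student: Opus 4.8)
The plan is to analyze the forgetful map $f\colon GL_n\times\flag\to GL_n\times\flag_J$ on the Lusztig cells and closures directly in terms of relative positions of Borel and parabolic subgroups, using the hypothesis that $w$ is maximal in $W_JwW_J$. First I would recall that a pair $(X,gB)$ lies in $\h_w^\circ$ exactly when the Borel subgroups $g B g^{-1}$ and $X(gBg^{-1})X^{-1}$ are in relative position $w$, and that $(X,gP_J)$ lies in $\h_{w_0,J}^\circ$ (here using that $w_0$ is minimal in $W_Jw$, hence $w_0\in{}^JW$, and the admissible-sequence description of Definition \ref{def:lusztig_parabolic}) exactly when $gP_Jg^{-1}$ and $X(gP_Jg^{-1})X^{-1}$ are in the relative position indexed by $w_0$. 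The key point is the following elementary Bruhat-theoretic fact: if $w$ is maximal in $W_JwW_J$ and $w_0$ is the minimum of $W_Jw$, then for a fixed $X$ and a fixed parabolic $P=gP_Jg^{-1}$ with $X P X^{-1}$ in position $w_0$ relative to $P$, \emph{every} Borel $B'$ with $g^{-1}B'$-image landing in $gP_J/B$ — i.e. every Borel contained in $P$ — has $XB'X^{-1}$ in position $w$ relative to $B'$. Concretely, $B\dot wB$ is the unique Bruhat cell meeting $P_J \dot w_0 P_J$ in an open dense set precisely because of maximality, and the fibres of $P_J\dot w_0P_J\to$ (the $L_J$-double-coset data) are irreducible of the expected dimension; I would phrase this via the standard decomposition $P_J\dot w P_J = \coprod_{u,v\in W_J} B u \dot w v B$ and note maximality forces $\ell(u w v)=\ell(u)+\ell(w)+\ell(v)$ for all $u,v\in W_J$, so that the map $P_J/B\times P_J/B \to$ (relative position) is constantly $w$ on the relevant locus.

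Granting that, I would argue as follows. \textbf{Image.} Given $(X,gB)\in\h_w^\circ$, the pair $(X,gP_J)$ satisfies the relative-position condition defining $\h_{w_0,J}^\circ$, using the projection of relative positions $W\to W_J\backslash W/W_J$ and the fact that $w\mapsto w_0$ under composing with ``take minimum of the left $W_J$-coset'' combined with $w\in W_JwW_J$ having $w_0$ as its minimal representative; conversely, given $(X,gP_J)\in\h_{w_0,J}^\circ$, choose any Borel $B'\subseteq gP_Jg^{-1}$; the fact above shows $(X,B')\in\h_w^\circ$, so the map is surjective onto $\h_{w_0,J}^\circ$, and taking closures gives $f(\h_w)=\h_{w_0,J}$ since $f$ is proper (both spaces are closed in the respective products and $f$ is a restriction of a projective-bundle projection, hence closed). \textbf{Bundle structure.} The fibre of $f$ over $(X,gP_J)\in\h_{w_0,J}^\circ$ is $\{B'\ : \ B'\subseteq gP_Jg^{-1}\} \cong gP_Jg^{-1}/B \cong P_J/B$, and this identification is the restriction of the tautological $P_J/B$-bundle $GL_n\times\flag\to GL_n\times\flag_J$. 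To see that this is a genuine (Zariski-)locally trivial fibre bundle over $\h_{w_0,J}^\circ$ and not just a set-theoretic fibration, I would note that $\h_{w_0,J}^\circ$ is locally closed (Proposition \ref{prop:loc_closed}) and that $\h_w^\circ$ is precisely the preimage $f^{-1}(\h_{w_0,J}^\circ)$ inside the $P_J/B$-bundle — this is exactly the content of the ``fact'' above, that lying over $\h_{w_0,J}^\circ$ forces relative position $w$ — so $\h_w^\circ\to\h_{w_0,J}^\circ$ is the restriction of a locally trivial bundle to an open subset of the base, hence itself locally trivial with fibre $P_J/B$. Passing to closures, $\h_w=\overline{\h_w^\circ}=\overline{f^{-1}(\h_{w_0,J}^\circ)}$ is the preimage of $\h_{w_0,J}=\overline{\h_{w_0,J}^\circ}$ under the bundle projection (again because $\h_w^\circ$ is saturated for $f$, so its closure is the preimage of the closure of its image), giving that $\h_w\to\h_{w_0,J}$ is a $P_J/B$-bundle as well.

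The main obstacle I anticipate is making the ``fact'' airtight: namely that $(X,gP_J)\in\h_{w_0,J}^\circ$ \emph{forces} every Borel inside $gP_Jg^{-1}$ to be in position $w$ with its $X$-conjugate, equivalently that $\h_w^\circ$ is exactly the full preimage $f^{-1}(\h_{w_0,J}^\circ)$ rather than a proper subset. This is where maximality of $w$ in $W_JwW_J$ is essential — it is the condition guaranteeing that the Bruhat cell $B\dot wB$ is the \emph{unique} (and open) one in $P_J\dot w_0 P_J$ meeting $B\backslash P_J\dot w_0 P_J/B$ generically, and in fact the only one at all that is compatible with the double coset once we restrict to Borels inside the two parabolics. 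I would prove this by the length additivity $\ell(uwv)=\ell(u)+\ell(w)+\ell(v)$ for $u,v\in W_J$ (a standard consequence of $w$ being the longest element of $W_JwW_J$), which shows $P_J\dot w_0 P_J = B W_J \dot w_0 W_J B$ is a single $B\times B$-cell $B\dot wB$ in the sense that all the relative positions of Borels-inside-$gP_Jg^{-1}$ with their $X$-images collapse to $w$. Once this is established the rest is formal bundle-theoretic bookkeeping together with properness of $f$.
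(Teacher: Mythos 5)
The central ``fact'' your plan rests on is false, and it is exactly where the maximality hypothesis gets misused. Maximality of $w$ in $W_JwW_J$ makes the \emph{closure} $\overline{B\dw B}=\overline{P_J\dw P_J}$ bi-invariant under $P_J$; it does not make the cell $B\dw B$ itself $P_J$-bi-invariant. Indeed $P_J\dw P_J=\bigsqcup_{v\in W_JwW_J}B\dot v B$ is a union of many $B\times B$-cells, of which $B\dw B$ is only the open dense one, so for $(X,gP_J)\in\h_{w_0,J}^\circ$ the relative position of a Borel $B'\subset gP_Jg^{-1}$ with $XB'X^{-1}$ is \emph{not} constantly $w$ along the fibre $gP_J/B$: it varies over $W_JwW_J$. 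A small counterexample: $G=GL_3$, $J=\{1\}$, $w=321$, $w_0=312$; for generic $(X,V_2)$ with $\dim(XV_2\cap V_2)=1$ (such a pair lies in $\h_{312,J}^\circ$), the choice $V_1=XV_2\cap V_2$ gives $\dim(XV_2\cap V_1)=1\neq 0=r_{2,1}(321)$, so this point of the fibre is not in $\h_{321}^\circ$. Hence $f^{-1}(\h_{w_0,J}^\circ)\neq\h_w^\circ$, and both your surjectivity argument and the passage to closures (``$\h_w^\circ$ is saturated for $f$, so $\h_w$ is the preimage of $\h_{w_0,J}$'') break down as stated. Two subsidiary claims are also wrong: the length additivity $\ell(uwv)=\ell(u)+\ell(w)+\ell(v)$ for $u,v\in W_J$ characterizes the \emph{minimal} double-coset representative (for the maximal one, multiplication by elements of $W_J$ decreases length), and $\h_{w_0,J}^\circ$ is not cut out by a single relative-position condition on the parabolics — that is precisely why the admissible sequences of Definition \ref{def:lusztig_parabolic} are needed; note also that $w_0\in{}^JW$ need not lie in ${}^JW^J$, so it is not even a relative position of parabolics.

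What survives, and is how the paper argues, is the closed statement: since $P_J\overline{B\dw B}P_J=\overline{B\dw B}$ when $w$ is maximal in $W_JwW_J$, the defining condition $g^{-1}Xg\in\overline{B\dw B}$ of $\h_w$ is unchanged under $g\mapsto gp$ for $p\in P_J$, so $f^{-1}(f(\h_w))=\h_w$: the closed variety $\h_w$ is a union of full fibres $gP_J/B$, which immediately gives the $P_J/B$-bundle structure over its image, with no claim about individual open cells. The identification $f(\h_w)=\h_{w_0,J}$ then still needs an argument that your plan no longer supplies once the fibrewise cell-matching is gone; the paper obtains it from $f(\h_{w_0})=\h_{w_0,J}$ (Proposition \ref{prop:YwYwJ_pushforward}) together with the observation that $\h_{w_0,J}\subset f(\h_w)$ are irreducible closed sets of the same dimension (using $\ell(w)=\ell(w_0)+\ell(w_J)$ and that the fibres have dimension $\ell(w_J)$), rather than by trying to show $f(\h_w^\circ)=\h_{w_0,J}^\circ$.
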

\begin{proof}
  Consider the natural map $f\col G\times G/B \to G\times G/P_J$. We claim that $f^{-1}(f(\h_w))=\h_w$. Indeed, since $\h_w=\{(X,gB); g^{-1}Xg\in \overline{B\dw B}\}$ it is enough to prove that $p\overline{B\dw B}p^{-1}=\overline{B\dw B}$ for every $p\in P_J$. This follows from the fact that $w$ is maximal in $W_JwW_J$. Thus $\h_w$ is a $P_J/B$ fiber bundle over its image.\par

  Now we prove that $f(\h_w)=\h_{w_0,J}$. We know by Proposition \ref{prop:YwYwJ_pushforward} that $f(\h_{w_0})=\h_{w_0,J}$. In particular $\h_{w_0,J}\subset f(\h_w)$, both are irreducible and of the same dimension, hence we must have an equality.
\end{proof}

Let $J$ and $w\in W$ be such that $wJw^{-1}=J$ and $w\in {}^JW$. Given a local system $L$ on $\U_{w,J}$ induced by a representation of $W_J^w$, we have that $(f_{w,J})_!(IC_{\h_{w,J}^\circ}(L))$ is a sum of shifted $1$-character sheaves of $G$, where  $f_{w,J}\col \h_{w,J}^\circ\to G$ is the first projection.  Recall that the $1$-character sheaves of $G$  are in bijection with the irreducible representations of $S_n$.

\begin{theorem}
Let $J$ be a subset of simple transpositions of $w$ and let $w\in {}^JW^J$ be such that $wJw^{-1}=J$. Moreover, let $J'\subset J$ such that $wJ'w^{-1}=J'$ and let $L$ be the local system on $\U_{w,J}$ induced by the induced representation $\ind_{W_{J'}^w}^{W_J^w}$ of $W_J^w$. Then
   \[
   \ch( (f_{w,J})_! (IC_{Y_{w,J}^\circ}(L)[-\ell(w)]))=\frac{\ch(q^{\frac{\ell(w_{J'})}{2}}C'_{w_{J'}}T_w)}{|W_{J'}|},
   \]
   where $w_{J'}$ is the maximum element of $W_{J'}$.
 \end{theorem}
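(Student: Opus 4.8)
The plan is to identify both sides with the $\ch$ of a pushforward to $G$ of the constant sheaf on the smaller parabolic cell $\h_{w,J'}^\circ$, and then to recognise that pushforward as $IC_{\h_{w,J}^\circ}(L)$ via a smallness argument.

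First I would rewrite the right‑hand side. The longest element $w_{J'}$ of $W_{J'}$ satisfies $P_{z,w_{J'}}=1$ for $z\le w_{J'}$ and $P_{z,w_{J'}}=0$ otherwise, so $q^{\ell(w_{J'})/2}C'_{w_{J'}}=\sum_{z\in W_{J'}}T_z$. Since $J'\subseteq J$ and $w\in{}^JW$ we have $w\in{}^{J'}W$, hence $\ell(zw)=\ell(z)+\ell(w)$ and $T_zT_w=T_{zw}$ for $z\in W_{J'}$, so that $q^{\ell(w_{J'})/2}C'_{w_{J'}}T_w=\sum_{z\in W_{J'}}T_{zw}$. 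By Proposition~\ref{prop:ch_lusztig}(1) this reads $\ch(q^{\ell(w_{J'})/2}C'_{w_{J'}}T_w)=\sum_{z\in W_{J'}}\ch((f_{zw})_!\mathbb{C}_{\h_{zw}^\circ})$.

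Next I would assemble the cells $\h_{zw}^\circ$ into $\h_{w,J'}^\circ$. By the standard description $P_{J'}\dw P_{J'}=\bigcup_{v\in W_{J'}wW_{J'}}B\dot vB$ together with $W_{J'}wW_{J'}=W_{J'}w$ (which holds because $wJ'w^{-1}=J'$, so $wW_{J'}=W_{J'}w$), we get $P_{J'}\dw P_{J'}=\bigsqcup_{z\in W_{J'}}B\dz\dw B$. Since the condition defining $\h_{w,J'}^\circ\subseteq G\times G/P_{J'}$ depends only on $gP_{J'}$, the preimage of $\h_{w,J'}^\circ$ under $G\times G/B\to G\times G/P_{J'}$ is exactly $\bigsqcup_{z\in W_{J'}}\h_{zw}^\circ$; hence the forgetful map $\mu\colon\bigsqcup_{z\in W_{J'}}\h_{zw}^\circ\to\h_{w,J'}^\circ$ is a Zariski‑locally‑trivial fibre bundle with fibre the flag variety $P_{J'}/B$. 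As $H^*(P_{J'}/B)$ is even and Tate, with $\sum_i\dim H^{2i}(P_{J'}/B)\,q^i=|W_{J'}|$, it follows that $R\mu_!\mathbb{C}=R\mu_*\mathbb{C}=\bigoplus_i\mathbb{C}_{\h_{w,J'}^\circ}[-2i]^{\oplus\dim H^{2i}(P_{J'}/B)}$. Pushing forward along $f_{w,J'}\colon\h_{w,J'}^\circ\to G$ (noting $f_{w,J'}\circ\mu|_{\h_{zw}^\circ}=f_{zw}$) and taking $\ch$ yields $\sum_{z\in W_{J'}}\ch((f_{zw})_!\mathbb{C}_{\h_{zw}^\circ})=|W_{J'}|\cdot\ch((f_{w,J'})_!\mathbb{C}_{\h_{w,J'}^\circ})$, so that the right‑hand side of the theorem equals $\ch((f_{w,J'})_!\mathbb{C}_{\h_{w,J'}^\circ})$; in particular $(f_{w,J'})_!\mathbb{C}_{\h_{w,J'}^\circ}$ is a sum of shifted $1$‑character sheaves by Proposition~\ref{prop:char_sheaves_GLn}.

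Finally I would show that the forgetful map $\nu\colon\h_{w,J'}^\circ\to\h_{w,J}^\circ$ (defined because $P_{J'}\dw P_{J'}\subseteq P_J\dw P_J$) is small with monodromy local system $L$, which finishes the proof because $f_{w,J'}=f_{w,J}\circ\nu$. Properness of $\nu$: $\h_{w,J'}^\circ$ is the locus inside the preimage of $\h_{w,J}^\circ$ under $G\times G/P_{J'}\to G\times G/P_J$ cut out by $g^{-1}Xg\in\bigsqcup_{z\in W_{J'}}B\dz\dw B$, a closed condition since $W_{J'}$ is downward closed for the Bruhat order of $W_J$ (so $\bigsqcup_{z\in W_{J'}}B\dz\dw B$ is closed in $P_J\dw P_J=\bigsqcup_{u\in W_J}B\dot{uw}B$). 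Generic finiteness and the local system: factor $f\colon\h_w^\circ\to\h_{w,J}^\circ$ as $\h_w^\circ\xrightarrow{f_1}\h_{w,J'}^\circ\xrightarrow{\nu}\h_{w,J}^\circ$; over $\U_{w,J}$ both $f$ and $f_1$ are Galois covers, with groups $W_J^w$ and $W_{J'}^w$ (Theorem~\ref{thm:pi1_WJw} and Proposition~\ref{prop:YwYwJ_pushforward}, applied to $J$ and to $J'$), so $\nu|_{\nu^{-1}(\U_{w,J})}$ is the intermediate cover $(\h_w^\circ)^{w_{rs}}/W_{J'}^w\to\U_{w,J}$; by Proposition~\ref{prop:N_properties }(5) the subgroup $\N_{L_{J'}}=\N_{L_J}\cap L_{J'}$ is precisely the preimage of $W_{J'}^w$ under $\N_{L_J}\twoheadrightarrow W_J^w$, so this cover carries the local system $L=\ind_{W_{J'}^w}^{W_J^w}$. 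The dimension estimate: $f_1$ being surjective, $\{y\colon\dim\nu^{-1}(y)\ge i\}\subseteq\{y\colon\dim f^{-1}(y)\ge i\}$, and the latter has dimension $<\dim\h_{w,J}^\circ-2i$ because $f$ is small (Proposition~\ref{prop:YwYwJ_pushforward}); hence $\nu$ is small. Proposition~\ref{prop:small_local_system} then gives $\nu_!\mathbb{C}_{\h_{w,J'}^\circ}=\nu_*\mathbb{C}_{\h_{w,J'}^\circ}=IC_{\h_{w,J}^\circ}(L)$, and after inserting the cohomological shift dictated by the normalisation of Section~\ref{sec:chashv} (this is where the $[-\ell(w)]$ enters) we conclude $\ch((f_{w,J})_!(IC_{\h_{w,J}^\circ}(L)[-\ell(w)]))=\ch((f_{w,J'})_!\mathbb{C}_{\h_{w,J'}^\circ})=\ch(q^{\ell(w_{J'})/2}C'_{w_{J'}}T_w)/|W_{J'}|$.

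The hard part will be this last step — establishing that $\nu$ is small in the precise sense of Proposition~\ref{prop:small_local_system} and that its monodromy over $\U_{w,J}$ is exactly $\ind_{W_{J'}^w}^{W_J^w}$, which hinges on matching the two Galois‑cover structures of $\h_w^\circ\to\h_{w,J'}^\circ$ and $\h_w^\circ\to\h_{w,J}^\circ$ through Proposition~\ref{prop:N_properties }(5), together with a careful tracking of the shifts so that the normalisation $[-\ell(w)]$ appears correctly.
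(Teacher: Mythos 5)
Your proposal is correct and follows essentially the same route as the paper: identify the right‑hand side with $\ch((f_{w,J'})_!\mathbb{C}_{\h_{w,J'}^\circ})$ via the $P_{J'}/B$‑bundle over $\h_{w,J'}^\circ$ together with $q^{\ell(w_{J'})/2}C'_{w_{J'}}T_w=\sum_{z\in W_{J'}}T_{zw}$ and Proposition \ref{prop:ch_lusztig}, and identify the left‑hand side with the same quantity through $\nu_*\mathbb{C}_{\h_{w,J'}^\circ}=IC_{\h_{w,J}^\circ}(L)[-\ell(w)]$. The only difference is bookkeeping: the paper asserts the latter identity first and then runs the bundle computation after reducing to $J'=J$, whereas you do the bundle step directly for $J'$ and supply the smallness/Galois‑cover argument (factoring $f=\nu\circ f_1$ and comparing the $W_J^w$‑ and $W_{J'}^w$‑covers) that the paper leaves implicit.
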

 \begin{proof}
 Consider the map $f\col Y_{w,J'}^{\circ}\to Y_{w, J}^\circ$, we have that $f_*(\mathbb{C}_{Y_{w,J'}^{\circ}})=IC(L)[-\ell(w)]$, then it is enough to prove
 \[
 \ch( (f_{w,J'})_!(\mathbb{C}_{Y_{w,J'}^\circ}))=\frac{q^{\frac{\ell(w_{J'})}{2}}\ch(C'_{w_{J'}}T_w)}{|W_{J'}|}.
 \]
 So we can assume that $J'=J$.

 Since $J'=J$, we have $L=\mathbb{C}_{Y_{w,J}^\circ}$. Consider $h\col G\times G/B\to G\times G/P_J$. By Proposition \ref{prop:PJB_bundle} and the fact that $wJw^{-1}=J$, we have that $h^{-1}(Y_{w,J}^\circ)=\bigsqcup_{z\in W_J} Y_{zw}^{\circ}$. This means that $h_!(\mathbb{C}_{h^{-1}(Y_{w,J}^{\circ})}) = h_*(\mathbb{C}_{h^{-1}(Y_{w,J}^{\circ})})=H^*(P_J/B)\otimes \mathbb{C}_{Y_{w,J}^\circ}$, and hence
 \begin{align*}
 \ch((f_{w,J})_!\mathbb{C}_{Y_{w,J}^\circ})) &= \frac{\ch((f_{w,j}\circ h)_!(\mathbb{C}_{h^{-1}(\h_{w,J}^\circ)}))}{|W_{J}|}\\
                                                & = \frac{\sum_{z\in W_J}\ch((f_{w,J}\circ h)_!(\mathbb{C}_{h^{-1}(\h_{zw,J}^\circ)}))}{|W_{J}|}\\
                                                & = \frac{\sum_{z\in W_J}\ch(T_zT_w)}{|W_{J}|}\\
                                                & = \frac{q^{\ell(w_J)}\ch(C'_{w_J}T_w)}{|W_J|},
 \end{align*}
where the third equality follows from Proposition \ref{prop:ch_lusztig}.
 \end{proof}

 We now prove Theorem \ref{thm:plethysm}.

 \begin{proof}
  Since we are specializing to $q=1$, then $C'_{w_J}(q=1)=\sum_{z\in W_J} T_z(q=1)$. All we have to do is to prove that
  \[
  \frac{\sum_{z\in W_{J}}p_{\lambda(zw)}}{|W_J|_{q=1}} = \prod_{j=1}^mp_{|\tau_j|}[h_{\lambda_{\tau_j}}].
  \]
  However, the cycle decomposition of $zw$ depends only on the cycles of $w$ as a permutation of $\{1,\ldots, \ell(\lambda)\}$, so we can write
  \[
  \sum_{z\in W_{J}}p_{\lambda(zw)} = \prod_{j=1}^m\bigg(\sum_{z_j\in W_{\tau_j}}p_{\lambda(z_j\tau_j)}\bigg).
  \]
  This means that we can restrict to the case when $w$ acts as a cycle on $S_{k}\times\ldots\times  S_{k}$ (that is $w(j)=j+k \pmod{\ell k}$). Thus the identity we want to prove becomes
  \[
 \frac{\sum_{z\in S_k^\ell}p_{\lambda(zw)}}{k!^\ell} =p_\ell[h_{k}].
  \]
  Writing $z=(z_1,\ldots, z_\ell)$, we have that the cycle type $\lambda(zw)$ of $zw$ depends only on the product $z_1z_2\ldots z_\ell$. From this we deduce that $\lambda(zw)=\ell\lambda(z_1z_2\ldots z_\ell)$, thus
  \begin{align*}
  \frac{\sum_{z\in S_k}p_{\ell\lambda(z)}}{k!}
        &=\frac{\sum_{z\in S_k}p_{\ell}[p_{\lambda(z)}]}{k!}\\
        &=p_{\ell}\bigg[\frac{\sum_{z\in S_k}p_{\lambda(z)}}{k!}\bigg]\\
        &= p_{\ell}[h_{k}].
  \end{align*}
 \end{proof}

% \begin{Question}
% \label{ques:parabolic_comb}
%     Give combinatorial interpretations to $\ch(L_{z,(w,J)})$ in $\Lambda_{|J|}$.
% \end{Question}

% \begin{proposition}
%     Responder a questão acima se $J=\{k\}$
% \end{proposition}

\section{Projections to splitting spaces of Grassmanians}
\label{sec:J}

In this section we fix $G=GL_n$ and $J=\{n-k+1,\ldots, n-1\}$ for $k=0,\ldots, n$, so that
\[
G/P_J=\{\{0\}\subset V_1\subset \ldots \subset V_{n-k}\subset \mathbb{C}^n\}
\]
is the splitting space of the Grassmanian $\Gr(n-k,n)$. We begin with the following characterization.

\begin{lemma}
\label{lem:wJw=Sn-k}
We have the following equality
\[
\{w\in {}^JS_n; wJw^{-1}=J\} = \begin{cases}
S_{n-k}\times S_1^k & \text{ if } k\geq 2,\\ 
S_n & \text{ otherwise.}
\end{cases}
\]
\end{lemma}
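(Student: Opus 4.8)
The plan is to reduce everything to elementary combinatorics of permutations of $\{1,\dots,n\}$. Writing $s_i=(i\ i+1)$ for the simple transposition with index $i$, the set $J=\{n-k+1,\dots,n-1\}$ corresponds to the subgroup $W_J$, which is the symmetric group on the block $B:=\{n-k+1,\dots,n\}$. The degenerate cases $k\le 1$ I would dispose of first: then $J=\emptyset$, so $W_J=\{e\}$, ${}^JS_n=S_n$, and $wJw^{-1}=\emptyset=J$ for every $w$; hence the left-hand side is all of $S_n$, which is also what the formula $S_{n-k}\times S_1^k$ gives when $k=0$ but not when $k=1$, explaining the need for the case split. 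So from now on assume $k\ge 2$.

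For the inclusion $S_{n-k}\times S_1^k\subseteq\{w\in{}^JS_n;\,wJw^{-1}=J\}$ I would argue directly, and it is routine: if $w$ fixes every $i>n-k$, then for each $j\in J$ one has $ws_jw^{-1}=(w(j)\ w(j+1))=(j\ j+1)=s_j$, so $wJw^{-1}=J$, while $w^{-1}(j)=j<j+1=w^{-1}(j+1)$ for $j\in J$ gives $w\in{}^JS_n$.

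The content is in the reverse inclusion. Starting from $w\in{}^JS_n$ with $wJw^{-1}=J$: each $ws_jw^{-1}=(w(j)\ w(j+1))$ must be a \emph{simple} transposition, so $|w(j)-w(j+1)|=1$ for $j=n-k+1,\dots,n-1$. The key elementary observation is that a sequence of pairwise distinct integers whose consecutive differences are all $\pm1$ is strictly monotone (a turning point at an interior index would force two equal terms). Hence $w(B)$ is an interval $\{a,a+1,\dots,a+k-1\}$, and reading off the conjugated simple transpositions gives $wJw^{-1}=\{s_a,s_{a+1},\dots,s_{a+k-2}\}$ regardless of whether $w|_B$ is increasing or decreasing; comparing this with $J=\{s_{n-k+1},\dots,s_{n-1}\}$ forces $a=n-k+1$, so $w(B)=B$ and $w|_B$ is either the identity or the order-reversal of $B$. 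The last step eliminates the reversal by invoking $w\in{}^JS_n$: for the reversal one has $w^{-1}(n-k+1)=n>n-k+1=w^{-1}(n)$ (this is where $k\ge2$ enters), contradicting $w^{-1}(n-k+1)<\dots<w^{-1}(n)$. Therefore $w$ fixes $B$ pointwise, hence also stabilizes $\{1,\dots,n-k\}$, i.e.\ $w\in S_{n-k}\times S_1^k$.

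The main obstacle — mild, but the one point needing care — is the monotonicity observation together with the bookkeeping that the conjugates $\{s_a,\dots,s_{a+k-2}\}$ exhaust $J$ exactly when $a=n-k+1$; everything else is formal. I would also sanity-check the boundary value $k=n$ (there $W_J=S_n$, ${}^JS_n=\{e\}$, and $S_{n-k}\times S_1^k=\{e\}$), though it is already covered by the general argument.
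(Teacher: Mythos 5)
Your proof is correct and follows essentially the same elementary route as the paper: conjugates of the simple transpositions in $J$ must again be simple transpositions in $J$, and the minimality condition defining ${}^JS_n$ then pins the block down. The only cosmetic difference is that the paper applies both conditions to $w^{-1}$ (so that $w^{-1}(\ell)\in\{n-k+1,\dots,n\}$ for $\ell\in J$ forces $w^{-1}$ to be the identity on the block in one step), whereas you track $w$, identify $w(\{n-k+1,\dots,n\})$ as an interval, and then rule out the order-reversing possibility separately via the ${}^JS_n$ condition.
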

\begin{proof}
If $k=0,1$ we have that $J=\emptyset$ and the result is straightforward. Let us assume that $k\geq 2$. It is clear that $S_{n-k}\times S_1^{k}\subset \{z\in {}^JS_n; wJw^{-1}=J\} $. Let us prove the opposite inclusion. Take $w\in{}^JS_n$ such that $wJw^{-1}=J$. The condition $w\in {}^JS_n$ means that
\[
w^{-1}(n-k+1)<w^{-1}(n-k+2)<\ldots< w^{-1}(n),
\]
while the condition $wJw^{-1}=J$ says that, for every $\ell\in J$, $\{w^{-1}(\ell), w^{-1}(\ell+1)\}=\{\ell',\ell'+1\}$ for some $\ell'\in J$. So we must have that $w^{-1}(\ell)=\ell$ for every $\ell\in J$, which means that $w\in S_{n-k}\times S_1^k$.
\end{proof}
We have the following immediate corollary characterizing the possible subgroups $W_J^w$ appearing in Theorem \ref{thm:local_system_parabolic} when $J=\{n-k+1,\ldots, n-1\}$.

\begin{corollary}
\label{cor:W_J^w_nk1}
Let $w\in {}^JS_n$ such that $wJw^{-1}=J$, then $W_J^w = W_J=S_1^{n-k}\times S_k$.
\end{corollary}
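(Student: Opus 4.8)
The plan is to deduce this directly from Lemma \ref{lem:wJw=Sn-k}. First I would dispose of the degenerate cases $k=0$ and $k=1$: here $J=\emptyset$, so $W_J=\{e\}=S_1^{n-k}\times S_k$, and the statement $W_J^w=W_J$ holds trivially for every $w$. So from now on I would assume $k\geq 2$.

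By Lemma \ref{lem:wJw=Sn-k}, the hypotheses $w\in {}^JS_n$ and $wJw^{-1}=J$ force $w\in S_{n-k}\times S_1^k$; concretely, $w$ permutes the indices $\{1,\ldots,n-k\}$ among themselves and fixes each of $n-k+1,\ldots,n$. On the other hand, $W_J=S_1^{n-k}\times S_k$ is exactly the group of permutations supported on $\{n-k+1,\ldots,n\}$. Hence $w$ and every $z\in W_J$ have disjoint supports, so $wz=zw$ as elements of $S_n$, for all $z\in W_J$. Therefore $W_J^w=\{z\in W_J;\ wz=zw\}=W_J$, which is the claim.

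There is essentially no obstacle in this argument; the only thing to be careful about is the bookkeeping in the small-$k$ cases, where $J$ is empty and the statement is vacuous but must still be phrased consistently with the convention $W_J=S_1^{n-k}\times S_k$. I would also remark, as motivation, that the content of this corollary is that for this particular family $J=\{n-k+1,\ldots,n-1\}$ the subgroups $W_{J_{w'}}^{w'}$ arising in Theorem \ref{thm:local_system_parabolic} are as large as possible, i.e.\ they coincide with the full parabolic subgroup $W_{J_{w'}}$, which is precisely what makes the finer analysis carried out in the rest of this section possible.
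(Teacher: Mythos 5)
Your proof is correct and follows exactly the route the paper intends: the corollary is stated there as an immediate consequence of Lemma \ref{lem:wJw=Sn-k}, and your argument (for $k\geq 2$, $w\in S_{n-k}\times S_1^k$ has support disjoint from that of $W_J=S_1^{n-k}\times S_k$, so everything commutes; for $k\leq 1$ the statement is trivial since $J=\emptyset$) is precisely the omitted verification. Nothing to add.
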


The next lemma proves that for a general $w\in {}^JS_n$, the subset $J_w$ is also of the form $\{n-k'+1,\ldots, n-1\}$ (recall the isomorphism $\h_{w,J}^\circ\to \h_{w,J_w}^\circ$ in Proposition \ref{prop:t_iso_t1}).
\begin{lemma}
Let $w\in {}^JS_n$, then there exists $k'$ such that $J_w=\{n-k'+1,\ldots, n-1\}$.
\end{lemma}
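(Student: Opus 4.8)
The statement to prove is that for $w\in{}^JS_n$ with $J=\{n-k+1,\dots,n-1\}$, the stabilized set $J_w=\bigcap_{m}w^m J w^{-m}$ is again an interval of the form $\{n-k'+1,\dots,n-1\}$ for some $k'\le k$. The cleanest route is to chase the admissible sequence $(J_m,w_m)_{m\ge0}$ produced by Construction \ref{cons:lusztig_parabolic} (equivalently Construction \ref{cons:lusztig_parabolic_flag}): since $J_{m+1}=J_m\cap w_m J_m w_m^{-1}\subset J_m$, the sets decrease and stabilize at $J_\infty=J_{w}$ by Proposition \ref{prop:Gamma_J_injective} and the discussion preceding it. So it suffices to show, by induction on $m$, that each $J_m$ is a \emph{final interval} of $S=\{1,\dots,n-1\}$, i.e.\ of the form $\{n-k_m+1,\dots,n-1\}$ with $k_0=k\ge k_1\ge k_2\ge\cdots$, and then take $k'=k_\infty$.

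\textbf{Key steps, in order.} First I would set up the inductive claim: assume $J_m=\{n-k_m+1,\dots,n-1\}$ (an interval ending at $n-1$), with $w_m\in{}^{J_m}S_n^{J_m}$. The base case $m=0$ is the hypothesis. For the inductive step I must understand $w_m J_m w_m^{-1}\cap J_m$. Recall that, exactly as in the characterization of ${}^JS_n$ given in the proof of Lemma \ref{lem:wJw=Sn-k} and in the proof that Constructions \ref{cons:lusztig_parabolic} and \ref{cons:lusztig_parabolic_flag} agree, the condition $w_m\in{}^{J_m}S_n^{J_m}$ forces $w_m$ to be increasing on $J_m$ and $w_m^{-1}$ increasing on $J_m$, and $J_{m+1}=J_m\cap w_m J_m w_m^{-1}$ consists precisely of those $j\in J_m$ such that there exists $i\in J_m$ with $j=w_m(i)$ and $j+1=w_m(i+1)$. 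The key observation is that because $J_m$ is a \emph{final} interval $\{n-k_m+1,\dots,n-1\}$, if $j\in J_{m+1}$ and $j<n-1$ then the value $j+1$ lies in $\{n-k_m+2,\dots,n-1\}\subset J_m$ too, so iterating shows $j,j+1,\dots,n-1$ are all in $w_m(J_m\cup\{\text{successor indices}\})$... more precisely, I would argue: if some $j\in J_m$ is \emph{not} in $J_{m+1}$, then every $j'\in J_m$ with $j'\le j$ is also not in $J_{m+1}$. Indeed suppose $j'<j$, $j'\in J_{m+1}$, so $j'=w_m(i')$, $j'+1=w_m(i'+1)$ for some $i'\in J_m$; one then pushes upward using that $w_m^{-1}$ is monotone on the interval $J_m$ and that $j'+1,\dots,j$ all lie in $J_m$ (since $J_m$ is a final interval and $j'+1\le j\le n-1$), to produce consecutive preimages forcing $j\in J_{m+1}$ as well — contradiction. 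This monotonicity argument is essentially the one already carried out in the "$\{w^{-1}(i'_j),\dots\}$ nonempty" paragraph of the proof that the two constructions agree, and I would adapt it verbatim. Hence $J_{m+1}$ is an up-set inside the interval $J_m$, i.e.\ of the form $\{n-k_{m+1}+1,\dots,n-1\}$ with $k_{m+1}\le k_m$.

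\textbf{Finishing.} With the inductive claim established, stabilization gives $J_w=J_\infty=\{n-k'+1,\dots,n-1\}$ where $k'=\lim k_m$ (the limit is attained since the sequence is a decreasing sequence of nonnegative integers). Alternatively — and this is the slicker finish I would actually present — one can bypass the induction entirely: by Proposition \ref{prop:t_iso_t1} (or Remark \ref{rem:w_double}), $\h_{w,J}^\circ\cong\h_{w,J_w}^\circ$ and $w_\infty\in{}^{J_w}S_n$ satisfies $w_\infty J_w w_\infty^{-1}=J_w$; then a direct application of Lemma \ref{lem:wJw=Sn-k} to the group $W_{J_w}$ would pin down $J_w$ — but that lemma presupposes $J_w$ has the interval form, so it cannot replace the induction. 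Therefore the inductive argument above is the core of the proof. The main obstacle, and the only place requiring genuine care, is the monotonicity bookkeeping in the inductive step: showing that "not in $J_{m+1}$" propagates downward through the final interval $J_m$, using that $w_m$ and $w_m^{-1}$ are increasing on $J_m$ and that final intervals are closed under taking successors below $n$. Once that combinatorial lemma is in place the rest is immediate.
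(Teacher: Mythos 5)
Your route — inducting along the admissible sequence $(J_m,w_m)$ rather than working directly with $J_w$ — is genuinely different from the paper's, and its combinatorial core is salvageable: if $J_m=\{p,\dots,n-1\}$ is a final interval and $w_m\in{}^{J_m}W^{J_m}$, then the two chains $w_m^{-1}(p)<\dots<w_m^{-1}(n)\le n$ and $w_m(p)<\dots<w_m(n)\le n$ force $w_m^{-1}(\ell)\le\ell$ and $w_m(\ell)\le\ell$ for all $\ell\ge p$, so any $j\in J_{m+1}$ actually satisfies $w_m^{-1}(j)=j$, $w_m^{-1}(j+1)=j+1$, and then $w_m^{-1}(j+2)=j+2$, which gives the up-set property. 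Be aware, though, that this really uses \emph{both} one-sided minimality conditions: with $w_m\in{}^{J_m}W$ alone the claim is false (in $S_6$ with $J_m=\{3,4,5\}$ and $w_m^{-1}=512346$ one gets $J_{m+1}=\{4\}$, not a final segment). Admissibility does give $w_m\in{}^{J_m}W^{J_m}$, so your step can be completed, but the "adapt verbatim" remark understates what has to be checked.

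The genuine gap is your identification $J_w=J_\infty$. In the paper $J_w$ is \emph{defined} as $\bigcap_n w^nJw^{-n}$, and Proposition \ref{prop:Gamma_J_injective} together with the preceding discussion only says that the admissible sequence stabilizes and that $\Gamma_J$ is a bijection onto ${}^JW$; it says nothing about the stabilized subset being $\bigcap_n w^nJw^{-n}$. As written, your induction proves that $J_\infty$ is a final interval, which is not the statement unless the identification is supplied. It is true, but it needs an argument: $J_\infty\subseteq J_w$ because $J_\infty\subseteq J$ and $wJ_\infty w^{-1}=J_\infty$; conversely $J_w\subseteq J_m$ for all $m$ by induction, since $w\in{}^{J_m}W$ lets one write $w=w_mb$ reduced with $b\in W_{J_m}$, so $w_m^{-1}J_ww_m=bJ_wb^{-1}$ consists of reflections of $W_{J_m}$, and Proposition \ref{prop:w_cap}(1) then places the corresponding simple roots in $\Phi_{J_{m+1}}$, i.e.\ $J_w\subseteq J_{m+1}$. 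Nothing you cite provides this. The paper sidesteps all of it with a three-line direct argument that only uses $w\in{}^JS_n$ and $wJ_ww^{-1}=J_w$: let $\ell_0=\min J_w$; then $\{w^{-1}(\ell_0),w^{-1}(\ell_0+1)\}=\{\ell_0',\ell_0'+1\}$ with $\ell_0'\in J_w$, while the chain $w^{-1}(\ell_0)<\dots<w^{-1}(n)\le n$ forces $w^{-1}(\ell_0)\le\ell_0$, so $\ell_0'=\ell_0$ by minimality, whence $w^{-1}(\ell)=\ell$ for all $\ell\ge\ell_0$ and $J_w=\{\ell_0,\dots,n-1\}$. Either patch the identification explicitly or replace the sequence machinery by this direct argument.
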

\begin{proof}
Let $\ell_0$ be the minimum element of $J_w$ (recall that $J_w\subset J$). Then we have that $\{w^{-1}(\ell_0), w^{-1}(\ell_0+1)\}=\{\ell'_0,\ell'_0+1\}$ for some $\ell'_0\in J_w$. However, we have that
\[
w^{-1}(\ell_0)<w^{-1}(\ell_0+1)<\ldots < w^{-1} (n),
\]
which means that $\ell'_0\leq \ell_0$ and hence $\ell'_0=\ell_0$ by the minimality of $\ell$. Moreover, we have $w^{-1}(\ell)=\ell$ for every $\ell\geq \ell_0$, which proves that $J_w = \{\ell_0,\ell_0+1,\ldots, n-1\}$.
\end{proof}

Let $w\in {}^JS_n$ such that $wJw^{-1}=J$. Much of the work done in the previous sections becomes easier when $J=\{n-k+1,\ldots, n-1\}$. For instance,  we can actually take $\U_{w,J}$ (the open set appearing in Theorem \ref{thm:UwJ_intro}) to be equal to $(\h_{w,J}^\circ)^{rs}$.

% we have this characterization of $\h_{w,J}^\circ$
% \begin{align*}
%     \h_{w,J}^\circ:=\{(X, V_\bullet); X\in G, XV_i\cap V_j = r_{ij}(w)\text{ for }i,j=1,\ldots, n-k\}.
% \end{align*}

\begin{proposition}
We have a natural map $\pi_1((\h_{w,J}^\circ)^{rs}, (X,gB))\to W_J^w=S_{k}$.
\end{proposition}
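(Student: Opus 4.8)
The plan is to mirror the general construction from Section \ref{sec:monodromy}, but exploiting the simplifications afforded by $J=\{n-k+1,\ldots,n-1\}$ and Corollary \ref{cor:W_J^w_nk1}, which tells us $W_J^w=W_J=S_1^{n-k}\times S_k=S_k$. By Proposition \ref{prop:t_iso_t1} we may replace $(w,J)$ by $(w,J_w)$, so by the lemmas just proved we are free to assume $wJw^{-1}=J$ with $J$ of the stated shape; in particular $w\in S_{n-k}\times S_1^k$ by Lemma \ref{lem:wJw=Sn-k}, so $\dw$ fixes the last $k$ coordinate vectors and $L_J=GL_{\lambda_1}\times\cdots\times GL_{\lambda_r}\times GL_k$ where $w$ acts trivially on the last factor. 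First I would recall from Proposition \ref{prop:wPJYwJ_iso} the identification $\h_{w,J}^\circ\cong \frac{G\times \dw P_J}{L_J U^J_w}=\frac{G\times \dw L_J U^J}{L_J U^J_w}$, and from Propositions \ref{prop:wT1Ywiso}, \ref{prop:wT1_wLJ}, \ref{prop:NLT_galois} the chain of maps
\[
\frac{G\times(\dw T_1)^{rs}U^J}{\N_T U^J_w}\to \frac{G\times(\dw T_1)^{rs}U^J}{\N_{L_J}U^J_w}=\frac{G\times(\dw L_J)^{rs}U^J}{L_J U^J_w},
\]
the first of which is an $\N_{L_J}/\N_T=W_J^w$-Galois cover, yielding $\pi_1(\U_{w,J},(X,gP_J))\to W_J^w$.

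The one thing to check is that, in this special case, the open set $\frac{G\times(\dw L_J)^{rs}U^J}{L_J U^J_w}$ appearing as $\U_{w,J}$ coincides with the full regular semisimple locus $(\h_{w,J}^\circ)^{rs}$. Under the isomorphism $\h_{w,J}^\circ\cong\frac{G\times\dw L_JU^J}{L_JU^J_w}$, a point $(X,gP_J)$ corresponds (after choosing $g$ with $g^{-1}Xg\in\dw P_J$) to $\dw\ell u$ with $\ell\in L_J$, and $X$ is regular semisimple iff $\dw\ell$ is a regular semisimple element of the disconnected group $N_G(L_J)$; by Proposition \ref{prop:N_properties } items (2) and (3) every quasisemisimple element of $\dw L_J$ is $L_J$-conjugate into $\dw T_1$, and $(\dw L_J)^{rs}$ is exactly the $L_J$-conjugacy-saturation of $(\dw T_1)^{rs}$. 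Hence $(\h_{w,J}^\circ)^{rs}$ is precisely the image of $\frac{G\times(\dw L_J)^{rs}U^J}{L_JU^J_w}$, i.e. equals $\U_{w,J}$. (Here the point specific to $J=\{n-k+1,\ldots,n-1\}$ is that $w$ acts on $W_J$ through a single trivial factor $S_k$, so $(\dw T_1)^{rs}$ is nonempty and the regular-semisimple locus of $\dw L_J$ is dense and all of it arises this way; one can also see this directly from Construction \ref{cons:lusztig_parabolic_flag}, since a regular semisimple $X$ makes the parabolic construction stabilize immediately.)

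Putting these together: the composite $\pi_1((\h_{w,J}^\circ)^{rs},(X,gB))=\pi_1(\U_{w,J},(X,gP_J))\to W_J^w=S_k$ from Theorem \ref{thm:pi1_WJw} is the desired map, so the statement follows. I expect the only genuine obstacle to be the identification $\U_{w,J}=(\h_{w,J}^\circ)^{rs}$ — i.e. verifying that \emph{every} regular semisimple point of $\h_{w,J}^\circ$ lies in the image of the $\N_T U^J_w$-quotient, which amounts to the Remark after Proposition \ref{prop:wT1Ywiso} being sharp in this case. This in turn reduces to the injectivity of the map in Equation \eqref{eq:UJUJiso}, already established whenever $\dw t_0$ is regular semisimple, so in fact no new input beyond bookkeeping is needed; the rest is a direct transcription of Theorem \ref{thm:pi1_WJw}.
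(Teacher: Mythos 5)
Your reduction to Theorem \ref{thm:pi1_WJw} hinges on the identification $\U_{w,J}=(\h_{w,J}^\circ)^{rs}$, and the step you use to justify it --- ``$X$ is regular semisimple iff $\dw\ell$ is a regular semisimple element of $N_G(L_J)$'' --- is false, even for $J=\{n-k+1,\ldots,n-1\}$. Regular semisimplicity of $X=g(\dw\ell u)g^{-1}$ does not control the twisted Levi part $\dw\ell$: this is precisely the phenomenon of Example \ref{exa:G24_not_galois} (there $X$ is regular semisimple while $\dw t$ has characteristic polynomial $(x^2+2)^2$), and it persists for $J$ of the present shape as soon as $\overline{w}\in S_{n-k}$ has two cycles with equal eigenvalue products. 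Concretely, take $n=6$, $k=2$, $J=\{5\}$, $w=214356$, $X=\mathrm{diag}(1,6,2,3,100,200)$, $V_2=\langle e_1,e_2\rangle$, $V_4=\langle e_1,e_2,e_3,e_4\rangle$, and $V_1\subset V_2$, $V_2\subset V_3\subset V_4$ generic, so that $(X,V_\bullet)\in(\h_{w,J}^\circ)^{rs}$. Writing $g^{-1}Xg=\dw\ell u$ with $\ell=\mathrm{diag}(t_1,t_2,t_3,t_4,A)$, the block structure forces $-t_1t_2=\det(X|_{V_2})=6$ and $-t_3t_4=\det(X|_{V_4/V_2})=6$, so the eigenvalues $\pm\sqrt{-6}$ of $\dw\ell$ each occur twice and $\dw\ell$ is not regular semisimple; this is independent of all choices, since the $L_JU^J_w$-action changes $\dw\ell$ only by conjugation. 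Hence $(\h_{w,J}^\circ)^{rs}\not\subset\U_{w,J}$ with $\U_{w,J}$ as defined in the proof of Theorem \ref{thm:pi1_WJw}, and you cannot obtain the desired map on $\pi_1((\h_{w,J}^\circ)^{rs},\cdot)$ by restricting the one constructed there; density of $\U_{w,J}$ does not allow a monodromy representation to be extended to the larger open set. The appeal to the injectivity of the map in Equation \eqref{eq:UJUJiso} is also circular, as that injectivity was proved exactly under the hypothesis that the twisted torus element is regular semisimple, which is what fails at these points.

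The paper's proof avoids the general machinery altogether. Since $w\in S_{n-k}\times S_1^k$ forces $r_{n-k,n-k}(w)=n-k$, every $(X,V_\bullet)\in(\h_{w,J}^\circ)^{rs}$ satisfies $XV_{n-k}=V_{n-k}$, which gives a map $(\h_{w,J}^\circ)^{rs}\to(\h_{e,\{n-k\}^c}^{\circ})^{rs}$, $(X,V_\bullet)\mapsto(X,V_{n-k})$; over the target, completing $V_{n-k}$ to a flag $V_{n-k}\subset V_{n-k+1}\subset\ldots\subset V_{n-1}$ by $X$-invariant subspaces is an $S_k$-Galois cover (the fibre consists of the $k!$ orderings of the eigenvalues of $X$ on $\mathbb{C}^n/V_{n-k}$), and composing the induced maps on fundamental groups yields the statement. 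That construction uses only the regular semisimplicity of $X$, which is why the proposition holds on all of $(\h_{w,J}^\circ)^{rs}$ --- strictly beyond the locus your argument reaches. To repair your write-up you would need either this direct cover or a genuinely new argument that the monodromy extends across $(\h_{w,J}^\circ)^{rs}\setminus\U_{w,J}$; the claimed equality of the two open sets is not available.
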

\begin{proof}
If $k=0,1$, we have that $S_k$ is trivial so there is nothing to do. Let us assume that $k\geq 2$. By Lemma \ref{lem:wJw=Sn-k} we have that $w\in S_{n-k}\times S_1^{k}$. This means that $r_{n-k,n-k}(w)=n-k$. Recall that
\[
(\h_{e,\{n-k\}^c}^{\circ})^{rs}=\{(X,V_{n-k});X\in G^{rs}, XV_{n-k} = V_{n-k}\}.
\]
We have  a map
\begin{align*}
(\h_{w,J}^\circ)^{rs} &\to (\h_{e,\{n-k\}^c}^{\circ})^{rs}\\
  (X,V_\bullet)&\mapsto (X, V_{n-k}).
\end{align*}
This induces a map $\pi_1((\h_{w,J}^\circ)^{rs}, (X,V_\bullet))\to \pi_1((\h_{e,\{n-k\}^c}^{\circ})^{rs},(X,V_{n-k}))$. \par

We have an $S_{k}$-Galois cover given by
\begin{align*}
    (\h_{e, \{1,\ldots, n-k-1\}}^{\circ})^{rs}&\to (\h_{e,\{n-k\}^c}^{\circ})^{rs}\\
     (X,V_{n-k}\subset V_{n-k+1}\subset \ldots \subset V_{n-1}\subset \mathbb{C}^n)&\mapsto (X,V_{n-k}),
\end{align*}
which gives a map $\pi_1((\h_{e,\{n-k\}^c}^{\circ})^{rs}, (X,V_{n-k}))\to S_{k}$.
\end{proof}

Let $w\in {}^JS_n$ such that $J_w = \{n-k'+1,\ldots, n-1\}$, in particular $w\in S_{n-k'}\times S_1^{k'}$. Let $\overline{w}=w|_{[n-k']}$ and define $\overline{J}=J\cap [n-k']$.

\begin{proposition}
Let $X$ be a regular semisimple matrix. We have that $\h_{w,J}(X)$ is the union of $\binom{n}{n-k'}$ varieties $\h_i$, where each $\h_i$ is isomorphic to $\h_{\overline{w},\overline{J}}(X_i)$ for some regular semisimple $X_i\in GL_{n-k'}$.
\end{proposition}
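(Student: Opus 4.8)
The plan is to unwind the condition $w \in S_{n-k'} \times S_1^{k'}$ at the level of the Lusztig variety and see that the torus-invariance built into $w$ forces every flag in $\h_{w,J}(X)$ to contain a fixed $X$-invariant subspace of dimension $n-k'$, and that these invariant subspaces are the $\binom{n}{n-k'}$ coordinate subspaces spanned by eigenvectors of $X$.

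First I would use Proposition \ref{prop:t_iso_t1} (and Remark \ref{rem:w_double}) to replace $J$ by $J_w = \{n-k'+1,\dots,n-1\}$, so that $\h_{w,J}(X)^\circ \cong \h_{w,J_w}(X)^\circ$, and work with the partial flag variety $G/P_{J_w}$ parametrizing chains $V_1 \subset \dots \subset V_{n-k'} \subset \mathbb C^n$. Since $w\in S_{n-k'}\times S_1^{k'}$, we have $r_{n-k',j}(w) = \min(n-k', j)$ for all $j$; geometrically (as in Example \ref{exa:projective} and the discussion around Construction \ref{cons:lusztig_parabolic_flag}) the rank conditions on $V_{n-k'}$ coming from the tail of $w$ are exactly that $XV_{n-k'} = V_{n-k'}$, i.e. $V_{n-k'}$ is an $X$-invariant subspace of dimension $n-k'$. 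Because $X$ is regular semisimple, its $\binom{n}{n-k'}$ $(n-k')$-dimensional invariant subspaces $W_1,\dots,W_{\binom{n}{n-k'}}$ are the spans of subsets of size $n-k'$ of a fixed eigenbasis $e_1,\dots,e_n$. This decomposes $\h_{w,J}(X)$ as the union of the closed subvarieties $\h_i := \{(V_\bullet)\in \h_{w,J}(X) : V_{n-k'} = W_i\}$.

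Next I would fix one $W_i$ and identify $\h_i$ with a full (non-parabolic) Lusztig variety for $GL_{n-k'}$. Restricting $X$ to the invariant subspace $W_i$ gives $X_i := X|_{W_i} \in GL(W_i) \cong GL_{n-k'}$, which is again regular semisimple since its eigenvalues form a subset of those of $X$. On $\h_i$ the data is a complete flag $V_1\subset \dots \subset V_{n-k'-1}\subset W_i$ inside $W_i$ together with the remaining spaces $V_{n-k'}\subset \dots \subset V_{n-1}\subset \mathbb C^n$ beyond $W_i$; but once $V_{n-k'}=W_i$ is $X$-invariant, the defining rank conditions of $\h_{w,J_w}$ involving the indices $> n-k'$ are automatically satisfied (this is exactly the content that made $J_w$ stabilize), so those higher $V_j$'s are unconstrained — wait, more precisely $J_w$ was chosen so that there are no such indices outside $J_w$, i.e. $\{n-k',\dots,n-1\}\setminus J_w = \{n-k'\}$ except that actually $n-k'\in J_w^c$, so the only free index is $n-k'$ itself and it is pinned to $W_i$. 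Thus $\h_i$ depends only on the flag inside $W_i$, and the rank conditions $\dim X_iV_a\cap V_b \geq r_{a,b}(\overline w)$ for $a,b\le n-k'$ are precisely those cutting out $\h_{\overline w}(X_i) = \h_{\overline w, \overline J}(X_i)$, giving the claimed isomorphism $\h_i \cong \h_{\overline w,\overline J}(X_i)$.

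The main obstacle, and the step requiring the most care, is the bookkeeping in Construction \ref{cons:lusztig_parabolic_flag}: one must check that the auxiliary flag $V'_\bullet$ built from $(X, V_\bullet^J)$ stabilizes immediately and carries exactly the intersection data encoded by $\overline w$ once $V_{n-k'}$ is $X$-invariant — in other words, that the identification $\h_{w,J}^\circ \cong \h_{w,J_w}^\circ$ restricts, over the locus $V_{n-k'}=W_i$, to the statement about $\h_{\overline w}(X_i)$, and that taking closures commutes with this decomposition. I would handle this by noting that when $V_{n-k'}=W_i$ is $X$-invariant, $X$ decomposes as a block-diagonal operator with respect to $\mathbb C^n = W_i \oplus W_i'$ for a complementary invariant subspace $W_i'$, so all the intersections $V_a\cap XV_b$ for $a,b\le n-k'$ stay inside $W_i$ and the construction is literally the one for $GL(W_i)$; then the closure of $\h_{w,J}^\circ(X)$ is the union of the closures $\overline{\h_i^\circ}$, each of which is $\h_{\overline w,\overline J}(X_i)$ by Proposition \ref{prop:ch_lusztig}-type identifications already established for the non-parabolic case.
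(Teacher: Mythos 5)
Your overall strategy is the same as the paper's: force the $(n-k')$-dimensional subspace attached to each flag to be $X$-invariant, use regular semisimplicity to get exactly $\binom{n}{n-k'}$ such subspaces $W_i$, and identify the piece over each $W_i$ with a Lusztig variety for $X|_{W_i}$. (The paper implements this with the intermediate variety $\h_{w,J\setminus\{n-k'\}}$, i.e.\ it adds only the single extra step $V_{n-k'}$, and uses the finite, generically injective forgetful map back to $\h_{w,J}$.) However, there is a genuine gap in your identification of the pieces. Replacing $J$ by $J_w=\{n-k'+1,\ldots,n-1\}$ adds \emph{all} the flag steps indexed by $J\setminus J_w=\{n-k+1,\ldots,n-k'\}$, not just the single step $n-k'$; your parenthetical claim that ``the only free index is $n-k'$'' is wrong as soon as $k-k'\geq 2$. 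Consequently, inside $\h^\circ_{w,J_w}(X)$ the slice $\{V_{n-k'}=W_i\}$ carries a \emph{complete} flag of $W_i$, so its closure is a copy of the full Lusztig variety $\h_{\overline w}(X_i)\subset GL(W_i)/B$, and your asserted equality $\h_{\overline w}(X_i)=\h_{\overline w,\overline J}(X_i)$ is false whenever $\overline J\neq\emptyset$: these live in different flag varieties and are related by a proper birational forgetful map which may contract boundary strata. Relatedly, your closures are taken in $G/P_{J_w}$, while the proposition concerns $\h_{w,J}(X)\subset G/P_J$; the isomorphism of Proposition \ref{prop:t_iso_t1} only identifies the \emph{open} cells, so ``taking closures commutes with this decomposition'' is exactly the point that still needs an argument, and the appeal to ``Proposition \ref{prop:ch_lusztig}-type identifications'' does not help (that proposition is about characters, not closures).

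The gap can be closed in two ways. Either follow the paper and add only the one step $V_{n-k'}$ (work with $\h_{w,J\setminus\{n-k'\}}$): then the slice over $W_i$ already lives in the partial flag variety of $W_i$ with steps $1,\ldots,n-k$, so it is literally $\h_{\overline w,\overline J}(X_i)$, and forgetting the pinned $V_{n-k'}$ is injective on each slice, hence an isomorphism onto its image in $\h_{w,J}(X)$. Or keep your full refinement, but then push each slice down by the proper forgetful map $G/P_{J_w}\to G/P_J$ and identify its \emph{image} with $\h_{\overline w,\overline J}(X_i)$: here one uses that $\overline J_{\overline w}=\emptyset$ (no element of $\overline J$ has $w$-orbit inside $J$, since $J_w$ lies above $n-k'$), so by Proposition \ref{prop:t_iso_t1} the forgetful map carries $\h^\circ_{\overline w}(X_i)$ bijectively onto $\h^\circ_{\overline w,\overline J}(X_i)$, and properness gives that the image of the closed slice is the closure of that cell, i.e.\ $\h_{\overline w,\overline J}(X_i)$. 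The isomorphism in the statement is then between the downstairs piece (the image) and $\h_{\overline w,\overline J}(X_i)$, not between your upstairs slice and it. In the special cases $k-k'\leq 1$, where $\overline J=\emptyset$, your argument is correct as written and coincides with the paper's.
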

\begin{proof}
Consider the following commutative diagram
\[
\begin{tikzcd}
  \h_{w, J\setminus\{n-k'\}}^{rs} \ar[r, "f"] \ar[d, "g"]& \h_{w,J}^{rs} \ar[d, "p"]\\
    \h_{e, \{n-k'\}^c}^{rs}\ar[r, "h"] & G^{rs},
   \end{tikzcd}
\]
where
\begin{align*}
    f(X,V_1\subset\ldots \subset V_{n-k}\subset V_{n-k'}\subset \C^n)=&(X,V_1\subset \ldots \subset V_{n-k}\subset \C^n), \\
    g(X,V_1\subset\ldots \subset V_{n-k}\subset V_{n-k'}\subset \C^n)=&(X,V_{n-k'}), \\
    p(X,V_1\subset \ldots \subset V_{n-k}\subset \C^n) =& X,\\
    h(X,V_{n-k'}) = X.
\end{align*}
Note that, since $w\in S_{n-k'}\times S_1^k$, we have that $(X,V_\bullet)\in \h_{w,J\setminus\{n-k'\}}^{rs}$ satisfies $XV_{n-k'}=V_{n-k'}$. The map $f$ is generically injective, since for each $(X,V_\bullet)\in (\h_{w,J}^\circ)^{rs}$ there exists exactly one $V_{n-k'}$ such that $V_{n-k} \subset V_{n-k'}$ and $XV_{n-k'}=V_{n-k'}$. Moreover, the map $f$ is finite, since there is a finite number of $V_{n-k'}$ satisfying $XV_{n-k'}=V_{n-k'}$, because $X$ is regular semisimple. For fixed $X\in G^{rs}$, $|h^{-1}(X)| =\binom{n}{n-k'}$. For fixed $(X,V_{n-k'})\in h^{-1}(X)$, $g^{-1}(X, V_{n-k'})$ is isomorphic to $\h_{\overline{w},\overline{J}}(X|_{V_{n-k'}})$. Moreover $f|_{g^{-1}(X, V_{n-k'})}$ is injective, which means
  \[
  \h_{w,J}(X) = \bigcup_{(X,V_{n-k'})\in h^{-1}(X)} f(g^{-1}(X,V_{n-k'})),
  \]
  which finishes the proof.
\end{proof}

\begin{proposition}
\label{prop:split_local_sytem}
Let $\rho$ be a representation of $S_{k'}$ and consider $L$ the induced character sheaf with support in $\h_{w,J}$. Then
\[
\ch(IH^*(\h_{w,J}(X), IC_{\h_{w,J}}(L)))=\ch(IH^*(\h_{\overline{w},\overline{J}}(X')))\ch(\rho),
\]
where $X\in GL_n^{rs}$ and $X'\in GL_{n-k'}^{rs}$.
\end{proposition}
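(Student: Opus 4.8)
The plan is to combine the decomposition of $\h_{w,J}(X)$ into $\binom{n}{n-k'}$ pieces, each isomorphic to $\h_{\overline w,\overline J}(X_i)$ with $X_i\in GL_{n-k'}^{rs}$, together with the monodromy description of the relevant local system. First I would invoke the previous proposition to write $\h_{w,J}(X)=\bigcup_i \h_i$ where each $\h_i\cong \h_{\overline w,\overline J}(X_i)$, and note that these pieces are permuted transitively by the covering monodromy: indeed the $\binom{n}{n-k'}$ $X$-invariant subspaces $V_{n-k'}$ are exactly the fibre of the $S_{k'}$-Galois cover $(\h_{e,\{1,\dots,n-k'-1\}}^\circ)^{rs}\to(\h_{e,\{n-k'\}^c}^\circ)^{rs}$ (restricting the one built in the proposition above, with $k$ replaced by $k'$), so that the monodromy group $W_J^w\cong S_{k'}$ acts simply transitively on the set of components $\{\h_i\}$ through its action on the invariant subspaces of dimension $n-k'$.

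Next I would describe $IC_{\h_{w,J}}(L)$ along the open stratum where $f\colon \h_{w,J\setminus\{n-k'\}}^{rs}\to \h_{w,J}^{rs}$ is an isomorphism (this is the generic locus identified in the preceding proof), and observe that, away from the locus where the $X$-invariant subspace $V_{n-k'}$ is not transverse to the partial flag, the variety $\h_{w,J}(X)$ is smooth and its components $\h_i$ are disjoint; the intersection cohomology sheaf with coefficients in the local system $L$ induced by $\rho$ therefore restricts on $\h_i$ to $IC_{\h_i}(\mathbb{C})\otimes(\text{a copy of the }\rho\text{-isotypic datum})$. Concretely, because the monodromy cover is Galois with group $S_{k'}$ permuting the $\h_i$ simply transitively, the induced/restriction yoga for local systems on a disjoint union of copies permuted by a group gives
\[
IH^*(\h_{w,J}(X), IC_{\h_{w,J}}(L)) \;\cong\; \bigl(IH^*(\h_{\overline w,\overline J}(X'))\otimes V_\rho\bigr)
\]
as graded $S_n$-modules, where the $S_n$-action is the standard one coming from the ambient geometry on each piece and $V_\rho$ is the underlying space of $\rho$; taking graded Frobenius characters yields the asserted product formula $\ch(IH^*(\h_{w,J}(X),IC_{\h_{w,J}}(L)))=\ch(IH^*(\h_{\overline w,\overline J}(X')))\ch(\rho)$, using the multiplicativity of $\ch$ established in Proposition \ref{prop:char_sheaves_GLn} and the surrounding discussion.

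The main obstacle, I expect, is bookkeeping the $S_n$-equivariance carefully: one must check that the $S_n$-module structure on $IH^*$ of the whole (reducible) variety really does decompose as the (external) tensor product of the $S_{n-k'}$-module structure on a single component with the representation $\rho$ of $S_{k'}\cong W_J^w$, and that these two commuting actions assemble to the correct $S_{n-k'}\times S_{k'}$-action (hence the product of symmetric functions rather than some twisted version). This amounts to identifying the local system $IC_{\h_{w,J}}(L)$ on the open locus with the one pushed forward from the Galois cover and matching it with $\rho$; I would handle it by passing to the relative/perverse-sheaf formulation of Section \ref{sec:monodromy}, using Proposition \ref{prop:smooth_pullback_perverse} to reduce to the cover where the local system becomes constant on each sheet, and then pushing back down. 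Everything else is a routine consequence of the preceding proposition and the small-map machinery already in place.
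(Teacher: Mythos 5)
Your argument breaks precisely at the step you flag as ``bookkeeping.'' First, the group-theoretic claim is wrong: the fibre $\h_{w,J}(X)$ has $\binom{n}{n-k'}$ components (one for each $X$-invariant subspace of dimension $n-k'$), while $W_J^w\cong S_{k'}$ has order $k'!$, so $S_{k'}$ cannot act simply transitively on the set of components. The $\binom{n}{n-k'}$ invariant subspaces form the fibre of the map $(X,V_{n-k'})\mapsto X$, not of the $S_{k'}$-Galois cover; the fibre of that cover over a fixed $(X,V_{n-k'})$ consists of the $k'!$ invariant completions of the flag in $\mathbb{C}^n/V_{n-k'}$. The group that permutes the components is $S_n$, acting through monodromy in the $X$-direction, transitively with stabilizer of a component equal to $S_{n-k'}\times S_{k'}$. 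Second, the intermediate identification $IH^*(\h_{w,J}(X),IC_{\h_{w,J}}(L))\cong IH^*(\h_{\overline w,\overline J}(X'))\otimes V_\rho$ ``as graded $S_n$-modules'' with $\ch$ multiplicative cannot be right as stated: $IH^*(\h_{\overline w,\overline J}(X'))$ only carries an $S_{n-k'}$-action, and the Frobenius characteristic is multiplicative for the induction product $\ind_{S_{n-k'}\times S_{k'}}^{S_n}(-\boxtimes -)$, not for an internal tensor product of $S_n$-modules. So the heart of the statement --- that the two commuting actions assemble into an induced $S_{n-k'}\times S_{k'}$-module and that the $S_{k'}$-factor is exactly $\rho$ --- is asserted rather than proved, and for a general irreducible $\rho$ your proposed fix (pass to a cover where the local system trivializes sheet by sheet) does not exist in the naive sense.

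The paper sidesteps all of this by linearity: since every character of $S_{k'}$ is a $\mathbb{Z}$-combination of the permutation characters $\ind_{S_{\lambda'}}^{S_{k'}}1$ and both sides of the identity are additive in $\rho$, it suffices to treat $\rho=\ind_{S_{\lambda'}}^{S_{k'}}1$. For such $\rho$ the character sheaf is realized geometrically: $IC_{\h_{w,J}}(L)=f_*(IC_{\h_{w,J_\lambda}})$, where $f$ is the finite (hence small) forgetful map from the parabolic Lusztig variety $\h_{w,J_\lambda}$ remembering an $X$-invariant partial flag refining $V_{n-k'}$ according to $\lambda'$. The fibre over $X$ then visibly splits as a disjoint union of copies of $\h_{\overline w,\overline J}(X|_{V_{n-k'}})$ indexed by invariant partial flags, so $IH^*$ is the induced module $\ind_{S_{n-k'}\times S_{k'}}^{S_n}\bigl(IH^*(\h_{\overline w,\overline J}(X'))\boxtimes\rho\bigr)$, whose character is the claimed product. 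If you want to keep your monodromy-cover route, you would need an $S_{k'}$-equivariant decomposition of the pushforward from the Galois cover together with a proof that the $S_{k'}$-isotypic pieces interact with the $S_n$-action by induction from $S_{n-k'}\times S_{k'}$ --- which is exactly the content the paper's reduction to permutation modules makes automatic.
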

\begin{proof}
It is sufficient to prove the proposition for $\rho=\ind_{S_{\lambda'}}^{S_{k'}}1$, where $\lambda$ is a partition of $k'$. Fix $\lambda$, consider $\lambda'$ the transpose partition, and define
\[
J_{\lambda} = J\setminus \{n-k', n-k'+\lambda'_1,n-k'+\lambda'_1+\lambda'_2,\ldots n-k'+\lambda'_1+\ldots \lambda'_{\ell(\lambda')}\}.
\]
That means that $(X,V_\bullet)\in \h_{w,J_{\lambda}}$ satisfies $XV_{n-k'+\lambda_1'+\ldots \lambda_j'}=V_{n-k'+\lambda_1'+\ldots \lambda_j'}$. Consider the map
\begin{align*}
f\col \h_{w,J_{\lambda}}^{rs}&\to \h_{w,J}^{rs}\\
(X,V_\bullet)&\mapsto (X, V_1\subset V_2\subset \ldots V_{n-k}).
\end{align*}
We have that $IC_{\h_{w,J}}(L) = f_*(IC_{\h_{w,J_\lambda}})$ because the map $f$ is finite and hence small. On the other hand, we have that $\h_{w,J,\lambda'}(X) = \bigsqcup \h_{\overline{w},\overline{J}}(X|_{V_{n-k'}})$, where the union runs through all $V_{n-k'}\subset V_{n-k'+\lambda'_1} \subset\ldots \subset \C^n$ fixed by $X$. In particular $IH^*(\h_{w,J,\lambda'}(X))=\bigoplus IH^*(\h_{\overline{w},\overline{J}}(X|_{V_{n-k'}}))$. This means that the representation $IH^*(\h_{w,J,\lambda'}(X))$ is induced from the representation $IH(\h_{\overline{w},\overline{J}}(X'))\times \rho$  of $S_{n-k'}\times S_k'$ to $S_n$. The result follows.
\end{proof}

\begin{Exa}
\label{exa:projective_local_system}
Finishing Example \ref{exa:projective}, if $L_i$ is a local system on (a open subset of) $\Hi_i$ induced by the representation $\rho$ of $S_i$, we have
\[
\ch(IH^*(\Hi_i,L_i)) = [n-i]_qh_{n-i}\ch(\rho).
\]
\end{Exa}

This finishes the proof of Theorem \ref{thm:chJnk1} and its corollaries.

\section{Further directions}
\label{sec:further_directions}

We assume that $G=GL_n$ in all the discussions below.

\subsection{Kazhdan--Lusztig polynomials}
We have that $(IC_{\h_w(X)})_p$ for $p\in \h_z(X)^\circ$ has Poincaré polynomial precisely the Kazhdan--Lusztig polynomial $P_{z,w}(q)$. In other words, the singularity of $\h_w$ at $\h_z^\circ$ is the same as the singularity of the Schubert variety $\Omega_w$ at $\Omega_z^\circ$. \par

The picture for parabolic Lusztig varieties is not so clear. For instance, $\Omega_{2134,\{2,3\}}$ is smooth, while $\h_{2134,\{2,3\}}$ is singular at $\h_{1234,\{2,3\}}$. In fact, for an invertible regular semisimple matrix $X$, we have that $\h_{2134,\{2,3\}}(X)\subset \mathbb{P}^{3}$ is the union of the $6$ coordinate lines, while $\h_{1234,\{2,3\}}(X)$ is the set of the coordinate points of $\mathbb{P}^3$. So $(IC_{\h_{2134,\{2,3\}}})|_{\h_{1234,\{2,3\}}}$ is a local system on $\h_{1234,\{2,3\}}$ of rank $3$. More than that, it is associated to the representation of $S_3$ whose character is $h_{2,1}$ (a proper proof of this statement will be given in a subsequent work).

It is not clear to us how the parabolic Kazhdan--Lusztig polynomials (introduced by Deodhar in \cite{DeodharParabolic}) relate to the singularities of the parabolic Lusztig varieties, or if new polynomials (or symmetric functions/graded representations) will have to be defined.

% \subsection{Rook-theory}
% When $\m$ is an abelian Hessenberg function, the coefficients of the chromatic symmetric function $\csf_q($

\subsection{Positivity}
The natural extension of the Stanley--Stembridge and Haiman's conjecture to the parabolic case is:

\begin{conjecture}
The Frobenius character $\ch(IH^*(\h_{w,J}(X))$ is $h$-positive for every $w \in S_n$ and for every $J\subset \{1,\ldots, n-1\}$.
\end{conjecture}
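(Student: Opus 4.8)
This conjecture contains the Stanley--Stembridge and Haiman conjectures, since for $J=\emptyset$ one has $\h_{w,\emptyset}=\h_w$ and $\ch(IH^*(\h_w(X)))=\ch(q^{\ell(w)/2}C'_w)$ by Proposition~\ref{prop:ch_lusztig}, so an unconditional proof is not within reach of present methods; the plan below is a \emph{recursive} strategy that repackages the difficulty into smaller pieces via the decomposition theorem in the form of Theorem~\ref{thm:local_system_parabolic} and its iterate~\eqref{eq:decomposition_hzJ}. Fix $(w,J)$ and interpolate a chain of single simple reflections $J=J_0\subsetneq J_1\subsetneq\cdots\subsetneq J_m=S$ with $J_i=J_{i-1}\cup\{s_i\}$. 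Applying the decomposition theorem successively to the forgetful maps $GL_n\times\flag_{J_{i-1}}\to GL_n\times\flag_{J_i}$ and taking Frobenius characters, one obtains, by induction on $i$, an identity
\[
\ch(IH^*(\h_{w,J}(X)))=\sum_{\alpha} q^{n_\alpha}\,\ch(\rho_\alpha),
\]
because ${}^SW=\{e\}$ and $\h_{e,S}=GL_n$, so after $m$ steps every summand lives on $GL_n$; here the $n_\alpha$ are non-negative grading shifts and each $\rho_\alpha$ is a graded representation of $(S_n)_S^e=S_n$ attached to a local system on $GL_n^{rs}$ as in Theorem~\ref{thm:pi1_WJw}. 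Since $\ch(\operatorname{Ind}_{S_\lambda}^{S_n}\mathbf{1})=h_\lambda$, the conjecture for $(w,J)$ would follow if every $\rho_\alpha$ occurring in this total reduction were a non-negative-in-$q$ sum of Young permutation modules.

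The heart of the plan is therefore the following \emph{single-step permutation-preservation} statement, a sharpening of Theorem~\ref{thm:local_system_parabolic}: if $J'=J\cup\{s\}$ and $F$ is a local system on an open subset of $\h_{z,J}$ induced by a permutation representation of $(S_n)_{J_z}^z$, then in
\[
(f_{J,J'})_*(IC_{\h_{z,J}}(F))=\bigoplus_{z'}IC_{\h_{z',J'}}(L_{z',z}^{J',J}(F))
\]
every $L_{z',z}^{J',J}(F)$ is induced by a permutation representation of $(S_n)_{J'_{z'}}^{z'}$. Granting this, one runs the induction above starting from the trivial local system on $\h_{w,J}$---a permutation representation of the trivial group---so that all the $\rho_\alpha$ become permutation modules of $S_n$ and $h$-positivity follows, a non-negative-in-$q$ sum of $h$-positive symmetric functions being $h$-positive; this also recovers, for codominant $w$ and the $J$ of Example~\ref{exa:chromatic_gasharov}, the $e$-positivity of the associated chromatic quasisymmetric functions. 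To prove the single-step statement one should analyse the geometry underlying Theorem~\ref{thm:local_system_parabolic} when exactly one simple reflection is added: the relevant local component of the forgetful map restricts over the open stratum to one of the Galois covers of Theorem~\ref{thm:pi1_WJw}, whose deck group acquires a single symmetric-group factor (Proposition~\ref{prop:WJw_product}, Corollary~\ref{cor:W_J^w_nk1}), and one must show that the induced action on the relevant intersection cohomology of the fibre---a union of projective spaces glued along coordinate subspaces, which is exactly the situation of the varieties $\Hi_i$ in Corollary~\ref{cor:chHi} and Example~\ref{exa:projective}---is a permutation representation. When $J$ already has the interval form $\{n-k+1,\dots,n-1\}$, Theorem~\ref{thm:chJnk1} collapses this to the purely representation-theoretic question of whether the $S_{k'}$-modules $\rho_{z,w}$ are permutation modules, which is the cleanest available reformulation and the target of the promised companion paper.

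The main obstacle is precisely this single-step statement: for the full chain $J=\emptyset$ it \emph{is} the Stanley--Stembridge/Haiman conjecture (equivalently, the $k'=k+1$ case of the permutation-representation conjecture in the introduction), so it will not yield to formal arguments and the recursion merely reorganizes the problem. Two concrete attacks look promising. First, combine Theorem~\ref{thm:chJnk1} with the forthcoming combinatorial formula for $\ch(\rho_i)$ to settle the interval case $J=\{n-k+1,\dots,n-1\}$, then bootstrap: a general $J$ is reached from such an interval by a further chain of single additions whose fibres are again of Grassmannian type, controlled by Proposition~\ref{prop:PJB_bundle} and the bundle structures of Propositions~\ref{prop:PJLJ_bundle}--\ref{prop:towPJ_small}. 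Second, exploit Theorem~\ref{thm:groj_haiman}: since $\ch(H^*_c(\h_{w,J}(X)^\circ,L))$ agrees up to a $q$-polynomial with the character of an element of the Grojnowski--Haiman hybrid basis, and Grojnowski--Haiman used that basis to prove Schur positivity of LLT polynomials, a transfer of their argument to the $h$/$e$-side would settle the single-step statement directly. Finally, the bookkeeping cannot be relaxed: the $\Gr(2,4)$, $w=3412$ computation of Example~\ref{exa:ch_G24} exhibits a non-$h$-positive $\ch(IH^*(\h_{z,J}(X),L))$ even though $L$ is induced by the regular---hence permutation---representation of $S_2$, so restricting the filtration to single simple reflections and strengthening the inductive hypothesis from ``$h$-positive'' to ``induced by a permutation representation'' are both essential.
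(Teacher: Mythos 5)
The statement you are addressing is a conjecture in the paper's ``Further directions'' section; the paper offers no proof of it, and your proposal does not supply one either. What you have written is a reduction, and you concede as much: everything hinges on your ``single-step permutation-preservation'' claim, which is precisely the potentially stronger conjecture already formulated in the paper's introduction (that the local systems $L_{z,w}^{J',J}(F)$ in Equation \eqref{eq:decomposition_hzJ} are induced by permutation representations whenever $F$ is), and which for the full chain starting at $J=\emptyset$ is equivalent to the Stanley--Stembridge/Haiman conjectures. So the heart of the argument is an open problem at least as hard as the special case the conjecture was designed to generalize; no amount of reorganizing the filtration $J=J_0\subsetneq\cdots\subsetneq J_m=S$ discharges it. The two ``concrete attacks'' you sketch are likewise programmatic: the combinatorial formula for $\ch(\rho_i)$ is only announced for a companion paper and only for codominant $w$, and there is no argument that the Grojnowski--Haiman Schur-positivity mechanism transfers from the hybrid-basis identity of Theorem \ref{thm:groj_haiman} to $h$-positivity of the closed cells.

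That said, the scaffolding you build around the gap is sound and faithful to the paper's own program. You correctly note that the terminal stage of the recursion lands on $1$-character sheaves on $GL_n$, where a graded permutation module has $h$-positive Frobenius character, so the conjecture would indeed follow from the single-step statement; and you correctly draw the lesson of Example \ref{exa:ch_G24}, namely that the inductive hypothesis must be ``induced by a permutation representation'' rather than ``$h$-positive,'' since $\ch(IH^*(\h_{z,J}(X),L))$ can fail to be $h$-positive even for $L$ coming from the regular representation of $(S_n)_{J_z}^z$. In short: the reduction is correct and matches the paper's intended strategy, but the statement remains a conjecture because the key lemma you rely on is itself conjectural.
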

It is important to notice that $\ch(IH^*(\h_{w,J}(X),L))$ may fail to be $h$-positive (see Example \ref{exa:ch_G24}) even if $L$ is induced by a permutation representation of $W_{J_w}^w$. We could ask

\begin{Question}
Given $w$ and $J$, which local systems $L$ satisfies that $\ch(IH^*(\h_{w,J}(X),L))$ is $h$-positive?
\end{Question}

\subsection{Combinatorial and algebraic interpretations}

We know that (\cite{ChaShvV, BrosnanChow, AN_hecke})
\begin{align*}
\ch(IH^*(\h_w(X)))=&\ch(q^{\frac{\ell(w)}{2}}C'_w)\\
\ch(H^*(\h_{w_\m}(X)))=&\omega(\csf_q(G_\m))
\end{align*}
for every permutation $w\in S_n$ and for every Hessenberg function $\m\col[n]\to[n]$. What are the analogues of these equalities in the parabolic case?

\begin{Question}
\label{ques:codo}
Give a combinatorial/algebraic interpretation of  $\ch(IH^*(\h_{w,J}(X)))$ for some class of pairs $(w, J)$.
\end{Question}

Example \ref{exa:chromatic_gasharov} (see also \cite{KiemLee}) gives a class of pairs $(w_0,J)$ for which we can give such a combinatorial description. Unfortunately, this class seems to be very restrictive. Some classes the authors consider of special importance are
 \begin{itemize}
     \item when $J=\{n-k+1,\ldots, n-1\}$ and $w$ is codominant,
     \item when $J=\{k\}^c$, that is $G/P_J$ is a Grassmannian.
 \end{itemize}

The authors consider the first class above as an important class, because examples indicate that:

\begin{conjecture}
\label{conj:local_system_codominant}
Let $w$ be a codominant permutation, $J=\{n-k+1,\ldots, n-1\}$ and $f\col G\times \flag \to G\times \flag_J$ be the forgetful map. If
\[
f_*(IC_{\h_w})=\bigoplus_{z\in {}^JW} IC_{\h_{z,J}}(L_{z,w}^{J',\emptyset})
\]
is the splliting given by the decomposition theorem, then $L_{z,w}^{J',\emptyset} = 0$ for every non-codominant permutation $z$.
\end{conjecture}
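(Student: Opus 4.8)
Here is the plan for proving Conjecture \ref{conj:local_system_codominant}.

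\medskip

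The plan is to reduce to the one–step forgetful map and then analyse its fibres directly. Since $J=\{n-k+1,\dots,n-1\}$, the map $f$ factors as a tower $\h_w\to G\times\flag_{J_2}\to\cdots\to G\times\flag_{J_k}=G\times\flag_J$, where $J_j=\{n-j+1,\dots,n-1\}$, and by the Corollary to Theorem \ref{thm:chJnk1} the decomposition theorem may be applied one factor at a time. So it suffices to prove, by induction on $n$ and on $k$, the stronger statement: if $z\in{}^{J_j}S_n$ is codominant and $L$ is a local system on $\h_{z,J_j}$ of the type produced by Theorem \ref{thm:chJnk1} (induced by a representation of $S_{k'_{J_j}(z)}$), then every summand of $(f_{J_j,J_{j+1}})_*(IC_{\h_{z,J_j}}(L))$ is supported on some $\h_{z',J_{j+1}}$ with $z'$ codominant. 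When $k'_{J_j}(z)>0$ the permutation $z$ fixes the last coordinates, and Proposition \ref{prop:split_local_sytem} expresses the pushforward in terms of the analogous one for $\h_{\overline z,\overline J}(\overline X)$ inside $GL_{n-k'}$; since $\overline z=z|_{[n-k']}$ is again codominant (the pattern $312$ is avoided by the initial segment that $z$ permutes) and $\overline J$ is again of tail type, this case follows from the inductive hypothesis in smaller rank. Thus one is reduced to $k=2$, i.e.\ the map that forgets the hyperplane $V_{n-1}$, with $w$ codominant.

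\medskip

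In that case $\h_w(X)=\{V_\bullet:XV_i\subseteq V_{\m(i)}\ \text{for all }i\}$ is smooth, $IC_{\h_w}=\mathbb C_{\h_w}[\dim\h_w]$, and the fibre of $f$ over a partial flag $(V_1\subset\dots\subset V_{n-2})$ in the image is the set of hyperplanes $V_{n-1}$ with $V_{n-2}\subseteq V_{n-1}$ satisfying the conditions of $\h_w$ that involve $V_{n-1}$, namely $XV_i\subseteq V_{n-1}$ for the $i$ with $\m(i)=n-1$, together with $XV_{n-1}\subseteq V_{\m(n-1)}$ (vacuous if $\m(n-1)=n$, and equal to $XV_{n-1}=V_{n-1}$ if $\m(n-1)=n-1$). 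Put $U:=V_{n-2}+\sum_{i:\,\m(i)=n-1}XV_i$, replacing $U$ by the smallest $X$–invariant subspace containing it in the case $\m(n-1)=n-1$. Every valid $V_{n-1}$ contains $U$, so the fibre is a point when $\dim U=n-1$ and a $\mathbb P^1$ when $\dim U\le n-2$. Hence either $U$ is never full, which is exactly the setting of (a variant of) Proposition \ref{prop:PJB_bundle}, so $f$ is a $\mathbb P^1$–bundle onto the single codominant support $\h_{w_0,J}$ with $w_0$ the minimal element of $W_Jw$; or $f$ is generically of degree $\le 1$ onto its image and the jump locus $\{\dim U\le n-2\}$ is cut out inside the image by further \emph{inclusion} conditions $XV_a\subseteq V_b$ with $b\le n-2$ and no corank condition. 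I would then check, via a short computation with the rank numbers $r_{i,j}(z)$ and the iterated Construction \ref{cons:lusztig_parabolic_flag}, that each such jump locus is precisely a parabolic Lusztig variety $\h_{z,J}$ whose defining data is purely by inclusions, i.e.\ with $z$ codominant; more generally this identifies every relevant stratum of $f$ as a codominant $\h_{z,J}$.

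\medskip

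Finally, to pass from ``the relevant strata are codominant'' to ``$L_{z,w}=0$ for non-codominant $z$'', I would show that $f|_{\h_w}$ is semismall over the complement of the non-codominant strata (via the fibre–by–fibre analysis illustrated in Example \ref{exa:ch_G24}), so that the decomposition of $f_*(IC_{\h_w})$ has no summand of non-codominant support; alternatively, since the stalk of $IC_{\h_{w'}}$ along $\h_z^\circ$ is governed by the Kazhdan--Lusztig polynomial $P_{z,w'}$, one computes the stalk of $f_*(IC_{\h_w})$ at a generic point of $\h_{z,J}$ directly from the fibre of $f$ there and sees that an intersection-cohomology summand of non-codominant support cannot appear.

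\medskip

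The main obstacle will be the second step: $f|_{\h_w}$ need not be semismall in general, so one must control the fibre dimension of $f$ over \emph{every} parabolic Lusztig stratum, not merely over the tautological stratification by $\dim U$, and match each jump locus with a codominant permutation through the recursive Construction \ref{cons:lusztig_parabolic_flag}; this is the delicate combinatorial part. A secondary difficulty is bookkeeping the nontrivial local systems through the reduction to rank $n-k'$: one must combine Proposition \ref{prop:split_local_sytem} with the (elementary but necessary) fact that codominance and the tail-type condition on $J$ are inherited by $\overline w$ and $\overline J$, and verify that the representation-theoretic data of $L$ does not interfere with the support computation.
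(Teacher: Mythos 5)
You are attempting to prove a statement that the paper itself leaves open: Conjecture \ref{conj:local_system_codominant} is stated with the remark that ``examples indicate'' it, and no proof is given anywhere in the paper, so there is no argument of the authors to compare yours against. Your text, moreover, is a plan rather than a proof, and the points where it stops are exactly where the content of the conjecture lies. First, the induction along the tower $\flag\to\flag_{J_2}\to\cdots\to\flag_{J_k}$ requires the stronger one-step statement for the nontrivial local systems produced at the previous stage, and you propose to handle those by Proposition \ref{prop:split_local_sytem}; but that proposition is a statement about Frobenius characters of a single fibre $\h_{w,J}(X)$ for fixed regular semisimple $X$, not a sheaf-level statement about the pushforward over the family $G\times\flag_J$, and the $\binom{n}{n-k'}$ components $\h_{\overline z,\overline J}(X_i)$ are permuted by the monodromy in $X$, so ``descend to $GL_{n-k'}$ and apply the inductive hypothesis'' needs a family/equivariant version of that splitting which neither you nor the paper supplies. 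Second, in the base case the identification of every jump locus of $f$ with a \emph{codominant} parabolic Lusztig variety via Construction \ref{cons:lusztig_parabolic_flag} is precisely the combinatorial core of the conjecture and is left as ``I would then check''; note also that the point-versus-$\mathbb{P}^1$ dichotomy in terms of $\dim U$ is too coarse when $\m(n-1)=n-1$, since the closed condition $XV_{n-1}=V_{n-1}$ can reduce the fibre to finitely many points even when $\dim U\le n-2$, so the relevant stratification of the image is not controlled by $\dim U$ alone.

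Third, the concluding logic is not valid as stated: proving that $f$ is semismall over the \emph{complement} of the non-codominant strata says nothing about summands of $f_*(IC_{\h_w})$ supported \emph{on} those strata, which is what must be excluded; ruling them out requires a stalk or fibre-dimension estimate over each non-codominant stratum (your alternative via Kazhdan--Lusztig stalks is again only announced, and the paper itself warns in Section \ref{sec:further_directions} that the relation between parabolic Lusztig singularities and known Kazhdan--Lusztig-type polynomials is unclear). As it stands, the proposal reduces the conjecture to statements that are essentially equivalent to it, so it should be regarded as a strategy outline for an open problem, not a proof.
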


We could also ask for combinatorial interpretations of the characters of the local systems that appear in the decomposition theorem. Since the groups $W_{J_z}^z$ can be complicated in general, we restrict ourselves to the case $J=\{n-k+1,\ldots, n-1\}$ (which means that $W_{J_z}^z = S_{i}$ for some $i$ that depends on $z$ and $J$).

\begin{Question}
\label{ques:L_codo}
Let $J=\{n-k+1,\ldots, n-1\}$ and $J'=\{n-k'+1,\ldots, n-1\}$ with $k'>k$. Give a combinatorial description of $\ch(L_{w, z}^{J,J'})$, when $w, z$ are codominant permutations. Recall that $L_{w,z}^{J,J'}$ is induced by a representation of $S_i$ for some $i$ that depends on $z$ and $J'$.
\end{Question}

Assuming that Conjecture \ref{conj:local_system_codominant} is true, we have that Theorem \ref{thm:local_system_parabolic} and Proposition \ref{prop:split_local_sytem} give
\[
\ch(IH^*(\h_{w,J}(X))) = \sum_{\substack{z\leq w, z\in {}^{J'}S_n\\ z \text{ codominant}}} \ch(\h_{\overline{z},\overline{J'}}(X_z))\ch(L_{w, z}^{J,J'}).
\]
This gives a recursion for computing $\ch(IH^*(\h_{w,J}(X)))$ (assuming that $\ch(L_{w, z}^{J,J'})$ is known), where the initial case is Example \ref{exa:projective}. Even the case $k'=k+1$ is already relevant. Also noteworthy is the fact that $h$-positivity of $\ch(L_{w, z}^{J,J'})$ for every $w, z$ codominant permutations and for every $J=\{n-k+1,\ldots, n-1\}$, $J'=\{n-(k+1)+1,\ldots, n-1\}$, would imply the $e$-positivity $\csf_q(G_\m)$ (also assuming the validity of Conjecture \ref{conj:local_system_codominant}).

\subsection{LLT polynomials}

By \cite{Procesi}, \cite{GP} and \cite[Proposition 5.4]{PrecupSommers} there exists a representation $\LLT_\m$ for each Hessenberg function $\m$ such that
\begin{equation}
    \label{eq:LLT}
    P(G/B)\otimes \LLT_\m = \mathcal{C}\otimes H^*(\h_{w_\m}(X)),
\end{equation}
where $P(G/B)$ is the trivial representation of $S_n$ in $H^*(G/B)$ and $\mathcal{C}$ is the coinvariant algebra of $S_n$. The Frobenius characters of $\LLT_\m$ are precisely the unicellular LLT-polynomials (see \cite{LLT}, \cite{CarlssonMellit}, \cite{AlexPanova}). \par

Is there an analogue of Equation \eqref{eq:LLT} for the parabolic Lusztig varieties? That is, for a fixed $J\subset\{1,\ldots, n-1\}$ there exists representations $P_J$ and $\mathcal{C_J}$ such that, for each $w\in {}^JS_n$, there exists a representation $\LLT_{w,J}$ satisfying
\[
P_J\otimes \LLT_{w,J} = \mathcal{C}_J\otimes IH^*(\h_{w,J}(X)).
\]

% \begin{Question}
% What is Haiman's conjecture in the parabolic case?
% \end{Question}

% \begin{Question}
% Compare stratification parabolic with positroids stratification.
% \end{Question}

\bibliographystyle{amsalpha}
\bibliography{bibli}

\end{document}